\def\namedlabel#1#2{\begingroup
    #2%
    \def\@currentlabel{#2}%
    \phantomsection\label{#1}\endgroup
}
\numberwithin{equation}{section}
\renewcommand{\>}{\rangle}
\newcommand{\ov}{\overline}
\renewcommand{\sf}{\mathsf}
\newcommand{\de}{\partial}
\newcommand{\di}{\partial_{\infty}}
\newcommand{\calL}{\mathcal L}
\newcommand{\calO}{\mathcal O}
\newcommand{\calT}{\mathcal{T}}
\newcommand{\sfG}{\mathsf{G}}
\newcommand{\sfg}{\mathsf{g}}
\newcommand{\bB}{\mathbb{B}}
\newcommand{\bD}{\mathbb{D}}
\newcommand{\SO}{{\rm SO}}
\newcommand{\PO}{{\rm PO}}
\newcommand{\OO}{{\rm O}}
\newcommand{\SL}{{\rm SL}}
\newcommand{\GL}{{\rm GL}}
\newcommand{\PGL}{{\rm PGL}}
\newcommand{\PSL}{{\rm PSL}}
\newcommand{\Hom}{{\rm Hom}}
\newcommand{\spa}{{\rm span}}
\newcommand{\hcc}{\H^{p,q}}
\newcommand{\g}{\gamma}
\renewcommand{\k}{\kappa}
\renewcommand{\L}{\Lambda}
\newcommand{\G}{\Gamma}
\newcommand{\Om}{\Omega}
\renewcommand{\r}{\rho}
\renewcommand{\g}{\gamma}
\newcommand{\N}{\mathbb N}
\newcommand{\Z}{\mathbb Z}
\newcommand{\R}{\mathbb R}
\newcommand{\C}{\mathbb C}
\renewcommand{\P}{\mathbb P}
\renewcommand{\H}{\mathbb H}
\newcommand{\bS}{\mathbb S}
\newcommand{\sfb}{\mathsf{b}}
\newcommand{\sfC}{\mathsf{C}}
\newcommand{\vcd}{\mathrm{vcd}}
\newcommand{\gph}{\mathrm{graph}}
\newcommand{\ie}{i.e.\ }
\newcommand{\eg}{e.g.\ }
\newcommand{\resp}{resp.\ }
\definecolor{dark-gray}{gray}{0.40}
\theoremstyle{plain}
\newtheorem{thm}{Theorem}[section]
\newtheorem{lem}[thm]{Lemma}
\newtheorem{prop}[thm]{Proposition}
\newtheorem{cor}[thm]{Corollary}
\newtheorem{fact}[thm]{Fact}
\newtheorem{notation}[thm]{Notation}
\theoremstyle{definition}
\newtheorem{example}[thm]{Example}
\newtheorem{examples}[thm]{Examples}
\newtheorem{defn}[thm]{Definition}
\newtheorem{remark}[thm]{Remark}
\newtheorem{remarks}[thm]{Remarks}
\title[$\H^{p,q}$-convex cocompactness and higher higher Teichm\"uller spaces]{$\H^{p,q}$-convex cocompactness and\\ higher higher Teichm\"uller spaces}
\date{\today}
\author{Jonas Beyrer}
\address{LMU, Mathematisches Institut, Theresienstrasse 39, 80333 M\"unchen, Germany}
\email{beyrer@math.lmu.de}
\author{Fanny Kassel}
\address{CNRS and Laboratoire Alexander Grothendieck, Institut des Hautes \'Etudes Scientifiques, Universit\'e Paris-Saclay, 35 route de Chartres, 91440 Bures-sur-Yvette, France}
\email{kassel@ihes.fr}
\thanks{This project received funding from the European Research Council (ERC) under the European Union's Horizon 2020 research and innovation programme (ERC starting grant DiGGeS, grant agreement No 715982). J.B was supported by the Schweizerischer Nationalfonds (SNF, Swiss Research Foundation), P2ZHP2 184022 (Early Postdoc.Mobility)}
\begin{document}

\begin{abstract}
For any integers $p\geq 2$ and $q\geq 1$, let $\H^{p,q}$ be the pseudo-Riemannian hyperbolic space of signature $(p,q)$.
We prove that if $\Gamma$ is the fundamental group of a closed aspherical $p$-manifold, then the set of representations of $\Gamma$ to $\PO(p,q+1)$ which are convex cocompact in $\H^{p,q}$ is a union of connected components of $\Hom(\Gamma,\PO(p,q+1))$.
More generally, we show that if $\Gamma$ is any finitely generated group with no infinite nilpotent normal subgroups and with virtual cohomological dimension~$p$, then the set of injective and discrete representations of $\Gamma$ to $\PO(p,q+1)$ preserving a non-degenerate non-positive $(p-1)$-sphere in the boundary of $\H^{p,q}$ is a union of connected components of $\Hom(\Gamma,\PO(p,q+1))$.
This gives new examples of higher-dimensional higher-rank Teichm\"uller spaces.
\end{abstract}

\maketitle
\tableofcontents

\section{Introduction} 

Teichm\"uller theory has a long and rich history.
The Teichm\"uller space of a closed surface $S$ of genus $\geq 2$ is a fundamental object in many areas of mathematics.
It can be viewed both as a moduli space for marked complex structures on~$S$ or, via the Uniformization Theorem, as a moduli space for marked hyperbolic structures on~$S$.
In this second point of view, the holonomy representation of the fundamental group $\pi_1(S)$ naturally realizes the Teichm\"uller space of~$S$ as a connected component of the $G$-character variety of $\pi_1(S)$ for $G=\PSL(2,\R)$, corresponding to the image, modulo conjugation by~$G$ at the target, of a connected component of $\Hom(\pi_1(S),G)$ consisting entirely of injective and discrete representations.
Here $\Hom(\pi_1(S),G)$ denotes the set of representations of $\pi_1(S)$ to~$G$, endowed with the compact-open topology.

An interesting and perhaps surprising phenomenon, which has led to a considerable amount of research in the past twenty years, is that for certain real semi-simple Lie groups $G$ of higher real rank such as $\SL(n+1,\R)$, $\mathrm{Sp}(2n,\R)$, or $\OO(2,n)$ for $n\geq 2$, there also exist connected components of $\Hom(\pi_1(S),G)$ consisting entirely of injective and discrete representations, and which are non-trivial in the sense that they are not reduced to a single representation and its conjugates by~$G$.
The images in the $G$-character variety of these components are now called \emph{higher(-rank) Teichm\"uller spaces}.
By work of Choi--Goldman \cite{cg05} (for $G=\SL(3,\R)$), Fock--Goncharov \cite{fg06}, and Labourie \cite{lab06}, these higher-rank Teichm\"uller spaces include \emph{Hitchin components} when $G$ is a real split simple Lie group, which were first investigated in the pioneering work of Hitchin \cite{hit92}.
By work of Burger--Iozzi--Wienhard \cite{biw10}, higher-rank Teichm\"uller spaces also include \emph{maximal components} when $G$ is a real simple Lie group of Hermitian type, where the \emph{Toledo invariant} (a topological invariant generalizing the Euler number) is maximized.
See \cite{biw14} for details.
More recently, new higher-rank Teichm\"uller spaces were discovered in \cite{bp,bcggo,glw,bglpw}, consisting of so-called \emph{$\Theta$-positive} representations of surface groups introduced by Guichard--Wienhard \cite{gw16,gw22}, when $G = \OO(p,q)$ for $p\neq q$ or $G$ is an exceptional simple real Lie group whose restricted root system is of type $F_4$; this conjecturally gives the full list of higher-rank Teichm\"uller spaces, see \cite{gw16}.
The richness of this very active \emph{higher-rank Teichm\"uller theory} (see \cite{biw14,wie-icm,poz-bourbaki}) comes from the many similarities between higher-rank Teichm\"uller spaces and the classical Teichm\"uller space of~$S$: see for instance \cite{hit92,cg05,fg06,lab06,lm09,biw10,gw12,gmn13,gmn14,bd14,bcls15,z15,lz17,bp17,bp,bcggo,glw,bglpw}.

It is natural to try to generalize the theory further by considering what could be called \emph{higher-dimensional higher-rank Teichm\"uller theory} (or \emph{higher higher Teichm\"uller theory} for short); this idea appears for instance in Wienhard's ICM survey \cite[\S\,14]{wie-icm}.
The goal would be, for certain closed topological manifolds $N$ of dimension $n>2$, to find connected components of representations from $\pi_1(N)$ to higher-rank semi-simple Lie groups~$G$ which consist entirely of injective and discrete representations and are non-trivial as above, and to study their images in the $G$-character variety of $\pi_1(N)$.
We are particularly interested in such \emph{higher higher Teichm\"uller spaces} for which the connected component in $\Hom(\pi_1(N),G)$ contains a representation with Zariski-dense image in~$G$.\footnote{By contrast, there exist rigid situations where all representations in the connected component factor through a semi-simple Lie group of real rank one inside~$G$ (see \eg \cite{poz15,km17}), or where the only non-trivial way to deform a representation inside~$G$ is through a compact group centralizing its image (see \eg \cite{kkp12,kli11}).}
To our knowledge, for simple~$G$ such higher higher Teichm\"uller spaces have so far been shown to exist only for $G = \PGL(n+1,\R)$ or $G = \OO(n,2)$ (see Section~\ref{subsec:intro-higher-higher-Teich} just below).
In the present paper we give new examples where $G$ can be any indefinite orthogonal group $\OO(p,q+1)$ with $p\geq 2$ and $q\geq 1$.

\subsection{Previously known higher higher Teichm\"uller spaces and new examples} \label{subsec:intro-higher-higher-Teich}

Firstly, in the context of real projective geometry, Benoist \cite{ben05} proved that for any closed topological manifold $N$ of dimension $n\geq 2$, if the fundamental group $\pi_1(N)$ has no infinite nilpotent normal subgroups, then the set of injective and discrete representations from $\pi_1(N)$ to $G := \PGL(n+\nolinebreak 1,\R)$ which are holonomies of convex projective structures on~$N$ is closed in $\Hom(\pi_1(N),G)$.
This set is also open in $\Hom(\pi_1(N),G)$ by earlier work of Koszul \cite{kos68}, and so it is a union of connected components of $\Hom(\pi_1(N),G)$.

In many examples such connected components are non-trivial, and so their images in the $G$-character variety of $\pi_1(N)$ are \emph{higher higher Teichm\"uller spaces} in the sense above.
This is the case for instance if $N$ is a closed real hyperbolic $n$-manifold with a closed totally geodesic embedded hypersurface: then bending \`a la Johnson--Millson \cite{jm87} allows to continuously deform
the natural inclusion $\pi_1(N) \hookrightarrow \mathrm{Isom}(\H^n) = \PO(n,1) \hookrightarrow \PGL(n+\nolinebreak 1,\R) = G$ (given by the holonomy representation) into representations with Zariski-dense image inside $\Hom(\pi_1(N),G)$.
Interesting examples are also obtained using Vinberg's theory \cite{vin71} of linear reflection groups: see \cite{ben08,clm18}.

We note that closedness in the case that $N$ is a closed real hyperbolic $n$-manifold was first proved by Choi--Goldman \cite{cg05} for $n=2$ and by  Kim \cite{kim01} for $n=3$.
See also \cite{ct-expository} for an expository proof when $\pi_1(N)$ is Gromov hyperbolic.
Benoist's result was extended by Marseglia \cite{mar-PhD} to holonomies of convex projective structures of finite Busemann volume on~$N$ when $N$ is not necessarily closed, and by Cooper--Tillmann \cite{ct} to holonomies of certain convex projective structures with generalized cusps (recovering also Benoist's result \cite{ben05} with a different proof).

Secondly, higher higher Teichm\"uller spaces were found in the context of anti-de Sitter geometry (\ie Lorentzian geometry of constant negative curvature).
Indeed, Mess \cite{mes90} (for $n=2$) and Barbot \cite{bar15} (for general $n\geq 2$) proved that for any closed real hyperbolic $n$-manifold $N$ with holonomy representation $\sigma_0 : \pi_1(N)\to\mathrm{Isom}(\H^n) = \PO(n,1) = \OO(n,1)/\{\pm\mathrm{I}\}$, the connected component of (a lift of) $\sigma_0$ composed with the natural inclusion $\OO(n,1)\hookrightarrow G:=\PO(n,2)$ in $\Hom(\pi_1(N),G)$ consists entirely of injective and discrete representations. 
Again, such connected components are non-trivial, and in fact contain representations with Zariski-dense image in~$G$, as soon as $N$ admits a closed totally geodesic embedded hypersurface, using bending: see \cite[\S\,6]{kas12}.
(For $n=3$, see also \cite[Th.\,1.20]{mst} for non-trivial deformations obtained by combining several bendings along intersecting hypersurfaces, in some special situation.)

One of the results of the present paper is the following generalization of Barbot's result to pseudo-Riemannian geometry of any signature $(p,q)$, where $G = \PO(p,q+1) = \OO(p,q+\nolinebreak 1)/\{\pm\mathrm{I}\}$.

\begin{thm} \label{thm:deform-Fuchsian-inj-discr}
Let $p\geq 2$ and $q\geq 1$ be integers.
Let $N$ be a closed real hyperbolic $p$-manifold with holonomy representation $\sigma_0 : \pi_1(N)\to\PO(p,1)$, and let $\rho_0 : \pi_1(N)\to\PO(p,q+1)$ be the composition of a lift of $\sigma_0$ to $\OO(p,1)$ with the natural inclusion $\OO(p,1)\hookrightarrow\PO(p,q+1)$.
Then the connected component of $\rho_0$ in $\Hom(\pi_1(N),\PO(p,q+1))$ consists entirely of injective and discrete representations.
\end{thm}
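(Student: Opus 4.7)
The plan is to deduce the theorem directly from the paper's main result (announced in the abstract): the set of representations $\pi_1(M)\to\PO(p,q+1)$ that are convex cocompact in $\hcc$ is a union of connected components of $\Hom(\pi_1(M),\PO(p,q+1))$. Since a closed real hyperbolic $p$-manifold is aspherical, this result applies to our $M$, and the theorem reduces to two claims: \emph{(a)} the reference representation $\rho_0$ is itself $\hcc$-convex cocompact; and \emph{(b)} any $\hcc$-convex cocompact representation is injective with discrete image.

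For \emph{(a)}, I would exploit the Fuchsian nature of $\rho_0$. The natural embedding $\OO(p,1)\hookrightarrow\OO(p,q+1)$ preserves a totally geodesic isometric copy of $\H^p$ inside $\hcc$, namely a maximal spacelike totally geodesic subspace pointwise fixed by a compact $\OO(q)$-factor of the centralizer. Because $\sigma_0$ is the holonomy of a closed hyperbolic $p$-manifold, $\rho_0(\pi_1(M))$ acts properly discontinuously and cocompactly on this $\H^p$. The visual boundary of this $\H^p$ sits inside $\partial\hcc$ as a topological $(p-1)$-sphere of pairwise transverse isotropic lines, and it coincides with the limit set of $\rho_0(\pi_1(M))$ in $\partial\hcc$. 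One then verifies that this picture matches the paper's definition of $\hcc$-convex cocompactness: negative (i.e.\ pairwise spacelike) limit set, nonempty properly convex $\rho_0$-invariant set, and cocompact action on its convex hull. The totally geodesic $\H^p$ itself (or a small thickening of it inside $\hcc$) is the convex set on which the action is cocompact.

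Claim \emph{(b)} is essentially built into the definition of convex cocompactness: a properly discontinuous, cocompact action of the torsion-free group $\pi_1(M)$ on a nonempty convex set in $\hcc$ forces injectivity and discreteness of the representation. Torsion-freeness of $\pi_1(M)$ is ensured by the asphericity of~$M$.

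The main obstacle here is not the proof of this corollary but the proof of the general openness/closedness statement that it invokes; granting that, the Fuchsian verification in \emph{(a)} is geometric bookkeeping, and Theorem~\ref{thm:deform-Fuchsian-inj-discr} follows immediately by combining \emph{(a)}, \emph{(b)}, and the main theorem.
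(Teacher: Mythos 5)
Your proof is correct and coincides with the paper's own route: the authors state Theorem~\ref{thm:deform-Fuchsian-inj-discr} as a consequence of Theorem~\ref{thm:main} via Corollary~\ref{cor:deform-Fuchsian-Hpq-cc}, observing precisely that $\rho_0$ is $\H^{p,q}$-convex cocompact (taking for $\mathscr{C}$ a neighborhood of the totally geodesic $\H^p$, as you note, since $\mathscr{C}$ must have non-empty interior) and that $\H^{p,q}$-convex cocompact representations of a torsion-free group are injective with discrete image by Definition~\ref{def:Hpq-cc}.
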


Using bending, the connected components in Theorem~\ref{thm:deform-Fuchsian-inj-discr} are non-trivial, and in fact contain representations with Zariski-dense image in $G=\PO(p,q+1)$ (or $\mathrm{PSO}(p,q+1)$), as soon as $N$ admits $q$ closed totally geodesic embedded hypersurfaces which pairwise do not intersect: see Appendix~\ref{appendix}.

Our results are actually more precise and general than Theorem~\ref{thm:deform-Fuchsian-inj-discr}, as we now explain.

\subsection{$\H^{p,q}$-convex cocompact representations}

Convex cocompactness is a classical notion in the theory of Kleinian groups: recall that a discrete subgroup $\Gamma$ of the group $\PO(p,1)$ of isometries of the real hyperbolic space~$\H^p$ is said to be \emph{convex cocompact} if it acts properly discontinuously and cocompactly on some non-empty closed convex subset $\mathscr{C}$ of~$\H^p$.
In that case $\Gamma$ is finitely generated and \emph{Gromov hyperbolic}: triangles in its Cayley graph (with respect to any given finite generating subset) are uniformly thin.
Convex cocompact subgroups of $\PO(p,1)$ admit various geometric, topological, and dynamical characterizations that make them a particularly rich and interesting class of discrete~subgroups.

Inspired by work of Mess \cite{mes90} and Barbot--M\'erigot \cite{bm12,bar15} for the anti-de Sitter space $\mathrm{AdS}^{p+1} = \H^{p,1}$, the notion of convex cocompactness was extended in \cite{dgk18} to the setting of discrete subgroups of the isometry group $\PO(p,q+1)$ of the \emph{pseudo-Riemannian hyperbolic space} $\H^{p,q}$ of signature $(p,q)$ for any integers $p\geq 1$ and $q\geq 0$.
By definition, $\H^{p,q}$ is the pseudo-Riemannian symmetric space $\PO(p,q+1)/\OO(p,q)$, which can be realized as an open set in projective space as
$$\H^{p,q} = \big\{[v]\in\P(\R^{p+q+1}) ~|~ \sfb(v,v)<0\big\}$$
where $\sfb$ is any non-degenerate symmetric bilinear form of signature $(p,q+1)$ on $\R^{p+q+1}$.
For $q=0$, this is the real hyperbolic space~$\H^p$.
For $q=1$, this is the $(p+1)$-dimensional \emph{anti-de Sitter space} $\mathrm{AdS}^{p+1}$, which is a Lorentzian counterpart of $\H^p$.
In general, $\H^{p,q}$ has a natural pseudo-Riemannian structure of signature $(p,q)$ with isometry group $\PO(p,q+1)$, induced by the symmetric bilinear form~$\sfb$ (see Section~\ref{subsec:Hpq}); it has constant negative sectional curvature, which makes it a pseudo-Riemannian analogue of~$\H^p$. 
Similarly to~$\H^p$, the space $\H^{p,q}$ has a natural \emph{boundary at infinity}
$$\di \H^{p,q} = \big\{[v]\in\P(\R^{p+q+1}) ~|~ \sfb(v,v)=0\big\}.$$

Recall that a subset of projective space is said to be \emph{convex} if it is contained and convex in some affine chart; it is said to be \emph{properly convex} if its closure is convex.
Unlike $\H^p$, for $q\geq 1$ the space $\H^{p,q}$ is not a convex subset of the projective space $\P(\R^{p+q+1})$.
However, the notion of convexity in $\H^{p,q}$ still makes sense: a subset $\mathscr{C}$ of $\H^{p,q}$ is said to be \emph{convex} if it is convex as a subset of $\P(\R^{p+q+1})$ or equivalently, from an intrinsic point of view, if any two points of~$\mathscr{C}$ are connected inside~$\mathscr{C}$ by a unique segment which is geodesic for the pseudo-Riemannian structure.
We say that $\mathscr{C}$ is \emph{properly convex} if its closure $\overline{\mathscr{C}}$ in $\P(\R^{p+q+1})$ is convex.

The notion of $\H^{p,q}$-convex cocompactness introduced in \cite{dgk18} is the following.

\begin{defn}[{\cite[Def.\,1.2]{dgk18}}] \label{def:Hpq-cc}
A discrete subgroup of $\PO(p,q+1)$ is \emph{$\hcc$-convex cocompact} if it acts properly discontinuously and cocompactly on a closed properly convex subset $\mathscr{C}$ of $\H^{p,q}$, with non-empty interior, such that the boundary at infinity $\di\mathscr{C} := \overline{\mathscr{C}} \cap \di\H^{p,q}$ of~$\mathscr{C}$ does not contain any non-trivial projective segments.

A representation of a discrete group into $\PO(p,q+1)$ is \emph{$\hcc$-convex cocompact} if it has finite kernel and $\hcc$-convex cocompact, discrete image.
\end{defn}

As in the classical case of Kleinian groups, if a discrete subgroup of $\PO(p,q+1)$ is $\H^{p,q}$-convex cocompact, then it is finitely generated and Gromov hyperbolic: see \cite[Th.\,1.7]{dgk18} and \cite[Th.\,1.24]{dgk-proj-cc}.
Moreover, the \emph{virtual cohomological dimension} $\vcd(\Gamma)$ of~$\Gamma$ (an invariant of finitely generated groups measuring a notion of their ``size'', see Section~\ref{subsec:vcd}) is bounded above by~$p$ \cite[Cor.\,11.10]{dgk-proj-cc}.
We note that Definition~\ref{def:Hpq-cc} fits into a general notion of \emph{strong convex cocompactness} in projective space developed in \cite{dgk-proj-cc}: see \cite[Th.\,1.24]{dgk-proj-cc}.

An important feature\footnote{This feature uses the assumption that $\di\mathscr{C}$ contains no segment, and is not true without it, see \cite{dgk-ex-cc}.} is that $\H^{p,q}$-convex cocompactness is an \emph{open} condition: see \cite[Cor.\,1.12]{dgk18} and \cite[Cor.\,1.25]{dgk-proj-cc}.
In the present paper, for $p\geq 2$ and $q\geq 1$, we prove that $\H^{p,q}$-convex cocompactness is also a \emph{closed} condition when $\Gamma$ is ``sufficiently large'' in the sense that $\vcd(\Gamma)=\nolinebreak p$ is maximal.

\begin{thm} \label{thm:main}
Let $p\geq 2$ and $q\geq 1$ be integers, and let $\G$ be a Gromov hyperbolic group with $\vcd(\G)=p$.
Then the set of $\H^{p,q}$-convex cocompact representations of~$\Gamma$ is a union of connected components of $\Hom(\G,\PO(p,q+1))$.
\end{thm}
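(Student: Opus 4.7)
Since $\H^{p,q}$-convex cocompactness is an open condition in $\Hom(\G,\PO(p,q+1))$ by \cite{dgk18,dgk-proj-cc}, only closedness remains to be established. The plan is to show that if $(\rho_n)_{n\in\N}$ is a sequence of $\H^{p,q}$-convex cocompact representations converging to some $\rho_\infty\in\Hom(\G,\PO(p,q+1))$, then $\rho_\infty$ is itself $\H^{p,q}$-convex cocompact.

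The approach goes through the Anosov characterization established in \cite{dgk-proj-cc}: each $\rho_n$ is $P_1$-Anosov and admits a $\rho_n$-equivariant, continuous, injective boundary map $\xi_n:\di\G\to\di\H^{p,q}$ whose image is \emph{negative}, meaning that any two distinct image points span a projective line meeting $\H^{p,q}$ along a spacelike geodesic. After passing to a subsequence, equicontinuity-type properties of Anosov boundary maps in converging families produce a continuous $\rho_\infty$-equivariant limit $\xi_\infty:\di\G\to\di\H^{p,q}$. Once $\xi_\infty$ is shown to be an embedding with negative image, the results of \cite{dgk-proj-cc} identify $\rho_\infty$ as $P_1$-Anosov with negative transverse boundary map, hence as $\H^{p,q}$-convex cocompact; a suitable $\rho_\infty(\G)$-invariant properly convex cocompact domain in $\H^{p,q}$ can then be assembled from (for instance) the convex hull of $\xi_\infty(\di\G)$, possibly combined with a dual construction.

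The main difficulty, and the step where the hypothesis $\vcd(\G)=p$ is essential, is proving that $\xi_\infty$ is still injective with negative image. By the Bestvina--Mess theorem (applied to a torsion-free finite-index subgroup of~$\G$), $\di\G$ has topological dimension $p-1$ and enjoys strong manifold-like cohomological properties. Consequently, each image $\xi_n(\di\G)$ is an embedded topological $(p-1)$-sphere inside the quadric $\di\H^{p,q}$ of dimension $p+q-1$, realising the maximal dimension compatible with the negativity condition. A dimension-and-degree argument, combining Hausdorff convergence of the images $\xi_n(\di\G)$ to $\xi_\infty(\di\G)$ with the local structure of maximal-dimensional negative subsets of $\di\H^{p,q}$, should rule out any collapse: an identification of two distinct points of $\di\G$ under $\xi_\infty$ would strictly drop the topological dimension of the image below $p-1$, contradicting the fact that it is a limit of $(p-1)$-spheres. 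Preservation of negativity at the limit can be handled similarly, since a failure would create two distinct points of $\di\G$ spanning an isotropic direction, again in conflict with the limiting dimension. Without the maximality of $\vcd$ this rigidity breaks down, in line with the fact that $P_1$-Anosov is not a closed condition in general; the hypothesis $\vcd(\G)=p$ is therefore sharp for the argument.
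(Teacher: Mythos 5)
There is a genuine gap at the heart of your argument: the dimension-and-degree claim does not hold. If $\xi_\infty$ identifies two distinct boundary points, the image is a quotient of a $(p-1)$-sphere and still has topological dimension $p-1$; collapsing finitely many (or even a positive-codimension subset of) pairs does not drop the covering dimension, so Hausdorff convergence of the images $\xi_n(\di\G)$ to $\xi_\infty(\di\G)$ places no obstruction on non-injectivity. The same issue defeats your claimed proof of negativity: limits of negative $(p-1)$-spheres are always \emph{non-positive} $(p-1)$-spheres (Corollary~\ref{c.limit-non-pos-sphere}) and these are still $(p-1)$-spheres in $\di\hat\H^{p,q}$ whether or not they are $\sfb$-non-degenerate (Corollary~\ref{cor:non-positive-sphere-as-graph}), so there is no dimension drop to exploit. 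Concretely, Section~\ref{subsec:split-spacetimes} exhibits cocompactly-acting discrete subgroups of $\PO(p,q+1)$ preserving weakly spacelike $p$-graphs whose boundary is a non-positive $(p-1)$-sphere that is \emph{not} transverse; the transition from "negative" to merely "non-positive" is exactly the degeneration that has to be excluded, and excluding it is the content of the whole paper.

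The paper's route is quite different and cannot be shortcut to a boundary-map convergence argument. It first reduces to the case where the Zariski closure of $\rho_\infty(\G)$ is reductive via semi-simplification (Section~\ref{subsec:proof-main-thm}). It then uses the Seppi--Smith--Toulisse existence of $\rho_n(\G)$-invariant maximal complete spacelike $p$-manifolds $M_n$ (Fact~\ref{fact:Hpq-cc-maximal}), together with a uniform lower bound on the timelike distance of such manifolds to the boundary (Lemma~\ref{l.eps-to-bdy}, which rests on a compactness theorem for maximal submanifolds). Proposition~\ref{prop:non-deg-limit} then shows the limit preserves a weakly spacelike $p$-graph $M$ with $M\subset\Om(\L)$; its proof is the technical core and requires Benoist-style semi-proximality and limit-cone arguments (Section~\ref{sec:non-deg-limit}) to rule out a $\sfb$-degenerate limit set. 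Finally, Theorem~\ref{thm:geom-action-weakly-sp-gr} (proved via the theory of $j$-crowns and Hilbert metrics on properly convex subdomains) converts this into $\H^{p,q}$-convex cocompactness. None of these ingredients is present in your outline, and the elementary dimension argument you propose in their stead is incorrect.
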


Gromov hyperbolic groups $\Gamma$ with $\vcd(\G)=p$ include in particular all fundamental groups of closed real hyperbolic $p$-manifolds.
In general, if $\vcd(\Gamma)=p\geq 2$ and if $\Gamma$ admits an $\H^{p,q}$-convex cocompact representation, then the Gromov boundary of~$\Gamma$ is homeomorphic to a $(p-1)$-dimensional sphere \cite[Cor.\,11.10]{dgk-proj-cc}.
Moreover, in that case $\Gamma$ is (up to passing to a finite-index subgroup) the fundamental group of a closed aspherical differentiable manifold: this follows from \cite{abbz12} for $q=1$ and from the recent work \cite{sst} for general $q\geq 1$ and $p\geq 2$.
(For $p\geq 6$, Bartels--L\"uck--Weinberger \cite{blw10} proved that in general, any Gromov hyperbolic group with Gromov boundary a $(p-1)$-sphere is the fundamental group of a closed aspherical \emph{topological} manifold.)

Here is an immediate consequence of Theorem~\ref{thm:main}, which improves Theorem~\ref{thm:deform-Fuchsian-inj-discr}.

\begin{cor} \label{cor:deform-Fuchsian-Hpq-cc}
Let $p\geq 2$ and $q\geq 1$ be integers.
Let $N$ be a closed real hyperbolic $p$-manifold with holonomy representation $\sigma_0 : \pi_1(N)\to\PO(p,1)$, and let $\rho_0 : \pi_1(N)\to\PO(p,q+1)$ be the composition of a lift of $\sigma_0$ to $\OO(p,1)$ with the natural inclusion $\OO(p,1)\hookrightarrow\PO(p,q+1)$.
Then the connected component of $\rho_0$ in $\Hom(\pi_1(N),\PO(p,q+1))$ consists entirely of $\H^{p,q}$-convex cocompact representations.
\end{cor}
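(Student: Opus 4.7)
The plan is to reduce the statement directly to Theorem~\ref{thm:main}. Set $\Gamma := \pi_1(M)$. First, I would verify the hypotheses of Theorem~\ref{thm:main}: since $M$ is a closed real hyperbolic $p$-manifold, $\Gamma$ is torsion-free and acts properly discontinuously and cocompactly on the $\delta$-hyperbolic space~$\H^p$, so $\Gamma$ is Gromov hyperbolic by the Milnor--\v{S}varc lemma. Moreover, $M$ is a closed aspherical manifold of dimension~$p$, hence a $K(\Gamma,1)$, so $\mathrm{cd}(\Gamma) = p$; being torsion-free, $\vcd(\Gamma) = p$ as well. Thus Theorem~\ref{thm:main} applies, and tells us that the set of $\H^{p,q}$-convex cocompact representations in $\Hom(\Gamma,\PO(p,q+1))$ is a union of connected components.

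It then suffices to establish that the base representation $\rho_0$ is itself $\H^{p,q}$-convex cocompact; this is the crux of the argument. The inclusion $\OO(p,1)\hookrightarrow\OO(p,q+1)$ preserves an orthogonal decomposition $\R^{p+q+1} = V \oplus W$ on which $\sfb$ has respective signatures $(p,1)$ and $(0,q)$. The projective subspace $\P(V)$ meets $\H^{p,q}$ in a totally geodesic spacelike copy $S_0\cong\H^p$, on which $\rho_0(\Gamma)$ acts as the cocompact lattice $\sigma_0(\Gamma) \subset \PO(p,1)$, and meets $\di\H^{p,q}$ in a smooth $(p-1)$-sphere $\Lambda_0$ containing no projective segment (it is the boundary of a round ball in the projective $p$-plane $\P(V)$). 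To match Definition~\ref{def:Hpq-cc} I would exhibit a $\rho_0(\Gamma)$-invariant closed properly convex subset $\mathscr{C}\subset\H^{p,q}$ with non-empty interior whose ideal boundary is exactly $\Lambda_0$. A natural candidate is a small $\Gamma$-equivariant thickening of $S_0$ in the timelike normal directions parametrized by~$W$: proper convexity and containment in $\H^{p,q}$ follow from $\sfb|_W$ being positive definite, so that sufficiently small normal perturbations of a $\sfb$-negative vector remain $\sfb$-negative, and the centralizer of $\OO(p,1)$ in $\OO(p,q+1)$ (which contains the compact group $\OO(q)$ acting on~$W$) provides an obvious $\Gamma$-invariant thickening parameter. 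Cocompactness of $\Gamma$ on $\mathscr{C}$ then reduces, via the normal projection $\mathscr{C}\to S_0$, to the cocompact action on~$S_0$. Alternatively, one could invoke the characterizations of $\H^{p,q}$-convex cocompactness in \cite{dgk18,dgk-proj-cc}, reducing the problem to the dynamical verification that the limit set of $\rho_0(\Gamma)$ in $\di\H^{p,q}$ is the transverse topological sphere~$\Lambda_0$, which is automatic in the Fuchsian case.

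The main obstacle will be ensuring the thickening is simultaneously $\rho_0(\Gamma)$-equivariant, properly convex in $\P(\R^{p+q+1})$, and has ideal boundary exactly the segment-free sphere~$\Lambda_0$ (in particular, that no extra boundary segments appear in the orthogonal~$W$-directions). Once $\rho_0$ is thus identified as an $\H^{p,q}$-convex cocompact representation, Theorem~\ref{thm:main} immediately implies that the entire connected component of $\rho_0$ in $\Hom(\Gamma,\PO(p,q+1))$ consists of such representations, which is the content of the corollary.
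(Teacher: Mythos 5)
Your proposal is correct and follows essentially the same route as the paper, which deduces the corollary from Theorem~\ref{thm:main} after noting that $\rho_0$ is $\H^{p,q}$-convex cocompact with $\mathscr{C}$ taken to be a suitable $\rho_0(\Gamma)$-invariant neighbourhood of the totally geodesic copy of $\H^p$ inside $\H^{p,q}$. You simply supply more of the routine verification (Gromov hyperbolicity and $\vcd(\Gamma)=p$, the orthogonal splitting $\R^{p+q+1}=V\oplus W$, and the properties of the thickening) that the paper leaves as a remark.
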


Indeed, one readily checks that in this setting $\rho_0 : \pi_1(N)\to\PO(p,q+1)$ is $\H^{p,q}$-convex cocompact (one can take for $\mathscr{C}$ a neighborhood of a copy of $\H^p$ inside $\H^{p,q}$).

The $\H^{p,q}$-convex cocompact representations in Corollary~\ref{cor:deform-Fuchsian-Hpq-cc} could be called \emph{$\H^{p,q}$-quasi-Fuchsian}, by analogy with the theory of Kleinian groups.

\begin{remark}
Theorem~\ref{thm:main} was previously proved by Barbot \cite{bar15} in the Lorentzian (anti-de Sitter) case where $q=1$.
See also the work of Mess \cite{mes90} for $p=q+1=2$.
Theorem~\ref{thm:main}  was also known for $p=2$ (and $q$ arbitrary), see \cite[\S\,7.2]{dgk18}, \cite[Cor.\,11.15]{dgk-proj-cc}, and \cite{ctt19}: in that case, $\Gamma$ is a closed surface group and the connected component of $\rho_0$ in $\Hom(\Gamma,\PO(2,q+1))$ consists of \emph{maximal representations} in the sense of \cite{biw10,biw}.
\end{remark}

Theorem~\ref{thm:main} is classically true also for $q=0$.
Indeed, assume for simplicity that $\Gamma$ is torsion-free.
For $\vcd(\Gamma)=p\geq 2$, a representation from $\Gamma$ to $\PO(p,1)$ is ($\H^{p,0}$-)convex cocompact if and only if it is injective and discrete.
For $p=2$, we have $\Gamma=\pi_1(S)$ for some closed hyperbolic surface~$S$ and the injective and discrete representations of $\Gamma$ into $\PO(p,1)$ make up one (or two, reversing orientation) connected component(s) of representations, whose image in the character variety of $\pi_1(S)$ is the Teichm\"uller space of~$S$.
For $p\geq 3$, there is only one injective and discrete representation of $\Gamma$ into $\PO(p,1)$ modulo conjugation, by Mostow rigidity.

\begin{remark} \label{rem:p=1}
Theorem~\ref{thm:main} is not true for $p=1$.
Indeed, in that case, up to replacing $\Gamma$ by a finite-index subgroup, it is a free group and $\Hom(\Gamma,\PO(p,q+1))$ is connected and contains representations that are not $\H^{p,q}$-convex cocompact (\eg the constant representation).
\end{remark}

\begin{remark}
Theorem~\ref{thm:main} is not true in general for $1\leq\vcd(\Gamma)<p$.
Indeed, if $\vcd(\Gamma)=\nolinebreak 1$, then $\Gamma$ is a free group up to finite index and we conclude as in Remark~\ref{rem:p=1}.
For $2\leq k<\nolinebreak p$, suppose that $\Gamma=\pi_1(N)$ for some closed hyperbolic $k$-manifold $N$ with a closed totally geodesic embedded hypersurface~$\mathcal{H}$ separating $N$ into two connected components $N_1$ and~$N_2$.
Then $\vcd(\Gamma)=k$.
The holonomy representation of~$N$ (with values in $\PO(k,1)$) lifts to a representation into $\OO(k,1)$ which, composed with the natural inclusion $\OO(k,1)\hookrightarrow\PO(p,q+1)$, yields an $\H^{p,q}$-convex cocompact representation $\sigma_0 : \pi_1(N)\to\PO(p,q+1)$.
The group $\sigma_0(\pi_1(\mathcal{H}))$ is contained in a copy of $\OO(k-1,1)$ in $\PO(p,q+1)$, and the centralizer of $\sigma_0(\pi_1(\mathcal{H}))$ in $\PO(p,q+1)$ is isomorphic to $\OO(p-k+1,q)$.
Bending \`a la Johnson--Millson \cite{jm87} along $\mathcal{H}$ using a compact one-parameter subgroup of this $\OO(p-k+1,q)$ yields a continuous family $(\sigma_{\theta})_{\theta\in\R/2\pi\Z}$ of representations from $\pi_1(N)$ to $\PO(p,q+1)$, containing $\sigma_0$, such that $\sigma_{\pi}$ takes values in $\OO(k,1)$ but is not injective and discrete (hence not $\H^{p,q}$-convex cocompact).
Indeed, the preimage of $\mathcal{H}$ in the universal cover $\widetilde{N} \simeq \H^k$ divides $\H^k$ into connected components, each of which projects to either $N_1$ or~$N_2$.
One easily constructs a $(\sigma_0,\sigma_{\pi})$-equivariant map $f : \H^k \to \H^k$ which is isometric in restriction to any of these connected components.
The fact that $f$ is $1$-Lipschitz and \emph{not} an isometry then implies that $\sigma_{\pi}$ cannot be injective and discrete, by \cite[Prop.\,1.13]{gk17}.
(Alternatively, for $k\geq 3$ one can argue using Mostow rigidity.)
\end{remark}

Recall that a Lorentzian manifold is called \emph{anti-de Sitter} (or \emph{AdS} for short) if its sectional curvature is constant negative.
It is called \emph{globally hyperbolic} if it is causal (\ie contains no timelike loop) and if the future of any point intersects the past of any other point in a compact set (possibly empty).
Here is another consequence of Theorem~\ref{thm:main}.

\begin{cor} \label{cor:deform-AdS-GHMC}
For $p\geq 2$ and $q\geq 1$, let $N$ be a closed $p$-manifold, let $\tau_0 : \pi_1(N)\to\PO(p,2)$ be the holonomy of a globally hyperbolic AdS spacetime homeomorphic to $N\times\nolinebreak\R$, and let $\rho_0 : \pi_1(N)\to\PO(p,q+1)$ be the composition of a lift of $\tau_0$ to $\OO(p,2)$ with the natural homomorphism $\OO(p,2)\to\PO(p,q+1)$.
Then the connected component of $\rho_0$ in $\Hom(\pi_1(N),\PO(p,q+\nolinebreak 1))$ consists entirely of $\H^{p,q}$-convex cocompact representations.
\end{cor}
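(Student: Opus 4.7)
The plan is to reduce Corollary~\ref{cor:deform-AdS-GHMC} to Theorem~\ref{thm:main}. To apply that theorem we must verify two things: that $\pi_1(M)$ is Gromov hyperbolic with $\vcd(\pi_1(M))=p$, and that the representation $\rho_0$ itself is $\hcc$-convex cocompact. Once both are in place, Theorem~\ref{thm:main} immediately identifies the connected component of $\rho_0$ with a component of $\hcc$-convex cocompact representations.

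First, by the work of Mess~\cite{mes90} (for $p=2$) and Barbot~\cite{bar15} (for general~$p$), the hypothesis that $\tau_0$ is the holonomy of a globally hyperbolic AdS spacetime homeomorphic to $M\times\R$ implies that $\tau_0(\pi_1(M))$ acts properly discontinuously and cocompactly on a closed properly convex subset $\mathscr{C}_0$ of $\H^{p,1}$, with non-empty interior, whose ideal boundary contains no non-trivial projective segment; in other words, $\tau_0$ is $\H^{p,1}$-convex cocompact in the sense of Definition~\ref{def:Hpq-cc}. By \cite[Th.\,1.7]{dgk18}, $\pi_1(M)$ is therefore Gromov hyperbolic. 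Moreover, the universal cover of the GHMC spacetime $M\times\R$ is a contractible open subset of $\H^{p,1}$; hence $M\times\R$, and therefore~$M$, is aspherical, so that $M$ is a closed aspherical $p$-manifold and $\vcd(\pi_1(M))=p$.

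Next, the natural homomorphism $\OO(p,2)\to\PO(p,q+1)$ arises from an orthogonal decomposition $\R^{p+q+1}=V\oplus W$ in which $\sfb|_V$ has signature $(p,2)$ and $\sfb|_W$ is negative definite of dimension $q-1$. Its image preserves the totally geodesic copy of $\H^{p,1}$ cut out by~$V$, and the action of $\rho_0(\pi_1(M))$ on this copy is identified with that of $\tau_0(\pi_1(M))$. Taking for $\mathscr{C}$ a sufficiently small closed neighborhood of $\mathscr{C}_0$ inside $\H^{p,q}$ (thickening in the $W$-directions, which are $\rho_0(\pi_1(M))$-invariant by orthogonality), one checks, exactly as in the discussion following Corollary~\ref{cor:deform-Fuchsian-Hpq-cc}, that $\mathscr{C}$ is closed and properly convex in $\H^{p,q}$, has non-empty interior, and that $\di\mathscr{C}$ contains no non-trivial projective segment; moreover $\rho_0(\pi_1(M))$ acts properly discontinuously and cocompactly on~$\mathscr{C}$. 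Thus $\rho_0$ is $\hcc$-convex cocompact, and Theorem~\ref{thm:main} applied to $\Gamma=\pi_1(M)$ concludes the proof.

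The only genuinely delicate point is verifying that the thickened set $\mathscr{C}$ is properly convex with no projective segment in $\di\mathscr{C}$; this rests on the corresponding statement for $\mathscr{C}_0\subset\H^{p,1}$, which is the content of the Mess--Barbot theory of AdS quasi-Fuchsian representations (and is essentially the same verification needed in Corollary~\ref{cor:deform-Fuchsian-Hpq-cc}). Everything else is bookkeeping on top of Theorem~\ref{thm:main}.
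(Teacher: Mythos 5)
Your reduction to Theorem~\ref{thm:main} is the right strategy, and the paper proceeds that way too; you also correctly deduce Gromov hyperbolicity and $\vcd(\pi_1(M))=p$ from the asphericality of the GHMC spacetime. The divergence is in how you establish that $\rho_0$ itself is $\H^{p,q}$-convex cocompact. The paper deduces this in one line from the functoriality results of Danciger--Gu\'eritaud--Kassel \cite[Th.\,1.16~\&~1.24]{dgk-proj-cc}: since $\tau_0$ is $\H^{p,1}$-convex cocompact, hence $P_1$-Anosov into $\PO(p,2)$ with negative proximal limit set $\Lambda_0\subset\di\H^{p,1}$, the compatible inclusion $\PO(p,2)\hookrightarrow\PO(p,q+1)$ preserves the $P_1$-Anosov property, and the limit set is unchanged and still negative inside $\di\H^{p,q}$, whence $\rho_0$ is $\H^{p,q}$-convex cocompact by Fact~\ref{fact:Hpq-cc-Ano}.

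Your alternative is to thicken the convex set $\mathscr{C}_0\subset\H^{p,1}$ directly in $\H^{p,q}$. I would flag that this step is substantially more delicate than you indicate, and not ``exactly as'' the remark after Corollary~\ref{cor:deform-Fuchsian-Hpq-cc}. In that remark the set being thickened is a totally geodesic spacelike copy of~$\H^p$ that is globally preserved, which is much more rigid geometrically. Here $\mathscr{C}_0$ is an a priori complicated noncompact closed convex set in a codimension-$(q-1)$ totally geodesic copy of~$\H^{p,1}$. A ``small closed neighborhood'' of a noncompact convex set is not in general closed, not in general convex in~$\P(\R^{p+q+1})$, and need not be properly convex; and ``thickening in the $W$-directions'' is not a canonical operation in $\H^{p,q}$ since the ambient space is not a metric product along $\mathscr{C}_0$. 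To make your construction rigorous one would essentially have to reproduce the machinery of Proposition~\ref{prop:M-in-prop-conv-Omega}: pass to a compact fundamental domain for $\Gamma$ acting on $\mathscr{C}_0$ (cocompactness is available since $\tau_0$ is $\H^{p,1}$-convex cocompact), take a compact neighborhood inside $\Om(\Lambda_0)\subset\H^{p,q}$, orbit it, take the convex hull, and then separately verify proper convexity and absence of segments in $\di\mathscr{C}$ (the latter by identifying $\di\mathscr{C}$ with $\Lambda_0$). This is workable but it is real content, not bookkeeping, and it is genuinely cleaner to invoke the Anosov characterization (Fact~\ref{fact:Hpq-cc-Ano}) or the cited DGK theorems directly, as the paper does.
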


Indeed, $\tau_0 : \pi_1(N)\to\PO(p,2)$ is $\H^{p,1}$-convex cocompact by \cite[Th.\,1.4]{bar15} and \cite[Th.\,1.1]{bm12}, hence $\rho_0$ is $\H^{p,q}$-convex cocompact by \cite[Th.\,1.16 \& 1.24]{dgk-proj-cc}.

The manifolds $N$ in Corollary~\ref{cor:deform-AdS-GHMC} include all closed $p$-manifolds admitting a real hyperbolic structure, but also exotic examples where $\pi_1(N)$ is not isomorphic (nor even quasi-isometric) to a lattice of $\PO(p,1)$: such examples were constructed by Lee--Marquis \cite{lm19} for $4\leq p\leq 8$ (using Coxeter groups) and by Monclair--Schlenker--Tholozan \cite{mst} for any $p\geq 4$ (using Gromov--Thurston manifolds).
Some of these examples from \cite{lm19,mst} admit non-trivial continuous deformations (even with Zariski-dense image: see Appendix~\ref{appendix}) in $G=\PO(p,q+1)$ (or $\mathrm{PSO}(p,q+1)$),  for $q>1$, and so we obtain in this way new \emph{higher higher Teichm\"uller spaces} for Gromov hyperbolic groups that are not quasi-isometric to lattices of $\PO(p,1)$.

We note that there also exist Gromov hyperbolic groups $\Gamma$ with $\vcd(\Gamma)=p$ that admit $\H^{p,q}$-convex cocompact representations for some $q\geq 2$, but no $\H^{p,q}$-convex cocompact representations for $q=0$ or~$1$: an example of a Coxeter group satisfying this property was recently constructed by Lee--Marquis \cite{lm23} for $p=4$.
This example is rigid, but we expect the existence of other non-rigid examples, which would provide additional concrete examples of higher higher Teichm\"uller spaces obtained from Theorem~\ref{thm:main}.

\subsection{Link with Anosov representations} \label{subsec:intro-Ano}

Anosov representations are representations of infinite Gromov hyperbolic groups $\Gamma$ to semi-simple Lie groups with finite kernel and discrete image, defined by strong dynamical properties.
They were introduced by Labourie \cite{lab06} and further studied by Guichard--Wienhard \cite{gw12} and many other authors.
They play a major role in recent developments on discrete subgroups of Lie groups.
All known examples of higher-rank Teichm\"uller spaces (associated to surfaces) consist entirely of Anosov representations of surface groups \cite{lab06,bilw05,glw}.

By definition, a representation $\rho : \Gamma\to\PO(p,q+1)$ is \emph{$P_1$-Anosov} if there exists a continuous, $\rho$-equivariant boundary map $\xi : \di\Gamma\to\di\H^{p,q}$ which
\begin{enumerate}[label=(\roman*),ref=\roman*]
  \item\label{item:def-Ano-transv} is injective and even \emph{transverse}: $\xi(\eta)\notin\xi(\eta')^{\perp}$ for all $\eta\neq\eta'$ in $\di\Gamma$,
  \item\label{item:flow} has an associated flow with some uniform contraction/expansion properties described in \cite{lab06,gw12}.
\end{enumerate}
We do not state condition~\eqref{item:flow} precisely, but refer instead to Definition~\ref{def:P1-Ano} below for an alternative definition using a simple condition on eigenvalues, taken from \cite{ggkw17}.
A consequence of \eqref{item:flow} is that $\xi$ is \emph{dynamics-preserving}: for any infinite-order element $\gamma\in\Gamma$, the element $\rho(\gamma)\in\PO(p,q+1)$ admits a unique attracting fixed point in $\di\H^{p,q}$, and $\xi$ sends the attracting fixed point of $\gamma$ in $\di\Gamma$ to this attracting fixed point of $\rho(\gamma)$ in $\di\H^{p,q}$.
In particular, by a density argument, the continuous map $\xi$ is unique, and the image $\xi(\di\Gamma)$ is the \emph{proximal limit set} $\Lambda_{\rho(\Gamma)}$ of $\rho(\Gamma)$ in $\di\H^{p,q}$, \ie the closure of the set of attracting fixed points of elements $\rho(\gamma)$ for $\gamma\in\Gamma$.
By \cite[Prop.\,4.10]{gw12}, if $\rho(\Gamma)$ is irreducible (in the sense that there is no non-trivial $\rho(\Gamma)$-invariant projective subspace of $\P(\R^{p,q})$), then condition~\eqref{item:flow} is automatically satisfied as soon as \eqref{item:def-Ano-transv} is.

The dynamical properties of Anosov representations are very similar to those satisfied by convex cocompact representations of Gromov hyperbolic groups into simple Lie groups of real rank one, see \eg \cite{lab06,gw12,klp-survey,ggkw17,bps19}.
The analogy is in fact also geometric, in particular for representations into $\PO(p,q+1)$, as we now explain.

For this, note that the transversality condition in \eqref{item:def-Ano-transv} means that the proximal limit set $\Lambda_{\rho(\Gamma)}$ of any $P_1$-Anosov representation $\rho : \Gamma\to\PO(p,q+1)$ lifts to a subset of the $\sfb$-isotropic vectors of~$\R^{p,q+1}$ where the symmetric bilinear form $\sfb$ is non-zero on all pairs of distinct points; we say that $\Lambda_{\rho(\Gamma)}$ is \emph{negative} if it lifts to a subset where $\sfb$ is negative on all pairs of distinct points.
With this terminology, Danciger--Gu\'eritaud--Kassel \cite{dgk18} proved that, given a Gromov hyperbolic group $\Gamma$, the $\H^{p,q}$-convex cocompact representations of $\Gamma$ to $\PO(p,q+1)$ are exactly the $P_1$-Anosov representations whose proximal limit set is negative (see Fact~\ref{fact:Hpq-cc-Ano} below); moreover, if the Gromov boundary of~$\Gamma$ is connected, then any $P_1$-Anosov representation of $\Gamma$ to $\PO(p,q+1)$ is either $\H^{p,q}$-convex cocompact or $\H^{q+1,p-1}$-convex cocompact, where we see $\H^{q+1,p-1}$ as the complement of the closure of $\H^{p,q}$ in $\P(\R^{p+q+1})$ after switching the sign of the quadratic form.
Combining this characterization of $\H^{p,q}$-convex cocompact representations with Theorem~\ref{thm:main}, we obtain the following.

\begin{cor} \label{cor:Anosov}
Let $p\geq 2$ and $q\geq 1$ be integers, and let $\Gamma$ be a Gromov hyperbolic group with $\vcd(\G)=\nolinebreak p$.
Then
\begin{itemize}
  \item the set of $P_1$-Anosov representations of $\Gamma$ whose proximal limit set is negative is a union of connected components in $\Hom(\G,\PO(p,q+1))$;
  \item if $p=q+1$, then the set of all $P_1$-Anosov representations of $\Gamma$ is a union of connected components in $\Hom(\G,\PO(p,q+1))$.
\end{itemize}
\end{cor}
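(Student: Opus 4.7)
The plan is to derive both statements from Theorem~\ref{thm:main} combined with the characterization recalled in Fact~\ref{fact:Hpq-cc-Ano}, which identifies the $\H^{p,q}$-convex cocompact representations of~$\G$ into $\PO(p,q+1)$ with the $P_1$-Anosov ones whose proximal limit set is negative.

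For the first bullet this is immediate: by Fact~\ref{fact:Hpq-cc-Ano} the set of $P_1$-Anosov representations of~$\G$ with negative proximal limit set equals the set of $\H^{p,q}$-convex cocompact representations, which is a union of connected components of $\Hom(\G,\PO(p,q+1))$ by Theorem~\ref{thm:main}.

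For the second bullet, suppose $p=q+1$. Then $-\sfb$ is a non-degenerate symmetric form of signature $(q+1,p)=(p,q+1)$ with $\PO(-\sfb)=\PO(\sfb)=\PO(p,q+1)$ as subgroups of $\PGL(\R^{p+q+1})$, while the two open subsets $\H^{p,q}=\{[v]:\sfb(v,v)<0\}$ and $\H^{q+1,p-1}=\{[v]:-\sfb(v,v)<0\}$ of $\P(\R^{p+q+1})\setminus\di\H^{p,q}$ play symmetric roles. Under our hypotheses $p\geq 2$ and $q\geq 1$, Theorem~\ref{thm:main} applies with $(p,q)$ replaced by $(q+1,p-1)$, provided $\vcd(\G)=q+1$, which holds since $p=q+1$. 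Applying it with respect to $-\sfb$ therefore yields that the set of $\H^{q+1,p-1}$-convex cocompact representations is also a union of connected components of $\Hom(\G,\PO(p,q+1))$. Invoking the dichotomy of \cite{dgk18} recalled above---when $\di\G$ is connected, every $P_1$-Anosov representation of $\G$ into $\PO(p,q+1)$ is $\H^{p,q}$- or $\H^{q+1,p-1}$-convex cocompact---the full $P_1$-Anosov locus is then a union of two unions of connected components, hence a union of connected components itself.

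The step I expect to require the most care is justifying the connectedness hypothesis on $\di\G$ in the cited dichotomy, given only $\vcd(\G)=p$. One may assume the $P_1$-Anosov locus nonempty, so that a continuous $\rho$-equivariant boundary map $\xi:\di\G\to\di\H^{p,q}$ exists and realizes $\di\G$ as the proximal limit set $\L_{\rho(\G)}$. If $\di\G$ were disconnected, Stallings's theorem would produce a splitting of~$\G$ over a finite subgroup; I would then restrict $\rho$ to each factor (of virtual cohomological dimension $\leq p$) and try to combine the first bullet applied to the factors with transversality of~$\xi$ and the signature coincidence $p=q+1$ to check that the signs along the different components of $\L_{\rho(\G)}$ patch together globally, so that $\rho$ lies in one of the two desired classes.
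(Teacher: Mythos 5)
Your derivation of both bullets is exactly what the paper intends (and says in the paragraph just before the Corollary): the first bullet is the equality, via Fact~\ref{fact:Hpq-cc-Ano}, between the set of $\H^{p,q}$-convex cocompact representations and the set of $P_1$-Anosov representations with negative proximal limit set, combined with Theorem~\ref{thm:main}; the second bullet additionally applies Theorem~\ref{thm:main} with $-\sfb$ in place of $\sfb$ and invokes the dichotomy of~\cite{dgk18}. So on the main line of argument you match the paper.

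You are also right to flag the dichotomy's hypothesis that $\di\Gamma$ be connected. However, the workaround you sketch (Stallings, then ``patch the signs along components'') does not work, and indeed cannot: if $\di\Gamma$ is disconnected the signs genuinely need not agree. Concretely, take $\Gamma = \pi_1(\Sigma_g)\ast\Z$ ($\Sigma_g$ a closed surface of genus $g\geq 2$), $p=q+1=2$. Combining a maximal representation of $\pi_1(\Sigma_g)$ into $\PO(2,2)$ with a far-away proximal element for the $\Z$-factor by ping-pong produces a $P_1$-Anosov representation $\rho$; but since $\di\Gamma$ is not a circle, $\rho$ is neither $\H^{2,1}$- nor $\H^{2,1}$-convex cocompact (by~\cite[Cor.\,11.10]{dgk-proj-cc}), and degenerating the $\Z$-generator continuously to a non-proximal element leaves the Anosov locus while staying in the same connected component of $\Hom(\Gamma,\PO(2,2))$. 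So for such a $\Gamma$ the set of $P_1$-Anosov representations is not closed and the second bullet fails. The fix is not to patch signs but to add the hypothesis that $\di\Gamma$ is connected (equivalently, since $\vcd(\Gamma)=p\geq 2$, that $\Gamma$ is one-ended); under that hypothesis the dichotomy applies and your argument is complete. This extra hypothesis is automatic in the settings the paper actually targets, \eg when $\Gamma$ is the fundamental group of a closed aspherical $p$-manifold and $\di\Gamma\simeq\bS^{p-1}$, and is what the paper implicitly assumes when it cites the dichotomy; but as stated the second bullet is too strong, and you were right to be uneasy about it.
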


In particular (see Corollaries \ref{cor:deform-Fuchsian-Hpq-cc} and~\ref{cor:deform-AdS-GHMC}):
\begin{itemize}
  \item if $\G$ is a uniform lattice in $\OO(p,1)$ and if $\rho_0 : \Gamma\subset\OO(p,1)\hookrightarrow\PO(p,q+1)$ is the natural inclusion, then the connected component of $\rho_0$ in $\Hom(\Gamma,\PO(p,q+\nolinebreak 1))$ consists entirely of $P_1$-Anosov representations;
  \item if $N$ is a closed $p$-manifold, $\tau_0 : \pi_1(N)\to\PO(p,2)$ the holonomy of a globally hyperbolic AdS spacetime homeomorphic to $N\times\R$, and $\rho_0 : \pi_1(N)\to\PO(p,q+1)$ the composition of a lift of $\tau_0$ to $\OO(p,2)$ with the natural homomorphism $\OO(p,2)\to\PO(p,q+1)$, then the connected component of $\rho_0$ in $\Hom(\pi_1(N),\PO(p,q+1))$ consists entirely of $P_1$-Anosov representations.
\end{itemize}

\subsection{Weakly spacelike $p$-graphs with proper cocompact group actions} \label{subsec:intro-suff-Hpq-cc}

In order to prove Theorem~\ref{thm:main}, we introduce the notion of a \emph{weakly spacelike $p$-graph} in~$\H^{p,q}$: this is a closed subset of~$\H^{p,q}$, homeomorphic to~$\R^p$, on which no two distinct points are in timelike position, or equivalently which meets every totally negative $q$-dimensional projective subspace of $\P(\R^{p,q+1})$ in a unique point (see Definition~\ref{def:weakly-sp-gr}, Proposition~\ref{prop:weakly-sp-gr-is-a-graph}, and Remark~\ref{r.contractible}).
The class of weakly spacelike $p$-graphs includes in particular all $p$-dimensional connected complete \emph{spacelike submanifolds} of~$\H^{p,q}$ (\ie immersed $C^1$ submanifolds $M$ such that the restriction of the pseudo-Riemannian metric of $\H^{p,q}$ to $TM$ is Riemannian), but this class is larger since we allow $M$ to not be $C^1$, and the restriction of the metric to only be positive semi-definite.
Given a weakly spacelike $p$-graph $M\subset\H^{p,q}$, we can consider its boundary at infinity $\L = \di M := \ov{M} \cap \di\H^{p,q}$ and associate to it  a convex open subset $\Om(\L)$ of~$\H^{p,q}$: for $p\geq 2$ this is the set of points of~$\H^{p,q}$ that see every point of~$\L$ in a spacelike direction (Notation~\ref{not:Omega-Lambda} and Lemma~\ref{lem:Omega-Lambda-convex}).
In the Lorentzian case where $q=1$, the set $\Om(\L)$ is called the \emph{invisible domain} of~$\L$.

Our proof of Theorem~\ref{thm:main} is based on a study of discrete subgroups of $\PO(p,q+1)$ acting properly discontinuously and cocompactly on weakly spacelike $p$-graphs in~$\H^{p,q}$.
In particular, we characterize $\H^{p,q}$-convex cocompactness as follows.

\begin{thm} \label{thm:geom-action-weakly-sp-gr-basic}
For $p,q\geq 1$, let $\Gamma$ be a discrete subgroup of $\PO(p,q+1)$ acting properly discontinuously and cocompactly on a weakly spacelike $p$-graph $M$ in $\H^{p,q}$ with boundary at infinity $\L := \di M \subset \di\H^{p,q}$ such that $M \subset \Om(\L)$.
Then the following are equivalent:
\begin{enumerate}
  \item\label{item:geom-action-1} $\Gamma$ is Gromov hyperbolic,
  \item\label{item:geom-action-2} any two points of $\L$ are transverse (\ie non-orthogonal for~$\sfb$),
  \item\label{item:geom-action-3} $\Gamma$ is $\hcc$-convex cocompact.
\end{enumerate}
\end{thm}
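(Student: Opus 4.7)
\emph{Plan.} I would organize the proof as a cycle that leverages as much as possible the existing machinery of \cite{dgk18,dgk-proj-cc}: specifically, the characterization of $\H^{p,q}$-convex cocompactness as $P_1$-Anosov plus negativity of the proximal limit set (Fact~\ref{fact:Hpq-cc-Ano}). Concretely, I would prove $(3) \Rightarrow (1)$ and $(3) \Rightarrow (2)$ by direct citation, and then close the cycle with the two substantive implications $(2) \Rightarrow (3)$ and $(1) \Rightarrow (2)$.

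For $(3) \Rightarrow (1)$, this is \cite[Th.\,1.7]{dgk18} (or \cite[Th.\,1.24]{dgk-proj-cc}). For $(3) \Rightarrow (2)$, an $\H^{p,q}$-convex cocompact representation is $P_1$-Anosov with negative proximal limit set; since $\Gamma$ acts cocompactly on $M$ with $\di M = \L$, the orbital limit set in $\di\H^{p,q}$ is exactly~$\L$, and this equals the proximal limit set, which is negative — in particular transverse.

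For $(2) \Rightarrow (3)$, the key is to upgrade the weakly spacelike $p$-graph $M$ (on which $\Gamma$ acts properly and cocompactly) to a closed properly convex subset $\mathscr{C}\subset\H^{p,q}$ with non-empty interior and $\partial\mathscr{C}\cap\di\H^{p,q}\subseteq\L$, carrying a properly discontinuous cocompact $\Gamma$-action. Transversality of $\L$ guarantees (via Lemma~\ref{lem:Omega-Lambda-convex}) that $\Om(\L)$ is a properly convex open set containing $M$, and that the convex hull of $\L$ in $\P(\R^{p+q+1})$ is properly convex and disjoint from $\L^\perp$ off of~$\L$. A natural candidate is the intersection of this convex hull with $\Om(\L)$, possibly uniformly thickened in timelike-orthogonal directions. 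Proper discontinuity on~$\mathscr{C}$ is automatic from proper discontinuity on $M$ plus the ``compact transverse fibre'' structure of $\Om(\L)$ over~$M$; cocompactness would then follow from cocompactness on~$M$ by showing that the closest-point projection (or an analogue) from~$\mathscr{C}$ to~$M$ has uniformly bounded fibres. Finally, the boundary $\partial\mathscr{C}\cap\di\H^{p,q}=\L$ contains no projective segment, since a segment in $\L$ would force two of its endpoints to be orthogonal, contradicting~(2).

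For $(1) \Rightarrow (2)$, I would argue by contradiction: if distinct points $x,y\in\L$ satisfy $\sfb(x,y)=0$, the whole projective segment $[x,y]$ lies in $\di\H^{p,q}$ and is lightlike. Since $M$ is a weakly spacelike $p$-graph with $\di M=\L$, I would find sequences in $M$ tending to $x$ and~$y$ and, using the cocompact $\Gamma$-action to translate them back into a fixed compact fundamental domain, extract pairs of group elements whose dynamics near $x,y$ witness a rank-two ``flat'' in the asymptotic geometry of $M$, incompatible with $\Gamma$ being Gromov hyperbolic (a geodesic bigon in the Gromov boundary of a hyperbolic group is a single point). The main obstacle I expect is precisely this step: carefully exploiting the weakly spacelike condition to convert the null segment in $\L$ into an obstruction to hyperbolicity. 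The secondary obstacle is the cocompactness argument in $(2)\Rightarrow(3)$, where one must control the transverse directions of the thickening uniformly over~$\Gamma$.
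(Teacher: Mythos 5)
Your implications $(3)\Rightarrow(1)$ and $(3)\Rightarrow(2)$ are correct and match the paper's citations to \cite{dgk-proj-cc}.
Your $(2)\Rightarrow(3)$ is in the right spirit but understates two technical points that the paper handles separately. First, the convex open set $\Om(\L)$ is \emph{not} properly convex in general when $\L$ does not span $\R^{p,q+1}$ (only $\Om(\L)\cap\P(\spa(\L))$ is), so the closed convex hull $\mathscr{C}(\L)$ lives inside a proper subspace and lacks interior; one must produce a $\Gamma$-invariant properly convex open $\Omega\subset\Om(\L)$ containing $M$ (Proposition~\ref{prop:M-in-prop-conv-Omega}) to be able to invoke \cite[Th.\,1.24]{dgk-proj-cc}. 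Second, showing $\mathscr{C}(\L)$ is closed in $\H^{p,q}$ is not automatic from transversality: the paper formalizes this as condition~\eqref{item:geom-action-5} of Theorem~\ref{thm:geom-action-weakly-sp-gr}, and the only tool deployed for that step under hypothesis~(2) is Lemma~\ref{l.crown-existence} producing a boundary $1$-crown when $\partial_{\H}\mathscr{C}(\L)\cap\partial\Om(\L)\neq\emptyset$.

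The real gap is in $(1)\Rightarrow(2)$. Your sketch --- find sequences in $M$ tending to an orthogonal pair $x,y\in\L$, pull back, and witness a ``rank-two flat'' --- has no mechanism for turning the single lightlike segment $[x,y]$ into an actual quasi-flat in the coarse geometry of~$M$. A pair of non-transverse points does \emph{not} directly give a $2$-flat. The object that produces a flat is a \emph{$j$-crown} with $j\geq 2$ in~$\L$ (a configuration of $2j$ points spanning a $(j,j)$-signature subspace, with the prescribed orthogonality pattern); its convex hull $\calO_{\L}(\sfC)$ is foliated by orbits of a diagonal subgroup $A_{\sfC}\simeq\R^j$, and it is these orbits that fail Gromov hyperbolicity for the Hilbert metric of a suitable properly convex $\Omega$ (Propositions~\ref{prop:A-orbit-not-hyperb} and~\ref{p.boundaries dont intersect for hyperbolic}), after checking the orbit stays within bounded $d_\Omega$-distance of~$M$ (Lemma~\ref{l.tubular-contains-A-orbit}). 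The crucial missing ingredient is \emph{producing} such a crown: this is the boundary-crown-to-larger-crown construction of Proposition~\ref{p.crown-boundary}, which combines the cocompact $\Gamma$-action, the weakly spacelike structure, and a careful limit argument in $\Om(\L)$. The same Proposition~\ref{p.crown-boundary}, applied at maximal cardinality, also guarantees the crown one works with is \emph{not} a boundary crown, so that its convex hull actually sits inside $\Om(\L)$. Without this, there is no flat to talk about. Also note that the final incompatibility with hyperbolicity is detected by Hilbert-metric quadrilaterals in a carefully chosen properly convex $\Omega\supset M$, not by geodesic bigons in the Gromov boundary; the latter does not come into play because $M$ need not be smooth or carry an intrinsic Riemannian metric, which is precisely why the paper routes everything through $(\Omega,d_\Omega)$ and the Milnor--\v{S}varc lemma.
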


As proved in Proposition~\ref{prop:bdynonpossphere}.\eqref{item:bound-non-pos-sph-2} below, if the weakly spacelike $p$-graph $M$ is actually spacelike (\ie any two points of~$M$ are in spacelike position), then the assumption $M \subset \Om(\L)$ in Theorem~\ref{thm:geom-action-weakly-sp-gr-basic} is automatically satisfied.
We thus obtain the following.

\begin{cor} \label{cor:hyp-spacelike-compact->cc}
For $p,q\geq 1$, let $\Gamma$ be a discrete subgroup of $\PO(p,q+1)$ acting properly discontinuously and cocompactly on a spacelike $p$-graph $M$ in~$\H^{p,q}$ (Definition~\ref{def:weakly-sp-gr}).
Then the following are equivalent:
\begin{enumerate}
  \item\label{item:geom-action-1} $\Gamma$ is Gromov hyperbolic,
  \item\label{item:geom-action-2} any two points of $\L := \di M$ are transverse,
  \item\label{item:geom-action-3} $\Gamma$ is $\hcc$-convex cocompact.
\end{enumerate}
\end{cor}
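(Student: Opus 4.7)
The plan is to obtain this corollary as an immediate consequence of Theorem~\ref{thm:geom-action-weakly-sp-gr-basic} combined with Proposition~\ref{prop:bdynonpossphere}.\eqref{item:bound-non-pos-sph-2}, both of which have already been stated (or are about to be proved) in the paper. Since the conclusion of the corollary is literally the conclusion of Theorem~\ref{thm:geom-action-weakly-sp-gr-basic}, the only task is to show that the hypotheses of the theorem are satisfied in the present setting, \ie that a spacelike $p$-graph with a proper cocompact $\Gamma$-action is in particular a weakly spacelike $p$-graph on which $\Gamma$ acts properly and cocompactly and which satisfies $M\subset\Om(\L)$.

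First I would observe that every spacelike $p$-graph is, by definition, also a weakly spacelike $p$-graph: if any two distinct points of $M$ are in spacelike position with respect to the form~$\sfb$, then in particular no two distinct points of $M$ are in timelike position. The proper cocompact action of $\Gamma$ is unchanged under this weakening of the definition, so the first part of the hypothesis of Theorem~\ref{thm:geom-action-weakly-sp-gr-basic} is automatic. It therefore remains to verify the inclusion $M\subset\Om(\L)$, where $\L=\di M$.

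The second step is to quote Proposition~\ref{prop:bdynonpossphere}.\eqref{item:bound-non-pos-sph-2}, which is precisely designed to handle this: as explicitly stated in the paragraph preceding the corollary, whenever the weakly spacelike $p$-graph $M$ is actually spacelike, one has $M\subset\Om(\L)$ automatically. Intuitively, this holds because any point $x\in M$ sees every other point of $M$ in a spacelike direction, and by passing to the limit along sequences in $M$ converging to~$\di M=\L$, it sees every point of $\L$ in a spacelike (or null but non-orthogonal) direction, so $x\in\Om(\L)$ by definition of~$\Om(\L)$. With this in hand, all the hypotheses of Theorem~\ref{thm:geom-action-weakly-sp-gr-basic} are satisfied, and the equivalence of \eqref{item:geom-action-1}, \eqref{item:geom-action-2}, and \eqref{item:geom-action-3} follows directly.

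In this sense, the corollary presents no independent obstacle: the entire content is deferred to Theorem~\ref{thm:geom-action-weakly-sp-gr-basic} (where the nontrivial implications relating Gromov hyperbolicity, transversality of~$\L$, and $\hcc$-convex cocompactness must be established) and to Proposition~\ref{prop:bdynonpossphere}.\eqref{item:bound-non-pos-sph-2} (where the inclusion $M\subset\Om(\L)$ is upgraded from an assumption to a consequence of spacelikeness). The corollary itself is a clean specialization of the more general weakly spacelike result to the more classical setting of genuine spacelike $p$-graphs.
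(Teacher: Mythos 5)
Your proof is correct and follows exactly the paper's route: a spacelike $p$-graph is in particular weakly spacelike, Proposition~\ref{prop:bdynonpossphere}.\eqref{item:bound-non-pos-sph-2} supplies the inclusion $M\subset\Om(\L)$, and then Theorem~\ref{thm:geom-action-weakly-sp-gr-basic} applies verbatim. One small caveat on your parenthetical heuristic: a limit of spacelike directions from $x$ along a sequence in $M$ converging to $\L$ can a priori degenerate to a lightlike (orthogonal) direction, which is precisely why the actual proof of Proposition~\ref{prop:bdynonpossphere}.\eqref{item:bound-non-pos-sph-2} works with the strictly $1$-Lipschitz extension to $\ov{\bB}^p$ rather than a bare limiting argument; since you correctly invoke the proposition as a black box, this does not affect your proof.
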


Spacelike $p$-graphs include in particular all $p$-dimensional connected complete spacelike submanifolds of~$\H^{p,q}$ (Example~\ref{ex:spacelike-compl-submfd}).

Our proof of Theorem~\ref{thm:geom-action-weakly-sp-gr-basic} is based on convex projective geometry and on the study of geometric objects in $\di\H^{p,q}$ called \emph{crowns}, see Section~\ref{subsec:intro-strategy} below.

In Section~\ref{subsec:split-spacetimes}, we construct examples of discrete subgroups $\Gamma$ of $\PO(p,q+1)$ acting properly discontinuously and cocompactly on spacelike or weakly spacelike $p$-graphs~$M$ in $\H^{p,q}$, such that $\Gamma$ is \emph{not} Gromov hyperbolic; we prove the existence of higher higher Teichm\"uller spaces for such groups in Theorem~\ref{thm:main-general} below.
In the same Section~\ref{subsec:split-spacetimes} we also construct examples showing that if we remove the cocompactness assumption in Theorem~\ref{thm:geom-action-weakly-sp-gr-basic} or Corollary~\ref{cor:hyp-spacelike-compact->cc}, then \eqref{item:geom-action-1} does not imply \eqref{item:geom-action-2} and~\eqref{item:geom-action-3} anymore.

By Fact~\ref{fact:vcd} below, if $\vcd(\Gamma) = p$, then in Theorem~\ref{thm:geom-action-weakly-sp-gr-basic} and Corollary~\ref{cor:hyp-spacelike-compact->cc} we can replace ``$\Gamma$ acts properly discontinuously and cocompactly via~$\rho$ on'' by ``$\Gamma$ acts properly discontinuously via~$\rho$ on''.
The following proposition shows that we can then also replace it by ``$\rho$ has finite kernel and discrete image and $\rho(\Gamma)$ preserves''.

\begin{prop} \label{prop:proper-action-M}
For $p,q\geq 1$, let $\Gamma$ be a finitely generated group, let $\rho : \Gamma\to\PO(p,q+1)$ be a representation, and let $M$ be a weakly spacelike $p$-graph in~$\H^{p,q}$ preserved by $\rho(\Gamma)$, with $M \subset \Om(\L)$ where $\L := \di M \subset \di\H^{p,q}$.
Then the following are equivalent:
\begin{enumerate}
  \item\label{item:proper-action-M-1} $\Gamma$ acts properly discontinuously on~$M$ via~$\rho$,
  \item\label{item:proper-action-M-2} $\rho$ has finite kernel and discrete image.
\end{enumerate}
If this holds and if $\vcd(\Gamma) = p$, then $M$ is contained in some $\rho(\Gamma)$-invariant properly convex open subset of~$\H^{p,q}$.
\end{prop}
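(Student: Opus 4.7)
The plan is to prove the two implications of the equivalence separately, and then the supplementary statement on properly convex neighborhoods.

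For (1) $\Rightarrow$ (2), proper discontinuity immediately forces point stabilizers to be finite, so $\ker\rho \subset \Stab_{\rho}(x)$ is finite for any $x \in M$. For discreteness of $\rho(\Gamma)$ in $\PO(p,q+1)$, if $\rho(\gamma_n) \to g$ in $\PO(p,q+1)$ with $\rho(\gamma_n)$ pairwise distinct, then picking any $x \in M$ we have $\rho(\gamma_n)x \to gx$, which lies in $M$ because $M$ is closed in $\H^{p,q}$; the compact set $K := \{x, gx\} \cup \{\rho(\gamma_n)x : n \in \N\}$ is then met by every $\rho(\gamma_n)K$, contradicting proper discontinuity.

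For (2) $\Rightarrow$ (1), set $\Gamma' := \rho(\Gamma)$, discrete in $\PO(p,q+1)$, and argue by contradiction: if $\Gamma'$ fails to act properly on $M$, there exist pairwise distinct $\gamma_n \in \Gamma'$ and $x_n, y_n \in M$ with $x_n \to x$, $y_n \to y$, and $\gamma_n x_n = y_n$. Since $\Gamma'$ is discrete, $\gamma_n$ exits every compact subset of $\PO(p,q+1)$, so after extraction the normalized linear maps $\gamma_n / \|\gamma_n\|$ converge in $\P(\mathrm{End}(\R^{p+q+1}))$ to a rank-deficient operator $T$ whose image lies in some attracting subspace $V^+$ and whose kernel is a repelling subspace $V^-$, both totally isotropic for $\sfb$. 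The orbit of $M$ accumulates on the proximal limit set of $\Gamma'$, which is contained in $\L$ by $\Gamma'$-invariance of $\L$ combined with closedness of $M$ in $\H^{p,q}$; hence $V^+ \subset \L$, and by the duality $V^- \subset (V^+)^{\perp}$ (refined as needed for a generic divergent subsequence) one gets $V^- \subset \bigcup_{\eta \in \L}\eta^{\perp}$. The hypothesis $M \subset \Om(\L)$ is exactly that points of $M$ are $\sfb$-non-orthogonal to points of $\L$, so $x \notin V^-$; but then $\gamma_n x_n \to T(x) \in V^+ \subset \di\H^{p,q}$, contradicting $y \in M \subset \H^{p,q}$. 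The main technical challenge is to set up this attracting/repelling dichotomy rigorously for an \emph{arbitrary} (not necessarily proximal) divergent sequence, which requires a full singular-value/Cartan-projection analysis of $\gamma_n$ and passage to the appropriate partial flag varieties of $\PO(p,q+1)$.

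For the supplementary statement, assume (1)--(2) and $\vcd(\Gamma) = p$; I claim $\Om(\L)$ itself is the required properly convex $\rho(\Gamma)$-invariant open subset of $\H^{p,q}$. It is open, convex, $\rho(\Gamma)$-invariant (Lemma~\ref{lem:Omega-Lambda-convex}) and contains $M$ by hypothesis, so only proper convexity needs to be checked. If it fails, then the closure of $\Om(\L)$ in $\P(\R^{p+q+1})$ contains a projective line, hence a maximal positive-dimensional projective subspace $F$, which is $\rho(\Gamma)$-invariant by maximality. Then $\rho(\Gamma)$ lies in the proper parabolic subgroup $P$ of $\PO(p,q+1)$ stabilizing $F$; a standard bound on the virtual cohomological dimension of discrete subgroups of such a $P$ (coming from the dimension of $P$ modulo its maximal compact, together with the proper action on $M$) then yields $\vcd(\Gamma) < p$, contradicting the hypothesis. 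Hence $\Om(\L)$ is properly convex, completing the proof.
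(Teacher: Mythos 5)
Your proof of $(1)\Rightarrow(2)$ is fine and matches the (implicit) routine verification in the paper.

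Your argument for $(2)\Rightarrow(1)$ is not fully carried out and has genuine gaps, as you yourself acknowledge at the end of that paragraph. In particular, the assertions that the "proximal limit set" of an arbitrary divergent sequence is contained in $\L$, and that the repelling subspace $V^-$ lies in $\bigcup_{\eta\in\L}\eta^{\perp}$, are not justified, and the whole attracting/repelling framework needs a careful treatment of degenerate (non-proximal) sequences that you have not supplied. The paper takes a cleaner route: by Lemma~\ref{lem:Omega-Lambda-convex}.\eqref{item:Om-Lambda-non-empty} and Proposition~\ref{prop:bdynonpossphere}, $\L$ is $\sfb$-non-degenerate; $\sfb|_{\spa(\L)}$ has signature $(p,q'|0)$ by Lemma~\ref{l.kernonpossphere}; one projects $M$ orthogonally into $\P(\spa(\L))$ (Lemma~\ref{lem:restrict-span-L}), where $\Om(\L)\cap\P(\spa(\L))$ \emph{is} properly convex by Lemma~\ref{lem:Omega-Lambda-convex}.\eqref{item:Om-Lambda-prop-convex}, so the Hilbert metric gives proper discontinuity of the $\Gamma$-action there; Proposition~\ref{prop:M-in-prop-conv-Omega}.\eqref{item:M-in-Om-1} then upgrades this to proper discontinuity on all of $\Om(\L)$, hence on $M$. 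This avoids any dynamical analysis.

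Your argument for the supplementary statement is incorrect. The claim that $\Om(\L)$ is properly convex under the given hypotheses is false: if $\L$ does not span $\R^{p,q+1}$, then $\Om(\L)$ is not properly convex (see the example following Remark~\ref{rem:C-Lambda}, \eg when $\rho$ factors through a copy of $\OO(p,1)$ and $M$ is a totally geodesic copy of $\H^p$ — here $\vcd(\Gamma)=p$ is perfectly consistent). The purported contradiction does not go through because preserving the subspace $\L^{\perp}$, which is $\sfb$-non-degenerate, places $\rho(\Gamma)$ inside a \emph{reductive} subgroup of type $\OO(p,q')\times\OO(q+1-q')$, not inside a proper parabolic subgroup, so no bound $\vcd(\Gamma)<p$ follows. (Also, a "maximal positive-dimensional projective subspace" of a non-properly-convex closure need not be unique, so $\rho(\Gamma)$-invariance by maximality is not automatic.) The paper instead observes that (1) together with $\vcd(\Gamma)=p$ forces cocompactness of the $\Gamma$-action on $M$ (Fact~\ref{fact:vcd}), and then applies Proposition~\ref{prop:M-in-prop-conv-Omega}.\eqref{item:M-in-Om-2}, which constructs a $\rho(\Gamma)$-invariant properly convex open subset \emph{inside} $\Om(\L)$ containing~$M$ (it is not $\Om(\L)$ itself).
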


\subsection{A non-degeneracy result}

The second main ingredient of our proof of Theorem~\ref{thm:main} is the following.
We say that a subset $\L$ of $\di\H^{p,q}$ is \emph{non-degenerate} if the restriction of $\sfb$ to $\spa(\L)$ has trivial kernel (Definition~\ref{def:span-degen}).

\begin{prop} \label{prop:non-deg-limit}
For $p\geq 2$ and $q\geq 1$, let $\Gamma$ be a finitely generated group with no infinite nilpotent normal subgroups, such that $\vcd(\Gamma)=\nolinebreak p$.
Let $\rho : \Gamma\to\PO(p,q+1)$ be a limit of injective and discrete representations $\rho_n$ of~$\Gamma$, each preserving a weakly spacelike $p$-graph $M_n$ in $\H^{p,q}$ with $M_n \subset \Om(\L_n)$ where $\L_n := \di M_n$, such that the $\L_n$ converge to some $\L \subset \di\H^{p,q}$.
Suppose that the Zariski closure of $\rho(\Gamma)$ in $\PO(p,q+1)$ is reductive, and let $E$ be a $\rho(\Gamma)$-invariant complementary subspace of $V := \mathrm{Ker}(\sfb|_{\spa(\L)})$ in~$V^{\perp}$.
If $k<p$, suppose that $\rho(\Gamma)$ preserves a weakly spacelike $(p-k)$-graph $M_E$ in $\H^{p,q} \cap \P(E) \simeq \H^{p-k,q-k}$ with $\di M_E = \L_E := \L \cap \P(E)$ and $M_E \subset \Om(\L_E)$.
Then $\rho$ is injective and discrete, $V = \{0\}$, and $\L = \L_E$ is non-degenerate.
\end{prop}

Our proof of Proposition~\ref{prop:non-deg-limit} is based on algebraic and Lie-theoretic arguments using semi-proximal representations of algebraic groups, inspired by \cite{ben05}: see Section~\ref{subsec:intro-strategy} below.

\subsection{Existence of invariant weakly spacelike $p$-graphs}

A \emph{non-degenerate non-positive $(p-1)$-sphere} in $\di\H^{p,q}$ is a closed subset of $\di\H^{p,q}$, homeomorphic to $\bS^{p-1}$, which lifts to a subset of $\R^{p,q+1}$ on which $\sfb$ is non-degenerate and non-positive (see Definitions \ref{def:span-degen} and~\ref{def:non-pos-sphere} and Proposition~\ref{prop:lift-non-pos-sphere}).
It is not difficult to check (see Lemma~\ref{lem:Omega-Lambda-convex} and Proposition~\ref{prop:bdynonpossphere}) that if $M$ is a weakly spacelike $p$-graph in~$\H^{p,q}$ with boundary at infinity $\L := \di M$ such that $M \subset \Om(\L)$, then $\L$ is a non-degenerate non-positive $(p-1)$-sphere in $\di\H^{p,q}$; if $M$ is preserved by some subgroup of $\PO(p,q+1)$, then so is~$\L$.
The last ingredient of our proof of Theorem~\ref{thm:main} is the following converse.

\begin{fact} \label{fact:exist-p-graph}
For $p,q\geq 1$, let $\L$ be a non-degenerate non-positive $(p-1)$-sphere in $\di\H^{p,q}$, preserved by a subgroup $\Gamma$ of $\PO(p,q+1)$.
Then there exists a weakly spacelike $p$-graph $M$ in~$\H^{p,q}$, preserved by~$\Gamma$, such that $\di M = \L$ and $M \subset \Om(\L)$.
\end{fact}

This is due in full generality to Seppi--Smith--Toulisse \cite{sst}, who constructed more specifically a connected complete spacelike $p$-manifold $M$ with $\di M = \L$ which is \emph{maximal}, in the sense that its mean curvature vanishes (see Section~\ref{subsec:max-submfd}); this $p$-manifold is unique, and so it is invariant under any subgroup $\Gamma$ of $\PO(p,q+1)$ preserving~$\L$.
The existence and uniqueness of $\Gamma$-invariant maximal $p$-manifolds was first proved for $q=1$ (Lorentzian case) by Andersson--Barbot--B\'eguin--Zeghib \cite{abbz12} (for discrete $\Gamma$ with $\vcd(\Gamma) = p$) and Bonsante--Schlenker \cite{bs10}, and for $p=2$ by Collier--Tholozan--Toulisse \cite{ctt19} (when $\Gamma$ is a discrete surface group) and Labourie--Toulisse--Wolf \cite{ltw}.
See \cite{lt} for a detailed study of these $p$-manifolds when $p=2$.

\begin{remark}
For discrete~$\Gamma$, Fact~\ref{fact:exist-p-graph} expresses that the quotient $\Gamma\backslash\Om(\L)$ satisfies a weak form of \emph{global hyperbolicity} --- a classical notion in Lorentzian geometry, recently developed in signature $(p,q)$ by Troubat \cite{tro-PhD}.
\end{remark}

Using Theorem~\ref{thm:geom-action-weakly-sp-gr-basic} and Fact~\ref{fact:exist-p-graph}, we obtain the following, which was first proved for $q=1$ by Barbot \cite[Th.\,1.4]{bar15}.

\begin{thm} \label{thm:charact-Hpq-cc-sphere}
For $p,q\geq 1$, let $\Gamma$ be a Gromov hyperbolic discrete subgroup of $\PO(p,q+1)$ with $\vcd(\Gamma) = p$.
Then $\Gamma$ preserves a non-degenerate non-positive $(p-1)$-sphere $\L$ in $\di\H^{p,q}$ if and only if $\Gamma$ is $\H^{p,q}$-convex cocompact.
In this case, $\L$ is actually negative, and equal to the proximal limit set $\Lambda_{\Gamma}$ of~$\Gamma$.
\end{thm}

On the other hand, using Proposition~\ref{prop:non-deg-limit} and Fact~\ref{fact:exist-p-graph}, we obtain the following.

\begin{thm} \label{thm:non-deg-limit-sphere}
For $p\geq 2$ and $q\geq 1$, let $\Gamma$ be a finitely generated group with no infinite nilpotent normal subgroups, such that $\vcd(\Gamma)=\nolinebreak p$.
Let $\rho : \Gamma\to\PO(p,q+1)$ be a representation such that the Zariski closure of $\rho(\Gamma)$ in $\PO(p,q+1)$ is reductive, and which is a limit of representations $\rho_n : \Gamma\to\PO(p,q+1)$.
If each $\rho_n$ is injective and discrete and preserves a non-degenerate non-positive $(p-1)$-sphere in $\di\H^{p,q}$, then the same holds for~$\rho$.
\end{thm}

This implies Theorem~\ref{thm:main}.
Indeed, $\H^{p,q}$-convex cocompactness is an open condition by \cite{dgk18,dgk-proj-cc}, and it is a closed condition by Theorems \ref{thm:charact-Hpq-cc-sphere} and~\ref{thm:non-deg-limit-sphere} together with a short argument allowing to reduce to $\rho(\Gamma)$ having reductive Zariski closure (see Section~\ref{subsec:proof-main-thm}).

\subsection{Higher higher Teichm\"uller spaces for non-hyperbolic groups}

Our proof of Theorem~\ref{thm:main} only uses the existence of weakly spacelike $p$-graphs with no smoothness assumption (Fact~\ref{fact:exist-p-graph}).
However, as mentioned above, by \cite{sst} we know that if a subgroup $\Gamma$ of $\PO(p,q+1)$ preserves a non-degenerate non-positive $(p-1)$-sphere $\L$ in $\di\H^{p,q}$, then $\Gamma$ also preserves a $p$-dimensional connected complete \emph{spacelike submanifold} $M$ of~$\H^{p,q}$ with $\di M = \L$.
It follows from the Ehresmann--Thurston principle that preserving and acting properly discontinuously and cocompactly on such a submanifold is an open condition; we deduce the following.

\begin{thm} \label{thm:main-general}
Let $p\geq 2$ and $q\geq 1$, let $\Gamma$ be a finitely generated group with no infinite nilpotent normal subgroups, such that $\vcd(\Gamma) = p$.
Then the subset of $\Hom(\G,\PO(p,q+\nolinebreak 1))$ consisting of those representations with finite kernel and discrete image preserving a non-degenerate non-positive $(p-1)$-sphere in $\di\H^{p,q}$, is a union of connected components of $\Hom(\G,\PO(p,q+\nolinebreak 1))$.
\end{thm}

We refer to Section~\ref{subsec:split-spacetimes} for examples where $\Gamma$ is a direct product.

By Propositions \ref{prop:proper-action-M} and~\ref{prop:bdynonpossphere}, Fact~\ref{fact:vcd}, and \cite{sst}, we can reformulate Theorem~\ref{thm:main-general} as follows.

\begin{thm} \label{thm:main-general-spacelike-p-mfd}
Let $p\geq 2$ and $q\geq 1$, let $\Gamma$ be a finitely generated group with no infinite nilpotent normal subgroups.
Then the subset of $\Hom(\G,\PO(p,q+1))$ consisting of those representations acting properly discontinuously and cocompactly on some $p$-dimensional connected complete spacelike submanifold of~$\H^{p,q}$, is a union of connected components of $\Hom(\G,\PO(p,q+\nolinebreak 1))$.
\end{thm}

We shall call these representations \emph{spacelike cocompact} (see Definition~\ref{def:spacelike-cocompact}).

\subsection{Outline of the proofs of Theorem~\ref{thm:geom-action-weakly-sp-gr-basic} and Proposition~\ref{prop:non-deg-limit}} \label{subsec:intro-strategy}

\subsubsection{Proof of Theorem~\ref{thm:geom-action-weakly-sp-gr-basic}}

We introduce a generalization, to $\di\H^{p,q}$, of the \emph{crowns} considered by Barbot \cite{bar15} in the Einstein universe $\mathrm{Ein}^p = \di\H^{p,1}$: for $1\leq j\leq\min(p,q+1)$, we define a \emph{$j$-crown} in $\di\H^{p,q}$ to be a collection of $2j$ points of $\di\H^{p,q}$ which span a linear subspace of $\R^{p,q+1}$ which is non-degenerate of signature $(j,j)$ and such that each of the $2j$ points is orthogonal to all but one of the other points (Definition~\ref{def:j-crown}); the crowns considered by Barbot are $2$-crowns.

We prove the following strengthening of Theorem~\ref{thm:geom-action-weakly-sp-gr-basic}.

\begin{thm} \label{thm:geom-action-weakly-sp-gr}
For $p,q\geq 1$, let $\Gamma$ be a discrete subgroup of $\PO(p,q+1)$ acting properly discontinuously and cocompactly on a weakly spacelike $p$-graph $M$ in $\H^{p,q}$ with boundary at infinity $\L := \di M \subset \di\H^{p,q}$ such that $M \subset \Om(\L)$.
(For instance, $M$ could be any spacelike $p$-graph in~$\H^{p,q}$, see Proposition~\ref{prop:bdynonpossphere}.\eqref{item:bound-non-pos-sph-2}.).
Let $\mathscr{C}(\L)$ be the convex hull of $\L$ in $\Om(\L)$.
Then the following are equivalent:
\begin{enumerate}
  \item\label{item:geom-action-1} $\Gamma$ is Gromov hyperbolic,
  \item\label{item:geom-action-2} any two points of $\L$ are transverse,
  \item\label{item:geom-action-3} $\Gamma$ is $\hcc$-convex cocompact,
  \item\label{item:geom-action-4} $\L$ does not contain $2$-crowns,
  \item\label{item:geom-action-5} $\de_{\H}\mathscr{C}(\L) := \de\mathscr{C}(\Lambda) \cap \H^{p,q}$ (see Notation~\ref{not:boundaries}) does not meet $\partial\Om(\L)$.
\end{enumerate}
\end{thm}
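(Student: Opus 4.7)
The plan is to prove the implications
\[
(3) \Rightarrow (1), \quad (3) \Rightarrow (2) \Rightarrow (4) \Rightarrow (5) \Rightarrow (3), \quad (1) \Rightarrow (4),
\]
which together give all five equivalences. Among these, $(3) \Rightarrow (1)$ is the known fact that $\hcc$-convex cocompact groups are Gromov hyperbolic \cite{dgk18,dgk-proj-cc}, and $(3) \Rightarrow (2)$ is a direct consequence of Fact~\ref{fact:Hpq-cc-Ano} (the $P_1$-Anosov characterization), since an $\hcc$-convex cocompact representation has negative --- hence transverse --- proximal limit set equal to $\L$. The implication $(2) \Rightarrow (4)$ is immediate, since every $j$-crown with $j\geq 2$ contains pairs of orthogonal points (each of the $2j\geq 4$ points being orthogonal to $2j-2\geq 2$ of the others).

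I would prove $(4) \Rightarrow (5)$ by contrapositive. Suppose $x \in \partial_{\H}\mathscr{C}(\L) \cap \partial\Om(\L)$. Write $x$ as a minimal convex combination $x = \sum_i t_i \l_i$ of points $\l_i \in \L$ with $t_i > 0$. The condition $x \in \partial\Om(\L)$ furnishes a point $\l_0 \in \L$ with $\sfb(x,\l_0)=0$ for suitable lifts, hence $\sum_i t_i\, \sfb(\l_i,\l_0) = 0$. Using $\mathscr{C}(\L) \subset \Om(\L)$ and a careful signature analysis of the subspace spanned by $\l_0$ and the $\l_i$, one constrains the orthogonality pattern so as to pair the indices into hyperbolic planes and extract a $j$-crown in $\L$; the bound $j\geq 2$ is forced because $x \in \H^{p,q}$ is non-isotropic, so the combination must involve at least two independent pairs. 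For $(5) \Rightarrow (3)$, condition (5) ensures that $\mathscr{C}(\L)$ is a closed properly convex subset of $\H^{p,q}$ (its finite boundary lying inside $\Om(\L)$), and the cocompact $\Gamma$-action transfers from $M$ to $\mathscr{C}(\L)$ because they share the same boundary at infinity $\L$. The remaining no-segment condition of Definition~\ref{def:Hpq-cc} is verified directly from (5): a projective segment inside $\L$ would produce, via convex hull analysis, a point of $\partial_{\H}\mathscr{C}(\L)$ on $\partial\Om(\L)$, contradicting (5).

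The main obstacle is the implication $(1) \Rightarrow (4)$. Assuming $\L$ contains a $j$-crown $(\l_1^\pm,\dots,\l_j^\pm)$ with $j\geq 2$, the crown spans a non-degenerate $(j,j)$-subspace $W = \bigoplus_{i=1}^{j} W_i \subset \R^{p,q+1}$ which splits into hyperbolic planes $W_i = \spa(\l_i^+,\l_i^-)$. The goal is to exploit the cocompact action of $\Gamma$ on the weakly spacelike $p$-graph $M$ to promote this boundary splitting to an explicit non-hyperbolic subgroup of $\Gamma$, such as a $\Z^2$ arising from commuting infinite-order quasi-translations along two distinct hyperbolic planes $W_i,W_{i'}$, contradicting Gromov hyperbolicity. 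For $q=1$ and $j=2$ this was carried out by Barbot \cite{bar15} in the anti-de Sitter setting; the generalization to arbitrary signature $(p,q)$ and all $j\geq 2$, relying on convex projective techniques together with the weakly spacelike graph structure of $M$, constitutes the core new technical content of the argument.
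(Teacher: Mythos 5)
Your implication chart is equivalent to the paper's, and $(3)\Rightarrow(1)$, $(3)\Rightarrow(2)$, $(2)\Rightarrow(4)$ are essentially the same as the paper handles them.

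There is a genuine gap in your argument for $(4)\Rightarrow(5)$. From $x \in \partial_{\H}\mathscr{C}(\L) \cap \partial\Om(\L)$, a convex combination $x = \sum_i t_i\l_i$ together with $\sfb(x,\l_0)=0$ forces $\sfb(\l_i,\l_0)=0$ for all~$i$, and since $\sfb(x,x)<0$ some pair $\l_1,\l_2$ is transverse. That is precisely the paper's Lemma~\ref{l.crown-existence}: it yields a \emph{boundary $1$-crown} in $\L$. But $(4)$ only forbids $j$-crowns for $j\geq 2$, so a $1$-crown is no contradiction. The heart of $(4)\Rightarrow(5)$ is passing from a boundary $1$-crown to a $2$-crown, which is Proposition~\ref{p.crown-boundary}; this is \emph{not} a signature or convexity computation. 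Nothing in the convex combination forces the existence of a second transverse pair orthogonal to the first (the $\l_i$ could all lie inside $\spa(\l_1,\l_2)$). The paper's proof of that step is dynamical: it chooses a specific point $y\in M$ on a timelike plane through the crown, pushes it towards $w$, selects $\g_n\in\G$ bringing it back to a fundamental domain, and takes a limit of $\g_n\cdot(\sfC\cup\{w,y\})$ to produce a $2$-crown. Your sketch of $(4)\Rightarrow(5)$ does not use the $\G$-action at all, so it cannot reach this step, and the phrase ``pair the indices into hyperbolic planes'' has no content without it.

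Your proposed mechanism for $(1)\Rightarrow(4)$ also mischaracterizes the core step. You aim to build a $\Z^2 \leq \G$ from commuting elements, but there is no reason such elements exist: the vertices of a $j$-crown in $\L$ need not be fixed points of elements of $\G$, and even when they are, the group $\G$ need not intersect the diagonal torus $A_{\sfC}$. The paper instead gives a \emph{coarse-geometric} argument: crowns of maximal cardinality are not boundary crowns (Proposition~\ref{p.crown-boundary} again), hence some $A_{\sfC}$-orbit $F$, which fails $\delta$-hyperbolicity for every $\delta$ in the Hilbert metric by Proposition~\ref{prop:A-orbit-not-hyperb}, lies at bounded Hilbert distance from $M$ (Lemma~\ref{l.tubular-contains-A-orbit}, using cocompactness), so $M$ with its path metric from $d_\Om$ (well defined by Proposition~\ref{prop:M-in-prop-conv-Omega}.\eqref{item:M-in-Om-2} and Lemma~\ref{lem:path-metric-exists}) is not Gromov hyperbolic, and Milnor--\v{S}varc kills $\G$. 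Similarly, for $(5)\Rightarrow(3)$ your appeal to ``the action transfers because they share the same boundary at infinity'' is not a proof: the cocompactness of the $\G$-action on $\mathscr{C}(\L)$ is Proposition~\ref{prop:M-in-prop-conv-Omega}.\eqref{item:M-in-Om-3}, after which the paper invokes the projective characterization of $\hcc$-convex cocompactness (Fact~\ref{fact:Hpq-cc-proj}) rather than verifying Definition~\ref{def:Hpq-cc} by hand.
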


Although the statement of Theorem~\ref{thm:geom-action-weakly-sp-gr} only involves $2$-crowns, the proof crucially makes use of $j$-crowns for $j\geq 2$: see below.
One difficulty for proving Theorem~\ref{thm:geom-action-weakly-sp-gr} is that $M$ is not assumed to be smooth.
Two implications particularly require new ideas, namely \eqref{item:geom-action-4}~$\Rightarrow$~\eqref{item:geom-action-5} and \eqref{item:geom-action-1}~$\Rightarrow$~\eqref{item:geom-action-4}.

We prove \eqref{item:geom-action-4}~$\Rightarrow$~\eqref{item:geom-action-5} by contraposition, building $2$-crowns from the existence of a non-empty intersection $\de_{\H}\mathscr{C}(\L) \cap \partial\Om(\L)$.
One difficulty is that for $q>1$ we do not have \emph{cosmological geodesics} which played a key role in the arguments of \cite{bar15} in the Lorentzian case $q=1$.
Instead, we do a more careful geometric construction in~$\H^{p,q}$.
We introduce the notion of a \emph{boundary $j$-crown} in~$\L$, which is by definition a $j$-crown in~$\L$ which is fully contained in the orthogonal of some point of~$\L$.
We first observe (Lemma~\ref{l.crown-existence}) that if $\de_{\H}\mathscr{C}(\L)$ meets $\partial\Om(\L)$, then $\L$ contains a boundary $1$-crown.
The core of the work (Proposition~\ref{p.crown-boundary}) is then to construct a $2$-crown in~$\L$ from this boundary $1$-crown.
We actually construct a $j$-crown in~$\L$ from a boundary $(j-1)$-crown in~$\L$ for any $j\geq 2$, and this is crucial in the proof of \eqref{item:geom-action-1}~$\Rightarrow$~\eqref{item:geom-action-4} below to ensure the existence of crowns in~$\L$ which are \emph{not} boundary crowns.

We prove \eqref{item:geom-action-1}~$\Rightarrow$~\eqref{item:geom-action-4} by contraposition, showing that the existence of $j$-crowns in~$\L$ with $j\geq 2$ implies that $\Gamma$ is not Gromov hyperbolic.

For $q=1$, Lorentzian geometry is heavily used in \cite{bar15} to prove this implication, through the existence of a \emph{foliation} of the invisible domain $\Om(\L)$ of~$\L$ by \emph{Cauchy hypersurfaces} which are level sets of a \emph{cosmological time function} $\tau$ of class~$C^1$; the Gauss map for these hypersurfaces yields a closed embedded submanifold $\Sigma(\tau)$ of the Riemannian symmetric space $G/K$ of $G=\PO(p,2)$, on which $\Gamma$ acts properly discontinuously and cocompactly \cite[Rem.\,4.26]{bar15}.
On the other hand, for any $2$-crown in $\L$ inside the Einstein universe $\mathrm{Ein}^p = \di\H^{p,1}$, Barbot proves \cite[\S\,5.1]{bar15} that the maximal flat of $G/K$ determined by the $2$-crown is a totally geodesic subspace contained in $\Sigma(\tau)$; therefore, $\Sigma(\tau)$ is not Gromov hyperbolic, and so the Milnor--\v{S}varc lemma implies that $\Gamma$ is not Gromov hyperbolic.

The case $q\geq 2$ is more difficult: firstly the above tools from Lorentzian geometry are not available anymore, and secondly $j$-crowns in $\di\H^{p,q}$ for $2\leq j<\min(p,q+1)$ do not determine flats in the Riemannian symmetric space of $G=\PO(p,q+1)$ (only parallel sets).
We thus do not work in this Riemannian symmetric space, but rather in $\H^{p,q}$ itself, using convex projective geometry.
One issue is that $\Om(\L)$ is not necessarily properly convex, but we show (Proposition~\ref{prop:M-in-prop-conv-Omega}) that we can find a $\Gamma$-invariant properly convex open subset $\Omega$ of $\Om(\L)$ containing~$M$; this allows us to use the Hilbert metric $d_{\Omega}$ of~$\Omega$, which is invariant under~$\Gamma$.
The aforementioned Proposition~\ref{p.crown-boundary} ensures the existence of $j$-crowns in~$\L$ which are \emph{not} boundary $j$-crowns in~$\L$, for some $j\geq 2$ (a priori $j$ could be any integer between $2$ and $\min(p,q+1)$ --- there is no control on this).
For such $j$-crowns, we introduce a natural foliation of the convex hull in~$\Omega$ of the crown by $j$-dimensional spacelike submanifolds which are orbits of a $j$-dimensional diagonal group (Section~\ref{subsec:foliate-crown}).
We then observe that the leaves of the foliation, endowed with the path metric induced by $d_{\Omega}$, are not Gromov hyperbolic (Proposition~\ref{prop:A-orbit-not-hyperb}), and that some leaf remains at bounded Hilbert~dist\-ance from~$M$ (Lemma~\ref{l.tubular-contains-A-orbit}), so that $M$ endowed with the path metric induced by $d_{\Omega}$ is not Gromov hyperbolic.
(Note that we only assume $M$ to be a weakly spacelike $p$-graph, not~a~space\-like smooth embedded manifold, hence we cannot use an induced Riemannian metric.)
Since $\Gamma$ acts properly discontinuously and cocompactly on~$M$, the non-hyperbolicity of~$M$ for the path metric of~$d_{\Omega}$, together with the Milnor--\v{S}varc lemma, implies that $\Gamma$ is not Gromov hyperbolic.

\subsubsection{Proof of Proposition~\ref{prop:non-deg-limit}}

Since $\Gamma$ has no infinite nilpotent normal subgroups, it is a standard consequence of the Kazhdan--Margulis--Zassenhaus theorem that $\rho$ has finite kernel and discrete image (see Fact~\ref{fact:fd-closed}).

We note in Section~\ref{subsec:Lambda-nondegen} that $\rho$ lifts to a representation with values in $\OO(p,q+1)$, which we still denote by~$\rho$.
We split this semi-simple representation $\rho$ into the direct sum of a representation $\rho_d$ (``degenerate part'') with values in~$\GL(k,\R)$, defining the action of $\rho(\Gamma)$ on the kernel $V \simeq \R^k$ of $\sfb|_{\spa(\L)}$, and a representation $\rho_{nd}$ (``non-degenerate part'') with values in $\OO(p-k,q+1-k)$, corresponding to the restriction of $\rho$ to a $\rho(\Gamma)$-invariant complement $E$ of $V$ inside~$V^{\perp}$.
If $k<p$, our assumption is that the non-degenerate part $\rho_{nd}$ (which we denote by $\nu$ in Section~\ref{sec:non-deg-limit}) preserves a weakly spacelike $(p-k)$-graph $M_E$ in $\H^{p,q} \cap \P(E) \simeq \H^{p-k,q-k}$, such that $\di M_E = \L_E := \L \cap \P(E)$ and $M_E \subset \Om(\L_E)$.
Up to projecting (Lemma~\ref{lem:restrict-span-L}), we may assume that $M_E \subset \P(\spa(\L))$.

Our goal is then to prove that the degenerate part $\rho_d$ (which we denote by $\kappa\oplus\kappa^*$ in Section~\ref{sec:non-deg-limit}) is trivial, \ie $V$ is trivial.
This would be automatic if we knew that $\rho(\Gamma)$ is not contained in a proper parabolic subgroup of $\PO(p,q+1)$, but we do not know this a priori.
When $q=1$, it is easy to deduce the triviality of~$V$ from the fact that for $k=q=1$, the Riemannian symmetric space of $\GL(k,\R) \times \OO(p-k,q+1-k)$ (on which $\Gamma$ acts by isometries via $\rho = (\rho_d,\rho_{nd})$) has dimension $p = \vcd(\Gamma)$: see Lemma~\ref{lem:Lambda-non-deg-q=1}.
However, when $q>1$ the Riemannian symmetric space of $\GL(k,\R) \times \OO(p-k,q+1-k)$ has dimension $>p$, and so we cannot conclude so easily.
Instead, we use a careful analysis of proximal and semi-proximal representations as in \cite{ben05} (Sections \ref{subsec:vcd-bound}--\ref{subsec:Lambda-nondegen}) to show that if $V$ is non-trivial, then we may modify the degenerate part $\rho_d$ of~$\rho$ into a sum $\tau_1\oplus\ldots\oplus\tau_{\ell}$ of irreducible representations $\tau_i : \Gamma\to\GL(V_i)$ of~$\Gamma$, where each $V_i$ is a finite-dimensional real vector space and $\dim(V_1)+\ldots+\dim(V_{\ell})\leq k$, so that $\Gamma$ acts properly discontinuously via
$$\tau_1\oplus\dots\oplus\tau_{\ell} \oplus \rho_{nd} : \Gamma \longrightarrow \GL(V_1)\times\dots\times\GL(V_{\ell}) \times \OO(p-k,q+1-k)$$
on $\Omega_1 \times \dots \times \Omega_{\ell} \times M_{nd}$, where each $\Omega_i$ is a non-empty properly convex open subset of $\P(V_i)$.
In particular, $\vcd(\Gamma) \leq \dim(\Omega_1) + \dots + \dim(\Omega_{\ell}) + \dim(M_{nd}) \leq (k-1) + (p-k) < p$, contradicting the assumption that $\vcd(\Gamma) = p$.

\subsection{Organization of the paper}

In Section~\ref{sec:reminders} we recall some well-known facts on convex projective geometry, on the pseudo-Riemannian hyperbolic space~$\H^{p,q}$, on limit cones, and on $\H^{p,q}$-convex cocompact representations.
In Section~\ref{sec:nonpos-spheres-weakly-sp-gr} we discuss some central notions for the paper, namely weakly spacelike $p$-graphs in~$\H^{p,q}$ and their boundaries, which are non-positive $(p-1)$-spheres in $\di\H^{p,q}$.
In Section~\ref{sec:two-prelim-results} we establish two preliminary results, one that allows us to pass from a convex to a properly convex open subset of $\H^{p,q}$ in order to use Hilbert metrics (Proposition~\ref{prop:M-in-prop-conv-Omega}), and that also allows us to prove Proposition~\ref{prop:proper-action-M}, and another one about non-degenerate parts of weakly spacelike $p$-graphs and their boundaries (Proposition~\ref{prop:ME-weakly-sp-gr}).
In Section~\ref{sec:suff-cond-Hpq-cc} we prove Theorem~\ref{thm:geom-action-weakly-sp-gr} (hence Theorem~\ref{thm:geom-action-weakly-sp-gr-basic}) and discuss examples of weakly spacelike $p$-graphs with proper cocompact actions of \emph{non-hyperbolic} groups.
In Section~\ref{sec:non-deg-limit} we prove Proposition~\ref{prop:non-deg-limit}.
In Section~\ref{sec:proof-main-thm} we deduce Theorems \ref{thm:charact-Hpq-cc-sphere} and~\ref{thm:non-deg-limit-sphere}, and complete the proofs of Theorems \ref{thm:main} and \ref{thm:main-general}--\ref{thm:main-general-spacelike-p-mfd}.
In Appendix~\ref{appendix}, we explain how to obtain Zariski-dense $\H^{p,q}$-convex cocompact representations for various Gromov hyperbolic groups whose boundary is a $(p-1)$-sphere, including hyperbolic lattices but also more exotic examples, using \cite{lm19,mst}.

\subsection*{Acknowledgements}

We warmly thank Thierry Barbot, whose work \cite{bar15} has been a great source of inspiration to us, for stimulating discussions with the second-named author several years ago.
We are grateful to J\'er\'emy Toulisse for enlightening discussions on maximal spacelike $p$-manifolds, and to Andrea Seppi, Graham Smith, and J\'er\'emy Toulisse for kindly sharing with us their results from \cite{sst}.
We thank Yves Benoist for useful explanations about his paper \cite{ben05}, Jean-Marc Schlenker for interesting discussions about his paper \cite{mst} with Monclair and Tholozan, and Gye-Seon Lee for kindly pointing us to the examples in Table~\ref{table:Coxeter-diag}, taken from his paper \cite{lm19} with Marquis.
We thank Alejandro Maris Natera for useful questions on Appendix~\ref{subsec:appendix-Gromov-Thurston}.
We are grateful to the referees for their valuable comments and suggestions.
Finally, we thank the IHES in Bures-sur-Yvette and the MPIM in Bonn for providing us with excellent working conditions for this project.

\section{Reminders} \label{sec:reminders}

In the whole paper, we denote the signature of a symmetric bilinear form on a finite-dimensional real vector space by $(p,m|n)$, where $p$ (\resp $m$) is the dimension of a maximal positive definite (\resp negative definitive) linear subspace, and $n$ is the dimension of the kernel.
Thus the symmetric bilinear form is non-degenerate if and only if $n=0$.

\subsection{Properly convex domains in projective space} \label{subsec:prop-conv-proj}

Let $V$ be a finite-dimensional real vector space and $\Omega$ a properly convex open subset of $\P(V)$, with boundary $\partial\Omega = \ov{\Om}\smallsetminus\Om$.
Recall the \emph{Hilbert metric} $d_{\Omega}$ on~$\Omega$:
\begin{equation} \label{eqn:d-Omega}
d_{\Omega}(y,z) := \frac{1}{2} \log \, [a,y,z,b]
\end{equation}
for all distinct $y,z\in\Omega$, where $a,b$ are the intersection points of $\partial\Omega$ with the projective line through $y$ and~$z$, with $a,y,z,b$ in this order.
Here $[\cdot,\cdot,\cdot,\cdot]$ denotes the cross-ratio on $\P^1(\R)$, normalized so that $[0,1,t,\infty]=t$ for all~$t$. 
The metric space $(\Omega,d_{\Omega})$ is complete and proper (\ie closed balls are compact), and the automorphism group
$$\mathrm{Aut}(\Omega) := \{g\in\mathrm{PGL}(V) ~|~ g\cdot\Omega=\Omega\}$$
acts on~$\Omega$ by isometries for~$d_{\Omega}$.
As a consequence, any discrete subgroup of $\mathrm{Aut}(\Omega)$ acts properly discontinuously on~$\Omega$.

Straight lines (contained in projective lines) are always geodesics for the Hilbert metric $d_{\Omega}$.
When $\Omega$ is not strictly convex, there can be other geodesics as well.

\subsection{The pseudo-Riemannian hyperbolic space $\H^{p,q}$ and its boundary at infinity} \label{subsec:Hpq}

In the whole paper, we fix integers $p,q\geq 1$ and consider a symmetric bilinear form $\sfb$ of signature $(p,q+1|0)$ on $\R^{p+q+1}$.
We denote by $\R^{p,q+1}$ the space $\R^{p+q+1}$ endowed with~$\sfb$.
As in the introduction, we consider the open subset
$$\H^{p,q} = \P(\{v\in\R^{p+q+1} ~|~ \sfb(v,v)<0\})$$
of $\P(\R^{p+q+1})$ and its double cover 
\begin{equation} \label{eqn:Hpq-double}
\hat{\H}^{p,q} = \{v\in\R^{p+q+1} ~|~ \sfb(v,v)=-1\}.
\end{equation}
The non-trivial automorphism of the covering $\hat{\H}^{p,q} \to \H^{p,q}$ is given by $v\mapsto -v$.

For any $v\in \hat{\H}^{p,q}$ the restriction of $\sfb$ to $v^{\perp}\simeq T_v\hat{\H}^{p,q}$ is a symmetric bilinear form of signature $(p,q|0)$, and this defines a pseudo-Riemannian metric $\hat{\sfg}$ of signature $(p,q)$ on $\hat{\H}^{p,q}$, which is invariant under the orthogonal group $\OO(\sfb)\simeq \OO(p,q+1)$ of~$\sfb$.
It descends to a pseudo-Riemannian metric $\sfg$ of signature $(p,q)$ on $\H^{p,q}$, invariant under the projective orthogonal group $\PO(\sfb) = \OO(\sfb)/\{\pm\mathrm{I}\}\simeq \PO(p,q+1)$.

The unparametrized complete geodesics of $(\hat{\H}^{p,q},\hat\sfg)$ are the connected components of the non-empty intersections of $\hat{\H}^{p,q}$ with $2$-planes of $\R^{p,q+1}$; the unparametrized complete geodesics of $(\H^{p,q},\sfg)$ are the non-empty intersections of $\H^{p,q}$ with projectivizations of $2$-planes of $\R^{p,q+1}$.
There are three types of geodesics: a geodesic is \emph{spacelike} (\resp \emph{lightlike}, \resp \emph{timelike}) if the defining $2$-plane of $\R^{p,q+1}$ has signature $(1,1|0)$ (\resp $(0,1|1)$, \resp $(0,2|0)$).
For a parametrized geodesic~$\gamma$, this is equivalent to $\Vert\gamma'\Vert_{\hat\sfg}$ or $\Vert\gamma'\Vert_{\sfg}$ being everywhere $>0$ (\resp $=0$, \resp $<0$).

We say that two distinct points $o,m\in \H^{p,q}$ are \emph{in spacelike} (\resp \emph{lightlike}, \resp \emph{timelike}) \emph{position} if the unique geodesic through $o$ and~$m$ is spacelike (\resp lightlike, \resp timelike).
This is equivalent to the absolute value $|\sfb(\hat{o},\hat{m})|$ being $>1$ (\resp $=1$, \resp $<1$) for some (hence any) lifts $\hat{o},\hat{m}\in\hat{\H}^{p,q} = \{v\in\R^{p+q+1} ~|~ \sfb(v,v)=-1\}$ of $o,m\in\H^{p,q}$.

The totally geodesic subspaces of $(\hat{\H}^{p,q},\hat\sfg)$ are the connected components of the non-empty intersections of $\hat{\H}^{p,q}$ with linear subspaces of $\R^{p,q+1}$; the totally geodesic subspaces of $(\H^{p,q},\sfg)$ are the non-empty intersections of $\H^{p,q}$ with projectivizations of linear subspaces of $\P(\R^{p,q+1})$.
For $k\geq 1$, we shall say that a totally geodesic $k$-dimensional subspace is \emph{spacelike} (\resp \emph{lightlike}, \emph{timelike}) if the defining $(k+1)$-plane of $\R^{p,q+1}$ has signature $(k,1|0)$ (\resp $(0,1|k)$, \resp $(0,k+1|0)$); equivalently, all geodesics contained in the $k$-dimensional subspace are spacelike (\resp lightlike, \resp timelike).
We shall also call \emph{timelike} a $(k+1)$-plane of~$\R^{p,q+1}$ of signature $(0,k+1|0)$.

We consider the projective closure 
\begin{equation} \label{eqn:closure-Hpq}
\ov\H^{p,q}=\P(\{v\in\R^{p,q+1}\smallsetminus\{0\} ~|~ \sfb(v,v)\leq 0\})
\end{equation}
of $\H^{p,q}$ and its boundary
\begin{equation} \label{eqn:boundary-Hpq}
\di\H^{p,q}=\P(\{v\in\R^{p,q+1}\smallsetminus\{0\} ~|~ \sfb(v,v)=0\}),
\end{equation}
which we call the \emph{boundary at infinity} of~$\H^{p,q}$.

\begin{notation} \label{not:boundaries}
Given a subset $A\subset \H^{p,q}$, we denote by $\ov A$ its closure in $\P(\R^{p+q+1})$ and set
\begin{itemize}
  \item $\di A := \ov A\cap \di \H^{p,q}$ (boundary at infinity of~$A$),
  \item $\de A := \ov A\smallsetminus\mathrm{Int}(A) \subset \ov\H^{p,q}$,
  \item $\de_{\H} A := \de A\cap \H^{p,q}$,
  \item $\ov A^{\H} := \ov A\cap \H^{p,q}$.
\end{itemize}
\end{notation}

\subsection{The double cover of $\H^{p,q}$} \label{subsec:Hpq-hat}

Recall the double cover $\hat{\H}^{p,q}$ of~$\H^{p,q}$ from \eqref{eqn:Hpq-double}.
It also has a boundary at infinity $\di\hat{\H}^{p,q}$ which is a double cover of the boundary at infinity $\di\H^{p,q}$ of $\H^{p,q}$ from \eqref{eqn:boundary-Hpq}.
This yields a compactification $\hat{\overline{\H}}^{p,q} = \hat{\H}^{p,q} \sqcup \di\hat{\H}^{p,q}$ of $\hat{\H}^{p,q}$ which is a double cover of the compactification $\overline{\H}^{p,q}$ of $\H^{p,q}$ from~\eqref{eqn:closure-Hpq}.
Indeed, the double cover $\hat \H^{p,q}$, its boundary at infinity, and the corresponding compactification of $\hat{\H}^{p,q}$ identify with
\begin{align} 
\hat\H^{p,q} \simeq \{v\in\R^{p,q+1}\smallsetminus\{0\} ~|~ \sfb(v,v)< 0\}/\R_{>0},\nonumber\\
\di\hat{\H}^{p,q} \simeq \{v\in\R^{p,q+1}\smallsetminus\{0\} ~|~ \sfb(v,v)= 0\}/\R_{>0},\label{eqn:di-Hpq-hat-quotient}\\
\hat{\ov\H}^{p,q} \simeq \{v\in\R^{p,q+1}\smallsetminus\{0\} ~|~ \sfb(v,v)\leq 0\}/\R_{>0}.\nonumber
\end{align}
The non-trivial automorphism of the covering $\hat{\ov\H}^{p,q} \to \ov\H^{p,q}$ is given by $[v]\mapsto [-v]$.

We shall use the following notation.

\begin{notation} \label{not:Sn}
For $n\geq 1$, we denote by $\langle\cdot,\cdot\rangle_{n+1}$ the standard Euclidean inner product on~$\R^{n+1}$, by $\bS^n := \{ y\in\R^{n+1} \,|\, \langle y,y\rangle_{n+1}=1\}$ the unit sphere of $\R^{n+1}$ equipped with its standard spherical metric $\sfg_{\bS^n}$, and by $\bB^n := \{ y=(y_0,\dots,y_n)\in\bS^n \,|\, y_0>0\}$ the upper open hemisphere of $\bS^n$ equipped with the restriction $\sfg_{\bB^n}$ of $\sfg_{\bS^n}$.
\end{notation}

We now give a useful model for $\hat{\H}^{p,q}$ (``conformal model'', see \cite[Prop.\,2.4]{bar15} for $q=1$), based on the choice of a timelike $(q+1)$-plane $T$ of~$\R^{p,q+1}$.
Fix such a $(q+1)$-plane~$T$.
Let $\mathrm{pr}_T : \R^{p,q+1}\to T$ and $\mathrm{pr}_{T^{\perp}} : \R^{p,q+1}\to T^{\perp}$ be the $\sfb$-orthogonal projections onto $T$ and~$T^{\perp}$.
We define a function $r_T : \R^{p,q+1}\to\R_{\geq 0}$ by
\begin{equation} \label{eqn:r_T}
r_T(v) := \sqrt{|\sfb(\mathrm{pr}_T(v),\mathrm{pr}_T(v))|}
\end{equation}
for all $v\in\R^{p,q+1}$.
More explicitly, choose a $\sfb$-orthogonal basis $(e_1,\dots,e_{p+q+1})$ of~$\R^{p,q+1}$ which is \emph{standard for~$T$} in the sense that $\mathrm{span}(e_1,\dots,e_p) = T^{\perp}$, that $\mathrm{span}(e_{p+1},\dots,e_{p+q+1}) = T$, and that $\sfb(e_i,e_i)$ is equal to $1$ if $1\leq i\leq p$ and to $-1$ if $p+1\leq i\leq p+q+1$.
Then $r_T$ sends any $v = (v_1,\dots,v_{p+q+1})$ in this basis to $\sqrt{v_{p+1}^2+\dots+v_{p+q+1}^2}$.
Note that $r_T(\alpha v) = \alpha\,r_T(v)$ for all $\alpha\geq 0$ and $v\in\R^{p,q+1}$.

\begin{prop} \label{p.doublecoverasproduct}
For any timelike $(q+1)$-plane $T$ of $\R^{p,q+1}$, the map
\begin{eqnarray*}
\Psi_T :\hspace{1.8cm} \hat{\H}^{p,q}\hspace{1cm} & \longrightarrow & \hspace{2cm} \bB^p \hspace{1.3cm}\times\hspace{2.3cm} \bS^q \hspace{2.3cm}\\
\hat{x} = (v_1,\dots,v_{p+q+1}) & \longmapsto & \left(\left(\frac{1}{r_T(\hat{x})} (1,v_1,\dots,v_p)\right), \left(\frac{1}{r_T(\hat{x})} (v_{p+1},\dots,v_{p+q+1})\right)\right)
\end{eqnarray*}
(where on the left-hand side $\hat{x} = (v_1,\dots,v_{p+q+1})$ is expressed in a $\sfb$-orthogonal basis of~$\R^{p,q+1}$ which is standard for~$T$) is a diffeomorphism, which defines an isometry between the pseudo-Riemannian spaces $(\hat{\H}^{p,q},\hat{\sfg})$~and
$$(\bB^p \times \bS^q, y_0^{-2}\sfg_{\bB^p} \oplus -y_0^{-2}\sfg_{\bS^q}).$$
\end{prop}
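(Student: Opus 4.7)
The plan is to proceed in three elementary steps: check that $\Psi_T$ is well-defined as a smooth map into $\bB^p \times \bS^q$, exhibit a smooth inverse, and compute the pullback of $\hat{\sfg}$.

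For well-definedness, the key observation is that the defining equation $\sfb(\hat{x},\hat{x}) = -1$, written in a basis standard for $T$, reads $\sum_{i=1}^p v_i^2 - \sum_{j=1}^{q+1} v_{p+j}^2 = -1$, so $r_T(\hat{x})^2 = \sum_{j=1}^{q+1} v_{p+j}^2 = 1 + \sum_{i=1}^p v_i^2 \geq 1$. Hence $r_T$ is smooth and strictly positive on $\hat{\H}^{p,q}$, so $\Psi_T$ is smooth; a direct calculation shows that the first factor has squared Euclidean norm $r_T(\hat x)^{-2}(1 + \sum v_i^2) = 1$ and strictly positive zeroth coordinate (hence lies in $\bB^p$), while the second factor has squared Euclidean norm $r_T(\hat x)^{-2}\sum v_{p+j}^2 = 1$ (hence lies in $\bS^q$).

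For invertibility, I would exhibit the inverse explicitly: writing $y = (y_0, \tilde{y}) \in \bB^p \subset \R \times \R^p$ with $y_0 > 0$, set $\Phi(y,z) := y_0^{-1}(\tilde{y}, z) \in \R^{p,q+1}$, expressed in the same standard basis. The identity $\sfb(\Phi(y,z), \Phi(y,z)) = y_0^{-2}(|\tilde y|^2 - |z|^2) = y_0^{-2}((1-y_0^2) - 1) = -1$ shows that $\Phi$ is a well-defined smooth map $\bB^p \times \bS^q \to \hat{\H}^{p,q}$, and $\Phi = \Psi_T^{-1}$ by direct substitution; hence $\Psi_T$ is a diffeomorphism.

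For the isometry statement, I would differentiate the parametrization $\hat{x} = y_0^{-1}(\tilde{y}, z)$ and compute $\sfb(d\hat{x}, d\hat{x})$. Using the constraints $\langle \tilde{y}, d\tilde{y}\rangle = -y_0\,dy_0$ (from $|\tilde{y}|^2 = 1 - y_0^2$) and $\langle z, dz\rangle = 0$ (from $|z|^2 = 1$), the cross-terms between $dy_0$ and the tangential differentials collapse and a short computation yields
\[ \sfb(d\hat{x}, d\hat{x}) = y_0^{-2}\bigl(dy_0^2 + |d\tilde{y}|^2\bigr) - y_0^{-2}|dz|^2. \]
Restricted to $T\bB^p$ and $T\bS^q$ respectively, the two summands are precisely $y_0^{-2}\sfg_{\bB^p}$ and $-y_0^{-2}\sfg_{\bS^q}$, giving the claimed pseudo-Riemannian identification. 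There is no substantial obstacle here: the whole argument is a ``conformal model'' coordinate computation, directly analogous to the Lorentzian case $q=1$ treated in \cite[Prop.\,2.4]{bar15}; the only real delicacy is keeping track of signs from the split $\R^{p,q+1} = T^{\perp} \oplus T$, which is absorbed into the normalization provided by a basis standard for $T$.
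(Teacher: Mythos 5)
Your proof is correct; both you and the paper verify the isometry by computing the pullback of $\hat{\sfg}$ under $\Psi_T$ in the conformal model, so the approach is essentially the same. The style of the verification differs slightly: the paper fixes a point $\hat{x}$, decomposes $T_{\hat{x}}\hat{\H}^{p,q}$ as the $\sfb$-orthogonal direct sum of the tangent spaces of two explicit submanifolds $M$ and $N$ (one transverse to the spherical factor, one along it), and checks that $\mathrm{d}(\Psi_T)_{\hat{x}}$ maps each summand isometrically onto the corresponding factor of $T(\bB^p\times\bS^q)$; you instead parametrize $\hat{x} = y_0^{-1}(\tilde{y},z)$, differentiate once globally, and use the constraints $\langle\tilde{y},d\tilde{y}\rangle = -y_0\,dy_0$ and $\langle z,dz\rangle = 0$ to collapse the cross-terms. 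The paper also disposes of the diffeomorphism statement with the single word ``clearly,'' whereas you spell out the well-definedness (via $r_T(\hat{x})^2 = 1+\sum v_i^2$) and an explicit inverse $\Phi(y,z) = y_0^{-1}(\tilde{y},z)$. Your version is somewhat more self-contained; the paper's orthogonal-decomposition argument makes it slightly more transparent that the two factors stay separated, but nothing of substance distinguishes the two.
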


We note that $(\bB^p,y_0^{-2}\sfg_{\bB^p})$ is the upper hemisphere model of the hyperbolic space~$\H^p$.
The image of $T$ in $\hat\H^{p,q}$ corresponds via~$\Psi_T$ to the subset $\{u_0\}\times\bS^q$ of $\bB^p\times\bS^q$, where $u_0 = (1,0,\dots,0)$ is the mid-point of the upper hemisphere~$\bB^p$.

\begin{proof}
The map $\Psi_T$ is clearly a diffeomorphism.
Let us check that it defines an isometry between $\hat{\H}^{p,q}$ and $(\mathbb{B}^p\times \bS^q, y_0^{-2}\sfg_{\bB^p} \oplus -y_0^{-2}\sfg_{\bS^q})$.
Fix a point $\hat{x} = (v_1,\dots,v_{p+q+1}) \in \hat{\H}^{p,q}$ and let $r := r_T(\hat x) > 0$.
The tangent space $T_{\hat{x}}\hat{\H}^{p,q}$ is the direct orthogonal sum of $T_{\hat{x}}M$ and $T_{\hat{x}}N$ where $M$ and~$N$ are the smooth submanifolds of~$\hat{\H}^{p,q}$ defined by
\begin{align*}
M & := \big\{ (y_1,\dots,y_p,tv_{p+1},\dots,tv_{p+q+1}) \in \R^{p,q+1} ~|~ t\in\R_{>0},\ 1 + y_1^2 + \dots + y_p^2 = t^2r^2\big\},\\
N & := \big\{ (v_1,\dots,v_p,y_{p+1},\dots,y_{p+q+1}) \in \R^{p,q+1} ~|~ y_{p+1}^2 + \dots + y_{p+q+1}^2 = r^2\big\}.
\end{align*}
The pseudo-Riemannian metric $\hat{\sfg}$ is positive definite (\resp negative definite) in restriction to $M$ (\resp $N$).
If we write $\Psi_T(\hat{x}) = (u,u') \in \bB^p\times\bS^q$, then the tangent space $T_{(u,u')}(\bB^p\times\nolinebreak\bS^q)$ is the direct orthogonal sum of $T_{(u,u')}(\bB^p\times\{u'\})$ and $T_{(u,u')}(\{u\}\times\bS^q)$, and $\mathrm{d}(\Psi_T)_{\hat{x}}$ defines an isometry between $T_{\hat{x}}M$ (\resp $T_{\hat{x}}N$) and $\bB^p\times\{u'\}$ (\resp $\{u\}\times\bS^q$) endowed with the metric $y_0^{-2}\sfg_{\bB^p} \oplus -y_0^{-2}\sfg_{\bS^q}$.
\end{proof}

\begin{lem} \label{lem:not-in-timelike-position}
Consider two distinct points $\hat x_1,\hat x_2\in \hat\H^{p,q}$ with $\sfb(\hat{x}_1,\hat{x}_2)\leq 0$.
Fix a timelike $(q+1)$-plane $T$ of $\R^{p,q+1}$ and write $\Psi_T(\hat x_i) = (u_i,u'_i) \in \bB^p\times \bS^q$.
Then $\hat x_1$ and $\hat x_2$ are \emph{not} in timelike position (\resp are in spacelike position) if and only if $d_{\bB^p}(u_1,u_2) \geq d_{\bS^q}(u'_1,u'_2)$ (\resp $d_{\bB^p}(u_1,u_2) > d_{\bS^q}(u'_1,u'_2)$).
\end{lem}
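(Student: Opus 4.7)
The plan is to reduce the statement to a direct computation of $\sfb(\hat x_1,\hat x_2)$ in the conformal coordinates of Proposition~\ref{p.doublecoverasproduct}. First I would unpack the definitions: two distinct points of $\hat\H^{p,q}$ are not in timelike position iff $|\sfb(\hat x_1,\hat x_2)|\geq 1$ and are in spacelike position iff $|\sfb(\hat x_1,\hat x_2)|>1$. Combined with the hypothesis $\sfb(\hat x_1,\hat x_2)\leq 0$, these become $\sfb(\hat x_1,\hat x_2)+1\leq 0$ and $\sfb(\hat x_1,\hat x_2)+1<0$ respectively, so the whole claim reduces to extracting the sign of $\sfb(\hat x_1,\hat x_2)+1$ in conformal coordinates.

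Next I would carry out the core calculation. Work in a $\sfb$-orthogonal basis standard for $T$ and write $\hat x_i = (v^{(i)}_1,\dots,v^{(i)}_{p+q+1})$, $r_i := r_T(\hat x_i) > 0$ (positive since $\sfb(\hat x_i,\hat x_i) = -1$ forces a non-zero timelike part). By definition of $\Psi_T$, $u_i = r_i^{-1}(1, v^{(i)}_1,\dots,v^{(i)}_p)$ and $u'_i = r_i^{-1}(v^{(i)}_{p+1},\dots,v^{(i)}_{p+q+1})$, and the normalization $\sfb(\hat x_i,\hat x_i) = -1$ is exactly equivalent to $\langle u_i,u_i\rangle_{p+1} = \langle u'_i,u'_i\rangle_{q+1} = 1$. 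Splitting
\[
\sfb(\hat x_1,\hat x_2) \;=\; \sum_{j=1}^{p} v^{(1)}_j v^{(2)}_j \;-\; \sum_{j=p+1}^{p+q+1} v^{(1)}_j v^{(2)}_j
\]
and comparing termwise with $r_1 r_2\langle u_1,u_2\rangle_{p+1} = 1 + \sum_{j=1}^p v^{(1)}_j v^{(2)}_j$ (the extra $1$ coming from $u_{1,0}u_{2,0} = 1/(r_1 r_2)$) and with $r_1 r_2\langle u'_1,u'_2\rangle_{q+1} = \sum_{j=p+1}^{p+q+1} v^{(1)}_j v^{(2)}_j$ gives the clean identity
\[
\sfb(\hat x_1,\hat x_2)+1 \;=\; r_1 r_2\bigl(\langle u_1,u_2\rangle_{p+1} - \langle u'_1,u'_2\rangle_{q+1}\bigr).
\]

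Finally, I would translate the Euclidean inner products into spherical distances via $\langle u,v\rangle = \cos d_{\bS^{\bullet}}(u,v)$, with the small observation that $d_{\bB^p} = d_{\bS^p}$ on $\bB^p\times\bB^p$: for non-antipodal $u_1,u_2\in \bB^p$, the short great-circle arc $t\mapsto \sin((1-t)\alpha) u_1/\sin\alpha + \sin(t\alpha) u_2/\sin\alpha$ (with $\alpha = d_{\bS^p}(u_1,u_2)<\pi$) has positive first coordinate throughout, as a positive linear combination of two positive numbers. Since $r_1 r_2 > 0$ and $\cos$ is strictly decreasing on $[0,\pi]$, the identity rewrites as $\sfb(\hat x_1,\hat x_2)+1\leq 0$ (resp.\ $<0$) iff $d_{\bB^p}(u_1,u_2)\geq d_{\bS^q}(u'_1,u'_2)$ (resp.\ $>$), which is exactly the claim. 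I do not expect a real obstacle here: the only subtle point is the geodesic convexity of $\bB^p$ in $\bS^p$, dispatched in a single line as above.
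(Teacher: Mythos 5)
Your proof is correct and follows essentially the same route as the paper's: both reduce the statement to the identity $\sfb(\hat x_1,\hat x_2)=r_T(\hat x_1)\,r_T(\hat x_2)\,\bigl(\langle u_1,u_2\rangle_{p+1}-\langle u'_1,u'_2\rangle_{q+1}\bigr)-1$ and then use the hypothesis $\sfb(\hat x_1,\hat x_2)\leq 0$ to convert the sign of $\sfb(\hat x_1,\hat x_2)+1$ into the stated distance comparison. The only difference is that you spell out the verification that $d_{\bB^p}$ agrees with the restriction of $d_{\bS^p}$ (via geodesic convexity of the open hemisphere), a point the paper leaves implicit.
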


\begin{proof}
Recall from Section~\ref{subsec:Hpq} that $\hat x_1$ and $\hat x_2$ are in spacelike (\resp lightlike, \resp timelike) position if and only if $|\sfb(\hat{x}_1,\hat{x}_2)| > 1$ (\resp $|\sfb(\hat{x}_1,\hat{x}_2)| = 1$, \resp $|\sfb(\hat{x}_1,\hat{x}_2)| < 1$).
From the definition of~$\Psi_T$ (Proposition~\ref{p.doublecoverasproduct}), we have
$$\sfb(\hat{x}_1,\hat{x}_2) = r_T(\hat x_1) \, r_T(\hat x_2) \big(\<u_1,u_2\>_{p+1}-\<u'_1,u'_2\>_{q+1}\big) - 1.$$
Since $\sfb(\hat{x}_1,\hat{x}_2)\leq 0$ by assumption, we deduce that $|\sfb(\hat{x}_1,\hat{x}_2)| > 1$ (\resp $|\sfb(\hat{x}_1,\hat{x}_2)| = 1$, \resp $|\sfb(\hat{x}_1,\hat{x}_2)| < 1$) if and only if $d_{\bB^p}(u_1,u_2) > d_{\bS^q}(u'_1,u'_2)$ (\resp $d_{\bB^p}(u_1,u_2) = d_{\bS^q}(u'_1,u'_2)$, \resp $d_{\bB^p}(u_1,u_2) < d_{\bS^q}(u'_1,u'_2)$).
\end{proof}

\begin{lem} \label{lem:Hpq-hat-bar-prod}
For any timelike $(q+1)$-plane $T$ of $\R^{p,q+1}$, the map $\Psi_T$ of Proposition~\ref{p.doublecoverasproduct} extends continuously to a homeomorphism
$$\ov\Psi_T : \hat{\ov\H}^{p,q} \overset{\sim}{\longrightarrow} \overline{\bB}^p \times \bS^q,$$
where $\overline{\bB}^p$ is the closure of the upper hemisphere $\bB^p$ in $\bS^p$, \ie the union of $\bB^p$ and of the equator $\bS^{p-1} \simeq \{ y=(y_0,\dots,y_p)\in\bS^p \,|\, y_0=0\}$.
\end{lem}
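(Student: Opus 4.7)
The plan is to give an explicit $\R_{>0}$-equivariant extension of the formula defining $\Psi_T$ on the full closed cone $\{v \in \R^{p,q+1}\setminus\{0\} \,:\, \sfb(v,v) \leq 0\}$, and then to exhibit a continuous inverse. First I would verify that $r_T$ does not vanish on this cone: if $r_T(v)=0$, then $\mathrm{pr}_T(v)=0$, so $\sfb(v,v) = \sfb(\mathrm{pr}_{T^\perp}(v),\mathrm{pr}_{T^\perp}(v)) \geq 0$, with equality only when $v=0$. Writing $v=(v_1,\dots,v_{p+q+1})$ in a $\sfb$-orthogonal basis standard for~$T$, I would then define
$$\tilde\Psi_T(v) := \left(\frac{1}{r_T(v)}\bigl(\sqrt{-\sfb(v,v)},\, v_1,\dots,v_p\bigr),\, \frac{1}{r_T(v)}(v_{p+1},\dots,v_{p+q+1})\right).$$
This is clearly continuous on $\{v\neq 0:\sfb(v,v)\leq 0\}$, and since $r_T$ is positively $1$-homogeneous while $\sqrt{-\sfb(\cdot,\cdot)}$ is positively $1$-homogeneous, $\tilde\Psi_T$ is invariant under the $\R_{>0}$-action, so it descends through the quotient \eqref{eqn:di-Hpq-hat-quotient} to a continuous map $\overline\Psi_T : \hat{\overline\H}^{p,q} \to \R^{p+1}\times\R^{q+1}$. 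It agrees with~$\Psi_T$ on $\hat\H^{p,q}$ (where $\sfb(v,v)=-1$, so $\sqrt{-\sfb(v,v)}=1$).

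Next I would check that the image lies in $\overline{\bB}^p \times \bS^q$. The first coordinate of the first factor is $\sqrt{-\sfb(v,v)}/r_T(v) \geq 0$, and the squared Euclidean norm of the first factor equals
$$\frac{-\sfb(v,v)+v_1^2+\dots+v_p^2}{r_T(v)^2} = \frac{v_{p+1}^2+\dots+v_{p+q+1}^2}{r_T(v)^2} = 1,$$
so it lies in $\overline{\bB}^p$ (with equator exactly when $\sfb(v,v)=0$). The second factor obviously has norm~$1$. Thus $\overline\Psi_T$ sends $\di\hat\H^{p,q}$ into the equator times~$\bS^q$, that is, into $\bS^{p-1}\times\bS^q$, while mapping $\hat\H^{p,q}$ to $\bB^p\times\bS^q$.

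For the inverse, given $((y_0,\dots,y_p),(y_{p+1},\dots,y_{p+q+1})) \in \overline{\bB}^p\times\bS^q$, I would set $v:=(y_1,\dots,y_{p+q+1})$ in the chosen basis: since $y_0^2 + y_1^2+\dots+y_p^2=1 = y_{p+1}^2+\dots+y_{p+q+1}^2$, one has $r_T(v)=1$ and $\sfb(v,v)=-y_0^2 \leq 0$, so $[v] \in \hat{\overline\H}^{p,q}$ is well-defined, and a direct substitution gives $\overline\Psi_T([v])=((y_0,\dots,y_p),(y_{p+1},\dots,y_{p+q+1}))$. Conversely, every class in $\hat{\overline\H}^{p,q}$ admits a unique representative with $r_T(v)=1$, and for such a representative $\overline\Psi_T$ recovers $v$ unambiguously, so $\overline\Psi_T$ is bijective. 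Finally, $\hat{\overline\H}^{p,q}$ is compact (it identifies with the intersection of the Euclidean unit sphere of~$\R^{p+q+1}$ with the closed cone $\{\sfb(v,v)\leq 0\}$), and $\overline{\bB}^p\times\bS^q$ is Hausdorff, so the continuous bijection $\overline\Psi_T$ is a homeomorphism. The only mild obstacle is to check the continuity of $\sqrt{-\sfb(v,v)}/r_T(v)$ near the boundary, but this is immediate since the numerator is continuous on the closed cone and the denominator is bounded away from~$0$ on the Euclidean unit sphere.
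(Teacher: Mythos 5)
Your proof is correct and follows essentially the same route as the paper's: both write down an explicit $\R_{>0}$-equivariant formula extending $\Psi_T$ to the closed cone (the paper presents it as a chain of four identifications ending with $(y_1,\dots,y_p)\mapsto(\sqrt{1-y_1^2-\dots-y_p^2},y_1,\dots,y_p)$, while you write the composite in one line). The only mild difference is in how the inverse is handled: you exhibit it explicitly and then invoke compactness-plus-Hausdorff to get continuity of the inverse for free, whereas the paper argues that each step is a diffeomorphism and concludes the composite is one (a slight over-claim at the equator, where the square-root map is only a homeomorphism, not smooth --- but the lemma only asserts a homeomorphism, so both arguments suffice).
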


\begin{proof}
Let $\ov\Psi_T : \hat{\ov\H}^{p,q} \to \overline{\bB}^p \times \bS^q$ be the map obtained by identifying $\hat{\ov\H}^{p,q}$ with\linebreak $\{v\in\R^{p,q+1}\smallsetminus\nolinebreak\{0\} \,|\, \sfb(v,v)\leq 0\}/\R_{>0}$ as in \eqref{eqn:di-Hpq-hat-quotient}, then with $\{ v\in\R^{p,q+1} \,|\, \sfb(v,v)\leq\nolinebreak 0 \ \mathrm{and}\ r_T(v)=\nolinebreak 1\}$ via $[v]\mapsto v/r_T(v)$, then with $\{ v\in T^{\perp} \,|\, \sfb(v,v)\leq 1\} \times \{ v\in T \,|\, (-\sfb)(v,v)=1\}$ via $(\mathrm{pr}_{T^{\perp}}, \mathrm{pr}_T)$, then with $\ov\bD^p \times \bS^q$ (where $\ov\bD^p$ is the closed unit disk of Euclidean~$\R^p$), then with $\ov\bB^p \times \bS^q$ via $\ov\bD^p \ni (y_1,\dots,y_p) \mapsto (\sqrt{1 - (y_1^2 - \dots - y_p^2)}, y_1, \dots, y_p) \in \ov\bB^p$.
Then $\ov\Psi_T$ is a composition of diffeomorphisms, hence a diffeomorphism.
Moreover, the restriction of $\ov\Psi_T$ to $\hat\H^{p,q}$ is the map $\Psi_T$ of Proposition~\ref{p.doublecoverasproduct}.
\end{proof}

We shall sometimes denote the restriction of $\ov\Psi_T$ to $\di\hat\H^{p,q}$ by
\begin{equation} \label{eqn:di-Psi}
\di\Psi_T : \di\hat\H^{p,q} \overset{\sim}{\longrightarrow} \bS^{p-1} \times \bS^q.
\end{equation}

\begin{lem} \label{lem:b-di-Hpq-hat}
Let $\hat x_1 \in \di\hat\H^{p,q}$ and $\hat x_2 \in \hat{\ov\H}^{p,q}$.
Fix a timelike $(q+1)$-plane $T$ of $\R^{p,q+1}$ and write $\ov\Psi_T(\hat x_i) = (u_i,u'_i) \in \overline{\bB}^p \times \bS^q$ and $\hat x_i = [v_i]$ where $v_i\in\R^{p,q+1}\smallsetminus\{0\}$ satisfies $\sfb(v_i,v_i)\leq 0$ as in \eqref{eqn:di-Hpq-hat-quotient}.
Then $\sfb(v_1,v_2) \leq 0$ (\resp $\sfb(v_1,v_2) < 0$) if and only if $d_{\ov\bB^p}(u_1,u_2) \geq d_{\bS^q}(u'_1,u'_2)$ (\resp $d_{\ov\bB^p}(u_1,u_2) > d_{\bS^q}(u'_1,u'_2)$).
\end{lem}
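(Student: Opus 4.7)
The plan is to follow the same strategy as the proof of Lemma~\ref{lem:not-in-timelike-position} but now using the extension $\ov\Psi_T$ from Lemma~\ref{lem:Hpq-hat-bar-prod}, which realizes $\hat{\ov\H}^{p,q}$ as the set of $v \in \R^{p,q+1}$ with $r_T(v)=1$ (modulo an appropriate choice of representative on the boundary). I will take $v_1,v_2$ to be precisely those representatives, i.e.\ the unique lifts with $r_T(v_i)=1$.

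Working in a $\sfb$-orthogonal basis standard for~$T$, I will decompose $v_i = (a_i,b_i) \in T^\perp \oplus T$, where $b_i$ has Euclidean norm~$1$ (because $r_T(v_i)=1$) and $a_i$ has Euclidean norm $\sqrt{1+\sfb(v_i,v_i)} \le 1$, with equality exactly when $\hat x_i \in \di\hat\H^{p,q}$. Under $\ov\Psi_T$, this gives $u_i = (\sqrt{1-\|a_i\|^2},\,a_i) \in \ov\bB^p \subset \bS^p$ and $u'_i = b_i \in \bS^q$. A direct computation then yields
\[
\sfb(v_1,v_2) \;=\; \<a_1,a_2\>_p - \<b_1,b_2\>_{q+1}.
\]
Because $\hat x_1 \in \di\hat\H^{p,q}$, its first coordinate $u_1^0 = \sqrt{1-\|a_1\|^2}$ vanishes, so $\<u_1,u_2\>_{p+1}=\<a_1,a_2\>_p$, and the formula becomes
\[
\sfb(v_1,v_2) \;=\; \<u_1,u_2\>_{p+1} - \<u'_1,u'_2\>_{q+1}.
\]

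Since $u_1,u_2 \in \bS^p$ and $u'_1,u'_2 \in \bS^q$, the two inner products equal $\cos d_{\bS^p}(u_1,u_2)$ and $\cos d_{\bS^q}(u'_1,u'_2)$ respectively. I will then observe that $d_{\ov\bB^p}(u_1,u_2)$ agrees with the restricted spherical distance $d_{\bS^p}(u_1,u_2)$: because $u_1$ lies on the equator $\bS^{p-1}$ and $u_2 \in \ov\bB^p$, the minimizing $\bS^p$-geodesic between them stays in $\ov\bB^p$. Since $\cos$ is strictly decreasing on $[0,\pi]$, the sign conditions $\sfb(v_1,v_2)\le 0$ (respectively $<0$) translate exactly into $d_{\ov\bB^p}(u_1,u_2)\ge d_{\bS^q}(u'_1,u'_2)$ (respectively $>$), concluding the proof.

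There is essentially no serious obstacle: the argument is a bookkeeping extension of Lemma~\ref{lem:not-in-timelike-position} to the boundary, with the slight subtlety that one must choose the ``normalized'' lifts $v_i$ with $r_T(v_i)=1$ in order to make the identities with $u_i,u'_i$ sharp, and briefly justify that $d_{\ov\bB^p}$ coincides with the spherical distance for the configurations considered.
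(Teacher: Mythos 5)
Your proposal is correct and follows essentially the same route as the paper: normalize so that $r_T(v_i)=1$, decompose along $T^\perp\oplus T$, and deduce the key identity $\sfb(v_1,v_2)=\langle u_1,u_2\rangle_{p+1}-\langle u'_1,u'_2\rangle_{q+1}$, from which the equivalence follows by monotonicity of the cosine. Your extra remark justifying that $d_{\ov\bB^p}$ agrees with the restricted $\bS^p$-distance is a small but welcome clarification that the paper leaves implicit (it actually holds for any two points of the closed hemisphere, not only when one lies on the equator).
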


Here $d_{\ov\bB^p}$ denotes the distance function on $\ov\bB^p$ induced by the metric $\sfg_{\bS^p}$.

\begin{proof}
After renormalizing each $v_i$ by $r_T(v_i)$, we may assume that $r_T(v_1) = r_T(v_2) = 1$.
We then identify $\{ v\in\R^{p,q+1} \,|\, \sfb(v,v)\leq 0 \ \mathrm{and}\ r_T(v)=\nolinebreak 1\}$ with $\ov\bD^p \times \bS^q$ as in the proof of Lemma~\ref{lem:Hpq-hat-bar-prod}: the image of $v_1$ is $(u_1,u'_1) \in \bS^{p-1} \times \bS^q \subset \ov\bD^p \times \bS^q$, the image of $v_2$ is $(u''_2,u'_2) \in \ov\bD^p \times \bS^q$ where $u''_2$ corresponds to $u_2$ via the identification $\ov\bD^p \simeq \ov\bB^p$ in the proof of Lemma~\ref{lem:Hpq-hat-bar-prod}, and
\begin{equation} \label{eqn:b-di-Hpq-hat}
\sfb(v_1,v_2) = \langle u_1,u''_2\rangle_p - \langle u'_1,u'_2\rangle_{q+1} = \langle u_1,u_2\rangle_{p+1} - \langle u'_1,u'_2\rangle_{q+1}.
\end{equation}
In particular, $\sfb(v_1,v_2) \leq 0$ (\resp $\sfb(v_1,v_2) < 0$) if and only if $\langle u_1,u_2\rangle_{p+1} \leq \langle u'_1,u'_2\rangle_{q+1}$ (\resp $\langle u_1,u_2\rangle_{p+1} < \langle u'_1,u'_2\rangle_{q+1}$), which is equivalent to $d_{\ov\bB^p}(u_1,u_2) \geq d_{\bS^q}(u'_1,u'_2)$ (\resp $d_{\ov\bB^p}(u_1,u_2) > d_{\bS^q}(u'_1,u'_2)$).
\end{proof}

\subsection{Non-positive subsets of $\di\H^{p,q}$}

Following \cite[Def.\,3.1]{dgk18}, we shall adopt the following terminology.

\begin{defn} \label{def:nonpos-neg}
A subset $\tilde{\L}$ of $\R^{p,q+1}$ is \emph{non-positive} (\resp \emph{negative}) if $\sfb(\tilde{x},\tilde{y}) \leq 0$ (\resp $\sfb(\tilde{x},\tilde{y}) < 0$) for all $\tilde{x},\tilde{y}\in\tilde{\L}$.

A subset of $\di \H^{p,q}$ or $\di \hat \H^{p,q}$ is \emph{non-positive} (\resp \emph{negative}) if it is the projection to $\di\H^{p,q}$ or $\di\hat\H^{p,q}$ of a non-positive (\resp negative) subset of $\R^{p,q+1}$, where we see $\di\H^{p,q}$ as a subset of $\P(\R^{p,q+1})$ and $\di \hat \H^{p,q}$ as a subset of $(\R^{p,q+1}\smallsetminus\{0\})/\R_{>0}$ as in \eqref{eqn:di-Hpq-hat-quotient}.
\end{defn}

In the Lorentzian case, where $q=1$ and $\H^{p,q}$ is the anti-de Sitter space $\mathrm{AdS}^{p+1}$, non-positive (\resp negative) subsets of $\di\H^{p,q}$ are called \emph{achronal} (\resp \emph{acausal}): see \cite[Cor.\,2.11]{bar15}.

\begin{defn} \label{def:span-degen}
Let $\L$ be a subset of $\di\H^{p,q}$ or $\di\hat\H^{p,q}$.
We say that $\L$
\begin{itemize}
  \item \emph{spans} if the linear subspace $\spa(\L)$ of $\R^{p,q+1}$ spanned by $\L$ is the whole of~$\R^{p,q+1}$;
  \item is \emph{degenerate} if the restriction of $\mathsf{b}$ to $\spa(\L)$ is degenerate (\ie has non-zero kernel), and \emph{non-degenerate} otherwise.
\end{itemize}
\end{defn}

Note that a negative subset of $\di\H^{p,q}$ or $\di\hat\H^{p,q}$ is always non-degenerate.
If a non-positive subset $\Lambda$ of $\di\H^{p,q}$ or $\di\hat\H^{p,q}$ spans, then it is non-degenerate.
Conversely, if $\Lambda$ is non-degenerate, then the restriction of $\sfb$ to $\spa(\L)$ has signature $(p',q'|0)$ for some $1\leq q'\leq q+1$ and $1\leq p'\leq p$.
Thus a non-degenerate non-positive set $\L$ always spans in $\spa(\L)\simeq \R^{p',q'}$.

\subsection{Jordan projection and limit cone} \label{subsec:limit-cone}

Let $G$ be a non-compact Lie group which is the set of real points of a connected reductive real algebraic group~$\sfG$.

Let $K$ be a maximal compact subgroup of~$G$; its Lie algebra~$\mathfrak{k}$ is the set of fixed points of some involution (\emph{Cartan involution}) of the Lie algebra $\mathfrak{g}$ of~$G$.
Let $\mathfrak{p}$ be the set of anti-fixed points of this involution, so that $\mathfrak{g} = \mathfrak{k} + \mathfrak{p}$.
We fix a maximal abelian subspace $\mathfrak{a}$ of~$\mathfrak{p}$ (\emph{Cartan subspace} of~$\mathfrak{g}$).
The \emph{Weyl group} $W_G = N_G(\mathfrak{a})/Z_G(\mathfrak{a})$ acts on~$\mathfrak{a}$ with fundamental domain a closed convex cone $\mathfrak{a}^+$ of~$\mathfrak{a}$ (\emph{closed positive Weyl chamber}).

By the Jordan decomposition, any $g\in G$ may be written in a unique way as the commuting product $g_h g_e g_u$ of an element $g_h\in G$ which is hyperbolic (\ie conjugate to some element of $\exp(\mathfrak{a}^+)$), an element $g_e$ which is elliptic (\ie conjugate to some element of~$K$), and an element $g_u\in G$ which is unipotent.
We denote by $\lambda(g)$ the unique element of~$\mathfrak{a}^+$ such that $g_h$ is conjugate to $\exp(\lambda(g))$.
This defines a map $\lambda : G\to\mathfrak{a}^+$ called the \emph{Jordan projection} or \emph{Lyapunov projection}.

\begin{example} \label{ex:Jordan-decomp-PO}
Let $G=\PO(p,q+1)$, given by the standard quadratic form $v_1^2+\dots+v_p^2-v_{p+1}^2-\dots-v_{p+q+1}^2$ on $\R^{p+q+1}$.
Let $(e_1,\dots,e_{p+q+1})$ be the standard basis of $\R^{p,q+1}$. 
Let $r:=\min(p,q+1)$.
We can take $K$ to be $\mathrm{P}(\OO(p)\times\OO(q+1))$ and $\mathfrak{a}$ to be the set of matrices which, in the basis $(e_1+e_{p+q+1}, \dots, e_r+e_{p+q+2-r}, e_{r+1}, \dots, e_{p+q+1-r}, e_1-e_{p+q+1}, \dots, e_r-e_{p+q+2-r})$ of $\R^{p+q+1}$, are diagonal of the form $\mathrm{diag}(t_1,\dots,t_r,0,\dots,0,-t_1,\dots,-t_r)$ with $t_1,\dots,t_r\in\R$.
For $p\neq q+1$ (\resp $p=q+1=r$), we can take $\mathfrak{a}^+$ to be the subset of~$\mathfrak{a}$ defined by $t_1\geq\dots\geq t_r\geq 0$ (\resp $t_1\geq\dots\geq t_{r-1}\geq |t_r|$).
For $p\neq q+1$, the Jordan projection $\lambda = (\lambda_1,\dots,\lambda_r) : \PO(p,q+1)\to\mathfrak{a}^+$ gives the logarithms of the moduli of the $r$ first (complex) eigenvalues of elements of $\PO(p,q+1)$.
\end{example}

The following notion was introduced in full generality by Benoist \cite{ben97}.

\begin{defn} \label{def:limit-cone}
The \emph{limit cone} $\calL_{\Gamma}$ of a subsemigroup $\Gamma$ of~$G$ is the closure in~$\mathfrak{a}^+$ of the cone spanned by the elements $\lambda(\gamma)$ for $\gamma\in\Gamma$.
\end{defn}

In Section~\ref{subsec:vcd-bound} we shall use the following four facts.
The second one, stated as \cite[Fait\,2.2.c]{ben05}, is an immediate consequence of \cite[Prop.\,6.2--6.3]{ben97}.
The fourth one is due to Auslander.

\begin{fact}[{\cite[Th.\,1.2]{ben97}}] \label{fact:limit-cone}
Let $\Gamma$ be a Zariski-dense subsemigroup of~$G$.
Then the limit cone $\calL_{\Gamma}$ is convex with non-empty interior.
\end{fact}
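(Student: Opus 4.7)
The plan is to prove the two assertions (non-empty interior and convexity of $\calL_\G$) in turn, following Benoist's strategy based on combining Zariski-density inputs with a ``ping-pong'' analysis of Jordan projections of products.

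First I would show that $\Gamma$ contains \emph{loxodromic} (that is, $\mathfrak{a}^+$-regular) elements, i.e.\ elements whose Jordan projection lies in the interior of~$\mathfrak{a}^+$. The set of such elements is Zariski-open in~$G$ (it is cut out by non-vanishing of suitable discriminants on a faithful representation of~$\sfG$), and it is non-empty because $G$ is non-compact reductive; hence by Zariski-density of $\Gamma$ it meets~$\Gamma$. From here, non-emptiness of the interior of $\calL_\G$ would follow by showing that the $\R$-linear span of $\{\lambda(\g) : \g\in\Gamma\ \text{loxodromic}\}$ is all of~$\mathfrak{a}$: if this span were a proper subspace~$H$, then an explicit algebraic function of $g\in G$ (a symmetric function of the eigenvalues of $g$ in an appropriate representation, engineered to detect the complement of~$H$) would vanish on all loxodromic elements of~$\Gamma$ and hence, by Zariski-density and Zariski-openness of loxodromicity, on a Zariski-dense subset of~$G$, thus on all of~$G$ — contradicting the existence of loxodromic elements with prescribed Jordan data in~$G$.

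For convexity, the heart of the matter, I would fix two loxodromic elements $\g_1,\g_2\in\Gamma$ and aim to produce, for each pair of positive integers $m_1,m_2$, an element of $\Gamma$ whose Jordan projection is uniformly close to $m_1\lambda(\g_1)+m_2\lambda(\g_2)$. The standard device is to use Zariski-density to pick elements $\delta_1,\delta_2\in\Gamma$ such that the attracting and repelling flags of $\g_1$ and of conjugates $\delta_i\g_2\delta_i^{-1}$ are in ``general position'' (transverse for all Levi factors relevant to~$\mathfrak{a}$); this is an open, non-empty condition on $(\delta_1,\delta_2)$, so it is realized in~$\Gamma$. One then shows that the product $w(m_1,m_2):=\g_1^{m_1}\delta_1\g_2^{m_2}\delta_2$ is itself loxodromic for $m_1,m_2$ large, with a Cartan/KAK decomposition dominated by the powers of the $\g_i$, yielding
$$\lambda\bigl(w(m_1,m_2)\bigr)=m_1\lambda(\g_1)+m_2\lambda(\g_2)+O(1)$$
uniformly in $m_1,m_2$. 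Dividing by $m_1+m_2$ and taking limits puts every rational convex combination of $\lambda(\g_1),\lambda(\g_2)$ into~$\calL_\G$, and closedness of $\calL_\G$ finishes this pair. By the first step $\calL_\G$ has non-empty interior, so it is spanned by pairs of loxodromic projections, and the above pairwise convexity is enough to conclude that the whole cone is convex.

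The main obstacle is exactly the asymptotic Jordan-projection formula for $w(m_1,m_2)$: one must control not just the Cartan projection (for which an Iwasawa-type estimate suffices) but the \emph{Jordan} projection, which requires showing that $w(m_1,m_2)$ is proximal in every fundamental representation of $\sfG$ whose highest weight pairs non-trivially with~$\mathfrak{a}$, and that its attracting eigenline is close to that of~$\g_1^{m_1}$. This is what forces the ``general position'' hypothesis on $\delta_1,\delta_2$ and is where Zariski-density of~$\Gamma$ enters in an essential way; once this proximality-under-products lemma is in hand, the convexity statement falls out cleanly.
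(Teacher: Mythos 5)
This result is stated in the paper as a \emph{Fact}, cited to Benoist's 1997 paper without an internal proof, so there is no in-paper argument to compare against; I evaluate your outline against Benoist's cited argument.

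Your convexity step is essentially Benoist's. The asymptotic Jordan-projection estimate
$\lambda(\gamma_1^{m_1}\delta_1\gamma_2^{m_2}\delta_2)=m_1\lambda(\gamma_1)+m_2\lambda(\gamma_2)+O(1)$,
obtained after using Zariski-density to choose $\delta_1,\delta_2\in\Gamma$ placing the relevant attracting and repelling flags in general position in every $G$-proximal fundamental representation, is indeed the heart of his proof, and you correctly identify that the delicate point is proximality of the product (control of the Jordan projection and its eigenline), not merely a Cartan-projection estimate of Iwasawa type.

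The non-emptiness-of-interior step, however, has two genuine gaps. First, your plan to show that the $\R$-linear span of $\lambda(\Gamma)$ is all of $\mathfrak{a}$ via a vanishing-polynomial argument does not get started: $\lambda(g)$ records \emph{logarithms} of eigenvalue moduli, so membership of $\lambda(g)$ in a hyperplane $\{\textstyle\sum_i c_i x_i = 0\}$ of $\mathfrak{a}$ with arbitrary real $c_i$ is the condition $\prod_i |\mu_i(g)|^{c_i}=1$, which is not an algebraic condition on $g$ (and even for integer $c_i$ it involves absolute values rather than the eigenvalues themselves). No symmetric function of eigenvalues cuts out a generic $H$, so the implication ``vanishes on $\Gamma$, hence on $G$ by Zariski-density'' never launches. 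Second, even if the span were all of $\mathfrak{a}$, this alone does not force the closure of the cone $\R_{\geq 0}\cdot\lambda(\Gamma)$ to have non-empty interior --- a union of rays spanning $\mathfrak{a}$ (for instance the coordinate axes in $\R^2$) is nowhere open. You need convexity \emph{before} drawing any interior conclusion, and your outline establishes the two properties in the wrong order. Benoist's route is the reverse: prove convexity first via the product argument, then rule out containment in a proper subspace by a representation-theoretic argument (a Zariski-dense subsemigroup contains, for each $G$-proximal fundamental representation, proximal elements; combined with convexity this forbids the limit cone from lying in a wall or, more generally, in a degenerate position), which is quite different in flavour from a polynomial-identity argument.
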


\begin{fact}[{\cite[Fait\,2.2.c]{ben05}}] \label{fact:subsemigroup-in-cone}
Let $\Gamma$ be a Zariski-dense subsemigroup of~$G$.
For any open cone $\omega$ of~$\mathfrak{a}$ meeting the limit cone $\mathcal{L}_{\Gamma}$ (Definition~\ref{def:limit-cone}), there exists an open semigroup $G'$ of~$G$ meeting~$\Gamma$ and whose limit cone is contained in~$\omega$.
\end{fact}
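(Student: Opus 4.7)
The plan is to construct $G'$ as the open subsemigroup of~$G$ generated by a small open neighborhood of a sufficiently high power of a well-chosen loxodromic element of~$\Gamma$, and then to control the Jordan projection on $G'$ by a Schottky-type ping-pong argument in the full flag variety of~$G$. The main input is Benoist's density theorem for Jordan projections of Zariski-dense subsemigroups from \cite{ben97}.

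First, I would use Zariski density to pick $\gamma_0 \in \Gamma$ that is \emph{loxodromic regular} (\ie $\lambda(\gamma_0)$ lies in the interior of~$\mathfrak{a}^+$) with $\lambda(\gamma_0) \in \omega$. This is possible because, by \cite{ben97}, the set of rays $\R_{>0}\lambda(\gamma)$ as $\gamma$ runs over the loxodromic regular elements of~$\Gamma$ is dense in the projectivised limit cone~$\calL_\Gamma$, while $\omega$ is open and meets $\calL_\Gamma$. Up to shrinking, I may even assume that $\lambda(\gamma_0)$ belongs to an open subcone $\omega_0$ of~$\omega$ whose closure is contained in $\omega \cup \{0\}$.

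Second, I fix $n$ large. Then $\gamma_0^n$ acts as a strong contraction on the full flag variety $G/P_{\min}$: its attracting and repelling flags admit disjoint open neighborhoods $U^+$ and $U^-$ whose closures are swallowed by $\gamma_0^n$ and $\gamma_0^{-n}$ respectively and which together form a Schottky configuration. Choose a small open neighborhood $V$ of $\gamma_0^n$ in~$G$. By openness of the loxodromic regular locus and continuity of $\lambda$ there, and up to shrinking~$V$, I may assume that every $g \in V$ is loxodromic regular with $\lambda(g)$ in a narrow subcone of~$\omega_0$ around $\R_{>0}\lambda(\gamma_0)$, with attracting (resp.\ repelling) flag in $U^+$ (resp.\ $U^-$), and with a uniform contraction rate. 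Setting $G' := \bigcup_{k \geq 1} V^k$ then gives an open subsemigroup of~$G$, which meets $\Gamma$ at $\gamma_0^n$.

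The heart of the argument is to control the Jordan projection of an arbitrary product $g_1 \cdots g_k$ with $g_i \in V$. The Schottky property ensures that any such product is itself loxodromic regular, with attracting (resp.\ repelling) flag in $U^+$ (resp.\ $U^-$), and the standard additivity-modulo-bounded-error estimate for the Cartan, and hence for the Jordan, projection on Schottky semigroups from~\cite{ben97} then gives
\[
\lambda(g_1 \cdots g_k) = \lambda(g_1) + \cdots + \lambda(g_k) + O(1)
\]
in~$\mathfrak{a}$, with the error depending only on~$V$ and not on~$k$. Since each $\lambda(g_i)$ lies in a narrow cone around $\R_{>0}\lambda(\gamma_0)$, choosing this cone sufficiently small from the outset forces the ray through $\lambda(g_1 \cdots g_k)$ to remain in $\omega_0$ for every $k \geq 1$. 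Hence $\lambda(g) \in \omega_0 \cup \{0\}$ for all $g \in G'$, so $\calL_{G'} \subset \overline{\omega_0} \subset \omega \cup \{0\}$, as required. The main technical obstacle is exactly this additivity estimate, which is the core of Benoist's analysis in~\cite{ben97}; once granted, everything else is a continuity-and-openness argument.
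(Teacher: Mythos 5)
The paper does not supply its own proof of this Fact; it simply cites \cite[Fait~2.2.c]{ben05} and notes that it follows from \cite[Prop.~6.2--6.3]{ben97}. So there is no in-paper argument to compare against. Your ping-pong construction is a legitimate reconstruction of Benoist's argument, and all the main moves are right: density of loxodromic Jordan directions in $\calL_\Gamma$ to find $\gamma_0$ with $\lambda(\gamma_0)$ in $\omega$; a Schottky configuration in the full flag variety for a high power $\gamma_0^n$; an open neighborhood $V$ of $\gamma_0^n$ small enough to stay inside the Schottky setup with Jordan projection in a narrow subcone; $G'=\bigcup_{k\geq 1}V^k$ as the open subsemigroup; and approximate additivity of $\lambda$ on the Schottky semigroup to control $\calL_{G'}$.

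One quantitative slip, which doesn't break the argument but should be corrected: the approximate additivity on Schottky semigroups that Benoist provides is
\[
\big\|\lambda(g_1\cdots g_k)-\lambda(g_1)-\cdots-\lambda(g_k)\big\| \leq Ck,
\]
with an error \emph{linear} in the number of factors (each concatenation contributes a bounded error coming from the $(r,\epsilon)$-proximality estimates and from comparing the Cartan and Jordan projections of proximal elements), not the $O(1)$ uniform in $k$ that you asserted. The argument still goes through because each $\lambda(g_i)$ has norm bounded below by some $R$ that you can make arbitrarily large by choosing $n$ large, so the error per factor, $C$, is negligible compared to $R$ and the direction of $\lambda(g_1\cdots g_k)$ stays in a slightly widened subcone of $\omega_0\subset\omega$. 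You should also confirm at the outset that $\omega\neq\mathfrak{a}$ (otherwise take $G'=G$), so that $0\notin\omega$ and the nonzero point of $\omega\cap\calL_\Gamma$ is genuinely approximated in direction by loxodromic Jordan projections; this is the hypothesis needed to start the construction.
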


\begin{fact} \label{fact:proj-surj}
Let $(\tau,V)$ be a finite-dimensional linear representation of~$G$, with kernel~$G_0$.
Let $\Gamma$ be a Zariski-dense discrete subgroup of~$G$ such that the Lie algebra $\mathfrak{g}_0$ of $G_0$ meets the limit cone~$\mathcal{L}_{\Gamma}$ only in~$\{0\}$.
Then $\Gamma\cap G_0$ is finite and $\tau(\Gamma)$ is discrete in $\GL(V)$.
\end{fact}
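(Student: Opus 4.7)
My plan is to deduce both conclusions from a Jordan-projection analysis tied to the ideal $\mathfrak{g}_0$ in the reductive Lie algebra~$\mathfrak{g}$. Since $G_0 = \ker\tau$ is algebraic and normal in~$G$, I would first decompose $\mathfrak{g} = \mathfrak{g}_0 \oplus \mathfrak{g}_1$ as a direct sum of ideals and correspondingly $\mathfrak{a} = \mathfrak{a}_0 \oplus \mathfrak{a}_1$ with $\mathfrak{a}_i \subset \mathfrak{g}_i$, taking the positive Weyl chamber compatibly so that $\mathfrak{a}_0^+ \subset \mathfrak{a}^+$. Since $\mathrm{d}\tau$ vanishes exactly on~$\mathfrak{g}_0$, the restricted weights of~$\tau$ are characterized by the fact that an element $X\in\mathfrak{a}$ satisfies $\mu(X)=0$ for every weight~$\mu$ of~$\tau$ if and only if $X\in\mathfrak{a}_0$.

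I would then establish the discreteness of $\tau(\Gamma)$ by contradiction: suppose $(\gamma_n)\subset\Gamma$ satisfies $\tau(\gamma_n)\to\mathrm{I}$ with $\tau(\gamma_n)\neq\mathrm{I}$. The morphism $\tau$ preserves Jordan decompositions, so the moduli of the eigenvalues of $\tau(\gamma_n)$ tend to~$1$, yielding $\mu(\lambda_G(\gamma_n))\to 0$ for every restricted weight~$\mu$ of~$\tau$. If $(\lambda_G(\gamma_n))$ is unbounded in~$\mathfrak{a}^+$, then after rescaling a subsequential limit $Y := \lim \lambda_G(\gamma_n)/\|\lambda_G(\gamma_n)\|$ is a unit vector of the closed cone~$\mathcal{L}_\Gamma$ on which every weight of~$\tau$ vanishes, \ie $Y\in\mathfrak{g}_0\cap\mathcal{L}_\Gamma$, contradicting the hypothesis. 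In the bounded case the same argument gives $\lambda_G(\gamma_n)\to 0$, and the remaining task is to leverage discreteness of~$\Gamma$ together with the Jordan decomposition to propagate this to $\gamma_n\in G_0$ eventually.

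The finiteness of $\Gamma\cap G_0$ then follows from Benoist's density theorem applied inside a Zariski closure. For any $\gamma\in\Gamma\cap G_0$, normality of~$G_0$ forces $\gamma_h\in G_0$, hence $\lambda_G(\gamma)\in\mathfrak{a}_0\subset\mathfrak{g}_0$, and the hypothesis $\mathcal{L}_\Gamma\cap\mathfrak{g}_0=\{0\}$ yields $\lambda_G(\gamma)=0$. Let $H$ be the Zariski closure of $\Gamma\cap G_0$ in~$G$; by Zariski-density of~$\Gamma$ and normality of $\Gamma\cap G_0$ in~$\Gamma$, the group $H$ is normal in~$G$, so $H^\circ$ is a connected normal algebraic, hence reductive, subgroup. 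The group $\Gamma\cap H^\circ$ is Zariski-dense in~$H^\circ$ with all elements of vanishing Jordan projection relative to~$H^\circ$, so Fact~\ref{fact:limit-cone} applied inside~$H^\circ$ forces the Cartan subspace of~$\mathfrak{h}^\circ$ to be trivial, \ie $H^\circ$ is compact; a discrete subgroup of a compact group is finite, and finiteness of $H/H^\circ$ then yields finiteness of $\Gamma\cap G_0$.

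The main obstacle I expect lies in the bounded subcase of the discreteness argument: $\lambda_G(\gamma_n)\to 0$ does not a priori control the elliptic and unipotent Jordan components of~$\gamma_n$, which could drift within~$G$. A clean way around this is to work on the quotient $G/G_0 \xrightarrow{\sim} \tau(G)$ and observe that the hypothesis $\mathcal{L}_\Gamma\cap\mathfrak{g}_0=\{0\}$ makes the projection $\mathfrak{a}\to\mathfrak{a}_1$ proper when restricted to the closed cone~$\mathcal{L}_\Gamma$; this, combined with $\tau(\gamma_n)\to\mathrm{I}$ forcing the Cartan projection of~$\tau(\gamma_n)$ to zero, yields a bounded $G/G_0$-Cartan projection for~$\gamma_n$, whence discreteness forces $\gamma_n\in G_0$ eventually, contradicting $\tau(\gamma_n)\neq\mathrm{I}$.
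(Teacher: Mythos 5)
Your proposal takes a genuinely different route from the paper's. The paper simply cites Benoist's \cite[Lem.\,6.1]{ben05} to obtain both conclusions in the quotient $G/G_0$, and then observes that the induced representation of the reductive group $G/G_0$ has finite kernel, hence carries discrete subgroups to discrete subgroups of $\GL(V)$. You instead re-derive the content of that lemma from scratch via a projection analysis attached to the ideal $\mathfrak{g}_0\subset\mathfrak{g}$, which is more self-contained but longer; the paper's delegation is simply more economical. Your decomposition $\mathfrak{a}=\mathfrak{a}_0\oplus\mathfrak{a}_1$ and the characterization of $\mathfrak{a}_0$ as the common zero set of the restricted weights of~$\tau$ are correct, and your finiteness argument for $\Gamma\cap G_0$ (normality of $G_0$ gives $\lambda_G(\gamma)=0$ for $\gamma\in\Gamma\cap G_0$; the Zariski closure $H$ is normal; Fact~\ref{fact:limit-cone} applied to the Zariski-dense $\Gamma\cap H^\circ$, all of whose Jordan projections vanish, forces $H^\circ$ to be compact) is sound.

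The discreteness half, however, needs tightening and as written has a gap you only partially fill. The opening analysis via Jordan projections $\lambda_G$ cannot work on its own, as you note: $\lambda_G(\gamma_n)\to 0$ controls neither the elliptic nor the unipotent parts of $\gamma_n$, so it does not bound $(\gamma_n)$ in~$G$. Your fix correctly switches to Cartan projections, but two things need to be stated precisely. First, the Cartan projections $\mu_G(\gamma_n)$ do \emph{not} lie in $\mathcal{L}_\Gamma$, so your appeal to properness of $\mathfrak{a}\to\mathfrak{a}_1$ restricted to $\mathcal{L}_\Gamma$ does not directly apply; what you must invoke is Benoist's theorem from \cite{ben97} that for Zariski-dense $\Gamma$ the limit cone is the asymptotic cone of $\mu_G(\Gamma)$, so that any normalized unbounded subsequence of Cartan projections accumulates on the unit sphere of $\mathcal{L}_\Gamma$. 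Then the argument closes: $\tau(\gamma_n)\to\mathrm{I}$ forces $\mu(\mu_G(\gamma_n))\to 0$ for every restricted weight $\mu$ (these control the singular values of $\tau(\gamma_n)$), hence the $\mathfrak{a}_1$-component of $\mu_G(\gamma_n)$ tends to~$0$; if $\|\mu_G(\gamma_n)\|\to\infty$, a normalized subsequential limit lies in $\mathcal{L}_\Gamma$ with vanishing $\mathfrak{a}_1$-component, i.e.\ in $\mathfrak{a}_0\cap\mathcal{L}_\Gamma=\{0\}$, a contradiction; so the $\gamma_n$ lie in a compact subset of~$G$, discreteness of $\Gamma$ makes them eventually constant, and $\tau$ of that constant is~$\mathrm{I}$, contradicting $\tau(\gamma_n)\neq\mathrm{I}$. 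Second, your phrase ``bounded $G/G_0$-Cartan projection for $\gamma_n$, whence discreteness forces $\gamma_n\in G_0$'' misstates the logic: what one bounds is the $G$-Cartan projection of $\gamma_n$ (the $\mathfrak{a}_1$-part is small \emph{and} the hypothesis bounds the $\mathfrak{a}_0$-part), and one invokes discreteness of $\Gamma$ in $G$, not discreteness of $\pi(\Gamma)$ in $G/G_0$, which is precisely what one is still in the process of proving.
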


\begin{proof}
The representation $\tau$ of~$G$ factors through the canonical projection $\pi : G\to G/G_0$.
By \cite[Lem.\,6.1]{ben05}, the group $\Gamma\cap G_0$ is finite and the image $\pi(\Gamma)$ is discrete and Zariski-dense in $G/G_0$.
The group $G/G_0$ is still the real points of a connected reductive real algebraic group.
The linear representation of $G/G_0$ induced by~$\tau$ has finite kernel, hence the image of a Zariski-dense discrete subgroup of $G/G_0$ is discrete in $\GL(V)$.
\end{proof}

\begin{fact}[{see \cite[Ch.\,VIII]{rag72} or \cite[Cor.\,5.4]{abe01}}] \label{fact:proj-ss-discrete}
Let $G^{ss}$ be the real points of the commutator subgroup of~$\sfG$, and let $\pi : G\to G^{ss}$ be the natural projection.
For any Zariski-dense discrete subgroup $\Gamma$ of~$G$, the group $\pi(\Gamma)$ is discrete and Zariski-dense in~$G^{ss}$.
\end{fact}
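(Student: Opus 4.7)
The plan is to treat Zariski density and discreteness separately. For Zariski density, since $\pi:G\to G^{ss}$ is a surjective morphism of real algebraic groups, the preimage under $\pi$ of the Zariski closure of $\pi(\Gamma)$ is a Zariski-closed subgroup of $G$ containing $\Gamma$, hence equal to $G$; so $\pi(\Gamma)$ is Zariski-dense in $G^{ss}$.

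For discreteness I would argue by contradiction. Suppose there exist pairwise distinct $\gamma_n\in\Gamma$ with $\pi(\gamma_n)\to e$ and $\pi(\gamma_n)\neq e$. The kernel $N:=\ker\pi$ lies in the center of $G$ (essentially $Z(G)^{\circ}$ in the reductive case). Using a local continuous section of $\pi$ near $e\in G^{ss}$, I would write $\gamma_n=z_n h_n$ for large $n$, with $z_n\in N$ central in $G$ and $h_n\to e$. The key observation is that centrality of $z_n$ makes the commutator with any fixed $\gamma\in\Gamma$ collapse:
\[ [\gamma,\gamma_n] \,=\, [\gamma,h_n] \,\xrightarrow[n\to\infty]{}\, e, \]
so that discreteness of $\Gamma$ forces $[\gamma,\gamma_n]=e$ for all $n\geq N(\gamma)$.

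The main obstacle is that $N(\gamma)$ depends on $\gamma$: we do not get a single $\gamma_n$ centralizing all of $\Gamma$ simultaneously. I would resolve this by a Noetherian argument on the algebraic subgroups $H_n\subset G$ obtained as the Zariski closures of $\Gamma \cap \bigcap_{m\geq n}Z_G(\gamma_m)$. These form an ascending chain which must stabilize, and since the underlying union of subsets exhausts $\Gamma$, which is Zariski-dense in $G$, the stable subgroup equals $G$. Thus for $n$ large every $\gamma_m$ with $m\geq n$ is centralized by all of $G$, i.e.\ lies in $Z(G)$. But $\pi|_{Z(G)}$ takes values in the finite group $Z(G^{ss})$ (finite because $G^{ss}$ is semisimple), so convergence $\pi(\gamma_m)\to e$ forces $\pi(\gamma_m)=e$ for $m$ large, contradicting $\pi(\gamma_m)\neq e$.
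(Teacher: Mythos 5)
The paper does not give a proof of this Fact; it is simply cited from Raghunathan and Abels (and attributed to Auslander), so your argument can only be judged on its own merits. Your strategy — split off the central kernel, write $\gamma_n = z_nh_n$ with $z_n$ central and $h_n\to e$, and deduce from the central collapse $[\gamma,\gamma_n]=[\gamma,h_n]\to e$ plus discreteness of $\Gamma$ that each fixed $\gamma$ eventually commutes with $\gamma_n$ — is correct and is indeed the heart of the matter. The Zariski-density claim is also fine. But the uniformity step contains a genuine gap: a Noetherian topological space satisfies the \emph{descending} chain condition on closed subsets, not the ascending one, and ascending chains of Zariski-closed algebraic subgroups need not terminate (e.g.\ $\mu_2\subsetneq\mu_4\subsetneq\mu_8\subsetneq\cdots$ inside $\mathbf{G}_m$ over $\C$). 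So there is no reason for your chain $H_1\subset H_2\subset\cdots$ to stabilize, and that is precisely the step where the quantifier must be made uniform.

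The gap is fixable by flipping the chain. For $n\geq 1$, let $\mathsf{A}_n$ be the Zariski closure of the subgroup generated by $\{\gamma_m : m\geq n\}$. This is a \emph{descending} chain of Zariski-closed subgroups, so it stabilizes at some $\mathsf{A}_\infty=\mathsf{A}_{n_1}$. For any $\gamma\in\Gamma$, the commutator estimate gives $[\gamma,\gamma_m]=e$ for all $m\geq n(\gamma)$, hence $\gamma$ centralizes the group generated by $\{\gamma_m : m\geq\max(n(\gamma),n_1)\}$ and therefore, since centralizers are Zariski-closed, centralizes $\mathsf{A}_\infty$. Thus $\Gamma\subset Z_G(\mathsf{A}_\infty)$; by Zariski-density of $\Gamma$ this forces $\mathsf{A}_\infty\subset Z(G)$, and in particular $\gamma_m\in Z(G)$ for all $m\geq n_1$. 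The rest of your argument ($\pi(\gamma_m)\in Z(G^{ss})$, a finite group, so $\pi(\gamma_m)=e$ for $m$ large) then closes the contradiction. An alternative patch is to invoke that a Zariski-dense subgroup of a reductive group over a characteristic-zero field contains a finitely generated Zariski-dense subgroup; once one has finitely many generators, your $\Gamma_n$ contain all of them for $n$ large and hence equal $\Gamma$. (In the paper's sole application of this Fact, inside the proof of Proposition~\ref{prop:semiprox-vcd}, the group $\Gamma$ is Gromov hyperbolic, hence finitely generated, so in that context your ascending-chain phrasing would in fact be harmless — but as a proof of the Fact as stated it needs one of the above repairs.)
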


\subsection{Cohomological dimension} \label{subsec:vcd}

Recall that the \emph{cohomological dimension} of a group~$\Gamma$ is the largest integer $n\in\N$ for which there exists a $\Z[\Gamma]$-module $\mathcal{M}$ with $H^n(\Gamma,\mathcal{M})$ non-zero.
If $\Gamma$ admits a finite-index subgroup which is torsion-free (this is the case \eg if $\Gamma$ is a finitely generated linear group, by the Selberg lemma \cite[Lem.\,8]{sel60}), then all torsion-free finite-index subgroups of~$\Gamma$ have the same cohomological dimension, called the \emph{virtual cohomological dimension} of~$\Gamma$, denoted by $\vcd(\Gamma)$.
We shall use the following property (see \cite{ser71}).

\begin{fact} \label{fact:vcd}
Let $\Gamma$ be a group admitting a torsion-free finite-index subgroup.
Suppose $\Gamma$ acts properly discontinuously on a contractible topological manifold $M$ without boundary.
Then $\vcd(\Gamma) \leq \dim(M)$, with equality if and only if the action of $\Gamma$ on~$M$ is cocompact.
\end{fact}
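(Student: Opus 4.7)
The plan is to pass to a torsion-free finite-index subgroup $\G'\leq\G$, so that $\vcd(\G)=\mathrm{cd}(\G')$ by definition, and to identify $\mathrm{cd}(\G')$ with the cohomological dimension of an Eilenberg--MacLane space. Since $\G'$ is torsion-free and acts properly discontinuously on the Hausdorff manifold~$M$, its point stabilizers (finite by proper discontinuity) are all trivial, so the action is free. The projection $M\to N:=M/\G'$ is therefore a covering map, $N$ inherits the structure of a topological $n$-manifold without boundary for $n:=\dim(M)$, and contractibility of~$M$ makes $N$ a $K(\G',1)$. For any $\Z[\G']$-module $\calM$ one then has the standard identification
\[
H^k(\G';\calM) \;\cong\; H^k(N;\tilde{\calM}),
\]
where $\tilde{\calM}$ is the associated local system on~$N$.

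The inequality $\vcd(\G)\leq n$ then follows from the classical fact that a topological $n$-manifold without boundary has sheaf-cohomological dimension at most~$n$: $H^k(N;\mathcal{F})=0$ for every $k>n$ and every abelian sheaf $\mathcal{F}$ on~$N$. Applied to $\mathcal{F}=\tilde{\calM}$, this gives $\mathrm{cd}(\G')\leq n$.

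For the equality case I would use Poincar\'e--Lefschetz duality on the $n$-manifold~$N$, which is connected since $M$ is. If the action of~$\G$ on~$M$ is cocompact, then so is the action of the finite-index subgroup~$\G'$, so $N$ is a closed $n$-manifold; Poincar\'e duality with the orientation local system $\mathcal{O}_N$ yields $H^n(N;\mathcal{O}_N)\cong H_0(N;\Z)=\Z\neq 0$, producing a $\Z[\G']$-module witnessing $\mathrm{cd}(\G')\geq n$. Conversely, if the action is not cocompact, then $N$ is a non-compact connected $n$-manifold, and Poincar\'e--Lefschetz duality reads
\[
H^n(N;\mathcal{F}) \;\cong\; H_0^{BM}(N;\mathcal{F}\otimes\mathcal{O}_N)
\]
for every local system $\mathcal{F}$; since the degree-zero Borel--Moore homology of a non-compact connected manifold vanishes, $H^n(N;\mathcal{F})=0$ for all~$\mathcal{F}$, whence $\mathrm{cd}(\G')\leq n-1<n$.

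The main obstacle is making the two duality arguments rigorous in the purely topological (as opposed to smooth or PL) category and with arbitrary local coefficient systems. The requisite form of Poincar\'e--Lefschetz duality on topological manifolds, together with the vanishing of sheaf cohomology above the dimension, is due to Wilder and Borel--Moore and is treated systematically in Bredon's \emph{Sheaf Theory} (see also Serre's 1971 paper on cohomological dimension, cited in the text); once these inputs are granted, the rest of the argument is bookkeeping.
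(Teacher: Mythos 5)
The paper gives no proof of Fact~\ref{fact:vcd}, citing Serre's \emph{Cohomologie des groupes discrets} instead; your argument is precisely the classical one behind that citation (and behind the treatment in Brown's \emph{Cohomology of Groups}, Ch.~VIII), and it is correct. The only point worth flagging is that in the non-cocompact case the vanishing $H_0^{BM}(N;\mathcal{G})=0$ for an arbitrary local system $\mathcal{G}$ on a connected non-compact topological $n$-manifold, while true, is slightly less often stated than the untwisted version; one can equivalently invoke the statement that $H^n(N;\mathcal{G})=0$ for such $N$ and any local system $\mathcal{G}$ (see Bredon's \emph{Sheaf Theory}), or, in the smooth or PL case, the fact that a non-compact $n$-manifold is homotopy equivalent to an $(n-1)$-complex. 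As you note, carrying the Poincar\'e--Lefschetz and dimension arguments through in the purely topological category with arbitrary (possibly infinitely generated) local coefficients is the genuinely technical input, and Bredon or Serre's original paper are the right places to point for it.
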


\subsection{$\H^{p,q}$-convex cocompact representations} \label{subsec:Hpq-cc-def}

Recall the notion of $\H^{p,q}$-convex cocompactness from Definition~\ref{def:Hpq-cc}.
We shall use the following characterization from \cite{dgk-proj-cc}, where the \emph{full orbital limit set} $\Lambda^{\mathsf{orb}}_{\Omega}(\Gamma)$ of $\Gamma$ in~$\Omega$ is by definition the union of all accumulation points in $\partial\Om$ of all $\Gamma$-orbits in~$\Omega$ \cite[Def.\,1.10]{dgk-proj-cc}.

\begin{fact}[{\cite[Th.\,1.24]{dgk-proj-cc}}] \label{fact:Hpq-cc-proj}
Let $p,q\geq 1$.
For an infinite discrete subgroup $\G$ of $\PO(p,q+1)$, the following are equivalent:
\begin{enumerate}
  \item $\G$ is $\hcc$-convex cocompact,
  \item\label{item:Hpq-cc-proj-2} $\G$ acts \emph{convex cocompactly} on some properly convex open subset $\Om$ of~$\H^{p,q}$, \ie $\Gamma$ preserves~$\Omega$ and the convex hull of the full orbital limit set $\Lambda^{\mathsf{orb}}_{\Omega}(\Gamma)$ in~$\Omega$ is non-empty and has compact quotient by~$\Gamma$.
\end{enumerate}
\end{fact}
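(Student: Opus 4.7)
The plan is to prove the two implications of this equivalence separately, working within the framework of convex projective geometry.

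For (1)$\Rightarrow$(2), I would set $\Om := \mathrm{Int}(\mathscr{C})$, which by hypothesis is a non-empty properly convex open subset of $\P(\R^{p,q+1})$, contained in $\H^{p,q}$ and preserved by~$\Gamma$. The main task is to identify the full orbital limit set $\Lambda^{\mathsf{orb}}_{\Om}(\Gamma)$ with the boundary at infinity $\di\mathscr{C}$ of~$\mathscr{C}$, and to show that its convex hull in $\Om$ is contained in~$\mathscr{C}$, hence has compact quotient by~$\Gamma$. Both points come from cocompactness of the $\Gamma$-action on~$\mathscr{C}$: any $\Gamma$-orbit in $\Om$ stays within bounded Hilbert distance of a compact piece of $\mathscr{C}$ and thus accumulates onto all of $\di\mathscr{C}$; conversely no accumulation in $\de_\H\mathscr{C}$ occurs by cocompactness, so the orbital limit set is exactly $\di\mathscr{C}$. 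The no-segment assumption on $\di\mathscr{C}$ is not needed here but is needed in (2)$\Rightarrow$(1).

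For (2)$\Rightarrow$(1), start with $C_0$, the convex hull of $\Lambda^{\mathsf{orb}}_\Om(\Gamma)$ in~$\Om$, which by hypothesis has compact quotient by~$\Gamma$. This set may fail to have non-empty interior, so I would thicken it: choose $\e>0$ and let $\mathscr{C}$ be the closed $\e$-neighborhood of $C_0$ in the metric space $(\Om,d_\Om)$. Since $\Gamma$ acts by isometries for the Hilbert metric $d_\Om$, the set $\mathscr{C}$ is still $\Gamma$-invariant with compact quotient; since $\Om$ is open, $\mathscr{C}$ has non-empty interior, and its closure in $\P(\R^{p,q+1})$ is properly convex, being contained in~$\ov\Om$. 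Moreover $\mathscr{C}\subset\H^{p,q}$ because uniform neighborhoods of a $d_\Om$-closed set stay in~$\Om$. Thus $\mathscr{C}$ is a closed properly convex subset of $\H^{p,q}$ with non-empty interior on which $\Gamma$ acts properly and cocompactly. What remains is to show that $\di\mathscr{C}$ contains no non-trivial projective segment.

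The main obstacle is precisely this last step. Any non-trivial projective segment in $\di\mathscr{C}\subset\di\H^{p,q}$ must be lightlike, with each endpoint $\sfb$-orthogonal to every point of the segment. I would argue by contradiction: such a degenerate boundary configuration would force a non-trivial unipotent direction in the dynamics of $\Gamma$ at infinity, and one can then extract a sequence $(\g_n)\in\G^{\N}$ of distinct elements whose action on $\Om$ fails to escape a compact set near a well-chosen basepoint, contradicting properness of the action on the properly convex open set~$\Om$. Carrying this out rigorously is the heart of the argument and requires a careful analysis of the Cartan/Jordan projections of the $\g_n$ together with a $\ov\mathscr{C}$-supporting hyperplane argument along the segment; alternatively, one may invoke the general dictionary of \cite{dgk-proj-cc} between strong convex cocompactness in projective space and convex cocompactness on a properly convex domain, where this non-degeneracy is built directly into the definitions.
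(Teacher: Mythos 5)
This statement appears in the paper as a \emph{Fact}: it is cited verbatim from \cite[Th.\,1.24]{dgk-proj-cc} and the paper itself supplies no proof, so there is no internal argument to compare against. I will therefore assess the sketch on its own.

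The sketch as written has substantive gaps. In the direction (1)$\Rightarrow$(2), the identification $\Lambda^{\mathsf{orb}}_{\Omega}(\Gamma)=\di\mathscr{C}$ is not an easy consequence of cocompactness: a priori $\Lambda^{\mathsf{orb}}_{\Omega}(\Gamma)$ could be a proper subset of $\di\mathscr{C}$, and the equality $\di\mathscr{C}=\Lambda_{\Gamma}$ is in fact recorded in Fact~\ref{fact:Hpq-cc-Ano} as an output of the entire theorem, established there via the $P_1$-Anosov characterization. You would also need to show that the convex hull of $\Lambda^{\mathsf{orb}}_{\Omega}(\Gamma)$ in $\Omega$ is closed in $\H^{p,q}$ (so that ``compact quotient'' makes sense); its closure could in principle meet $\partial_{\H}\Omega$, and ruling this out again invokes the no-segment hypothesis on $\di\mathscr{C}$ in an essential way, not merely as a remark. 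So this direction is not, as you suggest, routine.

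In the direction (2)$\Rightarrow$(1), you have correctly identified where the difficulty lies, but the argument offered for it does not stand. The claim that a non-trivial projective segment in $\di\mathscr{C}$ ``forces a non-trivial unipotent direction in the dynamics of $\Gamma$'' leading to a failure of properness is asserted without any mechanism; the $\Gamma$-action on $\Omega$ is automatically properly discontinuous for any discrete $\Gamma\leq\mathrm{Aut}(\Omega)$, so one cannot reach a contradiction with properness of the action on~$\Omega$. What has to be shown is that a segment in $\di\H^{p,q}$ (necessarily totally isotropic) is incompatible with cocompactness on the convex core, and this is genuinely the hard direction of \cite[Th.\,1.24]{dgk-proj-cc}. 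In that reference, the implication is not proved by a direct dynamical argument of the kind you suggest, but by passing through a chain of equivalences: convex cocompactness (plus the $\H^{p,q}$ positioning) forces the inclusion $\Gamma\hookrightarrow\PO(p,q+1)$ to be $P_1$-Anosov, and the transversality of the Anosov boundary map is what rules out segments. Your alternative closing suggestion (``invoke the general dictionary of \cite{dgk-proj-cc}'') simply cites the result being proved. In short: the approach identifies the right obstruction but does not supply, or even gesture with sufficient precision at, the argument that resolves it.
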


The notion of $\H^{p,q}$-convex cocompactness is closely related, by \cite{dgk18,dgk-proj-cc}, to the notion of a $P_1$-Anosov representation into $\PO(p,q+1)$.
Here $P_1$ denotes the stabilizer in $G:=\PO(p,q+1)$ of an isotropic line of $\R^{p,q+1}$; it is a parabolic subgroup of~$G$, and $G/P_1$ identifies with the boundary at infinity $\di\H^{p,q}$ of $\H^{p,q}$.

The following is not the original definition from \cite{lab06,gw12}, but an equivalent characterization taken from \cite[Th.\,4.2]{ggkw17}.
Recall the notions of \emph{transverse}, \emph{dynamics-preserving} and \emph{proximal limit set} from Section~\ref{subsec:intro-Ano}, and the Jordan projection $\lambda = (\lambda_1,\dots,\lambda_{\min(p,q+1)}) :\linebreak \PO(p,q+1)\to\mathfrak{a}^+\subset\R^{\min(p,q+1)}$ from Section~\ref{subsec:limit-cone}.

\begin{defn} \label{def:P1-Ano}
Let $\Gamma$ be a Gromov hyperbolic group.
A representation $\rho : \Gamma\to G=\PO(p,q+1)$ is \emph{$P_1$-Anosov} if there exists a continuous, $\rho$-equivariant boundary map $\xi : \di\Gamma\to\di\H^{p,q}$ such that
\begin{enumerate}
  \item[(i)] \label{item:ano-trans} $\xi$ is transverse,
  \item[(ii)] $\xi$ is dynamics-preserving and for any sequence $(\gamma_n)_{n\in\N}$ of elements of~$\Gamma$ in pairwise distinct conjugacy classes in~$\Gamma$,
  $$(\lambda_1-\lambda_2)(\rho(\gamma_n)) \underset{n\to +\infty}{\longrightarrow} +\infty.$$
\end{enumerate}
\end{defn}

As in Section~\ref{subsec:intro-Ano}, the image $\xi(\di\Gamma)$ is then the proximal limit set $\Lambda_{\rho(\Gamma)}$ of $\rho(\Gamma)$ in $\di\H^{p,q}$.

\begin{fact}[{\cite[Th.\,1.15--1.24 \& Cor.\,11.10]{dgk-proj-cc}}] \label{fact:Hpq-cc-Ano}
Let $p,q\geq 1$.
For an infinite discrete subgroup $\G$ of $\PO(p,q+1)$, the following are equivalent:
\begin{enumerate}
  \item $\G$ is $\hcc$-convex cocompact,
  \item $\G$ is Gromov hyperbolic, the inclusion $\G\hookrightarrow \PO(p,q+1)$ is $P_1$-Anosov, and $\Gamma$ preserves a non-empty properly convex open subset of~$\H^{p,q}$;
  \item $\G$ is Gromov hyperbolic, the inclusion $\G\hookrightarrow \PO(p,q+1)$ is $P_1$-Anosov, and the proximal limit set $\L_{\G}\subset \di\H^{p,q}$ is negative.
\end{enumerate}
If these conditions hold, then $\di \mathscr{C} = \di \Om = \L_{\G}$, where $\mathscr{C}$ is any $\Gamma$-invariant properly convex closed subset of $\H^{p,q}$ with non-empty interior as in Definition~\ref{def:Hpq-cc} of $\hcc$-convex cocompactness, and $\Om$ is any $\Gamma$-invariant properly convex open subset of $\H^{p,q}$ as in Fact~\ref{fact:Hpq-cc-proj}.\eqref{item:Hpq-cc-proj-2}.
If moreover $\vcd(\G)=p$, then $\L_{\G}$ is homeomorphic to a $(p-1)$-dimensional sphere.
\end{fact}

\begin{remark} \label{rem:Hp1-cc-Mess-BM}
The special case where $q=1$ and $\Gamma$ is a uniform lattice in~$\SO(p,1)$ follows from work of Mess \cite{mes90} for $p=2$ and is work of Barbot--M\'erigot \cite{bm12} for $p\geq 3$.
\end{remark}

The following is a consequence of Fact~\ref{fact:Hpq-cc-Ano} and of the fact \cite{lab06,gw12} that being $P_1$-Anosov is an open property.

\begin{fact}[{\cite[Cor.\,1.12]{dgk18} \& \cite[Cor.\,1.25]{dgk-proj-cc}}] \label{fact:Hpq-cc-open}
Let $p,q\geq 1$.
The set of $\hcc$-convex cocompact representations is open in $\Hom(\G,\PO(p,q+1))$.
\end{fact}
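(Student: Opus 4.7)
The plan is to derive this from Fact~\ref{fact:Hpq-cc-Ano} together with the openness of the $P_1$-Anosov property. Given $\rho_0 : \Gamma \to \PO(p,q+1)$ which is $\H^{p,q}$-convex cocompact, Fact~\ref{fact:Hpq-cc-Ano} gives that $\rho_0$ is $P_1$-Anosov with negative proximal limit set $\L_{\rho_0(\Gamma)}$. First I would invoke the classical openness of the $P_1$-Anosov property established by Labourie \cite{lab06} and Guichard--Wienhard \cite{gw12}: there exists a neighborhood $U$ of $\rho_0$ in $\Hom(\Gamma,\PO(p,q+1))$ consisting entirely of $P_1$-Anosov representations. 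Moreover, on $U$ the equivariant transverse boundary map $\xi_\rho : \di\Gamma \to \di\H^{p,q}$ varies continuously with~$\rho$, in the sense that $\rho_n \to \rho_0$ implies $\xi_{\rho_n} \to \xi_{\rho_0}$ uniformly on $\di\Gamma$ (a standard consequence of the dynamical characterization of Anosov representations).

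Next I would transfer the negativity of the proximal limit set from $\rho_0$ to all representations in a possibly smaller neighborhood. Using the double cover $\hat\H^{p,q}\to\H^{p,q}$ and the description in \eqref{eqn:di-Hpq-hat-quotient}, lift $\xi_{\rho_0}$ to a continuous, $\rho_0$-equivariant map $\hat\xi_{\rho_0} : \di\Gamma \to \di\hat\H^{p,q}$ with image in a single negative lift of $\L_{\rho_0(\Gamma)}$ (such a lift exists by the definition of negativity of a subset of $\di\H^{p,q}$). By Lemma~\ref{lem:b-di-Hpq-hat}, negativity is then captured on lifts by the open condition $\sfb(v_1,v_2) < 0$. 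Transversality of $\xi_{\rho_0}$ together with compactness of $\di\Gamma$ (and of the complement of a neighborhood of the diagonal in $\di\Gamma\times\di\Gamma$) yields a uniform upper bound $\sfb(\hat\xi_{\rho_0}(\eta),\hat\xi_{\rho_0}(\eta')) \leq -\e$ for some $\e>0$, and near the diagonal $\sfb$ is controlled using the conformal model of Proposition~\ref{p.doublecoverasproduct} and Lemma~\ref{lem:b-di-Hpq-hat}. The uniform convergence $\xi_{\rho_n} \to \xi_{\rho_0}$ allows to lift each $\xi_{\rho_n}$ continuously to $\hat\xi_{\rho_n}$ close to $\hat\xi_{\rho_0}$ (using the equivariance and a base point), so that the strict inequality $\sfb(\hat\xi_{\rho_n}(\eta),\hat\xi_{\rho_n}(\eta')) < 0$ persists for all $\eta\neq\eta'$ and all $\rho_n$ in some smaller neighborhood of~$\rho_0$. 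Thus the proximal limit set $\L_{\rho(\Gamma)} = \xi_\rho(\di\Gamma)$ is negative for all $\rho$ in this neighborhood.

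Finally, applying Fact~\ref{fact:Hpq-cc-Ano} to each such $\rho$ concludes that $\rho$ is $\H^{p,q}$-convex cocompact, proving openness. The main obstacle is the careful sign-choice argument needed to upgrade the transversality of the boundary map to its uniform negativity under perturbation: transversality alone only gives $\sfb(\hat\xi(\eta),\hat\xi(\eta')) \neq 0$, and one must use the cocycle of lifts together with the connectedness and compactness of $\di\Gamma$ to ensure the sign remains globally negative after perturbation.
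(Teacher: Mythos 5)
Your high-level route---combine Fact~\ref{fact:Hpq-cc-Ano} with the openness of the $P_1$-Anosov property from \cite{lab06,gw12} and show that negativity of the proximal limit set persists---is exactly the one-sentence justification the paper gives immediately before this Fact (the Fact itself is simply cited to \cite{dgk18,dgk-proj-cc}). The away-from-diagonal part of your sign argument is fine. What does not close is the near-diagonal case, which you yourself single out as the main obstacle at the end: writing that ``near the diagonal $\sfb$ is controlled using the conformal model'' is not an argument. In the splitting of Lemma~\ref{l.nonpossets}, negativity amounts to the lifted limit set being the graph of a \emph{strictly} $1$-Lipschitz map, and strict $1$-Lipschitzness is not a $C^0$-open condition near coinciding points: the modulus of strictness can degenerate arbitrarily fast, so $C^0$-closeness of $\xi_{\rho_n}$ to $\xi_{\rho_0}$ does not by itself exclude a near-diagonal sign flip.

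The clean completion when $\di\Gamma$ is connected with at least three points is simpler than what you wrote: $\di\Gamma^{(2)}$ is then connected, the function $(\eta,\eta')\mapsto\sfb(\hat\xi_{\rho_n}(\eta),\hat\xi_{\rho_n}(\eta'))$ is continuous and nowhere zero on $\di\Gamma^{(2)}$ by transversality of the Anosov boundary map, and it is negative on the away-from-diagonal compact set for $n$ large, hence negative everywhere; no separate near-diagonal estimate is required. But the Fact, and \cite[Cor.\,1.12]{dgk18}, are stated for arbitrary Gromov hyperbolic $\Gamma$, including free groups whose boundary is a Cantor set, so the connectedness you invoke in your last paragraph is not among the hypotheses. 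In that generality one must do more: for instance, use the cocompact $\Gamma$-action on $\di\Gamma^{(3)}$ together with the lift-independent sign of the triple product $\sfb(\hat\xi(\eta),\hat\xi(\eta'))\,\sfb(\hat\xi(\eta'),\hat\xi(\eta''))\,\sfb(\hat\xi(\eta),\hat\xi(\eta''))$ to move a hypothetical near-diagonal bad pair into a fixed compact subset of $\di\Gamma^{(3)}$; or follow the cited references themselves, which obtain openness not from a sign computation on the limit set but from the openness of strong convex cocompactness for properly convex $\Gamma$-invariant domains in $\P(\R^{p+q+1})$ (\cite[Cor.\,1.25]{dgk-proj-cc}).
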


\subsection{Closedness of injective and discrete representations}

The following is a classical consequence of the Kazhdan--Margulis--Zassenhaus theorem (see \eg \cite[Fait\,2.5]{ben05}).
It will be used in Sections \ref{subsec:Lambda-nondegen}, \ref{subsec:proof-main-thm}, and~\ref{subsec:proof-main-thm-general}.

\begin{fact} \label{fact:fd-closed}
Let $\Gamma$ be a finitely generated group with no infinite nilpotent normal subgroups, and let $G$ be a real semi-simple Lie group.
Then the set of injective and discrete representations is closed in~$\Hom(\Gamma,G)$.
\end{fact}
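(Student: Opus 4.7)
The plan is to argue by contradiction from the assumption that a limit $\rho$ of injective and discrete representations $\rho_n$ fails itself to be injective and discrete. The main tool is the Kazhdan--Margulis--Zassenhaus theorem: there exists a neighborhood $U$ of the identity in $G$ such that any discrete subgroup of $G$ generated by elements of $U$ is nilpotent. The second key ingredient is the structure of $\Gamma$: as a torsion-free hyperbolic group, every nilpotent subgroup of $\Gamma$ is cyclic, since centralizers of nontrivial elements are cyclic; and the non-cyclic assumption forces, via a translation-length argument applied to a conjugate $\delta\gamma\delta^{-1} \in \langle \gamma \rangle$, that $\Gamma$ has no non-trivial normal abelian subgroup.

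Assume $\rho_n \to \rho$ with each $\rho_n$ injective and discrete, and that $\rho$ is not both. I would first show that one can produce two non-commuting elements $\beta_1, \beta_2 \in \Gamma$ with $\rho(\beta_i)$ arbitrarily close to $e$. If $\rho$ is not injective, then $\ker\rho$ is a non-trivial normal subgroup of $\Gamma$, hence non-abelian by the structural fact above, and the desired pair can be chosen inside $\ker\rho$. If $\rho$ is injective but not discrete, pick distinct $\alpha_k \in \Gamma$ with $\rho(\alpha_k) \to e$: if two of them do not commute we are done; otherwise the $\alpha_k$ all lie in a common maximal cyclic subgroup $\langle\eta\rangle$, and picking $\delta \in \Gamma$ not commuting with $\eta$ (possible since $\Gamma$ is non-cyclic) produces two non-commuting elements $\alpha_k$ and $\delta\alpha_k\delta^{-1}$, whose $\rho$-image still goes to $e$.

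Once such $\beta_1, \beta_2$ are in hand, I would choose $n$ large enough that $\rho_n(\beta_1), \rho_n(\beta_2) \in U$. Since $\rho_n(\Gamma)$ is discrete and $\rho_n$ is injective, the Kazhdan--Margulis--Zassenhaus theorem forces $\langle\beta_1,\beta_2\rangle \simeq \langle \rho_n(\beta_1), \rho_n(\beta_2)\rangle$ to be nilpotent, hence cyclic by the structural fact recalled above, contradicting the non-commutativity of $\beta_1$ and $\beta_2$. The main obstacle, and the point where both hypotheses (non-cyclic and torsion-free hyperbolic) are used in an essential way, is the production of the non-commuting pair $\beta_1, \beta_2$ with small $\rho$-image; this is what requires the absence of nontrivial normal abelian subgroups of $\Gamma$ (in the non-injective case) and the conjugation trick with a non-centralizing element (in the non-discrete case).
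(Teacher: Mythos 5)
Your proof is correct and is precisely the classical Kazhdan--Margulis--Zassenhaus argument that the paper invokes (it cites it as ``classical'', referring to [Bar15, Prop.~6.1], and does not write it out). Both ingredients you isolate are the right ones: the Zassenhaus neighborhood forcing $\langle\rho_n(\beta_1),\rho_n(\beta_2)\rangle$ to be nilpotent when $\rho_n(\beta_1),\rho_n(\beta_2)\in U$, and the fact that in a torsion-free non-cyclic hyperbolic group any nilpotent subgroup is cyclic while no non-trivial normal subgroup is abelian.

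One sub-step in the non-discrete case deserves a sentence, as it is asserted rather than argued: that choosing $\delta\notin C(\eta)$ makes $\alpha_k$ and $\delta\alpha_k\delta^{-1}$ non-commuting. This is true but not a tautology; the short verification is that if they commuted then, because any two commuting infinite-order elements of a torsion-free hyperbolic group have the same (cyclic) centralizer and because $C(\eta^m)=C(\eta)$ for $m\neq 0$, one would get $\delta C(\eta)\delta^{-1}=C(\delta\alpha_k\delta^{-1})=C(\alpha_k)=C(\eta)$, so $\delta$ normalizes the maximal cyclic $C(\eta)=\langle\eta'\rangle$, whence $\delta\eta'\delta^{-1}=\eta'^{\pm1}$; the case $+1$ puts $\delta\in C(\eta)$ and the case $-1$ gives $\delta^2\in C(\eta')$, which in a torsion-free group again forces $\delta\in C(\eta')=C(\eta)$ — both contradictions. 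With that filled in, the argument is complete.
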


\subsection{Maximal spacelike submanifolds} \label{subsec:max-submfd}

Let $(X,\sfg)$ be $\H^{p,q}$, $\hat{\H}^{p,q}$, or more generally a pseudo-Riemannian $C^1$ manifold of signature $(p,q)$.
We shall call \emph{spacelike submanifold of~$X$} any immersed $C^1$ submanifold $M$ of~$X$ such that the restriction of the pseudo-Riemannian metric $\sfg$ to $TM$ is Riemannian, \ie pointwise positive definite.

If $M$ is a spacelike submanifold of~$X$, then the pull-back bundle of $TX$ to $M$ splits orthogonally as $TM\oplus NM$, where $NM$ is the \emph{normal bundle} to $M$ in~$X$.
Assuming $M$ and $X$ to be $C^2$, the \emph{second fundamental form} $\mathrm{II}$ of~$M$ is the symmetric tensor on $TM$ whose value at any point $m\in M$ is the symmetric bilinear form $\mathrm{II}_m:T_mM\times T_mM\to N_mM$ defined by the equation
$$\sf g_m\big((\nabla_Y Z)_m,\xi\big) = \sf g_m\big(\mathrm{II}_m(Y_m,Z_m),\xi\big)$$
for all vector fields $Y,Z$ on~$M$ and all $\xi\in N_mM$, where $\nabla$ is the (pseudo-Riemannian) Levi--Civita connection of $(\H^{p,q},\sf g)$.
The \emph{mean curvature $H : M\to NM$} of $M$ is the trace of $\mathrm{II}$ divided by $j := \dim(M)$: for any $m\in M$ and any orthonormal basis $(e_1,\ldots,e_j)$ of $T_m M$,
$$H(m) = \frac{1}{j} \sum_{i=1}^j \mathrm{II}(e_i,e_i) \in N_mM.$$

\begin{defn} \label{def:max-submfd}
A spacelike submanifold $M$ of a pseudo-Riemannian $C^2$ manifold $(X,\sfg)$ is \emph{maximal} if $M$ is at least $C^2$ and the mean curvature of~$M$ vanishes.
\end{defn}

The terminology comes from the fact that in pseudo-Riemannian geometry of signature $(p,q)$ with $q\geq 1$, the $p$-dimensional maximal spacelike submanifolds locally maximize the $p$-dimensional volume: see \cite[Cor.\,3.24]{ltw} for $p=2$.

\section{Non-positive spheres and weakly spacelike graphs} \label{sec:nonpos-spheres-weakly-sp-gr}

In this section we introduce and discuss the notions of a \emph{non-positive $(p-1)$-sphere} in $\di\H^{p,q}$ or $\di\hat\H^{p,q}$ (Definition~\ref{def:non-pos-sphere}) and of a \emph{weakly spacelike $p$-graph} in~$\H^{p,q}$ (Definition~\ref{def:weakly-sp-gr}), which will be used throughout the rest of the paper.

\subsection{Non-positive spheres in $\di\hat{\H}^{p,q}$ and $\di\H^{p,q}$}

The following notion generalizes, to $\di\H^{p,q}$, the notion of an achronal topological sphere in the Einstein universe $\mathrm{Ein}^p = \di\H^{p,1}$.

\begin{defn} \label{def:non-pos-sphere}
Let $0\leq\ell\leq p-1$.
A \emph{non-positive $\ell$-sphere in $\di \hat \H^{p,q}$} is a subset of $\di \hat \H^{p,q}$ which is non-positive (Definition~\ref{def:nonpos-neg}) and homeomorphic to an $\ell$-sphere.
A \emph{non-positive $\ell$-sphere in $\di \H^{p,q}$} is the projection to $\H^{p,q}$ of a non-positive $\ell$-sphere in $\di \hat \H^{p,q}$.
\end{defn}

We will be particularly interested in non-positive $\ell$-spheres for $\ell=p-1$.

\begin{remark} \label{rem:non-pos-sphere-not-sphere}
Despite its name, a non-positive $(p-1$)-sphere in $\di\H^{p,q}$ is not always homeomorphic to a $(p-1)$-sphere: for $p\geq 2$, this is the case if and only if $\L$ is non-degenerate (see Proposition~\ref{prop:lift-non-pos-sphere} below).
\end{remark}

For any $n\geq 1$, we endow the sphere $\bS^n$ with its standard spherical metric, and we denote the standard Euclidean inner product on~$\R^n$ by $\langle\cdot,\cdot\rangle_n$, as in Notation~\ref{not:Sn}.

\begin{lem} \label{l.nonpossets}
Fix a splitting $\di\hat\H^{p,q} \simeq \bS^{p-1}\times\bS^q$ defined by the choice of a timelike $(q+1)$-plane of $\R^{p,q+1}$ as in \eqref{eqn:di-Psi}.
A subset $\hat \L$ of $\di\hat \H^{p,q}$ is non-positive (\resp negative) (Definition~\ref{def:nonpos-neg}) if and only if, in this splitting, it is the graph of a $1$-Lipschitz (\resp strictly $1$-Lipschitz) map $f : A\to\bS^q$ for some subset $A$ of $\bS^{p-1}$.
\end{lem}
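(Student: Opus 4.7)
The plan is to reduce the statement to Lemma~\ref{lem:b-di-Hpq-hat} applied to pairs of points at infinity, after identifying the restriction of the ambient spherical distance to the equator with the intrinsic spherical distance.

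First I would write any $\hat x \in \di\hat\H^{p,q}$ in the splitting~\eqref{eqn:di-Psi} as $\di\Psi_T(\hat x) = (u,u') \in \bS^{p-1} \times \bS^q$, viewing $\bS^{p-1}$ as the equator $\{y_0 = 0\}$ inside $\ov\bB^p \subset \bS^p$. Since the equator is a totally geodesic great subsphere of~$\bS^p$, its induced distance equals the intrinsic distance of $\bS^{p-1}$, so $d_{\ov\bB^p}(u_1,u_2) = d_{\bS^{p-1}}(u_1,u_2)$ for $u_1,u_2 \in \bS^{p-1}$. Lemma~\ref{lem:b-di-Hpq-hat} then says that, for any two points $\hat x_i = [v_i] \in \di\hat\H^{p,q}$ with representatives $v_i$ in the $\R_{>0}$-quotient model of~\eqref{eqn:di-Hpq-hat-quotient},
\[
\sfb(v_1,v_2) \leq 0 \iff d_{\bS^{p-1}}(u_1,u_2) \geq d_{\bS^q}(u'_1,u'_2),
\]
with the strict inequality on the left equivalent to the strict inequality on the right.

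Now suppose $\hat\L \subset \di\hat\H^{p,q}$ is non-positive. Taking two points of $\hat\L$ with the same first coordinate $u_1=u_2$ in the above inequality forces $d_{\bS^q}(u'_1,u'_2) \le 0$, hence $u'_1 = u'_2$: the projection $\pi_1 : \di\hat\H^{p,q} \to \bS^{p-1}$ is injective on $\hat\L$, so $\hat\L = \gph(f)$ for a uniquely defined map $f : A \to \bS^q$ on the set $A := \pi_1(\hat\L)$. Applying the inequality to arbitrary pairs then yields $d_{\bS^q}(f(u_1),f(u_2)) \leq d_{\bS^{p-1}}(u_1,u_2)$, so $f$ is $1$-Lipschitz. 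The converse is immediate: if $\hat\L$ is the graph of a $1$-Lipschitz $f : A \to \bS^q$, then the inequality holds for any two points of $\hat\L$, so Lemma~\ref{lem:b-di-Hpq-hat} gives $\sfb(v_1,v_2) \leq 0$, \ie $\hat\L$ is non-positive.

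The negative/strictly $1$-Lipschitz case is proved identically, using the strict version of Lemma~\ref{lem:b-di-Hpq-hat}; note that the distinct points required by the strict Lipschitz condition correspond exactly to distinct pairs in $\hat\L$, since we have already established that $\hat\L$ is a graph over~$A$. The only subtle point in the argument is the identification of the induced distance on the equator with the spherical distance on $\bS^{p-1}$, which allows the Lemma~\ref{lem:b-di-Hpq-hat} comparison (stated using $d_{\ov\bB^p}$, which is designed for interior points) to translate cleanly into a statement purely at infinity; the remaining steps are elementary extraction of the graph and Lipschitz conditions from the resulting inequality.
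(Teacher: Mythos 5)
Your proof is correct and follows essentially the same route as the paper's: both reduce the lemma to Lemma~\ref{lem:b-di-Hpq-hat}, deduce from the resulting inequality $d_{\bS^{p-1}}(u_1,u_2)\geq d_{\bS^q}(u_1',u_2')$ that the first-factor projection is injective on $\hat\L$ (so $\hat\L$ is a graph), and then read off the ($1$-)Lipschitz condition; the converse direction is the same implication run backwards. The one detail you make explicit — that the restriction of $d_{\ov\bB^p}$ to the equator coincides with $d_{\bS^{p-1}}$, since the equator is a totally geodesic great subsphere of $\bS^p$ contained in $\ov\bB^p$ — is correct and is indeed used (implicitly) in the paper's own proof, so your added care there is fine but not a material departure.
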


Here \emph{strictly $1$-Lipschitz} means that $d_{\bS^q}(f(u_1),f(u_2)) < d_{\bS^{p-1}}(u_1,u_2)$ for all $u_1\neq u_2$~in~$A$.

\begin{proof}
Suppose $\hat\L \subset \di\hat \H^{p,q}$ is non-positive (\resp negative): for any $v_1,v_2\in\R^{p,q+1}$ projecting to elements $[v_1]\neq [v_2]$ of $\hat\L$ via \eqref{eqn:di-Hpq-hat-quotient}, we have $\sfb(v_1,v_2)\leq 0$ (\resp $\sfb(v_1,v_2)<0$).
By Lemma~\ref{lem:b-di-Hpq-hat}, if $[v_i] \in \di\hat\H^{p,q}$ corresponds to $(u_i,u'_i) \in \bS^{p-1}\times\bS^q$ in our splitting, then $d_{\bS^{p-1}}(u_1,u_2) \geq d_{\bS^q}(u'_1,u'_2)$ (\resp $d_{\bS^{p-1}}(u_1,u_2) > d_{\bS^q}(u'_1,u'_2)$).
Therefore the restriction to~$\hat{\L}$ of the first-factor projection $\bS^{p-1}\times\bS^q \to \bS^{p-1}$ is injective, and so $\hat{\L}$ can be written as the graph of a map $f : A\to\bS^q$ for some subset $A$ of~$\bS^{p-1}$.
Moreover, the fact that $d_{\bS^{p-1}}(u_1,u_2) \geq d_{\bS^q}(u'_1,u'_2)$ (\resp $d_{\bS^{p-1}}(u_1,u_2) > d_{\bS^q}(u'_1,u'_2)$) implies that $f$ is $1$-Lipschitz (\resp strictly $1$-Lipschitz).

Conversely, for any subset $A$ of $\bS^{p-1}$ and any $1$-Lipschitz (\resp strictly $1$-Lipschitz) map $f : A\to\bS^q$, the graph of~$f$ in $\bS^{p-1}\times \bS^q \simeq \di\hat\H^{p,q}$ is non-positive (\resp negative) (Definition~\ref{def:nonpos-neg}) by Lemma~\ref{lem:b-di-Hpq-hat}.
\end{proof}

\begin{cor} \label{cor:non-positive-sphere-as-graph}
For a subset $\hat{\L}$ of $\di\hat\H^{p,q}$, the following are equivalent:
\begin{enumerate}[label=(\arabic*),ref=(\arabic*)]
  \item\label{item:non-pos-sph-1} $\hat{\L}$ is a non-positive $(p-1)$-sphere in $\di \hat \H^{p,q}$ (Definition~\ref{def:non-pos-sphere});
\end{enumerate}

\vspace{-0.15cm}

\begin{enumerate}[label=(\arabic*),ref=(\arabic*)]\setcounter{enumi}{1}
  \item\label{item:non-pos-sph-2} for \emph{any} splitting $\di\hat\H^{p,q} \simeq \bS^{p-1}\times\bS^q$ as in \eqref{eqn:di-Psi}, defined by the choice of a timelike $(q+1)$-plane of $\R^{p,q+1}$, the set $\hat{\L}$ is in this splitting the graph of a $1$-Lipschitz map $f : \bS^{p-1}\to\bS^q$;
\end{enumerate}

\vspace{-0.15cm}

\begin{enumerate}[label=(\arabic*)',ref=(\arabic*)']\setcounter{enumi}{1}
  \item\label{item:non-pos-sph-2-bis} for \emph{some} splitting $\di\hat\H^{p,q} \simeq \bS^{p-1}\times\bS^q$ as in \eqref{eqn:di-Psi}, defined by the choice of a timelike $(q+1)$-plane of $\R^{p,q+1}$, the set $\hat{\L}$ is in this splitting the graph of a $1$-Lipschitz map $f : \bS^{p-1}\to\bS^q$.
\end{enumerate}
\end{cor}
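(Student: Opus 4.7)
The plan is to prove \ref{item:non-pos-sph-2}$\Rightarrow$\ref{item:non-pos-sph-2-bis}$\Rightarrow$\ref{item:non-pos-sph-1}$\Rightarrow$\ref{item:non-pos-sph-2}. The implication \ref{item:non-pos-sph-2}$\Rightarrow$\ref{item:non-pos-sph-2-bis} is tautological.

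For \ref{item:non-pos-sph-2-bis}$\Rightarrow$\ref{item:non-pos-sph-1}, fix a splitting as in the statement and let $f : \bS^{p-1}\to\bS^q$ be a $1$-Lipschitz map whose graph is~$\hat{\L}$. By Lemma~\ref{l.nonpossets}, $\hat{\L}$ is non-positive in $\di\hat{\H}^{p,q}$. Moreover, since $f$ is continuous, its graph is homeomorphic to its domain $\bS^{p-1}$, so $\hat{\L}$ is a non-positive $(p-1)$-sphere.

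The main content is \ref{item:non-pos-sph-1}$\Rightarrow$\ref{item:non-pos-sph-2}. Fix any splitting $\di\hat{\H}^{p,q}\simeq \bS^{p-1}\times\bS^q$. By Lemma~\ref{l.nonpossets}, $\hat{\L}$ is the graph of a $1$-Lipschitz map $f : A\to\bS^q$ for some subset $A\subseteq\bS^{p-1}$. The whole task is to show that $A=\bS^{p-1}$. First, the first-factor projection $\pi : \bS^{p-1}\times\bS^q\to\bS^{p-1}$ restricts on $\hat{\L}$ to a continuous bijection onto~$A$; since $\hat{\L}$ is compact (being homeomorphic to $\bS^{p-1}$) and $A$ is Hausdorff, this restriction is a homeomorphism, hence $A$ itself is homeomorphic to $\bS^{p-1}$.

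It remains to deduce $A=\bS^{p-1}$ from the fact that $A\subseteq\bS^{p-1}$ is a topological $(p-1)$-sphere. For $p\geq 2$, this follows from invariance of domain: the inclusion $A\hookrightarrow\bS^{p-1}$ is a continuous injection between topological manifolds of equal dimension, hence open; thus $A$ is both open and compact (therefore closed) in the connected manifold~$\bS^{p-1}$, and non-empty, so $A=\bS^{p-1}$. For $p=1$, the set $A\subseteq\bS^0$ has two elements since $\hat{\L}$ does, so again $A=\bS^0$. The main (minor) obstacle is this last topological step, which is handled by invariance of domain together with the connectedness of~$\bS^{p-1}$.
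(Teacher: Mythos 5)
Your proof is correct and follows essentially the same route as the paper: Lemma~\ref{l.nonpossets} to reduce to showing $A=\bS^{p-1}$, then invariance of domain plus connectedness of $\bS^{p-1}$ (with $p=1$ handled separately). The only cosmetic difference is that you invoke compactness of $\hat\L$ to get the homeomorphism onto $A$, where the paper simply observes that projection from the graph of a continuous map is always a homeomorphism onto the domain.
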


\begin{proof}
\ref{item:non-pos-sph-1}~$\Rightarrow$~\ref{item:non-pos-sph-2}: Suppose that $\hat\L \subset \di\hat\H^{p,q} \simeq \bS^{p-1}\times \bS^q$ is a non-positive $(p-1)$-sphere.
Consider a splitting $\di\hat\H^{p,q} \simeq \bS^{p-1}\times\bS^q$ as in \eqref{eqn:di-Psi}, defined by the choice of a timelike $(q+1)$-plane of $\R^{p,q+1}$.
By Lemma~\ref{l.nonpossets}, the set $\hat\L$ is the graph of a $1$-Lipschitz map $f : A\to\bS^q$ for some subset $A$ of $\bS^{p-1}$.
Since $f$ is continuous, the first-factor projection restricts to a homeomorphism between $\hat\L$ and~$A$, hence $A$ is a compact subset of $\bS^{p-1}$ homeomorphic to a $(p-1)$-sphere.
If $p=1$, this implies $A = \bS^{p-1}$.
Suppose $p\geq 2$.
By the domain invariance theorem, any point $\hat x\in\hat\L$ admits an open neighborhood in~$\hat\L$ which is mapped by~$f$ to an open subset of~$\bS^{p-1}$ contained in~$A$, hence $A$ is open in $\bS^{p-1}$.
Since $\bS^{p-1}$ is connected we deduce $A=\bS^{p-1}$.

\ref{item:non-pos-sph-2}~$\Rightarrow$~\ref{item:non-pos-sph-2-bis} is clear.

\ref{item:non-pos-sph-2-bis}~$\Rightarrow$~\ref{item:non-pos-sph-1}: Suppose that in some splitting $\di\hat\H^{p,q} \simeq \bS^{p-1}\times\bS^q$ as in \eqref{eqn:di-Psi}, the set $\hat{\L}$ is the graph of a 1-Lipschitz map $f : \bS^{p-1}\to\bS^q$.
Then $\hat\L$ is a non-positive subset of $\di\hat\H^{p,q}$ by Lemma~\ref{l.nonpossets}.
The graph $\hat\L \subset \bS^{p-1}\times\bS^q \simeq \di\hat\H^{p,q}$ is homeomorphic to the first factor $\bS^{p-1}$, hence it is a non-positive $(p-1)$-sphere in $\di\hat\H^{p,q}$.
\end{proof}

\begin{examples} \label{ex:non-pos-sphere}
Consider a splitting $\di\hat\H^{p,q} \simeq \bS^{p-1}\times\bS^q$ as in \eqref{eqn:di-Psi}, defined by the choice of a timelike $(q+1)$-plane $T$ of $\R^{p,q+1}$.

\smallskip

(i) If $f : \bS^{p-1}\to\bS^q$ is a constant map, then the graph of~$f$ is a non-positive $(p-1)$-sphere $\hat{\L}$ in $\di\hat\H^{p,q}$ which is the intersection of $\di\hat\H^{p,q}$ with a linear subspace of $\R^{p,q+1}$ of signature $(p,1|0)$ containing~$T^{\perp}$.
  This $\hat{\L}$ is non-degenerate, and in fact negative (Definition~\ref{def:nonpos-neg}); it is the boundary of a copy of $\H^p$ in~$\hat\H^{p,q}$.

\smallskip

(ii) If $p\leq q+1$ and if $f : \bS^{p-1}\to\bS^q$ is given by $(t_1,\dots,t_p) \mapsto (|t_1|,\dots,|t_p|,0,\dots,0)$, then the graph of~$f$ is a non-degenerate non-positive $(p-1)$-sphere $\hat{\L}$ in $\di\hat\H^{p,q}$, which spans a linear subspace of $\R^{p,q+1}$ of signature $(p,p|0)$ containing~$T^{\perp}$.
The image $\L$ of $\hat{\L}$ in $\di\H^{p,q}$ is the intersection of $\di\H^{p,q}$ with the boundary of an open projective simplex $\mathcal{O}$ of $\P(\spa(\L))$ contained in $\H^{p,q}$, with vertices $x_1^{\pm},\dots,x_p^{\pm}$ where $x_i^+$ and $x_i^-$ are in spacelike position, and $\spa(x_i^+,x_i^-)$ and $\spa(x_j^+,x_j^-)$ are orthogonal for all $1\leq i<j\leq p$.
(Such a set $\{x_1^{\pm},\dots,x_p^{\pm}\}$ of vertices will be called a \emph{$p$-crown} in Section~\ref{subsec:foliate-crown}.)
More precisely, $\L$ is the union of $2^p$ closed faces of~$\overline{\mathcal{O}}$ of dimension $p-1$, each determined by $p$ vertices of the form $x_1^{\varepsilon_1},\dots,x_p^{\varepsilon_p}$ for $\varepsilon_1,\dots,\varepsilon_p\in\{\pm\}$ (see Figure~\ref{fig:A_Corbit} in Section~\ref{subsec:weakly-spacelike-graphs} below for $(p,q) = (2,1)$).

\smallskip

(iii) If $p\leq q+1$ and if $f : \bS^{p-1}\to\bS^q$ is given by $(t_1,\dots,t_p) \mapsto (t_1,\dots,t_p,0,\dots,0)$, then the graph of~$f$ is a degenerate non-positive $(p-1)$-sphere $\hat{\L}$ in $\di\hat\H^{p,q}$, which spans a totally isotropic $p$-dimensional linear subspace of $\R^{p,q+1}$.
\end{examples}

Here is a useful consequence of Corollary~\ref{cor:non-positive-sphere-as-graph}.
We consider convergence for the Hausdorff topology.

\begin{cor} \label{c.limit-non-pos-sphere}
For $X = \hat\H^{p,q}$ or $\H^{p,q}$, any sequence of non-positive $(p-1)$-spheres in $\di X$ admits a subsequence that converges to a non-positive $(p-1)$-sphere in $\di X$.
\end{cor}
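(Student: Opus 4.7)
My plan is to reduce the statement to a straightforward Arzelà--Ascoli argument using the graph description of non-positive $(p-1)$-spheres from Corollary~\ref{cor:non-positive-sphere-as-graph}.

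First I would handle the case $X = \hat\H^{p,q}$. Fix once and for all a splitting $\di\hat\H^{p,q} \simeq \bS^{p-1}\times\bS^q$ coming from some timelike $(q+1)$-plane of $\R^{p,q+1}$, as in \eqref{eqn:di-Psi}. By the implication \ref{item:non-pos-sph-1}~$\Rightarrow$~\ref{item:non-pos-sph-2} of Corollary~\ref{cor:non-positive-sphere-as-graph}, any non-positive $(p-1)$-sphere $\hat\L_n \subset \di\hat\H^{p,q}$ is the graph of some $1$-Lipschitz map $f_n : \bS^{p-1}\to\bS^q$. The family $(f_n)_{n\in\N}$ is equicontinuous (all maps are $1$-Lipschitz) and pointwise bounded (the target $\bS^q$ is compact), so Arzelà--Ascoli provides a subsequence $(f_{n_k})$ converging uniformly to a map $f : \bS^{p-1}\to\bS^q$, which is automatically $1$-Lipschitz. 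Uniform convergence of $f_{n_k}\to f$ on the compact space $\bS^{p-1}$ translates into Hausdorff convergence of the graphs in $\bS^{p-1}\times\bS^q \simeq \di\hat\H^{p,q}$. Hence the subsequence $\hat\L_{n_k}$ converges in Hausdorff topology to the graph $\hat\L$ of~$f$, and the implication \ref{item:non-pos-sph-2-bis}~$\Rightarrow$~\ref{item:non-pos-sph-1} of Corollary~\ref{cor:non-positive-sphere-as-graph} guarantees that $\hat\L$ is a non-positive $(p-1)$-sphere in $\di\hat\H^{p,q}$.

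For the case $X = \H^{p,q}$, I would reduce to the double-cover case. By Definition~\ref{def:non-pos-sphere}, each non-positive $(p-1)$-sphere $\L_n \subset \di\H^{p,q}$ is the image under the projection $\pi : \di\hat\H^{p,q}\to\di\H^{p,q}$ of some non-positive $(p-1)$-sphere $\hat\L_n \subset \di\hat\H^{p,q}$. Applying the previous paragraph, a subsequence $\hat\L_{n_k}$ Hausdorff-converges to a non-positive $(p-1)$-sphere $\hat\L \subset \di\hat\H^{p,q}$. Since $\pi$ is continuous (in fact a local homeomorphism of compact spaces), Hausdorff convergence is preserved under pushforward, so $\L_{n_k} = \pi(\hat\L_{n_k})$ converges in Hausdorff topology to $\pi(\hat\L)$, which is by definition a non-positive $(p-1)$-sphere in $\di\H^{p,q}$.

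There is essentially no obstacle here; the only minor point to check is the equivalence between uniform convergence of graphs of $1$-Lipschitz maps and Hausdorff convergence of their graphs, which is a standard consequence of compactness of the domain. All the geometric content has already been packaged into Corollary~\ref{cor:non-positive-sphere-as-graph}, so the statement reduces to a clean compactness argument for $1$-Lipschitz maps between compact metric spaces.
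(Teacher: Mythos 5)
Your proof is correct and follows essentially the same approach as the paper: fix a splitting $\di\hat\H^{p,q}\simeq\bS^{p-1}\times\bS^q$, use Corollary~\ref{cor:non-positive-sphere-as-graph} to write each sphere as the graph of a $1$-Lipschitz map, apply Arzel\`a--Ascoli, and invoke Corollary~\ref{cor:non-positive-sphere-as-graph} again for the limit. Your treatment of the case $X=\H^{p,q}$ simply spells out in more detail the reduction to the double cover that the paper dispatches with a parenthetical appeal to Definition~\ref{def:non-pos-sphere}.
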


\begin{proof}
We consider the case $X = \hat\H^{p,q}$, as it implies the case $X = \H^{p,q}$ (see Definition~\ref{def:non-pos-sphere}).
Let $(\hat\L_n)_{n\in \N}$ be a sequence of non-positive $(p-1)$-spheres in $\di\hat\H^{p,q}$.
Consider a splitting $\di\hat\H^{p,q}\simeq \bS^{p-1}\times \bS^q$ as in \eqref{eqn:di-Psi}, defined by the choice of a timelike $(q+1)$-plane of $\R^{p,q+1}$.
By Corollary~\ref{cor:non-positive-sphere-as-graph}, in this splitting, each $\hat\L_n$ is the graph of a $1$-Lipschitz map $f_n : \bS^{p-1} \to \bS^q$.
By the Arzel\`a--Ascoli theorem, some subsequence $(f_{\varphi(n)})_{n\in\N}$ of $(f_n)$ converges to a $1$-Lipschitz map $f : \bS^{p-1} \to \bS^q$.
Then the graph $\hat\L \subset \di\hat\H^{p,q}$ of~$f$ is the limit of $(\hat\L_{\varphi(n)})_{n\in\N}$, and $\hat\L$ is a non-positive $(p-1)$-sphere by Corollary~\ref{cor:non-positive-sphere-as-graph}.
\end{proof}

We now discuss further the non-degeneracy (Definition~\ref{def:span-degen}) of non-positive $(p-1)$-spheres.

\begin{lem} \label{lem:kernel-of-graph}
Fix a splitting $\di\hat\H^{p,q} \simeq \bS^{p-1}\times\bS^q$ defined by the choice of a timelike $(q+1)$-plane $T$ of $\R^{p,q+1}$ as in \eqref{eqn:di-Psi}.
Let $\hat{\L}$ be a non-positive $(p-1)$-sphere in $\di\hat{\H}^{p,q}$ which, in this splitting, in the graph of some $1$-Lipschitz map $f : \bS^{p-1}\to\bS^q$ (see Corollary~\ref{cor:non-positive-sphere-as-graph}).
Then $\hat\L$ is degenerate if and only if $\bS := \{u\in\nolinebreak\bS^{p-1} \,|\, f(-u) = -f(u)\}$ is non-empty.
In this case,
\begin{itemize}
  \item $\bS$ is a totally geodesic copy of $\bS^{k-1}$ in~$\bS^{p-1}$, where $k$ is the dimension of the kernel $V$ of $\sfb|_{\spa(\hat\L)}$;
  \item the restriction $f|_{\bS}$ of $f$ to $\bS$ is an isometry;
  \item the graph of $f|_{\bS}$ is the image of $V\smallsetminus\{0\}$ in $\di\hat{\H}^{p,q} \subset (\R^{p,q+1}\smallsetminus\{0\})/\R_{>0}$; in particular, the image of $V\smallsetminus\{0\}$ in $\di\hat{\H}^{p,q}$ is contained in~$\hat\L$;
  \item if $k<p$, then the set $\bS'$ of points of $\bS^{p-1}$ at distance $\pi/2$ of~$\bS$ is a totally geodesic copy of $\bS^{p-1-k}$ in $\bS^{p-1}$ and the graph of $f|_{\bS'}$ is a non-degenerate non-positive $(p-1-k)$-sphere in $\di\hat\H^{p,q}$.
\end{itemize}
\end{lem}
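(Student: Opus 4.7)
The plan is to work throughout in the fixed splitting $\di\hat\H^{p,q} \simeq \bS^{p-1} \times \bS^q$ and exploit the formula from Lemma~\ref{lem:b-di-Hpq-hat}: for lifts $v_i \in \R^{p,q+1}$ normalized by $r_T(v_i)=1$ of points $(u_i, u'_i) \in \di\hat\H^{p,q}$, one has $\sfb(v_1, v_2) = \langle u_1, u_2\rangle_p - \langle u'_1, u'_2\rangle_{q+1}$. The span $W := \spa(\hat\L) \subset \R^{p,q+1}$ is then generated by the lifted vectors $(u, f(u))$ for $u \in \bS^{p-1}$, and the kernel $V$ of $\sfb|_W$ is cut out by the condition $\langle a, u\rangle_p = \langle b, f(u)\rangle_{q+1}$ for all $u \in \bS^{p-1}$.

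I would prove each direction of the equivalence ``$\hat\L$ is $\sfb$-degenerate $\Leftrightarrow \bS \neq \emptyset$'' separately. For $\bS \neq \emptyset \Rightarrow V \neq 0$, I would invoke the antipodal identity $d_{\bS^{p-1}}(u,u') + d_{\bS^{p-1}}(u',-u) = \pi$ valid for every $u' \in \bS^{p-1}$; combined with $d_{\bS^q}(f(u), f(-u)) = \pi$ (which is exactly the condition $u \in \bS$) and the $1$-Lipschitz bound, a forced-equality triangle-inequality argument shows that $f$ preserves the distance from $u$ to every $u'$, and hence $(u, f(u)) \in V$. For $V \neq 0 \Rightarrow \bS \neq \emptyset$, I would pick $(a,b) \in V \smallsetminus \{0\}$; the isotropy identity $\|a\|^2 - \|b\|^2 = 0$ forces $\|a\| = \|b\| > 0$, so in particular $a \neq 0$, and after rescaling $a \in \bS^{p-1}$ and $b \in \bS^q$; specializing the kernel equation to $u = \pm a$ yields $\langle b, f(\pm a)\rangle_{q+1} = \pm 1$, which identifies $f(a) = b$ and $f(-a) = -f(a)$, so $a \in \bS$.

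Taken together, the two directions identify $\bS$ with the unit sphere of the first-coordinate projection $V' \subset \R^p$ of $V$; the same isotropy observation shows that the map $V \to V'$ is injective, hence an isomorphism, so $\dim V = \dim V' =: k$ and $\bS = V' \cap \bS^{p-1} \cong \bS^{k-1}$ is totally geodesic. The inverse of this projection followed by the second-coordinate projection defines a linear map $\phi : V' \to \R^{q+1}$, and polarizing the total isotropy $\sfb|_V \equiv 0$ yields $\langle \phi(a_1), \phi(a_2)\rangle_{q+1} = \langle a_1, a_2\rangle_p$ for all $a_1, a_2 \in V'$, so $\phi$ is a linear isometric embedding. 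Uniqueness of the second coordinate in $V$ forces $\phi|_\bS = f|_\bS$, giving the isometry claim, and normalizing a non-zero $v = (a, \phi(a)) \in V$ by $r_T(v) = \|a\|_p$ identifies its $\R_{>0}$-class with the point $(a/\|a\|, \phi(a/\|a\|))$ of the graph of $\phi|_\bS = f|_\bS$; the opposite sign normalizes to the antipodal image, which stays on the graph by linearity of $\phi$.

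For the last bullet, when $k < p$ the set $\bS' := (V')^\perp \cap \bS^{p-1} \cong \bS^{p-1-k}$ is totally geodesic by construction, and the graph of $f|_{\bS'}$ is a non-positive $(p-1-k)$-sphere as a subset of $\hat\L$ homeomorphic to $\bS'$. I would establish non-degeneracy of its span $W' \subset \R^{p,q+1}$ by re-running the implication $V \neq 0 \Rightarrow \bS \neq \emptyset$ verbatim, with $W'$ in place of $W$: the parameter set $\bS'$ is antipodally symmetric, so the specialization $u = \pm a_0$ is still available and would produce $a_0 \in \bS'$ with $f(-a_0) = -f(a_0)$, i.e.\ $a_0 \in \bS \cap \bS' \subset V' \cap (V')^\perp = \{0\}$, contradicting $\|a_0\| = 1$. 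I do not foresee a serious obstacle here; the only subtle point is checking that the forward-direction argument truly transfers to $W'$, which it does precisely because $\bS'$ contains the antipode of each of its points.
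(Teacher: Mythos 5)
Your proof is correct, and the core ingredients are the same as the paper's: Lemma~\ref{lem:b-di-Hpq-hat} to translate $\sfb$-orthogonality into spherical distances, total isotropy of the kernel, and the rigidity of the $1$-Lipschitz condition under forced equality in triangle inequalities (specializing orthogonality to $u = \pm a$ to pin down $f(a)$ and $f(-a)$). What differs is the packaging of the structural claims. The paper establishes that $\bS$ is a totally geodesic subsphere and $f|_\bS$ an isometry \emph{synthetically}: it shows $f$ restricted to any geodesic segment from $u$ to $-u$ ($u \in \bS$) is an isometric embedding, then that geodesic circles through pairs of points of $\bS$ lie in $\bS$ and are preserved, and deduces the full graph of $f|_\bS$ equals the image of $V\setminus\{0\}$ in a separate step. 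You instead identify $\bS$ outright as $V' \cap \bS^{p-1}$ where $V'$ is the first-coordinate projection of $V$, observe the projection $V \to V'$ is injective by isotropy, polarize $\sfb|_V \equiv 0$ to get a \emph{linear} isometric embedding $\phi : V' \to \R^{q+1}$, and read off $f|_\bS = \phi|_\bS$ from uniqueness of the second coordinate. This is cleaner bookkeeping: the totally-geodesic and isometry claims become immediate properties of a linear subspace and a linear isometry, and the bijection between $\gph(f|_\bS)$ and the image of $V\setminus\{0\}$ drops out for free rather than requiring two inclusions. Your re-application of the "$V \neq 0 \Rightarrow \bS \neq \emptyset$" argument to $W' = \spa(\gph(f|_{\bS'}))$ for the last bullet is exactly the paper's argument; you correctly flag that it goes through precisely because $\bS'$ is antipodally symmetric, which guarantees $\pm a_0 \in \bS'$ and hence that the specialization step is available.
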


\begin{remark}
For $q=1$, Barbot \cite[Def.\,3.7]{bar15} uses the terminology \emph{purely lightlike} for non-positive $(p-1)$-spheres of the Einstein universe $\mathrm{Ein}^p = \di\H^{p,1}$ which are degenerate.
\end{remark}

\begin{proof}[Proof of Lemma~\ref{lem:kernel-of-graph}]
We first check that the image of $V\smallsetminus\{0\}$ in $\di\hat{\H}^{p,q} \subset (\R^{p,q+1}\smallsetminus\nolinebreak\{0\})/\R_{>0}$ is contained in the graph of $f|_{\bS}$.
Let $z\in V\smallsetminus\{0\}$.
Then $z$ is isotropic and so, up to scaling it by some element of~$\R_{>0}$, it can be written as
$$z = (u,u') \in \bS^{p-1}\times\bS^q \simeq \{ v\in\R^{p,q+1} \,|\, \sfb(v,v)=0 \ \mathrm{and}\ r_T(v) = 1\},$$
where $r_T : \R^{p,q+1}\to\R_{\geq 0}$ is given by \eqref{eqn:r_T}.
By assumption we have $\sfb(z,v)=0$ for all $v\in\gph(f)$.
Applying this to $v = (u,f(u))$ and $v = (-u,f(-u))$, Lemma~\ref{lem:b-di-Hpq-hat} gives $d_{\bS^q}(u',f(u)) = d_{\bS^{p-1}}(u,u) = 0$ and $d_{\bS^q}(u',f(-u)) = d_{\bS^{p-1}}(u,-u) = \pi$.
Therefore $u' = f(u) = -f(-u)$ and $z = (u,f(u))$ belongs to the~graph~of~$f|_{\bS}$.

Next, we check that $\bS$ is a totally geodesic subsphere of~$\bS^{p-1}$ and that the restriction $f|_{\bS}$ of $f$ to $\bS$ is an isometry.
For this, we observe that for any $u\in\bS$, since $f$ is $1$-Lipschitz and $f(u)$ and $f(-u)$ are antipodal, the restriction of~$f$ to any geodesic segment between $u$ and $-u$ is an isometric embedding.
In particular, for any $u_1,u_2\in\bS$ with $u_1\neq\pm u_2$, the restriction of~$f$ to the unique totally geodesic circle of $\bS^{p-1}$ containing $u_1$ and~$u_2$ is an isometric embedding, and such a circle is contained in~$\bS$.
This shows that $\bS$ is a totally geodesic subsphere of~$\bS^{p-1}$ and that the restriction $f|_{\bS}$ of $f$ to $\bS$ is an isometry.

For any $u\in\bS$, the fact that the restriction of~$f$ to any geodesic segment between $u$ and $-u$ is an isometric embedding also shows that $d_{\bS^{q}}(f(u),f(u_1)) = d_{\bS^{p-1}}(u,u_1)$ for all $u_1\in\bS^{p-1}$, hence $\sfb((u,f(u),(u_1,f(u_1)))=0$ for all $u_1\in\bS^{p-1}$ by Lemma~\ref{lem:b-di-Hpq-hat}, and so $z:=(u,f(u))$ belongs to~$V$.
Thus the image of $V\smallsetminus\{0\}$ in $\di\hat{\H}^{p,q} \subset (\R^{p,q+1}\smallsetminus\{0\})/\R_{>0}$ is the full graph of $f|_{\bS}$.
In particular, the dimension of the totally geodesic subsphere $\bS$ (if non-empty) is $k-1$ where $k=\dim(V)$.

Suppose $1\leq k<p$.
Then $\bS'$ is non-empty and is a totally geodesic copy of $\bS^{p-1-k}$ in $\bS^{p-1}$.
By Lemma~\ref{l.nonpossets}, the graph of $f|_{\bS'}$ is a non-positive $(p-1-k)$-sphere in $\di\hat\H^{p,q}$.
Let us check that it is non-degenerate.
Suppose by contradiction that there exists $z\neq 0$ in the kernel of $\sfb$ restricted to $\spa(f|_{\bS'})$.
Then $z$ is isotropic and, up to scaling it by some element of~$\R_{>0}$, it can be written as
$$z = (u,u') \in \bS'\times\bS^q \subset \bS^{p-1}\times\bS^q \simeq \{ v\in\R^{p,q+1} \,|\, \sfb(v,v)=0 \ \mathrm{and}\ r_T(v) = 1\}.$$
As in the first paragraph of the proof, the fact that $\sfb(z,(u,f(u))) = \sfb(z,(-u,f(-u))) = 0$ implies that $f(-u) = -f(u)$: contradiction since $u\in\bS'$.
\end{proof}

\begin{lem} \label{l.kernonpossphere}
Let $\hat{\L}$ be a non-positive $(p-1)$-sphere in $\di\hat{\H}^{p,q}$.
Let $V \subset \R^{p,q+1}$ be the kernel of $\sfb|_{\spa(\L)}$, and let $k := \dim(V)\geq 0$.
Then
\begin{itemize}
  \item $k\leq\min(p,q+1)$;
  \item if $k=p$, then $\hat{\L}$ is the image of $V\smallsetminus\{0\}$ in $\di\hat\H^{p,q} \subset (\R^{p,q+1}\smallsetminus\{0\})/\R_{>0}$; in particular, the restriction $\sfb|_{\spa(\hat\L)}$ has signature $(0,0|k)$;
  \item if $k<p$, then the restriction $\sfb|_{\spa(\hat\L)}$ has signature $(p-k,q'|k)$ for some $1\leq q'\leq q+1-k$.
\end{itemize}
In particular, if $q+1<p$, then we always have $k<q+1$.
\end{lem}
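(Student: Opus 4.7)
The plan is to prove the four claims in order. For $k\leq\min(p,q+1)$, I will observe that $V$ is totally isotropic in $\R^{p,q+1}$ (since $\sfb|_V=0$ by definition of the kernel), so its dimension is at most the Witt index $\min(p,q+1)$ of a form of signature $(p,q+1)$. For the case $k=p$, I will apply Lemma~\ref{lem:kernel-of-graph} directly: the totally geodesic subsphere $\bS\subset\bS^{p-1}$ produced there has dimension $k-1=p-1$, so $\bS=\bS^{p-1}$; the lemma then identifies $\hat\L$ with the image of $V\smallsetminus\{0\}$ in $\di\hat\H^{p,q}$, which forces $\spa(\hat\L)=V$, totally isotropic of dimension $k$, with signature $(0,0|k)$.

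The main case is $k<p$. I will fix a timelike $(q+1)$-plane $T\subset\R^{p,q+1}$, view $\hat\L$ via the splitting $\di\hat\H^{p,q}\simeq\bS^{p-1}\times\bS^q$ as the graph of a $1$-Lipschitz map $f:\bS^{p-1}\to\bS^q$ (Corollary~\ref{cor:non-positive-sphere-as-graph}), and study $W:=\spa(\hat\L)$ through the first-factor projection $\pi_1:W\to T^\perp$. This $\pi_1$ is surjective, since $\pi_1(\hat\L)=\bS^{p-1}$ spans $T^\perp$; moreover $\ker\pi_1\subseteq T$, on which $\sfb$ is negative definite, so $\sfb|_{\ker\pi_1}$ is negative definite and in particular non-degenerate. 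Setting $m:=\dim\ker\pi_1$, I take the $\sfb$-orthogonal decomposition $W=\ker\pi_1\oplus W'$ with $W':=(\ker\pi_1)^{\perp}\cap W$ of dimension $p$. Then $\pi_1|_{W'}$ is a linear isomorphism $W'\overset{\sim}{\longrightarrow}T^\perp$, so $W'=\{(u,g(u))\mid u\in T^\perp\}$ for a unique linear map $g:T^\perp\to T$.

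The crucial observation is that $\|g\|_{\mathrm{op}}\leq 1$. Indeed, for any $u\in\bS^{p-1}$, decomposing $(u,f(u))\in\hat\L\subset W$ in the direct sum $W'\oplus\ker\pi_1$ and using the $\sfb$-orthogonality constraint shows that $g(u)$ is the orthogonal projection (in $T$) of $f(u)$ onto $(\ker\pi_1)^{\perp}\cap T$; since $|f(u)|=1$, one gets $|g(u)|\leq 1$. A direct computation in adapted orthonormal coordinates then identifies the matrix of $\sfb|_{W'}$ with $I-g^{*}g$, which is positive semidefinite; hence $\sfb|_{W'}$ has signature $(p-k_W,0\mid k_W)$, and adding the negative-definite contribution of $\ker\pi_1$ yields $\sfb|_W$ of signature $(p-k_W,\,m\,\mid k_W)$. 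Comparing with $(p',q'\mid k)$ identifies $k_W=k$, $p'=p-k$, and $q'=m$.

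The remaining inequalities then come quickly. The bound $q'\leq q+1-k$ follows from $W\subseteq V^\perp$ (of dimension $p+q+1-k$) together with $\dim W=p+m=p+q'$. For $q'\geq 1$ I argue by contradiction: if $m=0$, then $W=W'$, so $\hat\L\subset W'$ forces $f$ to agree with $g|_{\bS^{p-1}}$, and $|g(u)|=1$ on $\bS^{p-1}$ makes $g$ an isometric embedding $T^\perp\hookrightarrow T$; then $g^{*}g=I$ gives $\sfb|_{W'}=0$, so $V=W'$ has dimension $p$, contradicting $k<p$. The final ``in particular'' claim is then immediate: if $q+1<p$ and $k=q+1$, then $k<p$ yields $q'\geq 1$ while $q'\leq q+1-k=0$, a contradiction. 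I expect the main obstacle to be establishing the operator-norm bound $\|g\|_{\mathrm{op}}\leq 1$ (which crucially uses $\hat\L\subset\di\hat\H^{p,q}$, not just $\spa(\hat\L)$) together with the case analysis $m=0$ versus $m\geq 1$; everything else is linear algebra.
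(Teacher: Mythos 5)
Your proof is correct, and in the key case $k<p$ it takes a genuinely different route from the paper's argument.

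The paper invokes the last clause of Lemma~\ref{lem:kernel-of-graph}: from the subsphere $\bS$ where $f(-u)=-f(u)$ it passes to its antipodal locus $\bS'\subset\bS^{p-1}$ at distance $\pi/2$, takes $E:=\spa(\gph(f|_{\bS'}))\subset\spa(\hat\L)\subset V^\perp$, and compares signatures: $\bS'\simeq\bS^{p-k-1}$ forces $p'\geq p-k$ via Lemma~\ref{l.nonpossets}, while $E\subset V^\perp$ gives $p'\leq p-k$ and $q'\leq q+1-k$, and $q'\geq 1$ because $E$ contains isotropic vectors. You instead work directly with $W:=\spa(\hat\L)$: project it onto $T^\perp$, peel off the negative-definite kernel $\ker\pi_1\subset T$ of dimension $m$, realize the complementary summand $W'$ as the graph of a linear contraction $g:T^\perp\to T$ (the operator-norm bound $\|g\|\leq 1$ being exactly where the geometric assumption ``$\hat\L$ is a graph into $\bS^q$'' enters), and read off $\sfb|_{W'}=I-g^*g\geq 0$; the full signature $(p-k,m\,|\,k)$ of $\sfb|_W$ then falls out with $q'=m$, $m\leq q+1-k$ from $W\subset V^\perp$, and $m\geq 1$ because $m=0$ would force $g$ to be an isometry, hence $I-g^*g=0$ and $k=p$. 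This is a cleaner linear-algebra reduction: it bypasses the geometry of $\bS'$ and Lemma~\ref{lem:kernel-of-graph}'s fourth bullet entirely, and it works directly with $\spa(\hat\L)$ rather than a distinguished subspace of it, so there is no need to argue separately that the subspace $E$ captures the whole non-degenerate part. The paper's approach has the advantage of exposing the concrete geometric structure (the totally geodesic subspheres $\bS,\bS'$) that is reused elsewhere in the section, whereas yours is self-contained but more opaque geometrically. Both agree on the remaining three bullets ($V$ totally isotropic for the first, Lemma~\ref{lem:kernel-of-graph} for the second, and the third bullet plus the first for the last).
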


\begin{proof}
Since $V$ is a totally isotropic subspace of~$\R^{p,q+1}$, we have $k\leq\min(p,q+1)$.

Fix a splitting $\di\hat\H^{p,q} \simeq \bS^{p-1}\times\bS^q$ defined by the choice of a timelike $(q+1)$-plane of $\R^{p,q+1}$ as in \eqref{eqn:di-Psi}.
By Corollary~\ref{cor:non-positive-sphere-as-graph}, in this splitting, $\hat{\L}$ is the graph of a $1$-Lipschitz map $f : \bS^{p-1}\to\bS^q$.
Let $\bS := \{u\in\nolinebreak\bS^{p-1} \,|\, f(-u) = -f(u)\}$.
If $\bS$ is non-empty, let $\bS'$ be the set of points of $\bS^{p-1}$ at distance $\pi/2$ of~$\bS$; if $\bS$ is empty, let $\bS' := \bS^{p-1}$.

If $k=p$, then by Lemma~\ref{lem:kernel-of-graph} we have $\bS = \bS^{p-1}$ and $\hat{\L}$ is the image of $V\smallsetminus\{0\}$ in $\di\hat\H^{p,q} \subset (\R^{p,q+1}\smallsetminus\{0\})/\R_{>0}$.

Suppose $k<p$.
By Lemma~\ref{lem:kernel-of-graph}, the set $\bS'$ is a totally geodesic copy of $\bS^{p-k-1}$ in $\bS^{p-1}$ and the graph of $f|_{\bS'}$ is a non-degenerate non-positive $(p-1-k)$-sphere in $\di\hat\H^{p,q}$.
Let $(p',q'|0)$ be the signature of the restriction of $\sfb$ to $E := \spa(\gph(f|_{\bS'}))$.
We have $q'\geq 1$ since $E$ contains $\sfb$-isotropic vectors.
The graph of $f|_{\bS'}$ is a non-positive $(p-1-k)$-sphere in $\di\hat\H_E \simeq \di\hat\H^{p',q'-1}$, where $\di\hat\H_E$ is the intersection of $\di\H^{p,q} \subset (\R^{p,q+1}\smallsetminus\{0\})/\R_{>0}$ with the image of $E\smallsetminus\{0\}$.
Therefore $p-k\leq p'$ by Lemma~\ref{l.nonpossets}.
On the other hand, we have $E \subset \spa(\hat\L) \subset V^{\perp}$, where $\sfb|_{V^{\perp}}$ has signature $(p-k,q+1-k|k)$.
We deduce $p' \leq p-k$ (hence $p' = p-k$) and $q' \leq q+1-k$.
\end{proof}

Here is a consequence of Lemma~\ref{lem:kernel-of-graph}.
We denote by $\varsigma$ the non-trivial automorphism of the double covering $\di\hat\H^{p,q}\to\di\H^{p,q}$, and refer to Definition~\ref{def:span-degen} for the notion of (non-) degeneracy.

\begin{prop} \label{prop:lift-non-pos-sphere}
Let $\L$ be a non-positive $(p-1)$-sphere in $\di\H^{p,q}$, and let $\hat\L$ be a non-positive $(p-1)$-sphere in $\di\hat\H^{p,q}$ projecting onto~$\L$, so that the full preimage of $\L$ in $\di\hat\H^{p,q}$ is $\hat\L \cup \varsigma(\hat\L)$.
\begin{itemize}
  \item If $\L$ is totally degenerate (\ie spans a totally isotropic subspace of~$\R^{p,q+1}$), then $\hat\L = \varsigma(\hat\L)$ is the unique non-positive $(p-1)$-sphere of $\di\hat\H^{p,q}$ projecting onto~$\L$; the projection from $\hat\L$ to~$\L$ is a double covering.
  \item If $\L$ is degenerate but not totally degenerate, then $p\geq 2$ and $\hat\L \cup \varsigma(\hat\L)$ is connected; $\hat\L$ and $\varsigma(\hat\L)$ are exactly the two non-positive $(p-1)$-spheres of $\di\hat\H^{p,q}$ projecting onto~$\L$; the projection from $\hat\L$ to~$\L$ is not injective and not a covering.
  \item If $\L$ is non-degenerate, then $\hat\L$ and $\varsigma(\hat\L)$ are disjoint; they are exactly the two non-positive $(p-1)$-spheres of $\di\hat\H^{p,q}$ projecting onto~$\L$; the projection from $\hat\L$ or $\varsigma(\hat\L)$ to~$\L$ is a homeomorphism.
 \end{itemize}
\end{prop}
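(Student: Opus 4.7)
The strategy is to pass to the graph picture via Corollary~\ref{cor:non-positive-sphere-as-graph} and to exploit the description of $\bS:=\{u\in\bS^{p-1} \,|\, f(-u)=-f(u)\}$ provided by Lemma~\ref{lem:kernel-of-graph}. Let $\pi:\di\hat{\H}^{p,q}\to\di\H^{p,q}$ denote the covering map. After fixing a splitting $\di\hat{\H}^{p,q}\simeq\bS^{p-1}\times\bS^q$ as in \eqref{eqn:di-Psi}, I write $\hat\L$ as the graph of a $1$-Lipschitz map $f:\bS^{p-1}\to\bS^q$; then $\varsigma(\hat\L)$ is the graph of $\tilde f(u):=-f(-u)$, and $\hat\L\cap\varsigma(\hat\L)$ is exactly the graph of $f|_{\bS}$. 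By Lemma~\ref{lem:kernel-of-graph}, $\bS$ is empty when $k:=\dim V=0$, equal to $\bS^{p-1}$ when $k=p$, and a totally geodesic $(k-1)$-subsphere otherwise. Since $\varsigma$ acts freely on $\di\hat{\H}^{p,q}$, every fiber of $\pi$ has exactly two points and $\pi^{-1}(\L)=\hat\L\cup\varsigma(\hat\L)$.

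If $\L$ is totally $\sfb$-degenerate ($k=p$), then $\bS=\bS^{p-1}$, so $\tilde f=f$ and $\hat\L=\varsigma(\hat\L)$; then $\pi|_{\hat\L}$ is the quotient by the free $\Z/2$-action of~$\varsigma$, hence a double covering onto~$\L$, and $\pi^{-1}(\L)=\hat\L$ is the unique non-positive $(p-1)$-sphere projecting onto~$\L$. If $\L$ is $\sfb$-non-degenerate ($k=0$), then $\bS=\emptyset$ gives $\hat\L\cap\varsigma(\hat\L)=\emptyset$, and $\pi|_{\hat\L}$ is a continuous injection between compact Hausdorff spaces, hence a homeomorphism onto~$\L$; any non-positive $(p-1)$-sphere $\hat M$ projecting onto~$\L$ lies in the disjoint union $\hat\L\sqcup\varsigma(\hat\L)$, and by connectedness and dimension equals $\hat\L$ or $\varsigma(\hat\L)$.

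The intermediate case $0<k<p$ (which forces $p\geq 2$, since $k\geq 1$) is the most subtle. Here $\hat\L\cap\varsigma(\hat\L)\neq\emptyset$, so $\hat\L\cup\varsigma(\hat\L)$ is connected; and $\hat\L\neq\varsigma(\hat\L)$, since equality would force $\bS=\bS^{p-1}$. The map $\pi|_{\hat\L}$ is $2$-to-$1$ on $\gph(f|_{\bS})$ and injective elsewhere, hence neither injective nor a covering. The main obstacle is the uniqueness claim. Given a non-positive $(p-1)$-sphere $\hat M$ projecting onto~$\L$, I write $\hat M$ as the graph of a $1$-Lipschitz $g:\bS^{p-1}\to\bS^q$ with $g(u)\in\{f(u),\tilde f(u)\}$ for every~$u$, and set $A:=\{u:g(u)=f(u)\}$ and $B:=\{u:g(u)=\tilde f(u)\}$. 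These are closed with $A\cup B=\bS^{p-1}$ and $A\cap B=\bS$; a short continuity argument (approaching any $u\in\partial A\smallsetminus\bS$ from~$A^c$ along a sequence where $g=\tilde f$) shows $\partial A\subset\bS$, and likewise $\partial B\subset\bS$. The equality $\pi(\hat M)=\L$ then translates, by inspecting the two-point fibers of~$\pi$, into the additional condition $A\cup(-B)=\bS^{p-1}$. I split on $p-k$: if $p-k\geq 2$, the set $\bS^{p-1}\smallsetminus\bS$ is connected, so each of $A,B$ is either contained in~$\bS$ or equal to $\bS^{p-1}$, and $\bS\neq\bS^{p-1}$ together with $A\cup B=\bS^{p-1}$ forces one of them to be $\bS^{p-1}$; if $p-k=1$, then $\bS$ is an equator with hemispheres $H^{\pm}$, and the only decomposition $A\cup B=\bS^{p-1}$ with neither term equal to $\bS^{p-1}$ is $\{A,B\}=\{\overline{H^+},\overline{H^-}\}$, which is excluded by $A\cup(-B)=\bS^{p-1}$ since $-\overline{H^{\pm}}=\overline{H^{\mp}}$. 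Therefore $\hat M\in\{\hat\L,\varsigma(\hat\L)\}$, completing the proof.
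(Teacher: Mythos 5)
Your argument is correct, and it follows the same overall strategy as the paper's: pass to the graph picture, show that $\hat\L\cap\varsigma(\hat\L)$ is the graph of $f|_{\bS}$, and analyze the closed sets $A=\{u : g(u)=f(u)\}$ and $B=\{u : g(u)=f^\varsigma(u)\}$ with $A\cup B=\bS^{p-1}$ and $A\cap B=\bS$. Where you diverge is in the concluding step that one of $A\smallsetminus\bS$, $B\smallsetminus\bS$ is empty. The paper shows directly that $A$ and $B$ are each stable under $u\mapsto -u$, by applying Lemma~\ref{lem:kernel-of-graph} to the graph of~$g$ (if $u\in A$ and $-u\in B$ then $g(-u)=-g(u)$, forcing $u\in\bS$ since $\spa(\hat M)=\spa(\hat\L)$); this one claim handles the codimension-one and codimension-$\geq 2$ cases for $\bS$ uniformly, and does not even need the hypothesis that $\hat M$ surjects onto~$\L$. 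You instead invoke that hypothesis, encoded as $A\cup(-B)=\bS^{p-1}$, to rule out the sewn decomposition $\{A,B\}=\{\overline{H^+},\overline{H^-}\}$ in the codimension-one case, with a separate connectivity argument when the codimension is at least two. Both routes are valid; the paper's is slightly more uniform and yields the marginally stronger statement that any non-positive $(p-1)$-sphere contained in $\pi^{-1}(\L)$ (not necessarily projecting onto all of $\L$) must equal $\hat\L$ or $\varsigma(\hat\L)$, whereas yours avoids needing the kernel characterization from Lemma~\ref{lem:kernel-of-graph} a second time.
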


\begin{proof}
Let $\di\hat\H^{p,q} \simeq \bS^{p-1}\times\bS^q$ be the splitting defined by the choice of a timelike $(q+1)$-plane of $\R^{p,q+1}$ as in \eqref{eqn:di-Psi}.
By Corollary~\ref{cor:non-positive-sphere-as-graph}, in this splitting, $\hat\L$ is the graph of some $1$-Lipschitz map $f : \bS^{p-1}\to\bS^q$.
For any $(u,u') \in \bS^{p-1}\times\bS^q \simeq \di\hat\H^{p,q}$ we have $\varsigma((u,u')) = (-u,-u')$.
Therefore $\varsigma(\hat\L)$ is the graph of the map $f^{\varsigma} : \bS^{p-1}\to\bS^q$ given by $f^{\varsigma}(u) := -f(-u)$, which is still $1$-Lipschitz, and $\varsigma(\hat\L)$ is still a non-positive $(p-1)$-sphere of $\di\hat\H^{p,q}$ by Corollary~\ref{cor:non-positive-sphere-as-graph}.

The intersection $\hat\L \cap \varsigma(\hat\L)$ is the graph of the restriction of $f$ to
$$\{ u\in\bS^{p-1} ~|~ f(u) = f^{\varsigma}(u)\} = \{ u\in\bS^{p-1} ~|~ f(-u) = -f(u)\} =: \bS.$$
The projection from $\hat\L$ to $\L$ is two-to-one on the graph of $f$ restricted to $\bS$, and one-to-one on the graph of $f$ restricted to $\bS^{p-1}\smallsetminus\bS$.
By Lemma~\ref{lem:kernel-of-graph}, we have $\bS\neq\emptyset$ (\ie $\hat\L \cap \varsigma(\hat\L) \neq \emptyset$, which for $p\geq 2$ is also equivalent to $\hat\L \cup \varsigma(\hat\L)$ being connected) if and only if $\L$ is degenerate, and $\bS = \bS^{p-1}$ (\ie $\hat\L = \varsigma(\hat\L)$) if and only if $\L$ is totally degenerate.

If $p=1$, then $\bS \neq \emptyset$ implies $\bS = \bS^{p-1}$ since $\bS$ is a totally geodesic subsphere of $\bS^{p-1}$ by Lemma~\ref{lem:kernel-of-graph}.
Therefore $\L$ cannot be degenerate but not totally degenerate for $p=1$.

Let $\hat\L'$ be a non-positive $(p-1)$-sphere of $\di\hat\H^{p,q}$ projecting onto~$\L$.
By Corollary~\ref{cor:non-positive-sphere-as-graph}, in our splitting, $\hat\L'$ is the graph of a $1$-Lipschitz map $g : \bS^{p-1}\to\bS^q$.
For any $u\in\bS^{p-1}$, we have $g(u) = f(u)$ or $g(u) = f^{\varsigma}(u)$.
Let $A$ (\resp $B$) be the set of elements $u\in\bS^{p-1}$ such that $g(u) = f(u)$ (\resp $g(u) = f^{\varsigma}(u)$).
Then $A$ and~$B$ are closed subsets of $\bS^{p-1}$ whose union is $\bS^{p-1}$ and whose intersection is~$\bS$.
Thus $\bS^{p-1}\smallsetminus\bS$ is the disjoint union of its two closed subsets $A\smallsetminus\bS$ and $B\smallsetminus\bS$, and so these two subsets are both unions of connected components of $\bS^{p-1}\smallsetminus\bS$.
If $\bS$ has codimension at least two in~$\bS^{p-1}$ (where the codimension of the empty set is by definition~$p$) then $\bS^{p-1}\smallsetminus\bS$ is connected, while if $\bS$ has codimension one in~$\bS^{p-1}$ then $\bS^{p-1}\smallsetminus\bS$ has two connected components that are switched by $u\mapsto -u$.

We claim that $A$ and~$B$ are both stable under $u\mapsto -u$.
Indeed, if $u\in A$ and $-u\in B$, then $g(-u) = -f(u) = -g(u)$, and so Lemma~\ref{lem:kernel-of-graph} ensures that $(u,g(u)) = (u,f(u))$ belongs to the kernel $V$ of $\sfb|_{\spa(\hat\L')} = \sfb|_{\spa(\hat\L)}$, hence $u\in\bS = A\cap B$, which proves the claim.

Therefore one of the sets $A\smallsetminus\bS$ or $B\smallsetminus\bS$ is empty, which means that $\hat\L' = \hat\L$ or $\varsigma(\hat\L)$.
\end{proof}

\subsection{The convex open set $\Omega(\L)$ associated to a non-positive $(p-1)$-sphere} \label{subsec:Omega-Lambda}

Recall the notion of a non-positive subset of $\R^{p,q+1}$ or $\di\H^{p,q}$ from Definition~\ref{def:nonpos-neg}.

\begin{notation} \label{not:Omega-Lambda}
For a non-positive subset $\tilde{\L}$ of~$\R^{p,q+1}$, we denote by $\tilde{\Om}(\tilde{\L})$ (\resp $\tilde{\ov{\Om}}(\tilde{\L})$) the set of vectors $v\in\R^{p,q+1}$ such that $\sfb(v,\tilde{x}) < 0$ (\resp $\sfb(v,\tilde{x}) \leq 0$) for all $\tilde{x}\in\tilde{\L}$.

For a non-positive $(p-1)$-sphere $\L$ of $\di \H^{p,q} \subset \P(\R^{p,q+1})$, we denote by $\Om(\L)$ (\resp $\ov{\Om}(\L)$) the image in $\P(\R^{p,q+1})$ of $\tilde{\Om}(\tilde{\L})$ (\resp $\tilde{\ov{\Om}}(\tilde{\L})\smallsetminus\{0\}$) where $\tilde{\L}$ is any subset of the non-zero isotropic vectors of $\R^{p,q+1}$ whose projection to $\di\hat\H^{p,q} \subset (\R^{p,q+1}\smallsetminus\{0\})/\R_{>0}$ is a non-positive $(p-1)$-sphere projecting onto~$\L$.
\end{notation}

It follows from Proposition~\ref{prop:lift-non-pos-sphere} that $\Om(\L)$ (\resp $\ov{\Om}(\L)$) does not depend on the choice of non-positive $(p-1)$-sphere of $\di\hat\H^{p,q}$ projecting onto~$\L$.
One also readily checks that $\Om(\L)$ (\resp $\ov{\Om}(\L)$) is an open (\resp closed) convex subset of $\P(\R^{p,q+1})$, as $\tilde{\Om}(\tilde{\L})$ (\resp $\tilde{\ov{\Om}}(\tilde{\L})$) is an intersection of open (\resp closed) half-spaces of~$\R^{p,q+1}$.
If $\Om(\L)$ is non-empty, then it is the interior of $\ov{\Om}(\L)$, and $\ov{\Om}(\L) = \ov{\Om(\L)}$ is the closure of $\Om(\L)$ in $\P(\R^{p,q})$.

\begin{remark}
In the Lorentzian case, where $q=1$ and $\H^{p,q}$ is the anti-de Sitter space $\mathrm{AdS}^{p+1}$, the set $\Omega(\L)$ is often called the \emph{invisible domain} of the achronal set~$\L$; this terminology is justified by Lemma~\ref{lem:Omega-Lambda-convex}.\eqref{item:Om-Lambda-invisible-dom} just below.
Invisible domains of achronal subsets of $\di\mathrm{AdS}^{p+1}$ containing at least two points are also called \emph{AdS regular domains} \cite[Def.\,3.1]{bar15}; when $\L$ is a topological $(p-1)$-sphere the AdS regular domain $\Omega(\L)$ is said to be \emph{GH-regular} (where GH stands for \emph{globally hyperbolic}).
\end{remark}

\begin{lem} \label{lem:Omega-Lambda-convex}
Let $\L$ be a non-positive $(p-1)$-sphere in $\di\H^{p,q}$ (Definition~\ref{def:non-pos-sphere}).
Then
\begin{enumerate}
  \item\label{item:Om-Lambda-in-Hpq} $\Om(\L)$ is a convex open subset of~$\H^{p,q}$ which, if non-empty, satisfies $\di\Om(\L) = \L$; more generally, $\ov{\Om}(\L) \cap \di\H^{p,q} = \L$;
  \item\label{item:Om-Lambda-non-empty} $\Om(\L)$ is non-empty if and only if $\L$ is non-degenerate;
  \item\label{item:Om-Lambda-prop-convex} if $\L$ spans, then $\Om(\L)$ is properly convex;
  \item\label{item:Om-Lambda-invisible-dom} if $p\geq 2$, then $\Om(\L)$ is equal to the set $\H^{p,q} \smallsetminus \bigcup_{w\in\L} w^{\perp}$ of points of $\H^{p,q}$ that see every point of~$\L$ in a spacelike direction; if $p=1$ and $\Om(\L)$ is non-empty, then $\L$ consists of two points and $\Om(\L)$ is one of the two connected components of the set $\H^{p,q} \smallsetminus \bigcup_{w\in\L} w^{\perp}$, namely the component containing the geodesic line of $\H^{p,q}$ between the two points of~$\L$.
\end{enumerate}
\end{lem}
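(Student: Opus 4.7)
The plan is to work throughout in the conformal model from Proposition~\ref{p.doublecoverasproduct} and Lemma~\ref{lem:Hpq-hat-bar-prod}, which identifies $\hat{\ov\H}^{p,q}\simeq\ov{\bB}^p\times\bS^q$. By Corollary~\ref{cor:non-positive-sphere-as-graph}, the lift $\hat\L\subset\di\hat\H^{p,q}\simeq\bS^{p-1}\times\bS^q$ is then the graph of a $1$-Lipschitz map $f:\bS^{p-1}\to\bS^q$, and the defining inequality $\sfb(v,\tilde x)<0$ for $v=(w,w')\in\R^p\oplus\R^{q+1}$ and $\tilde x=(s,f(s))$ reads $\<w,s\>_p<\<w',f(s)\>_{q+1}$. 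This coordinate description is essentially the only computational tool we need.

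For~\eqref{item:Om-Lambda-in-Hpq}: openness and convexity of $\Om(\L)$ come from writing $\tilde\Om(\tilde\L)$ as an intersection of open half-spaces through the origin contained in any one of them, so that its projection to an affine chart is open and convex. To establish $\Om(\L)\subset\H^{p,q}$, evaluate the defining inequality at $s=w/\|w\|$ when $w\ne 0$ and apply Cauchy--Schwarz: $\|w\|<\<w',f(w/\|w\|)\>_{q+1}\leq\|w'\|$, whence $\sfb(v,v)<0$; the case $w=0$ is immediate. For the boundary identity $\ov\Om(\L)\cap\di\H^{p,q}=\L$, the inclusion $\supseteq$ is immediate from non-positivity of $\tilde\L$; for $\subseteq$, a boundary point satisfies $\|w\|=\|w'\|\ne 0$ and the equality case of Cauchy--Schwarz at $s=w/\|w\|$ forces $f(w/\|w\|)=w'/\|w'\|$, so $[v]\in\hat\L$ and $[v]\in\L$.

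For~\eqref{item:Om-Lambda-non-empty}, I would avoid any naive ``image lies in an open hemisphere'' argument, which fails in general for continuous $1$-Lipschitz maps $\bS^{p-1}\to\bS^q$, and instead work intrinsically with the convex cone $C_+:=\R_{\geq 0}\cdot\tilde\L$ in $\R^{p,q+1}$. Choosing an auxiliary positive-definite inner product and using that $\sfb$ is non-degenerate shows that $\tilde\Om(\tilde\L)$ is linearly isomorphic to the interior of the polar dual of $C_+$, which is non-empty if and only if $C_+$ is pointed (contains no line through the origin). In the $\sfb$-degenerate direction, for $v_0\ne 0$ in $V:=\ker\sfb|_{\spa(\L)}$, Lemma~\ref{lem:kernel-of-graph} places positive scalar multiples of both $v_0$ and $-v_0$ into $\tilde\L$, so $C_+$ contains a line. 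In the non-degenerate direction, if $v\ne 0$ with $\pm v\in C_+$, then $\sfb(\pm v,\cdot)\leq 0$ on $\tilde\L$ forces $\sfb(v,\cdot)=0$ on $\tilde\L$, hence $v\in\spa(\L)\cap\spa(\L)^\perp=V=\{0\}$, a contradiction. The same sign-flip argument proves~\eqref{item:Om-Lambda-prop-convex}: if $\ov\Om(\L)$ contained a projective line $\P(E)$, then $\sfb(v,\tilde x)\leq 0$ and $\sfb(-v,\tilde x)\leq 0$ together force $E\subset\tilde\L^\perp=\spa(\tilde\L)^\perp=\{0\}$ when $\L$ spans.

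For~\eqref{item:Om-Lambda-invisible-dom} with $p\geq 2$: $\tilde\L$ is homeomorphic to $\bS^{p-1}$ by Corollary~\ref{cor:non-positive-sphere-as-graph} and hence connected, so the continuous function $\tilde x\mapsto\sfb(v,\tilde x)$ on $\tilde\L$ has constant sign whenever nowhere zero, and either sign places $[v]$ in $\Om(\L)$ using the lift $\tilde\L$ or its antipode $-\tilde\L$ (which define the same $\Om(\L)$ by Notation~\ref{not:Omega-Lambda}). For $p=1$, $\tilde\L$ consists of two disconnected points and the connectedness argument breaks; one checks instead directly that $\L$ has two points whose connecting geodesic is spacelike (by~\eqref{item:Om-Lambda-non-empty}), lies in $\Om(\L)$, and selects the correct component of $\H^{1,q}\smallsetminus\bigcup_{w\in\L}w^\perp$. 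The main obstacle I anticipate is~\eqref{item:Om-Lambda-non-empty}: the temptation of an image-in-hemisphere argument has to be resisted and replaced by the cone duality above, together with Lemma~\ref{lem:kernel-of-graph} and careful bookkeeping of the two allowed lifts $\pm\tilde\L$ of $\L$ permitted by Notation~\ref{not:Omega-Lambda}.
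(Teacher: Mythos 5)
Your proof is essentially correct. Parts~\eqref{item:Om-Lambda-in-Hpq} and~\eqref{item:Om-Lambda-invisible-dom} follow the paper's route: the Cauchy--Schwarz estimate at $s=w/\|w\|$ is exactly the inequality $\sfb(u',f(u))\geq -1$ the paper uses, and your connectedness argument for~\eqref{item:Om-Lambda-invisible-dom} is the paper's, with the ambiguity of lifts handled by flipping~$v$ rather than by choosing a connected lift of~$\L$ as the paper does.

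Parts~\eqref{item:Om-Lambda-non-empty} and~\eqref{item:Om-Lambda-prop-convex} take a different but valid route. For~\eqref{item:Om-Lambda-non-empty} the paper argues constructively: the interior, inside $\spa(\L)$, of the cone $\R_{\geq 0}\!\cdot\tilde{\L}$ is non-empty and is shown to sit in $\tilde{\Om}(\tilde{\L})$ by exploiting non-degeneracy of $\sfb|_{\spa(\L)}$. Your cone-duality argument reaches the same conclusion more abstractly; be aware that the identification of $\tilde{\Om}(\tilde{\L})$ (the strict polar, under $v\mapsto\sfb(v,\cdot)$) with $\mathrm{int}(C_+^*)$ requires $C_+$ to be a closed cone with compact base not containing~$0$, which you can secure in the pointed case from your own observation that $0\notin\mathrm{conv}(\tilde{\L})$; in the degenerate direction it is cleanest to bypass the duality altogether and note, as you essentially do, that $\pm v_0\in C_+$ with $v_0\neq 0$ directly forces the strict polar to be empty. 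For~\eqref{item:Om-Lambda-prop-convex} there is a small, fixable imprecision: $\P(E)\subset\ov{\Om}(\L)$ does \emph{not} give $\pm v\in\tilde{\ov{\Om}}(\tilde{\L})$ for every $v\in E$; what it gives is that the closed convex cone $A:=E\cap\tilde{\ov{\Om}}(\tilde{\L})$ satisfies $A\cup(-A)=E$, forcing $A$ to be a closed half-plane or all of~$E$, hence to contain a line through the origin. You then obtain one non-zero $v\in E\cap\tilde{\L}^\perp$ (not $E\subset\tilde{\L}^\perp$ as written), but that already contradicts $\spa(\L)^\perp=\{0\}$. The paper reaches the same obstruction by a projective formulation, showing the intersection points $\ell\cap w^\perp$ all coincide along the line~$\ell$; this is the same cone picture seen projectively.
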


For~\eqref{item:Om-Lambda-invisible-dom}, recall that any point $x$ of $\H^{p,q}$ sees any point $w$ of $\di\H^{p,q}$ in a spacelike direction (if $x\notin w^{\perp}$) or a lightlike direction (if $x\in w^{\perp}$).

\begin{proof}
\eqref{item:Om-Lambda-in-Hpq}
Let us check that the convex open subset $\Om(\L)$ of $\P(\R^{p,q+1})$ is contained in~$\H^{p,q}$ and that $\ov{\Om}(\L) \cap \di\H^{p,q} \subset \L$.
(The reverse inclusion $\ov{\Om}(\L) \cap \di\H^{p,q} \supset \L$ is clear.)
The non-positive $(p-1)$-sphere $\L$ is the projection to $\di\H^{p,q}$ of some non-positive $(p-1)$-sphere $\hat\L$ of $\di\hat\H^{p,q}$.
Choose a timelike $(q+1)$-plane $T$ of $\R^{p,q+1}$.
As in the proof of Lemma~\ref{lem:Hpq-hat-bar-prod}, we can identify $\di\hat\H^{p,q}$ with $\{ v\in\R^{p,q+1} \,|\, \sfb(v,v)=0 \ \mathrm{and}\ r_T(v) = 1\}$ and, via $(\mathrm{pr}_{T^{\perp}},\mathrm{pr}_T)$, to
$$\{ v\in T^{\perp} ~|~ \sfb(v,v)=1\} \times \{ v\in T ~|~ (-\sfb)(v,v)=1\} \simeq \bS^{p-1}\times\bS^q.$$
Let $\tilde{\L}$ be the image of $\hat\L$ in $\{ v\in\R^{p,q+1} \,|\, \sfb(v,v)=0 \ \mathrm{and}\ r_T(v) = 1\}$; it is a non-positive subset of $\R^{p,q+1}\smallsetminus\{0\}$ which projects onto~$\L$.
It is sufficient to check that for any $v\in\tilde{\ov{\Om}}(\tilde{\L})$, if $v\in\tilde{\Om}(\tilde{\L})$ then $\sfb(v,v)<0$, and if  $\sfb(v,v)=0$ then $v$ is a non-negative multiple of some element of~$\tilde{\L}$.
By Corollary~\ref{cor:non-positive-sphere-as-graph}, the set $\tilde{\L}$ is the graph of some $1$-Lipschitz map $f : \bS^{p-1}\simeq\{ v\in T^{\perp} \,|\, \sfb(v,v)=1\} \to \{ v\in\nolinebreak T \,|\, (-\sfb)(v,v)=1\}\simeq\bS^q$.
Consider $v\in\tilde{\ov{\Om}}(\tilde{\L})$.
We can write $v = (\alpha u, \alpha' u') \in T^{\perp}\oplus T = \R^{p,q+1}$ for some $\alpha, \alpha'\geq 0$ and $u\in T^{\perp}$ and $u'\in T$ with $\sfb(u,u)=-\sfb(u',u')=1$.
Then $\sfb(v,v) = \alpha^2\,\sfb(u,u) + {\alpha'}^2\,\sfb(u',u') = \alpha^2 - {\alpha'}^2$.
Since $v$ belongs to $\tilde{\ov{\Om}}(\tilde{\L})$ and $(u,f(u))$ belongs to~$\tilde{\Lambda}$, we have
\begin{align} \label{eqn:proof-Om}
0 \geq \sfb(v,(u,f(u))) = \alpha\,\sfb(u,u) + \alpha'\,\sfb(u',f(u)) \geq \alpha - \alpha'.
\end{align}
If $v\in\tilde{\Om}(\tilde{\L})$, then the first inequality in \eqref{eqn:proof-Om} is strict, hence $\alpha<\alpha'$, and so $\sfb(v,v) = \alpha^2 -\nolinebreak {\alpha'}^2 <\nolinebreak 0$.
If $\sfb(v,v) = 0$, then $\alpha = \alpha'$, hence all inequalities in \eqref{eqn:proof-Om} must be equalities, and so $v$ is equal to $(u,f(u))\in\tilde{\L}$ rescaled by~$\alpha$.

\medskip

\eqref{item:Om-Lambda-non-empty}
Suppose that $\L$ is non-degenerate.
Let $\tilde{\mathcal{U}}$ be the interior inside $\spa(\L)$ of the $\R^+$-span of~$\tilde{\L}$; it is a non-empty open subset of~$\spa(\L)$.
For any $v\in\tilde{\mathcal{U}}$ we have $\sfb(v,v)\leq 0$, hence $\sfb(v,v)<0$ since $\tilde{\mathcal{U}}$ is open in $\spa(\L)$ and the restriction of $\sfb$ to $\spa(\L)$ is non-degenerate.
For any $v\in\tilde{\mathcal{U}}$ and $\tilde{x}\in\tilde{\L}$ we have $\sfb(v,\tilde{x})\leq 0$, hence $\sfb(v,\tilde{x})<0$ since $\tilde{\mathcal{U}}$ is open.
Thus $\tilde{\mathcal{U}}$ is contained in $\tilde{\Om}(\tilde{\L})$, and its projectivization is contained in $\Omega(\L)$.

Conversely, suppose that $\L$ is degenerate.
Let $\hat\L$ be a non-positive $(p-1)$-sphere of $\di\hat\H^{p,q}$ projecting onto~$\L$.
Consider a splitting $\hat{\ov{\H}}^{p,q} \simeq \ov{\bB}^p\times \bS^q$ as in Lemma~\ref{lem:Hpq-hat-bar-prod}, defined by the choice of a timelike $(q+1)$-plane of $\R^{p,q+1}$.
By Corollary~\ref{cor:non-positive-sphere-as-graph}, in this splitting, $\hat\L$ is the graph of some $1$-Lipschitz map $f : \bS^{p-1}\to\bS^q$.
By Lemma~\ref{lem:kernel-of-graph}, there exists $u\in\bS^{p-1}$ such that $f(-u) = -f(u)$.
Then for any $(u_1,u'_1) \in \bB^p\times\bS^q$, the sign of $\langle u_1,u\rangle_{p+1} - \langle u'_1,f(u)\rangle_{q+1}$ is opposite to the sign of $\langle u_1,-u\rangle_{p+1} - \langle u'_1,f(-u)\rangle_{q+1}$; in other words, the sign of $d_{\ov\bB^p}(u_1,u) - d_{\bS^q}(u'_1,f(u))$ is opposite to the sign of $d_{\ov\bB^p}(u_1,-u) - d_{\bS^q}(u'_1,f(-u))$.
By Lemma~\ref{lem:b-di-Hpq-hat}, this means that the sign of $\sfb(v_1,v)$ is opposite to the sign of $\sfb(v_1,w)$ for $v_1,v,w\in\R^{p,q+1}\smallsetminus\{0\}$ corresponding respectively to $(u_1,u'_1) \in \hat\H^{p,q}$, to $(u,f(u)) \in \hat\L \subset \di\hat\H^{p,q}$, and to $(-u,f(-u)) \in \hat\L \subset \di\hat\H^{p,q}$, where we see $\hat{\ov{\H}}^{p,q}$ as a subset of $(\R^{p,q+1}\smallsetminus\{0\})/\R_{>0}$ as in \eqref{eqn:di-Hpq-hat-quotient}.
Since $\Om(\L) \subset \H^{p,q}$ by \eqref{item:Om-Lambda-in-Hpq}, this shows that $\Om(\L)$ is empty.

\medskip

\eqref{item:Om-Lambda-prop-convex}
We argue by contraposition.
Suppose that the convex open set $\Omega(\L)$ is not properly convex.
Then $\ov{\Om(\L)}$ contains a complete projective line $\ell \subset \P(\R^{p,q+1})$ (see \eg \cite[Th.\,1]{gv56}).
For any $w\in\L$, for dimensional reasons, either $\ell \subset w^{\perp}$, or $\ell \cap w^{\perp}$ is a single point $\ell_w\in\P(\R^{p,q+1})$.
For $x,w \in \L$, if $\ell$ lies neither in $w^{\perp}$ nor in $x^\perp$, then we claim that the points $\ell_x$ and~$\ell_w$ are equal.
Indeed, if the points were different, then only one segment of $\ell\smallsetminus\{\ell_x,\ell_w\}$ would be contained in $\Om(\L)$, which would contradict that $\ell\subset \ov{\Om(\L)}$. 
Consequently, there is a point $o\in\ell$ such that $o\in x^{\perp}$ for all $x\in \L$, \ie $o\in\L^{\perp}$, and so $\L$ does not span.

\medskip

\eqref{item:Om-Lambda-invisible-dom}
By definition of $\Om(\L)$ (see Notation~\ref{not:Omega-Lambda}), we have $\Om(\L) \subset \P(\R^{p,q+1}) \smallsetminus \bigcup_{w\in\L} w^{\perp}$.
Therefore $\Om(\L) \subset \H^{p,q} \smallsetminus \bigcup_{w\in\L} w^{\perp}$ by~\eqref{item:Om-Lambda-in-Hpq}.

For $p\geq 2$, let us check the reverse inclusion.
Since $p\geq 2$, we can consider a \emph{connected} subset $\tilde{\L}$ of the non-zero isotropic vectors of $\R^{p,q+1}$ whose projection to $\di\hat\H^{p,q} \subset (\R^{p,q+1}\smallsetminus\{0\})/\R_{>0}$ is a non-positive $(p-1)$-sphere projecting onto~$\L$.
Let $x \in \H^{p,q} \smallsetminus \bigcup_{w\in\L} w^{\perp}$.
We can lift $x$ to $\tilde{x} \in \R^{p,q+1}$ such that $\sfb(\tilde{x},\tilde{w})<0$ for some $\tilde{w}\in\tilde{\L}$.
Since $\tilde{\L}$ is connected and $\sfb(\tilde{x},\tilde{w})\neq 0$ for all $\tilde{w}\in\tilde{\L}$, by continuity we then have $\sfb(\tilde{x},\tilde{w})<0$ for all $\tilde{w}\in\tilde{\L}$, hence $\tilde{x}\in\tilde{\Om}(\tilde{\L})$ and $x\in\Om(\L)$.

We now assume that $p=1$ and $\Om(\L)$ is non-empty.
By~\eqref{item:Om-Lambda-non-empty}, the set $\L$ is non-degenerate, and so $\#\L = \#\bS^0 = 2$ and $\L$ lifts to $\tilde\L = \{\tilde x,\tilde y\}$ where $\tilde x,\tilde y\in\R^{p,q+1}\smallsetminus\{0\}$ are isotropic and satisfy $\sfb(\tilde x,\tilde y)<0$.
The set $\H^{p,q} \smallsetminus \bigcup_{w\in\L} w^{\perp}$ has two connected components, namely $\P(\{ v\in\R^{p,q+1} \,|\, \sfb(v,\tilde x) \, \sfb(v,\tilde y) > 0\})$ and $\H^{p,q} \cap \P(\{ v\in\R^{p,q+1} \,|\, \sfb(v,\tilde x) \, \sfb(v,\tilde y) < 0\})$.
The first component contains the geodesic line of~$\H^{p,q}$ between the two points of~$\L$ and is equal to $\Om(\L)$ (Notation~\ref{not:Omega-Lambda}).
\end{proof}

\begin{remark}
Lemma~\ref{lem:Omega-Lambda-convex}.\eqref{item:Om-Lambda-in-Hpq} implies that any non-positive $(p-1)$-sphere $\L$ of $\di\H^{p,q}$ is \emph{filled} in the sense of \cite[Rem.\,3.19]{bar15}: any segment of $\di \Om(\L)$ between two points of~$\L$ is fully contained in~$\L$.
\end{remark}

\begin{notation} \label{not:C-Lambda}
For a non-degenerate non-positive $(p-1)$-sphere $\Lambda\subset \di\H^{p,q}$, we define $\mathscr{C}(\L)$ to be the convex hull of $\L$ in the convex set $\Om(\L)$.
It is a closed convex subset of the open convex set $\Om(\L)$.
\end{notation}

In other words, the closure $\overline{\mathscr{C}(\L)}$ of $\mathscr{C}(\L)$ in $\P(\R^{p,q+1})$ is the union of all closed projective simplices of $\P(\R^{p,q+1})$ with vertices in~$\L$ that are contained in $\overline{\Om(\L)}$; the set $\mathscr{C}(\L)$ is the intersection of $\overline{\mathscr{C}(\L)}$ with $\Om(\L)$.

\begin{remark} \label{rem:C-Lambda}
Let $\L \subset \di\H^{p,q}$ be a non-degenerate non-positive $(p-1)$-sphere.
Then
\begin{itemize}
  \item by Lemma~\ref{lem:Omega-Lambda-convex}.\eqref{item:Om-Lambda-non-empty} and its proof, the open convex set $\Om(\L)$ and its closed convex subset $\mathscr{C}(\L)$ are non-empty; since $\L \subset \di\mathscr{C}(\L)$, we have $\di\mathscr{C}(\L)=\di\Om(\L)=\L$ by Lemma~\ref{lem:Omega-Lambda-convex}.\eqref{item:Om-Lambda-in-Hpq};
  \item the set $\mathscr{C}(\L) \subset \H^{p,q}$ is closed in~$\H^{p,q}$ if and only if $\de_{\H}\mathscr{C}(\L)\cap \de_{\H}\Om(\L)=\emptyset$;
  \item $\mathscr{C}(\L)$ is properly convex.
  Indeed, by definition we have $\mathscr{C}(\L)\subset \Om(\L)\cap \P(\spa(\L))$, and so it is sufficient to check that $\Om(\L)\cap \P(\spa(\L))$ is properly convex.
  For this we note that $\Om(\L)\cap \P(\spa(\L))$ coincides with the set $\Om_{\spa(\L)}(\L)$ as in Notation~\ref{not:Omega-Lambda} where we see $\L$ as a subset of $\P(\spa(\L))$ (\ie $\L$ spans); we then apply Lemma~\ref{lem:Omega-Lambda-convex}.\eqref{item:Om-Lambda-prop-convex}.
\end{itemize}
\end{remark}

\begin{example}
In the setting of Examples~\ref{ex:non-pos-sphere}, let $\L$ be the image in $\di\H^{p,q}$ of the non-positive $(p-1)$-sphere~$\hat{\L}$.
In case~(i), the set $\mathscr{C}(\L)$ is the copy of~$\H^p$ bounded by~$\L$; the convex open set $\Om(\L)$ is not properly convex; it is the geometric join of $\H^p$ with a $(q-1)$-dimensional timelike totally geodesic subspace of~$\H^{p,q}$, minus this timelike totally geodesic subspace.
In case~(ii), the set $\mathscr{C}(\L)$ is the $(2p-1)$-dimensional open projective simplex spanned by~$\L$; if $p=q+1$, then $\Om(\L) = \mathscr{C}(\L)$, and if $p<q+1$, then $\Om(\L)$ is the geometric join of $\mathscr{C}(\L)$ with a $(q-p)$-dimensional timelike totally geodesic subspace of~$\H^{p,q}$, minus this timelike totally geodesic subspace.
In case~(iii), the non-positive $(p-1)$-sphere~$\L$ is degenerate and $\Om(\L)$ is empty.
\end{example}

\subsection{Spacelike and weakly spacelike $p$-graphs: definition and characterizations} \label{subsec:weakly-spacelike-graphs}

The following notion generalizes, to $\H^{p,q}$, the notion of an acausal (\resp achronal) topological hypersurface in the Lorentzian anti-de Sitter space $\mathrm{AdS}^{p+1} = \H^{p,1}$.

\begin{defn} \label{def:weakly-sp-gr}
Let $X = \H^{p,q}$ or $\hat\H^{p,q}$.
A \emph{spacelike} (\resp \emph{weakly spacelike}) \emph{$p$-graph} in~$X$ is a closed subset of~$X$ on which any two points are in spacelike position (\resp on which no two points are in timelike position) and which is, with the subset topology, homeomorphic to a connected $p$-dimensional topological manifold without boundary.
\end{defn}

Note that any spacelike $p$-graph is in particular a weakly spacelike $p$-graph.
In Section~\ref{sec:non-deg-limit} we will consider the following special class of spacelike $p$-graphs, see also \cite[Lem.\,3.5]{sst}.

\begin{example} \label{ex:spacelike-compl-submfd}
Any $p$-dimensional connected complete spacelike $C^1$ submanifold of $X = \H^{p,q}$ or $\hat\H^{p,q}$ is a spacelike $p$-graph in~$X$.
\end{example}

The goal of this section is to establish the following characterizations, which justify the terminology of Definition~\ref{def:weakly-sp-gr}.

\begin{prop} \label{prop:weakly-sp-gr-is-a-graph}
For $X = \hat\H^{p,q}$ (\resp $\H^{p,q}$) and for a subset $\hat{M}$ (\resp $M$) of~$X$, the following are equivalent:
\begin{enumerate}[label=(\arabic*),ref=(\arabic*)]
  \item\label{item:weakly-sp-gr-1} $\hat{M}$ (\resp $M$) is a weakly spacelike $p$-graph in~$X$ (Definition~\ref{def:weakly-sp-gr});
\end{enumerate}

\vspace{-0.15cm}

\begin{enumerate}[label=(\arabic*)',ref=(\arabic*)']
  \item\label{item:weakly-sp-gr-1-bis} $\hat{M}$ (\resp $M$) is closed in~$X$ and meets every $q$-dimensional timelike totally geodesic subspace of~$X$ in a unique point;
\end{enumerate}

\vspace{-0.15cm}

\begin{enumerate}[label=(\arabic*),ref=(\arabic*)]\setcounter{enumi}{1}
  \item\label{item:weakly-sp-gr-2} for \emph{any} splitting $\hat\H^{p,q} \simeq \bB^p\times \bS^q$ as in Proposition~\ref{p.doublecoverasproduct}, defined by the choice of a timelike $(q+1)$-plane of $\R^{p,q+1}$, the set $\hat{M}$ is (\resp the set $M$ lifts injectively to a subset of $\hat\H^{p,q}$ which is) in this splitting the graph of a $1$-Lipschitz map $f : \bB^p\to\bS^q$;
\end{enumerate}

\vspace{-0.15cm}

\begin{enumerate}[label=(\arabic*)',ref=(\arabic*)']\setcounter{enumi}{1}
  \item\label{item:weakly-sp-gr-2-bis} for \emph{some} splitting $\hat\H^{p,q} \simeq \bB^p\times \bS^q$ as in Proposition~\ref{p.doublecoverasproduct}, defined by the choice of a timelike $(q+1)$-plane of $\R^{p,q+1}$, the set $\hat{M}$ is (\resp the set $M$ lifts injectively to a subset of $\hat\H^{p,q}$ which is) in this splitting the graph of a $1$-Lipschitz map $f : \bB^p\to\bS^q$.
\end{enumerate}
\end{prop}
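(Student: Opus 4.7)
The plan is to establish the equivalence via (2)\,$\Rightarrow$\,(2') (trivial), (2')\,$\Rightarrow$\,(1) and (2)\,$\Rightarrow$\,(1') (elementary), (1)\,$\Rightarrow$\,(2) (the main work), and (1')\,$\Rightarrow$\,(1) (topological). For (2')\,$\Rightarrow$\,(1) in the $\hat\H^{p,q}$ case, the graph of a $1$-Lipschitz map $f:\bB^p\to\bS^q$ is closed in $\bB^p\times\bS^q\simeq\hat\H^{p,q}$ and homeomorphic via the first projection to $\bB^p\simeq\R^p$, a connected $p$-manifold without boundary. Using the conformal model of Section~\ref{subsec:Hpq-hat}, for $\hat x_i=(u_i,u'_i)$ in $\bB^p\times\bS^q$ one has
\[
\sfb(\hat x_1,\hat x_2)\;=\;\frac{\cos d_{\bB^p}(u_1,u_2)-\cos d_{\bS^q}(u'_1,u'_2)}{u_{1,0}\,u_{2,0}}-1;
\]
the Lipschitz condition and the monotonicity of $\cos$ on $[0,\pi]$ yield $\sfb(\hat x_1,\hat x_2)\leq -1\leq 0$, so Lemma~\ref{lem:not-in-timelike-position} rules out timelike position. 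The $\H^{p,q}$ case follows by projecting the injective lift. For (2)\,$\Rightarrow$\,(1'), any $q$-dimensional timelike totally geodesic subspace equals $\hat\H^{p,q}\cap T'$ for some timelike $(q+1)$-plane $T'$ and becomes $\{u_*\}\times\bS^q$ in the splitting via $T'$ (with $u_*$ the center of $\bB^p$), meeting the graph of $f$ in the single point $(u_*,f(u_*))$.

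For the main step (1)\,$\Rightarrow$\,(2), first take $X=\hat\H^{p,q}$. Fix a splitting via a timelike $(q+1)$-plane $T$ and consider the first-factor projection $\pi:\hat M\to\bB^p$. The plan is to show $\pi$ is a homeomorphism and that the resulting $f$ is $1$-Lipschitz. Properness is clear from closedness of $\hat M$ and compactness of $\bS^q$. For local injectivity, suppose $\hat x_n=(u_n,u'_n)\ne\hat y_n=(u_n,v'_n)$ in $\hat M$ both converge to the same point; specializing the formula above to $u_1=u_2=u_n$ yields
\[
\sfb(\hat x_n,\hat y_n)\;=\;\frac{1-\langle u'_n,v'_n\rangle_{q+1}}{u_{n,0}^{2}}\,-\,1,
\]
which is strictly $>-1$ (as $u'_n\ne v'_n$) yet tends to $-1$, so $|\sfb(\hat x_n,\hat y_n)|<1$ for large $n$, placing these two points in timelike position and contradicting (1). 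Since $\hat M$ is a $p$-manifold by (1), invariance of domain upgrades $\pi$ to a local homeomorphism; a proper local homeomorphism is a covering, and the image of $\pi$ is nonempty, open, and closed in the simply connected $\bB^p$, so $\pi(\hat M)=\bB^p$, and by connectedness of $\hat M$, $\pi$ is a homeomorphism, exhibiting $\hat M=\gph(f)$. For the Lipschitz bound, note $|\sfb(\hat x_1,\hat x_2)|\geq 1$ for distinct $\hat x_1,\hat x_2\in\hat M$; the case $\sfb(\hat x_1,\hat x_2)\geq 1$ is ruled out by taking a path $\gamma$ in $\hat M$ from $\hat x_1$ to $\hat x_2$ and applying the intermediate value theorem to $t\mapsto\sfb(\hat x_1,\gamma(t))$, which runs from $-1$ to $\geq 1$ and so vanishes at some $t_0$, putting $\gamma(t_0)\ne\hat x_1$ in timelike position with $\hat x_1$, contradicting (1). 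Hence $\sfb(\hat x_1,\hat x_2)\leq -1\leq 0$, and Lemma~\ref{lem:not-in-timelike-position} gives $d_{\bB^p}(u_1,u_2)\geq d_{\bS^q}(f(u_1),f(u_2))$.

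The $\H^{p,q}$ case reduces to the $\hat\H^{p,q}$ one by passing to the preimage $\hat M\subset\hat\H^{p,q}$ of $M$ under the double cover. Distinct points of $\hat M$ are either lifts of distinct (non-timelike, by (1)) points of $M$, or antipodes $\hat x,-\hat x$ with $\sfb(\hat x,-\hat x)=1$ (lightlike); so no two points are in timelike position. If $\hat M$ were connected it would satisfy (1) in $\hat\H^{p,q}$ and hence be a graph $\gph(f)$, but the deck involution $\hat x\mapsto -\hat x$ fixes the center $u_*$ of $\bB^p$ in any splitting while negating $\bS^q$, forcing $f(u_*)=-f(u_*)=0\notin\bS^q$, absurd. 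Therefore $\hat M$ has two components, each an injective lift of $M$ to which the $\hat\H^{p,q}$ case applies. Finally, (1')\,$\Rightarrow$\,(1) is obtained as follows: no two points of $\hat M$ can be in timelike position, since otherwise their spanning $2$-plane would be timelike of signature $(0,2|0)$ and would extend (using $q\geq 1$) to a timelike $(q+1)$-plane $T'$ whose totally geodesic subspace $\hat\H^{p,q}\cap T'$ contains both, violating the uniqueness in (1'); the topological $p$-manifold structure of $\hat M$ follows from a local analysis in the conformal model combined with continuity of the intersection map $T'\mapsto\hat M\cap(\hat\H^{p,q}\cap T')$ on the connected space of timelike $(q+1)$-planes. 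The hardest point is the local injectivity in (1)\,$\Rightarrow$\,(2), which crucially relies on the explicit $\sfb$-formula showing $\sfb(\hat x_n,\hat y_n)\to -1$ strictly from above.
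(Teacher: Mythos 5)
Your proof is correct and follows essentially the same plan as the paper's---reduce to the graph characterization and exploit the sign of $\sfb$ in the conformal splitting of Proposition~\ref{p.doublecoverasproduct} and Lemma~\ref{lem:not-in-timelike-position}---but takes a somewhat longer route for the central implication (1)\,$\Rightarrow$\,(2). The paper first proves $\sfb(\hat x,\hat y)\leq-1$ for all $\hat x,\hat y\in\hat M$ via connectedness and continuity (the same intermediate-value argument you invoke for the Lipschitz bound); by Lemma~\ref{lem:not-in-timelike-position} this at once gives global injectivity of the first-factor projection, so $\hat M$ is the graph of a $1$-Lipschitz $f:A\to\bS^q$ for some $A\subset\bB^p$, and one concludes $A=\bB^p$ by showing $A$ is closed (from closedness of $\hat M$) and open (invariance of domain) in the connected set~$\bB^p$. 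You instead establish local injectivity by a converging-sequences argument, upgrade it to a local homeomorphism by invariance of domain, and then use the fact that a proper local homeomorphism into the simply connected $\bB^p$ is a homeomorphism; this works but is a detour, since once $\sfb\leq-1$ is in hand your Lipschitz step already yields global injectivity and renders the local-injectivity paragraph redundant. The $\H^{p,q}$ reduction is also slightly different in flavor: the paper picks a connected component of the preimage and uses $\sfb\leq-1$ on it to conclude $-\hat x\notin\hat M$ directly, whereas you argue by contradiction via $f(u_*)=0$; both are fine. Finally, for (1')\,$\Rightarrow$\,(1) the paper isolates the manifold structure in Lemma~\ref{lem:weaklygraph-is-manifold}, building local charts from the foliation of a neighborhood by $q$-dimensional timelike totally geodesic subspaces (Remark~\ref{rem:local-timelike-foliation}) and getting connectedness from paths in the space of timelike $(q+1)$-planes; your ``continuity of the intersection map'' sketch points in the right direction but would need to be fleshed out along those lines in a complete write-up.
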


\begin{prop} \label{prop:sp-gr-is-a-graph}
For $X = \hat\H^{p,q}$ (\resp $\H^{p,q}$) and for a subset $\hat{M}$ (\resp $M$) of~$X$, the following are equivalent:
\begin{enumerate}[label=(\arabic*),ref=(\arabic*)]
  \item\label{item:sp-gr-1} $\hat{M}$ (\resp $M$) is a spacelike $p$-graph in~$X$ (Definition~\ref{def:weakly-sp-gr});
  \item\label{item:sp-gr-2} for \emph{any} splitting $\hat\H^{p,q} \simeq \bB^p\times \bS^q$ as in Proposition~\ref{p.doublecoverasproduct}, defined by the choice of a timelike $(q+1)$-plane of $\R^{p,q+1}$, the set $\hat{M}$ is (\resp the set $M$ lifts injectively to a subset of $\hat\H^{p,q}$ which is) in this splitting the graph of a strictly $1$-Lipschitz map $f : \bB^p\to\bS^q$;
\end{enumerate}

\vspace{-0.15cm}

\begin{enumerate}[label=(\arabic*)',ref=(\arabic*)']\setcounter{enumi}{1}
  \item\label{item:sp-gr-2-bis} for \emph{some} splitting $\hat\H^{p,q} \simeq \bB^p\times \bS^q$ as in Proposition~\ref{p.doublecoverasproduct}, defined by the choice of a timelike $(q+1)$-plane of $\R^{p,q+1}$, the set $\hat{M}$ is (\resp the set $M$ lifts injectively to a subset of $\hat\H^{p,q}$ which is) in this splitting the graph of a strictly $1$-Lipschitz map $f : \bB^p\to\bS^q$.
\end{enumerate}
\end{prop}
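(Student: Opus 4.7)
The plan is to obtain Proposition~\ref{prop:sp-gr-is-a-graph} as a strict refinement of Proposition~\ref{prop:weakly-sp-gr-is-a-graph}: every spacelike $p$-graph is a weakly spacelike $p$-graph and every strictly $1$-Lipschitz map is $1$-Lipschitz, so the previous proposition already yields one half of each implication, and what remains is only to track when the relevant inequalities are strict. The implication \ref{item:sp-gr-2}~$\Rightarrow$~\ref{item:sp-gr-2-bis} is immediate.

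For \ref{item:sp-gr-1}~$\Rightarrow$~\ref{item:sp-gr-2}, I would fix an arbitrary splitting $\hat\H^{p,q} \simeq \bB^p \times \bS^q$ as in Proposition~\ref{p.doublecoverasproduct} and invoke Proposition~\ref{prop:weakly-sp-gr-is-a-graph} to write $\hat M$ as the graph of some $1$-Lipschitz map $f : \bB^p \to \bS^q$. The main step is then to promote $f$ to strictly $1$-Lipschitz. Given distinct $u_1,u_2 \in \bB^p$, the points $\hat x_i := (u_i, f(u_i))$ lie in $\hat M$ and are thus in spacelike position. To apply Lemma~\ref{lem:not-in-timelike-position} to conclude $d_{\bB^p}(u_1,u_2) > d_{\bS^q}(f(u_1),f(u_2))$ one needs the preliminary sign bound $\sfb(\hat x_1, \hat x_2) \leq 0$; this follows from the $1$-Lipschitz property of $f$ combined with the elementary observation $r_T(\hat x)^2 \geq 1$ for every $\hat x \in \hat\H^{p,q}$ (immediate from the defining equation $\sfb(\hat x,\hat x) = -1$), plugged into the explicit formula for $\sfb(\hat x_1, \hat x_2)$ recorded in the proof of Lemma~\ref{lem:not-in-timelike-position}. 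Once this sign bound is in hand, the lemma equates spacelike position with the strict inequality, which is precisely the strictly $1$-Lipschitz condition on $f$.

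The converse \ref{item:sp-gr-2-bis}~$\Rightarrow$~\ref{item:sp-gr-1} runs in the same way: Proposition~\ref{prop:weakly-sp-gr-is-a-graph} supplies that $\hat M$ is a weakly spacelike $p$-graph, and the upgrade from ``not in timelike position'' to ``in spacelike position'' uses Lemma~\ref{lem:not-in-timelike-position} in reverse, after verifying the same auxiliary bound $\sfb \leq 0$. The $\H^{p,q}$ version of both implications then reduces to the $\hat\H^{p,q}$ version exactly as in Proposition~\ref{prop:weakly-sp-gr-is-a-graph}, since the injective lift of $M$ to $\hat\H^{p,q}$ produced there only uses the weakly spacelike condition, which is implied by the spacelike condition. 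The only point of substance beyond what is already handled in the weakly spacelike case is thus the verification of the auxiliary sign $\sfb(\hat x_1,\hat x_2) \leq 0$, which I expect to be the main (and very mild) obstacle in executing this plan.
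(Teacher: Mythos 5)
Your proposal is correct and follows the same route as the paper: reduce to Proposition~\ref{prop:weakly-sp-gr-is-a-graph} and upgrade the Lipschitz inequality from non-strict to strict via Lemma~\ref{lem:not-in-timelike-position}. Your explicit check that the hypothesis $\sfb(\hat x_1,\hat x_2)\leq 0$ of that lemma holds (from the $1$-Lipschitz bound together with the formula $\sfb(\hat x_1,\hat x_2)=r_T(\hat x_1)r_T(\hat x_2)(\langle u_1,u_2\rangle_{p+1}-\langle u'_1,u'_2\rangle_{q+1})-1$, which is already $\leq -1$ since the bracketed quantity is $\leq 0$ and $r_T>0$, making the $r_T\geq 1$ observation unnecessary) is a detail the paper leaves implicit in its one-line ``observation''; it is correct, and so is your reduction of the $\H^{p,q}$ case to the $\hat\H^{p,q}$ case.
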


Recall that \emph{strictly $1$-Lipschitz} means that $d_{\bS^q}(f(u_1),f(u_2)) < d_{\bB^p}(u_1,u_2)$ for all $u_1\neq\nolinebreak u_2$~in~$\bB^p$.

\begin{remark} \label{r.contractible}
Propositions \ref{prop:weakly-sp-gr-is-a-graph} and~\ref{prop:sp-gr-is-a-graph} imply that any spacelike (\resp weakly spacelike) $p$-graph in $\H^{p,q}$ lifts injectively to a spacelike (\resp weakly spacelike) $p$-graph in $\hat\H^{p,q}$, and that weakly spacelike $p$-graphs in $\H^{p,q}$ or $\hat\H^{p,q}$ are always homeomorphic to~$\bB^p$, hence contractible.
\end{remark}

\begin{examples} \label{ex:spacelike-graph}
Consider $\hat\H^{p,q} \simeq \bB^p \times \bS^q$ as in Proposition~\ref{p.doublecoverasproduct}, defined by the choice of a timelike $(q+1)$-plane $T$ of $\R^{p,q+1}$.

\smallskip

(i) If $f : \bB^p\to\bS^q$ is a constant map, then the graph of~$f$ is a spacelike $p$-graph in $\hat\H^{p,q}$ which is a totally geodesic copy of~$\H^p$.
Its boundary at infinity $\hat\L = \di\hat{M}$ is as in Example~\ref{ex:non-pos-sphere}.(i).

\smallskip

(ii) If $p\leq q+1$ and if $f : \bB^p\to\bS^q$ is given by
$$f(y_0, \dots, y_p) = \Bigg(\sqrt{\frac{y_0^2}{p} + y_1^2}, \dots, \sqrt{\frac{y_0^2}{p} + y_p^2}\Bigg),$$
then the graph $\hat{M}$ of~$f$ is a spacelike $p$-graph in $\hat\H^{p,q}$.
(In fact, it is a $p$-dimensional maximal complete spacelike $C^{\infty}$ submanifold of $\hat\H^{p,q}$, see Lemma~\ref{lem:F-spacelike} below with $j=p$.)
Its boundary at infinity $\hat\L = \di\hat{M}$ is as in Example~\ref{ex:non-pos-sphere}.(ii).
See Figure~\ref{fig:A_Corbit}.
\end{examples}

\begin{figure}[h]
\centering
\labellist
\small\hair 2pt
\pinlabel {$\H^{2,1}$} [u] at 265 280
\pinlabel {$M$} [u] at 220 220
\pinlabel {$x_1^+$} [u] at 139 231
\pinlabel {$\bullet$} [u] at 139 217
\pinlabel {$x_2^+$} [u] at 117 132
\pinlabel {$\bullet$} [u] at 117 144
\pinlabel {$x_1^-$} [u] at 226 274
\pinlabel {$\bullet$} [u] at 226 260
\pinlabel {$x_2^-$} [u] at 264 100
\pinlabel {$\bullet$} [u] at 264 110
\endlabellist
\vspace{-0.5cm}
\includegraphics[scale=0.7]{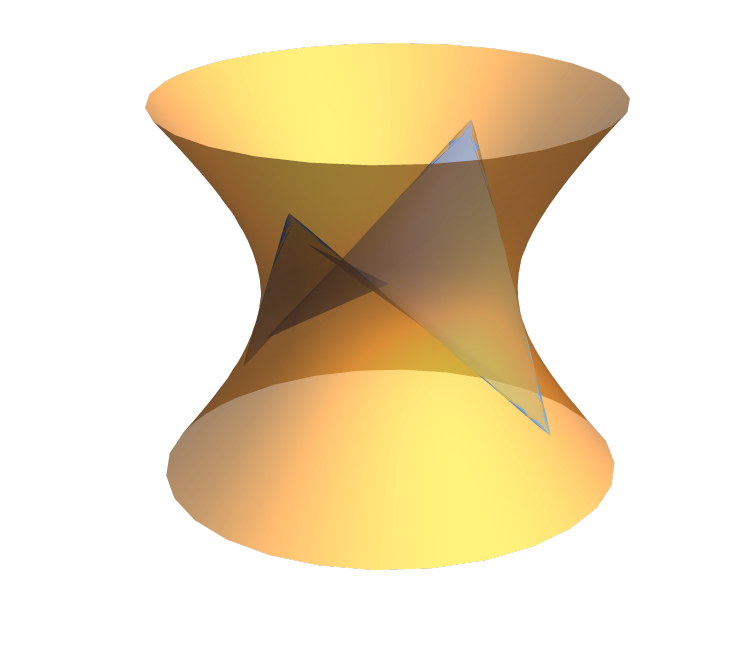}
\vspace{-0.8cm}
\caption{The anti-de Sitter space $\mathrm{AdS}^3 = \H^{2,1}$, intersected with an affine chart of $\P(\R^4)$, is the interior of the quadric of equation $v_1^2 + v_2^2 - v_3^2 = 0$.
Here we have represented, in such an affine chart, the image $M$ in $\H^{2,1}$ of a spacelike $2$-graph as in Example~\ref{ex:spacelike-graph}.(ii).
Its ideal boundary is the union of four segments of $\di\H^{2,1}$, joining points $x_1^{\pm}$ and $x_2^{\pm}$.}
\label{fig:A_Corbit}
\end{figure}

For the proof of \ref{item:weakly-sp-gr-1-bis}~$\Rightarrow$~\ref{item:weakly-sp-gr-1} in Proposition~\ref{prop:weakly-sp-gr-is-a-graph} we use the following basic observation.

\begin{remark} \label{rem:local-timelike-foliation}
Let $o\in \H^{p,q}$ and let $T$ be a timelike $(q+1)$-plane of $\R^{p,q+1}$ such that $o\in\P(T)$.
Consider a $\sfb$-orthogonal basis $(e_1,\ldots,e_{p+q+1})$ of $\R^{p,q+1}$ such that $\mathrm{span}(e_1,\dots,e_p) = T^{\perp}$ and $\mathrm{span}(e_{p+1},\dots,e_{p+q+1}) = T$, such that $o = [e_{p+q+1}]$, and such that $\sfb(e_i,e_i)$ is equal to $1$ if $1\leq i\leq p$ and to $-1$ if $p+1\leq i\leq p+q+1$.
For each $d\in T^{\perp}$ with $\sfb(d,d)<1$, consider the timelike $(q+1)$-plane
$$T_d := \mathrm{span}(e_{p+1},\dots,e_{p+q},d+e_{p+q+1})$$
of~$\R^{p,q+1}$.
We have $T_0 = T$.
In the affine chart $\{v_{p+q+1}=1\}$ of $\P(\R^{p+q+1})$, the sets $\P(T_d)$ are affine translates of $\{v_1 = \dots = v_p = 0\}$, with $\P(T_d)$ and $\P(T_{d'})$ disjoint in the affine chart if $d\neq d'$.
The map $d\mapsto\P(T_d)$ defines a foliation of the open subset
$$\mathcal{U} := \{ [v_1:\ldots:v_{p+q}:1] ~|~ v_1,\dots,v_{p+q}\in\R,\ v_1^2+\dots+v_p^2<1\}$$
of $\H^{p,q}$ (which contains $o$) by $q$-dimensional timelike totally geodesic subspaces, parametrized by the open unit disk $\bD^p$ of Euclidean~$\R^p$.
The intersection $\P(T_d) \cap \P(T_{d'}) = \P(\spa(e_{p+1},\dots,e_{p+q}))\linebreak = \P(T) \cap o^{\perp}$ is independent of the choice of $d\neq d'$ in~$\bD^p$.
\end{remark}

\begin{lem} \label{lem:weaklygraph-is-manifold}
For $X = \hat\H^{p,q}$ or $\H^{p,q}$, let $M$ be a closed subset of~$X$ meeting every $q$-dimensional timelike totally geodesic subspace of~$X$ in a unique point.
Then $M$ is homeomorphic to a connected $p$-dimensional topological manifold without boundary.
\end{lem}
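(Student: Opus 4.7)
The plan is to prove Lemma~\ref{lem:weaklygraph-is-manifold} in two stages: a local stage using the foliation of Remark~\ref{rem:local-timelike-foliation} to exhibit a chart $\phi:\bD^p_\varepsilon\to M$ around any given point of~$M$, and a global stage deriving connectedness from the parametrization of~$M$ by the connected space of timelike $(q+1)$-planes of~$\R^{p,q+1}$. In the case $X=\hat\H^{p,q}$, the analogous local foliation is obtained by lifting Remark~\ref{rem:local-timelike-foliation} to the sheet of the double cover containing the chosen point, and the remainder of the argument is unchanged.

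For the local stage, fix $m\in M$ and apply Remark~\ref{rem:local-timelike-foliation} at $o=m$ with some timelike $(q+1)$-plane $T$ through~$m$. This produces an open neighborhood $\mathcal U$ of~$m$ in~$X$ with a foliation by $q$-dimensional timelike totally geodesic subspaces $\P(T_d)$, $d\in\bD^p$, together with a continuous retraction $\psi:\mathcal U\to\bD^p$ onto the parameter. The uniqueness hypothesis makes $\phi:\bD^p\to M$, $d\mapsto M\cap\P(T_d)$, well-defined, with $\phi(0)=m$ and $\psi\circ\phi=\mathrm{id}$ wherever $\phi$ takes values in~$\mathcal U$. The crucial step is continuity of~$\phi$: if $d_n\to d$ in~$\bD^p$, pass to a subsequence along which $\phi(d_n)$ converges in the projective compactification of~$X$ to a point~$m'$. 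Direct inspection of the affine coordinates of Remark~\ref{rem:local-timelike-foliation} shows every such limit satisfies $\sfb(m',m')<0$ --- the ``blow-up'' limits lie in $\P(\spa(e_{p+1},\dots,e_{p+q}))$ on which $\sfb$ is negative definite --- so $m'\in X$. Then closedness of~$M$ and the Hausdorff-convergence $\P(T_{d_n})\to\P(T_d)$ force $m'\in M\cap\P(T_d)=\{\phi(d)\}$. With continuity of~$\phi$ in hand, openness of~$\mathcal U$ delivers $\varepsilon>0$ with $\phi(\bD^p_\varepsilon)\subset\mathcal U$; on this smaller ball $\phi$ is injective, since two distinct leaves $\P(T_d)$ and $\P(T_{d'})$ meet only in the hyperplane at infinity of the affine chart, hence outside~$\mathcal U$; its inverse $\psi$ is continuous; and continuity of~$\psi$ with $\psi(m)=0$ also shows that a sufficiently small $M$-neighborhood of~$m$ is contained in $\phi(\bD^p_\varepsilon)$, so this image is open in~$M$ and provides a chart onto~$\R^p$.

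For connectedness, let $\mathcal T$ be the set of timelike $(q+1)$-planes of~$\R^{p,q+1}$. It is a single $\OO(p,q+1)$-orbit with stabilizer $\OO(p)\times\OO(q+1)$, and reflections inside $\OO(p)$ and $\OO(q+1)$ together with the identity component hit every connected component of $\OO(p,q+1)$, so $\mathcal T$ is connected. Every $m\in M$ lies in some $T'\in\mathcal T$, obtained by extending a timelike unit lift of~$m$ to a $\sfb$-orthogonal negative definite basis, so the map $\mathcal T\to M$ sending $T'$ to the unique point of $M\cap\P(T')$ (resp.\ $M\cap T'$ for $X=\hat\H^{p,q}$) is surjective. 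Continuity follows from compactness of each $\P(T')\subset\H^{p,q}$: when $T'_n\to T'$, the compact sets $\P(T'_n)$ converge to $\P(T')$ in Hausdorff distance, so any accumulation point of the unique intersections $M\cap\P(T'_n)$ lies in $\P(T')\cap M$, and by uniqueness equals $M\cap\P(T')$. Hence $M$ is the continuous image of a connected space, and is therefore connected.

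The main obstacle, as indicated, is the continuity of~$\phi$: \emph{a priori} $\phi(d_n)$ could drift toward the ideal boundary while its image in~$M$ remains in~$X$. What rules this out is the explicit affine-chart description in Remark~\ref{rem:local-timelike-foliation}, which constrains every possible projective limit of $\phi(d_n)$ to lie in the timelike subspace $\P(\spa(e_{p+1},\dots,e_{p+q}))\subset\H^{p,q}$.
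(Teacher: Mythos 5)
Your proposal is correct and follows essentially the same approach as the paper: using the local foliation of Remark~\ref{rem:local-timelike-foliation} to build charts, and the connectedness of the space of timelike $(q+1)$-planes to get connectedness of~$M$. The only organizational difference is that you first establish continuity of $\phi$ on all of $\bD^p$ (via Hausdorff convergence of the leaves, which are compact and contained in $\H^{p,q}$, together with closedness of~$M$) and then restrict to a small ball; the paper instead first observes that for $T'\in\calT\smallsetminus\{T\}$ one has $\P(T')\cap M\subset\mathcal{U}_m$ (using $\P(T')\smallsetminus\mathcal{U}_m\subset\P(T)$ and $\P(T)\cap M=\{m\}\not\ni\P(T')$), so that $\varphi_m$ lands directly in $\mathcal{U}_m\cap M$, and then proves it is a homeomorphism onto the open set $\mathcal{U}_m\cap M$. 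Both variants are valid and rest on the same underlying facts.
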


\begin{proof}[Proof of Lemma~\ref{lem:weaklygraph-is-manifold}]
We give the proof for $X = \H^{p,q}$; the proof for $X = \hat\H^{p,q}$ is similar.
Let $M$ be a closed subset of $\H^{p,q}$ meeting every $q$-dimensional timelike totally geodesic subspace of $\H^{p,q}$ in a unique point.

We first fix a point $m\in M$.
Choosing a timelike $(q+1)$-plane $T$ of $\R^{p,q+1}$ such that $m\in\P(T)$, Remark~\ref{rem:local-timelike-foliation} provides an open neighborhood $\mathcal{U}_m$ of $m$ in $\H^{p,q}$ and a collection $\calT$ of timelike $(q+1)$-planes of $\R^{p,q+1}$ containing~$T$ such that $\calT$ is homeomorphic to~$\bD^p$, the sets $\P(T')\cap\mathcal{U}_m$ for $T'\in\calT$ define a foliation of~$\mathcal{U}_m$, and for any $T'\neq T''$ in $\calT$ we have $\P(T') \cap \P(T'') = \P(T) \cap m^{\perp} = \P(T') \smallsetminus \mathcal{U}_m$.
Since $M$ meets every $q$-dimensional timelike totally geodesic subspace of $\H^{p,q}$ in a unique point, we have $\P(T)\cap M = \{m\}$, hence $\P(T) \subset (X\smallsetminus M) \cup \{m\}$.
For any $T'\in\calT\smallsetminus\{T\}$, the facts that $\P(T') \smallsetminus \mathcal{U}_m \subset \P(T)$ and $m\notin\P(T')$ then imply $\P(T') \cap M \subset \mathcal{U}_m$.

Let $\varphi_m : \calT \to \mathcal{U}_m\cap M$ be the map sending any $T'$ to the unique intersection point of $\P(T')$ with $M$. 
We claim that $\varphi_m$ is a homeomorphism with respect to the subset topology on the image.
Indeed, $\varphi_m$ is bijective because $M$ meets every $q$-dimensional timelike totally geodesic subspace of $\H^{p,q}$ in a unique point and $\calT$ yields a foliation of $\mathcal{U}_m$.
The continuity of~$\varphi_m$ follows from the closedness of~$M$: if $(T_n)_{n\in\N}$ is a sequence of points of $\calT$ converging to $T'\in\calT$, then by closedness of~$M$ any accumulation point of $(\P(T_n)\cap M)_{n\in\N}$ is contained in $\P(T')\cap M$, which is a single point $\varphi(T')$, hence $\varphi(T_n)\to\varphi(T')$.
The continuity of~$\varphi_m^{-1}$ readily follows from the fact that the $\P(T')\cap\mathcal{U}_m$ for $T'\in\calT$ foliate $\mathcal{U}_m$.
Composing $\varphi_m^{-1}$ with a homeomorphism between $\calT$ and~$\bD^p$, we obtain a homeomorphism $\psi_m : \mathcal{U}_m\cap M \to \bD^p$.

We can do this for any $m\in M$.
We thus obtain an atlas of charts of~$M$ of the form $(\mathcal{U}_m\cap M,\psi_m)_{m\in M}$, with values in $\bD^p \subset \R^p$.
By construction, chart transition maps are homeomorphisms, and so $M$ is homeomorphic to a $p$-dimensional topological manifold without boundary.

We claim that $M$ is connected.
Indeed, given any two points $m_1,m_2\in M$, consider arbitrary timelike $(q+1)$-planes $T_1,T_2$ of $\R^{p,q+1}$ such that $m_i\in\P(T_i)$ for all $i\in\{1,2\}$.
Since the space of timelike $(q+1)$-planes of $\R^{p,q+1}$ is path connected (it identifies with the Riemannian symmetric space of $\PO(p,q+1)$, which is a contractible manifold), we can find a continuous path $t\mapsto T_t$ from $T_1$ to~$T_2$ in the space of timelike $(q+1)$-planes of $\R^{p,q+1}$.
Then $t\mapsto\P(T_t)\cap M$ defines a continuous path from $m_1$ to $m_2$ in~$M$.
\end{proof}

\begin{proof}[Proof of Proposition~\ref{prop:weakly-sp-gr-is-a-graph}]
\ref{item:weakly-sp-gr-1}~$\Rightarrow$~\ref{item:weakly-sp-gr-2} for~$\hat{M}$:
Suppose that $\hat M$ is a weakly spacelike $p$-graph in $\hat\H^{p,q}$.
The fact that no two points of $\hat M$ are in timelike position means that $|\sfb(\hat{x},\hat{y})|\geq 1$ for all $\hat{x},\hat{y}\in\hat{M}$ (see Section~\ref{subsec:Hpq}).
Since $\hat{M}$ is connected and $\sfb(\hat{x},\hat{x})= -1$, the continuity of~$\sfb$ implies that $\sfb(\hat{x},\hat{y})\leq -1$ for all $\hat{x},\hat{y}\in\hat{M}$.
Consider a splitting $\hat\H^{p,q} \simeq \bB^p\times \bS^q$ as in Proposition~\ref{p.doublecoverasproduct}, defined by the choice of a timelike $(q+1)$-plane of $\R^{p,q+1}$.
By Lemma~\ref{lem:not-in-timelike-position}, in our splitting we have $d_{\bB^p}(u_1,u_2) \geq d_{\bS^q}(u'_1,u'_2)$ for all $(u_1,u'_1),(u_2,u'_2)\in \hat M$.
Thus the first-factor projection restricted to $\hat M$ is injective, and so there exist a subset $A$ of $\bB^p$ and a $1$-Lipschitz map $f : A\to\bS^q$ such that $\hat M$ is the graph of~$f$.
Since $f$ is continuous, the first-factor projection restricted to $\hat M$ is a homeomorphism onto~$A$.
Since $\hat M$ is closed in $X=\hat\H^{p,q}$ by assumption, it follows that $A$ is closed in~$\bB^p$.
Moreover, since $\hat M$ is homeomorphic to a $p$-dimensional topological manifold without boundary, any point $\hat x\in\hat M$ admits an open neighborhood in~$\hat M$ whose image by the first-factor projection is an open subset of $\bB^p$ contained in~$A$, and so $A$ is open in $\bB^p$.
Since $\bB^p$ is connected we have $A=\bB^p$.
Thus $\hat M=\gph(f)$ for a $1$-Lipschitz map $f :\nolinebreak\bB^p\to\nolinebreak\bS^q$.

\ref{item:weakly-sp-gr-1}~$\Rightarrow$~\ref{item:weakly-sp-gr-2} for~$M$:
Suppose that $M$ is a weakly spacelike $p$-graph in $\H^{p,q}$.
Since $M$ is connected, the full preimage of $M$ in $\hat\H^{p,q}$ has at most two connected components.
Let $\hat{M}$ be one of these components.
As in the previous paragraph, since no two points of~$M$ are in timelike position, we have $|\sfb(\hat{x},\hat{y})|\geq 1$ for all $\hat{x},\hat{y}\in\hat{M}$ and in fact $\sfb(\hat{x},\hat{y})\leq -1$ for all $\hat{x},\hat{y}\in\hat{M}$ by connectedness of~$\hat{M}$.
In particular, if $\hat{x}\in\hat{M}$, then $-\hat{x}\notin\hat{M}$, which shows that $\hat{M}$ projects injectively onto~$M$. Hence the full preimage of $M$ in $\hat\H^{p,q}$ has exactly two connected components and $\hat M$ is a closed subset of $\hat\H^{p,q}$ which is homeomorphic to a connected $p$-dimensional topological manifold without boundary.
Moreover, the inequality $\sfb(\hat{x},\hat{y})\leq -1$ for all $\hat{x},\hat{y}\in\hat{M}$ shows that no points of $\hat{M}$ are in timelike position.
We conclude using the implication \ref{item:weakly-sp-gr-1}~$\Rightarrow$~\ref{item:weakly-sp-gr-2} for~$\hat{M}$.

\ref{item:weakly-sp-gr-2}~$\Rightarrow$~\ref{item:weakly-sp-gr-1-bis} for~$\hat{M}$:
Suppose that $\hat{M}$ satisfies~\ref{item:weakly-sp-gr-2}.
Let $T$ be a timelike $(q+1)$-plane of $\R^{p,q+1}$, and let us check that $\hat{M}$ meets the image of $T$ in $\hat\H^{p,q}$ in a unique point.
Consider the splitting $\hat\H^{p,q} \simeq \bB^p\times \bS^q$ given by~$T$ as in Proposition~\ref{p.doublecoverasproduct}.
In this splitting, the image of $T\smallsetminus\{0\}$ in $\hat{\H}^{p,q}$ is $\{u_0\}\times\bS^q$ where $u_0 = (1,0,\dots,0)$ is the mid-point of the upper hemisphere~$\bB^p$.
The fact that $\hat{M}$ is the graph of a map $f : \bB^p\to\bS^q$ then implies that $\hat{M}$ meets the image of~$T$ in a unique point (namely $(u_0,f(u_0))$).

\ref{item:weakly-sp-gr-2}~$\Rightarrow$~\ref{item:weakly-sp-gr-1-bis} for~$M$:
Suppose that $M$ lifts injectively to a subset $\hat{M}$ of $\hat{\H}^{p,q}$ satisfying~\ref{item:weakly-sp-gr-2}.
For any timelike $(q+1)$-plane $T$ of $\R^{p,q+1}$, we have just seen that $\hat{M}$ meets the image of $T$ in $\hat\H^{p,q}$ in a unique point; therefore, $M$ meets the image of $T$ in $\H^{p,q}$ in a unique point.

\ref{item:weakly-sp-gr-1-bis}~$\Rightarrow$~\ref{item:weakly-sp-gr-1}:
For $X = \hat\H^{p,q}$ (\resp $\H^{p,q}$), any two points of~$X$ which are in timelike position belong to a common $q$-dimensional timelike totally geodesic subspace of~$X$.
Therefore, the fact that $\hat M$ (\resp $M$) meets every such subspace of~$X$ in a unique point implies that no two points of $\hat M$ (\resp $M$) are in timelike position.
We conclude using Lemma~\ref{lem:weaklygraph-is-manifold}.

\ref{item:weakly-sp-gr-2}~$\Rightarrow$~\ref{item:weakly-sp-gr-2-bis} is clear.

\ref{item:weakly-sp-gr-2-bis}~$\Rightarrow$~\ref{item:weakly-sp-gr-1} for $\hat M$: Suppose that in some splitting $\hat\H^{p,q} \simeq \bB^p\times \bS^q$ as in Proposition~\ref{p.doublecoverasproduct}, the set $\hat{M}$ is the graph of a 1-Lipschitz map $f:\bB^p\to \bS^q$.
Since $f$ is continuous, the first-factor projection restricted to $\hat M$ is a homeomorphism onto~$\bB^p$.
In particular, $\hat M$ is closed in $X = \hat\H^{p,q}$ and homeomorphic to a topological $p$-dimensional manifold without boundary.
Since $f$ is $1$-Lipschitz we have $d_{\bB^p}(u_1,u_2) \geq d_{\bS^q}(u'_1,u'_2)$ for all $(u_1,u'_1),(u_2,u'_2)\in \hat M$, and so Lemma~\ref{lem:not-in-timelike-position} implies that no two points of $\hat M$ are in timelike position.

\ref{item:weakly-sp-gr-2-bis}~$\Rightarrow$~\ref{item:weakly-sp-gr-1} for $M$: Suppose that $M$ admits an injective lift $\hat{M} \subset \hat\H^{p,q}$ which, in some splitting $\hat\H^{p,q} \simeq \bB^p\times \bS^q$ as in Proposition~\ref{p.doublecoverasproduct}, is the graph of a $1$-Lipschitz map $f:\bB^p\to \bS^q$.
The previous paragraph shows that $\hat M$ is closed in $\hat\H^{p,q}$ and homeomorphic to a topological $p$-dimensional manifold without boundary.
The same holds for~$M$ since the natural projection from $\hat M$ to~$M$ is a homeomorphism.
Since no two points of $\hat M$ are in timelike position, the same holds for~$M$.
\end{proof}

\begin{proof}[Proof of Proposition~\ref{prop:sp-gr-is-a-graph}]
Observe that, by Lemma~\ref{lem:not-in-timelike-position}, for any splitting $\hat\H^{p,q} \simeq \bB^p\times \bS^q$ as in Proposition~\ref{p.doublecoverasproduct}, defined by the choice of a timelike $(q+1)$-plane of $\R^{p,q+1}$, and for any $1$-Lipschitz map $f : \bB^p\to\bS^q$, the map $f$ is strictly $1$-Lipschitz if and only if any two points of the graph of $f$ in $\hat\H^{p,q}$ are in spacelike position.

Proposition~\ref{prop:sp-gr-is-a-graph} is an immediate consequence of the equivalence \ref{item:weakly-sp-gr-1}~$\Leftrightarrow$~\ref{item:weakly-sp-gr-2}~$\Leftrightarrow$~\ref{item:weakly-sp-gr-2-bis} in Proposition~\ref{prop:weakly-sp-gr-is-a-graph} and of this observation.
\end{proof}

The following elementary observation will be useful in Section~\ref{subsec:spacelike-cocompact}.

\begin{lem} \label{lem:finite-group-preserv-M}
Let $M$ be a weakly spacelike $p$-graph in $\H^{p,q}$ (\resp $\hat\H^{p,q}$).
Then any compact subgroup of $\PO(p,q+1)$ (\resp $\OO(p,q+1)$) preserving~$M$ admits a global fixed point in~$M$.
\end{lem}

\begin{proof}
We give the proof for $\H^{p,q}$; the proof for $\hat\H^{p,q}$ is similar.
Any compact subgroup of $\PO(p,q+\nolinebreak 1)$ is contained in a maximal one, hence preserves a $q$-dimensional timelike totally geodesic subspace $\P(T)$ of $\H^{p,q}$.
If the compact subgroup preserves~$M$, then it also preserves the intersection $\P(T)\cap M$, which is a singleton by Proposition~\ref{prop:weakly-sp-gr-is-a-graph}.
\end{proof}

\subsection{Limits, boundaries, and projections of weakly spacelike $p$-graphs} \label{subsec:lim-bound-proj}

Here is an easy consequence of the equivalence \ref{item:weakly-sp-gr-1}~$\Leftrightarrow$~\ref{item:weakly-sp-gr-2-bis} in Proposition~\ref{prop:weakly-sp-gr-is-a-graph}.

\begin{cor} \label{c.limit-weakly-sp-gr}
For $X = \hat\H^{p,q}$ or $\H^{p,q}$, any sequence of weakly spacelike $p$-graphs in~$X$ admits a subsequence that converges to a weakly spacelike $p$-graph in~$X$.
\end{cor}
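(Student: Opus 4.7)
The plan is to adapt the strategy used in Corollary~\ref{c.limit-non-pos-sphere}: via Proposition~\ref{prop:weakly-sp-gr-is-a-graph}, identify each weakly spacelike $p$-graph with a $1$-Lipschitz map $\bB^p \to \bS^q$, extract a convergent subsequence by Arzel\`a--Ascoli, and read off the limit graph from the limit function. The main obstacle, absent from Corollary~\ref{c.limit-non-pos-sphere}, is that the domain $\bB^p$ is not compact; I will resolve this by exploiting the fact that $\bB^p$ has finite diameter, so any $1$-Lipschitz map on it extends to its compact closure $\ov{\bB}^p$, on which Arzel\`a--Ascoli applies directly.

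First I reduce the case $X = \H^{p,q}$ to the case $X = \hat\H^{p,q}$. Given a sequence $(M_n)$ of weakly spacelike $p$-graphs in $\H^{p,q}$, the implication \ref{item:weakly-sp-gr-1}$\Rightarrow$\ref{item:weakly-sp-gr-2} of Proposition~\ref{prop:weakly-sp-gr-is-a-graph} produces for each~$n$ an injective lift $\hat M_n \subset \hat\H^{p,q}$ that is itself a weakly spacelike $p$-graph. Any convergent subsequence $\hat M_{n_k} \to \hat M$ then projects to a convergent subsequence $M_{n_k} \to M$ in $\H^{p,q}$, and the projection $M$ of the weakly spacelike $p$-graph $\hat M$ is again a weakly spacelike $p$-graph (again by Proposition~\ref{prop:weakly-sp-gr-is-a-graph}).

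For the case $X = \hat\H^{p,q}$, fix a splitting $\hat\H^{p,q} \simeq \bB^p \times \bS^q$ as in Proposition~\ref{p.doublecoverasproduct}, coming from some timelike $(q+1)$-plane of $\R^{p,q+1}$. By the equivalence \ref{item:weakly-sp-gr-1}$\Leftrightarrow$\ref{item:weakly-sp-gr-2-bis} of Proposition~\ref{prop:weakly-sp-gr-is-a-graph}, each $\hat M_n$ is the graph of a $1$-Lipschitz map $f_n : \bB^p \to \bS^q$. Since $\bB^p$ has diameter at most~$\pi$ (it is a hemisphere of $\bS^p$), each $f_n$ is uniformly continuous and extends uniquely to a $1$-Lipschitz map $\ov f_n : \ov{\bB}^p \to \bS^q$. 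The family $(\ov f_n)$ is equicontinuous and takes values in the compact space $\bS^q$, so Arzel\`a--Ascoli on the compact $\ov{\bB}^p$ yields a subsequence $(\ov f_{\varphi(n)})$ converging uniformly to a $1$-Lipschitz map $\ov f : \ov{\bB}^p \to \bS^q$. Applying Proposition~\ref{prop:weakly-sp-gr-is-a-graph} once more, the graph of the restriction $\ov f|_{\bB^p}$ is a weakly spacelike $p$-graph $\hat M \subset \hat\H^{p,q}$; and the uniform convergence $\ov f_{\varphi(n)} \to \ov f$ on the compact $\ov{\bB}^p$ translates to Hausdorff convergence of the corresponding graphs inside $\hat{\ov\H}^{p,q} \simeq \ov{\bB}^p \times \bS^q$ (via Lemma~\ref{lem:Hpq-hat-bar-prod}), yielding $\hat M_{\varphi(n)} \to \hat M$, as required.
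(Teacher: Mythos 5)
Your proof is correct and follows essentially the same route as the paper: reduce $\H^{p,q}$ to $\hat\H^{p,q}$, identify each weakly spacelike $p$-graph with a $1$-Lipschitz map $\bB^p\to\bS^q$ via Proposition~\ref{prop:weakly-sp-gr-is-a-graph}, extract a convergent subsequence by Arzel\`a--Ascoli, and read off the limit graph. Your extra step of extending each $f_n$ to the compact closure $\ov\bB^p$ before invoking Arzel\`a--Ascoli is a pleasant way to make the compactness hypothesis literally satisfied, which the paper leaves implicit.
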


As in Corollary~\ref{c.limit-non-pos-sphere}, we consider convergence with respect to the Hausdorff topology for subsets of the projective space $\P(\R^{p,q+1})$, restricted to weakly spacelike $p$-graphs.
Using characterization \ref{item:weakly-sp-gr-2} of Proposition~\ref{prop:weakly-sp-gr-is-a-graph}, this coincides with the topology induced by pointwise convergence of $1$-Lipschitz maps $f:\bB^p\to\bS^q$ for any splitting $\hat\H^{p,q} \simeq \bB^p\times \bS^q$ as in Proposition~\ref{p.doublecoverasproduct}.

\begin{proof}[Proof of Corollary~\ref{c.limit-weakly-sp-gr}]
We consider the case $X = \hat\H^{p,q}$, as it implies the case $X = \H^{p,q}$ (see Proposition~\ref{prop:weakly-sp-gr-is-a-graph}).
Let $(\hat M_n)_{n\in \N}$ be a sequence of weakly spacelike $p$-graphs in $\hat\H^{p,q}$.
Consider a splitting $\hat\H^{p,q}\simeq \bB^p\times \bS^q$ as in Proposition~\ref{p.doublecoverasproduct}, defined by the choice of a timelike $(q+1)$-plane of $\R^{p,q+1}$.
By Proposition~\ref{prop:weakly-sp-gr-is-a-graph}, in this splitting, each $\hat M_n$ is the graph of a $1$-Lipschitz map $f_n : \bB^p\to \bS^q$.
By the Arzel\`a--Ascoli theorem, some subsequence $(f_{\varphi(n)})_{n\in\N}$ of $(f_n)$ converges to a $1$-Lipschitz map $f : \bB^p\to \bS^q$.
Then the graph $\hat M \subset \hat\H^{p,q}$ of~$f$ is the limit of $(\hat M_{\varphi(n)})_{n\in\N}$, and $\hat M$ is a weakly spacelike $p$-graph by Proposition~\ref{prop:weakly-sp-gr-is-a-graph}.
\end{proof}

The following will be used throughout the paper.

\begin{prop} \label{prop:bdynonpossphere}
Let $M$ be a weakly spacelike $p$-graph in~$\H^{p,q}$, with boundary at infinity $\L := \di M \subset \di\H^{p,q}$.
Then
\begin{enumerate}
  \item\label{item:bound-non-pos-sph-1} $\L$ is a non-positive $(p-1)$-sphere in $\di\H^{p,q}$ (Definition~\ref{def:non-pos-sphere}) and $M \subset \ov{\Om}(\L)$;
  \item\label{item:bound-non-pos-sph-2} if $M$ is a spacelike $p$-graph in~$\H^{p,q}$, then $M \subset \Om(\L)$; in particular, $\L$ is non-degenerate by Lemma~\ref{lem:Omega-Lambda-convex}.\eqref{item:Om-Lambda-non-empty}.
\end{enumerate}
\end{prop}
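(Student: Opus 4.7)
The plan is to reduce both statements to the conformal-model characterization of weakly spacelike $p$-graphs from Proposition~\ref{prop:weakly-sp-gr-is-a-graph}, combined with the inner-product computation of Lemma~\ref{lem:b-di-Hpq-hat}. Fix a timelike $(q+1)$-plane $T$ of $\R^{p,q+1}$ and the resulting splitting $\hat{\ov{\H}}^{p,q} \simeq \ov{\bB}^p \times \bS^q$ from Lemma~\ref{lem:Hpq-hat-bar-prod}. By Proposition~\ref{prop:weakly-sp-gr-is-a-graph}, the weakly spacelike $p$-graph $M$ lifts injectively to some $\hat M \subset \hat\H^{p,q}$ which, in this splitting, is the graph of a $1$-Lipschitz map $f : \bB^p \to \bS^q$. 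Since $f$ is uniformly continuous, it extends uniquely to a $1$-Lipschitz map $\ov f : \ov{\bB}^p \to \bS^q$, and the closure of $\hat M$ in $\hat{\ov{\H}}^{p,q}$ is the graph of $\ov f$; thus $\di\hat M$ corresponds to the graph of $\ov f|_{\bS^{p-1}}$. By Corollary~\ref{cor:non-positive-sphere-as-graph} this graph is a non-positive $(p-1)$-sphere in $\di\hat\H^{p,q}$, so its projection $\L$ is a non-positive $(p-1)$-sphere in $\di\H^{p,q}$.

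For the inclusion $M \subset \ov\Om(\L)$, take any $\hat x \in \hat M$, written as $(u,f(u))$ with $u \in \bB^p$, and any $\hat w \in \di\hat M$, written as $(u',\ov f(u'))$ with $u' \in \bS^{p-1}$. Since $\ov f$ is $1$-Lipschitz with respect to the spherical metrics,
\[
d_{\ov{\bB}^p}(u,u') \ \geq \ d_{\bS^q}(f(u),\ov f(u')),
\]
so Lemma~\ref{lem:b-di-Hpq-hat} gives $\sfb(v,w) \leq 0$ for the corresponding lifts $v,w$ chosen as in the proof of that lemma; thus $\hat x$ lies in $\tilde{\ov\Om}(\tilde\L)$ for $\tilde\L$ the lift of $\di\hat M$, and so $M \subset \ov\Om(\L)$ by Notation~\ref{not:Omega-Lambda}. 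This completes~(1).

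For~(2), suppose $M$ is a spacelike $p$-graph; then Proposition~\ref{prop:sp-gr-is-a-graph} upgrades $f$ to be strictly $1$-Lipschitz on $\bB^p$. The main (mild) obstacle is to promote the inequality above to a strict one when $u \in \bB^p$ but $u' \in \bS^{p-1}$, where the strict Lipschitz hypothesis does not a priori apply. The key observation is that $\bB^p$ is open and geodesically convex in $\bS^p$, so for $u \in \bB^p$ and $u' \in \bS^{p-1}$ the spherical geodesic $\gamma : [0,1] \to \ov{\bB}^p$ from $u$ to $u'$ satisfies $\gamma(t) \in \bB^p$ for every $t \in [0,1)$. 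Picking any such $u_t := \gamma(t)$ with $t \in (0,1)$ and combining the strict Lipschitz estimate on $(u,u_t)$ with the non-strict one on $(u_t,u')$ and the triangle inequality in $\bS^q$ yields
\[
d_{\bS^q}(f(u),\ov f(u')) \leq d_{\bS^q}(f(u),f(u_t)) + d_{\bS^q}(f(u_t),\ov f(u')) < d_{\bB^p}(u,u_t) + d_{\ov{\bB}^p}(u_t,u') = d_{\ov{\bB}^p}(u,u').
\]
By Lemma~\ref{lem:b-di-Hpq-hat} we then obtain $\sfb(v,w) < 0$, so $\hat x \in \tilde\Om(\tilde\L)$ and hence $M \subset \Om(\L)$; the $\sfb$-non-degeneracy of $\L$ follows from Lemma~\ref{lem:Omega-Lambda-convex}.\eqref{item:Om-Lambda-non-empty}.
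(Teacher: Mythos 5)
Your proof follows essentially the same route as the paper: split $\hat\H^{p,q} \simeq \bB^p \times \bS^q$ via Proposition~\ref{p.doublecoverasproduct}, express $\hat M$ as the graph of a $1$-Lipschitz (resp.\ strictly $1$-Lipschitz) map $f$ via Proposition~\ref{prop:weakly-sp-gr-is-a-graph} (resp.~\ref{prop:sp-gr-is-a-graph}), extend to $\ov\bB^p$, apply Corollary~\ref{cor:non-positive-sphere-as-graph} to $f|_{\bS^{p-1}}$ for the sphere statement, and apply Lemma~\ref{lem:b-di-Hpq-hat} for the inclusion in $\ov\Om(\L)$ or $\Om(\L)$. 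Part~(1) is the same as the paper's argument.

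In part~(2), you are actually more careful than the paper at one point, and this is worth noting. The paper's proof asserts that because $f:\bB^p\to\bS^q$ is strictly $1$-Lipschitz, ``its continuous extension $f:\ov\bB^p\to\bS^q$ is also strictly $1$-Lipschitz.'' Taken literally on all of $\ov\bB^p$ this is false: for the spacelike $p$-graph of Example~\ref{ex:spacelike-graph}.(ii), whose boundary at infinity contains pairs of \emph{non-transverse} points (a $p$-crown as in Example~\ref{ex:non-pos-sphere}.(ii)), the restriction of $\ov f$ to $\bS^{p-1}$ has equality $d_{\bS^q}(\ov f(u'),\ov f(u''))=d_{\bS^{p-1}}(u',u'')$ on those pairs. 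The statement being proved only requires strict $1$-Lipschitzness between one point of $\bB^p$ and one of $\bS^{p-1}$, and that is precisely what you verify by taking an intermediate point $u_t$ in the interior of the spherical geodesic and combining the strict estimate on $[u,u_t]$ with the non-strict one on $[u_t,u']$; the geodesic segment from $u\in\bB^p$ to $u'\in\bS^{p-1}$ does indeed meet the equator only at $u'$ since the equator is totally geodesic. So your version supplies the missing justification where the paper's statement is slightly too strong, while reaching the same conclusion.
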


Recall from Lemma~\ref{lem:Omega-Lambda-convex}.\eqref{item:Om-Lambda-invisible-dom} that the condition $M \subset \Om(\L)$ (rather than just $M \subset \ov{\Om}(\L)$) means that any point of~$M$ sees any point of~$\L$ in a spacelike direction.

\begin{proof}[Proof of Proposition~\ref{prop:bdynonpossphere}]
\eqref{item:bound-non-pos-sph-1} Consider a splitting $\hat{\ov\H}^{p,q} \simeq \ov{\bB}^p\times \bS^q$ as in Lemma~\ref{lem:Hpq-hat-bar-prod}, defined by the choice of a timelike $(q+1)$-plane of $\R^{p,q+1}$.
By Proposition~\ref{prop:weakly-sp-gr-is-a-graph}, we can lift $M$ to a weakly spacelike $p$-graph $\hat M$ in $\hat\H^{p,q}$ which, in this splitting, is the graph of some $1$-Lipschitz map $f : \bB^p\to\bS^q$.
Then $f$ extends continuously to a $1$-Lipschitz map $f : \ov\bB^p = \bB^p\sqcup\bS^{p-1} \to \bS^q$, and $\L = \di M$ is the projection of the graph of $f|_{\bS^{p-1}}$.
Since $f|_{\bS^{p-1}}$ is $1$-Lipschitz, Corollary~\ref{cor:non-positive-sphere-as-graph} implies that $\L$ is a non-positive $(p-1)$-sphere in $\di\H^{p,q}$.

Since $f : \ov\bB^p\to\bS^q$ is $1$-Lipschitz, Lemma~\ref{lem:b-di-Hpq-hat} implies that $\sfb(v_1,v) \leq 0$ for all $v_1,v\in\R^{p,q+1}\smallsetminus\{0\}$ corresponding respectively to $(u_1,f(u_1)) \in \hat M \subset \hat\H^{p,q}$ and $(u,f(u)) \in \hat\L := \di\hat M \subset \di\hat\H^{p,q}$, where we see $\hat{\ov{\H}}^{p,q}$ as a subset of $(\R^{p,q+1}\smallsetminus\{0\})/\R_{>0}$ as in \eqref{eqn:di-Hpq-hat-quotient}.
Therefore $M \subset \ov{\Om}(\L)$ by definition of $\ov{\Om}(\L)$ (see Notation~\ref{not:Omega-Lambda}).

\eqref{item:bound-non-pos-sph-2} Suppose $M$ is in fact a spacelike $p$-graph in~$\H^{p,q}$.
By Proposition~\ref{prop:sp-gr-is-a-graph}, this means that the $1$-Lipschitz map $f : \bB^p\to\bS^q$ is in fact strictly $1$-Lipschitz, and therefore its continuous extension $f : \ov\bB^p = \bB^p\sqcup\bS^{p-1} \to \bS^q$ is also strictly $1$-Lipschitz.
Lemma~\ref{lem:b-di-Hpq-hat} then implies that $\sfb(v_1,v) < 0$ for all $v_1,v\in\R^{p,q+1}\smallsetminus\{0\}$ corresponding respectively to $(u_1,f(u_1)) \in \hat M \subset \hat\H^{p,q}$ and $(u,f(u)) \in \hat\L := \di\hat M \subset \di\hat\H^{p,q}$, where we see $\hat{\ov{\H}}^{p,q}$ as a subset of $(\R^{p,q+1}\smallsetminus\{0\})/\R_{>0}$ as in \eqref{eqn:di-Hpq-hat-quotient}.
Therefore $M \subset \Om(\L)$ by definition of $\Om(\L)$ (see Notation~\ref{not:Omega-Lambda}).
\end{proof}

We note that when $M$ is weakly spacelike but not spacelike, the non-positive $(p-1)$-sphere $\L = \di M$ may be degenerate.
Here is a most degenerate example.

\begin{example} \label{ex:most-deg-p-graph}
For $p\leq q$, take $M$ to be the projection of $\mathrm{graph}(f)\subset\hat{\H}^{p,q}$ to~$\H^{p,q}$ for the isometry $f : \bB^p\subset\bS^p\hookrightarrow\bS^q$.
Then $M$ is a weakly spacelike $p$-graph in~$\H^{p,q}$: it is a $p$-dimensional totally geodesic subspace of $\H^{p,q}$ which is \emph{lightlike} (\ie it is the projectivization of a $(p+1)$-dimensional linear subspace of $\R^{p,q+1}$ of signature $(0,1|p)$, see Section~\ref{subsec:Hpq}) and $\di M$ is the projectivization of a $p$-dimensional totally isotropic linear subspace of~$\R^{p,q+1}$.
\end{example}

The following proposition implies in particular that if a weakly spacelike $p$-graph $M$ of~$\H^{p,q}$ satisfies that $\di M$ is the projectivization of a $p$-dimensional totally isotropic linear subspace of~$\R^{p,q+1}$, then we are in the setting of Example~\ref{ex:most-deg-p-graph}.

\begin{prop} \label{prop:deg-weakly-sp-foliation}
Let $M$ be a weakly spacelike $p$-graph in~$\H^{p,q}$ whose boundary at infinity $\L := \di M \subset \di\H^{p,q}$ is degenerate, and let $V := \mathrm{Ker}(\sfb|_{\L})$.
For $1\leq\ell\leq k:=\dim(V)$, any choice of an $\ell$-dimensional linear subspace of~$V$ determines a foliation of~$M$ by $\ell$-dimensional lightlike totally geodesic subspaces of $\H^{p,q}$.
If $\ell=k=p$, then $p\leq q$ and we are in the setting of Example~\ref{ex:most-deg-p-graph}.
\end{prop}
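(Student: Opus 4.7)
The plan is to lift $M$ injectively to a weakly spacelike $p$-graph $\hat M$ in $\hat\H^{p,q}$ via Proposition~\ref{prop:weakly-sp-gr-is-a-graph} and Remark~\ref{r.contractible}, set $\hat\L := \di\hat M$ (which spans the same subspace as $\L$ and hence has the same kernel $V$), and show that for every $\hat x \in \hat M$ the affine set $L_{\hat x} := \hat x + V_\ell \subset \R^{p,q+1}$ is contained in $\hat M$. Since the $L_{\hat x}$ are parallel affine subspaces in the direction of $V_\ell$, this automatically partitions $\hat M$ and yields the desired fibration after projection to $M$. As a preliminary step I will establish the orthogonality $\sfb(\hat x, v) = 0$ for every $\hat x \in \hat M$ and $v \in V$: by Lemma~\ref{lem:kernel-of-graph}, both $[v]_+$ and $[-v]_+$ belong to $\hat\L$ for $v \in V \smallsetminus \{0\}$, so combining the inclusion $M \subset \ov\Om(\L)$ from Proposition~\ref{prop:bdynonpossphere} with the definition of $\ov\Om(\L)$ in Notation~\ref{not:Omega-Lambda} gives $\sfb(\hat x, v) \leq 0$ and $\sfb(\hat x, -v) \leq 0$ for suitable positive rescalings, hence equality. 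It will then follow that $\R\hat x \oplus V_\ell$ has signature $(0,1|\ell)$ and intersects $\hat\H^{p,q}$ precisely in $L_{\hat x}$, making each $L_{\hat x}$ a single $\ell$-dimensional lightlike totally geodesic subspace of $\hat\H^{p,q}$.

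The core step is to prove $L_{\hat x} \subset \hat M$, which by iterating on a basis of $V_\ell$ reduces to showing, for a single $v \in V \smallsetminus \{0\}$, that the lightlike line $\{\hat x + tv : t \in \R\}$ lies in $\hat M$. I plan to carry this out in a conformal splitting $\hat\H^{p,q} \simeq \bB^p \times \bS^q$ from Proposition~\ref{p.doublecoverasproduct}, writing $\hat M$ as the graph of a $1$-Lipschitz map $F : \bB^p \to \bS^q$ that extends continuously to $\bar F : \ov\bB^p \to \bS^q$ and encodes $\hat\L$ on the equator $\bS^{p-1}$. A direct computation will show that the coordinates $u(t) \in \bB^p$ and $u'(t) \in \bS^q$ of $\hat x + tv$ trace great half-circle arcs with antipodal boundary limits $u_\pm \in \bS^{p-1}$ and $u'_\pm \in \bS^q$, that $(u_\pm, u'_\pm)$ corresponds to $[\pm v]_+ \in \hat\L$ so that $\bar F(u_\pm) = u'_\pm$, and that $\hat x$ and $\hat x + tv$ are in lightlike position (since $\sfb(\hat x, \hat x + tv) = -1$); by the proof of Lemma~\ref{lem:not-in-timelike-position} the latter yields the equality $d_{\ov\bB^p}(u(0), u(t)) = d_{\bS^q}(u'(0), u'(t))$ for every~$t$.

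The main obstacle will then be to deduce $F(u(t)) = u'(t)$ from these ingredients. The plan is to exploit that the $1$-Lipschitz bound $d_{\bS^q}(\bar F(u(0)), \bar F(u_+)) \leq d_{\ov\bB^p}(u(0), u_+)$ is saturated, so that a triangle inequality along the path $u$ forces each $F(u(t))$ to lie on a geodesic in $\bS^q$ from $u'(0)$ to $u'_+$ at distance $d_{\ov\bB^p}(u(0), u(t))$ from $u'(0)$. Since $u(0) \in \bB^p$ lies strictly inside the open upper hemisphere we have $d_{\ov\bB^p}(u(0), u_+) < \pi$, so $u'(0)$ and $u'_+$ are not antipodal in $\bS^q$ and this geodesic is unique; the same description applies verbatim to $u'(t)$, hence $F(u(t)) = u'(t)$ and $\hat x + tv \in \hat M$. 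Projecting the resulting foliation of $\hat M$ by the $L_{\hat x}$ through the double cover $\hat\H^{p,q} \to \H^{p,q}$ will then produce the desired fibration of $M$ by $\ell$-dimensional lightlike totally geodesic subspaces.
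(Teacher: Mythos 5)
Your proof is correct: it rests on the same essential mechanism as the paper's, namely the triangle-inequality rigidity forcing a $1$-Lipschitz map $f:\ov\bB^p\to\bS^q$ to be isometric along any geodesic arc whose endpoints go to antipodal points, but the packaging is genuinely different. The paper stays inside the conformal model $\ov\bB^p\times\bS^q$ throughout: it identifies the subsphere $\bS'\subset\bS\subset\bS^{p-1}$ corresponding to the chosen $\ell$-dimensional subspace of $V$, foliates $\bB^p$ by hemispheres of totally geodesic $\ell$-spheres $\bS_u$ containing $\bS'$, and shows directly that $f$ is an isometric embedding on each such hemisphere via $\pi=d_{\bS^q}(f(u'),f(-u'))\leq\cdots\leq\pi$. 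You instead make an ambient-space computation: the orthogonality $\sfb(\hat x,v)=0$ for $\hat x\in\hat M$ and $v\in V$ (extracted from $M\subset\ov{\Om}(\L)$ of Proposition~\ref{prop:bdynonpossphere}.\eqref{item:bound-non-pos-sph-1} together with the fact that $\pm v$ both lift into $\tilde\L$ by Lemma~\ref{lem:kernel-of-graph}) puts $\hat x+tv\in\hat\H^{p,q}$ in lightlike position relative to $\hat x$, which via Lemma~\ref{lem:not-in-timelike-position} yields the saturated identity $d_{\bB^p}(u(0),u(t))=d_{\bS^q}(u'(0),u'(t))$, and the uniqueness of the minimizing geodesic (valid since $u(0)\in\bB^p$ forces $d_{\ov\bB^p}(u(0),u_+)<\pi$) then closes the argument. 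The paper's version is marginally more self-contained, never invoking $\ov{\Om}(\L)$, while yours has the advantage of producing the fibers explicitly as the affine cosets $\hat x+V_\ell$, making the lightlike totally geodesic nature (signature $(0,1|\ell)$ of $\R\hat x\oplus V_\ell$) and the partition property of the fibration immediate; both routes ultimately draw their antipodality input from Lemma~\ref{lem:kernel-of-graph}.
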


(In particular, for $\ell=1$ this gives a foliation of~$M$ by lightlike geodesics.)

\begin{proof}
Consider a splitting $\hat{\ov\H}^{p,q} \simeq \ov{\bB}^p\times \bS^q$ as in Lemma~\ref{lem:Hpq-hat-bar-prod}, defined by the choice of a timelike $(q+1)$-plane of $\R^{p,q+1}$.
By Proposition~\ref{prop:weakly-sp-gr-is-a-graph}, we can lift $M$ to a weakly spacelike $p$-graph $\hat M$ in $\hat\H^{p,q}$ which, in this splitting, is the graph of some $1$-Lipschitz map $f : \bB^p\to\bS^q$.
Then $f$ extends continuously to a $1$-Lipschitz map $f : \ov\bB^p = \bB^p\sqcup\bS^{p-1} \to \bS^q$, and $\L = \di M$ is the projection of the graph of $f|_{\bS^{p-1}}$.
Let $\bS := \{u\in\nolinebreak\bS^{p-1} \,|\, f(-u) = -f(u)\}$.
By Lemma~\ref{lem:kernel-of-graph}, the restriction $f|_{\bS}$ of $f$ to $\bS$ is an isometry and $\P(V)$ is the projection of the graph of $f|_{\bS}$.

Let $V'$ be an $\ell$-dimensional linear subspace of~$V$, where $1\leq\ell\leq k$.
Then $\P(V')$ is the projection of the graph of $f|_{\bS'}$ for some totally geodesic copy $\bS'$ of $\bS^{\ell-1}$ in $\bS \simeq \bS^{k-1}$.
For any $u \in (\bS')^{\perp} \subset \bS^p$, the set $\bS_u := \{ tu+t'u' \,|\, u'\in\bS',\ t\in\R,\ t'>0,\ t^2+{t'}^2=1\}$ is a totally geodesic copy of $\bS^{\ell}$ in $\bS^p$.
Any point of~$\bB^p$ belongs to $\bS_u$ for some unique $u \in (\bS')^{\perp} \cap \bB^p$; this defines a foliation of $\bB^p$ by hemispheres $\bB^p \cap \bS_u$ of totally geodesic copies of $\bS^{\ell}$.
If $\ell=p$, then $(\bS')^{\perp} \cap \bB^p$ is a singleton $\{u\}$ and $\bB^p \cap \bS_u = \bB^p$.

We claim that for any $u \in (\bS')^{\perp} \cap \bB^p$, the restriction of $f$ to $\ov{\bB}^p \cap \bS_u$ is an isometric embedding.
Indeed, any $u_1\neq u_2$ in $\ov{\bB}^p \cap \bS_u$ belong to a totally geodesic circle in~$\bS_u$, which meets $\bS'$ in two antipodal points $u'$ and $-u'$, with $u', u_1, u_2, -u'$ in this cyclic order.
We have
\begin{align*}
\pi = d_{\bS^q}(f(u'),f(-u')) & \leq d_{\bS^q}(f(u'),f(u_1)) + d_{\bS^q}(f(u_1),f(u_2)) + d_{\bS^q}(f(u_2),f(-u'))\\
&  \leq d_{\bS^p}(u',u_1) + d_{\bS^p}(u_1,u_2) + d_{\bS^p}(u_2,-u') = \pi.
\end{align*}
All inequalities must be equalities, hence $d_{\bS^q}(f(u_1),f(u_2)) = d_{\bS^p}(u_1,u_2)$, proving the claim.

We deduce that for any $u \in (\bS')^{\perp} \cap \bB^p$, the image by~$f$ of the hemisphere $\bB^p \cap \bS_u$ is an $\ell$-dimensional totally geodesic subspace of $\hat\H^{p,q}$ which is the image of an $(\ell+1)$-dimensional linear subspace of $\R^{p,q+1}$ of signature $(0,1|\ell)$.
\end{proof}

We conclude this section with a projection result that will be used in the proof of Propositions \ref{prop:non-deg-limit} and~\ref{prop:proper-action-M}.

\begin{lem} \label{lem:restrict-span-L}
Let $M$ be a weakly spacelike $p$-graph in~$\H^{p,q}$, with $M \subset \Om(\L)$ where $\L := \di M \subset \di\H^{p,q}$.
Then the orthogonal projection $M'$ of $M$ to $\P(\spa(\L))$ is well defined and is still a weakly spacelike $p$-graph in~$\H^{p,q}$ with $\di M' = \L$ and $M' \subset \Om(\L)$.
If $M$ is a spacelike $p$-graph in~$\H^{p,q}$, then so is~$M'$.
\end{lem}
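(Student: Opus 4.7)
The plan is as follows. First, since $M\subset\Om(\L)$ implies $\Om(\L)\neq\emptyset$, Lemma~\ref{lem:Omega-Lambda-convex}.\eqref{item:Om-Lambda-non-empty} gives that $\L$ is $\sfb$-non-degenerate, and Lemma~\ref{l.kernonpossphere} then shows that the subspace $W:=\spa(\L)$ of $\R^{p,q+1}$ has signature $(p,q'|0)$ with $1\leq q'\leq q+1$, so that $\sfb|_{W^\perp}$ is negative definite. Hence $\R^{p,q+1}=W\oplus W^\perp$ is a $\sfb$-orthogonal direct sum and the orthogonal projection $\pi_W\colon\R^{p,q+1}\to W$ is well defined. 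Lift $M$ to a weakly spacelike $p$-graph $\hat M\subset\hat\H^{p,q}$ (Proposition~\ref{prop:weakly-sp-gr-is-a-graph}); for $\hat m\in\hat M$ write $\hat m=\hat m_1+\hat m_2$ along $W\oplus W^\perp$. Any point $\hat w$ of a suitable lift of $\L$ lies in $W$, so $\sfb$-orthogonality yields $\sfb(\hat m_1,\hat w)=\sfb(\hat m,\hat w)<0$ from $M\subset\Om(\L)$; in particular $\hat m_1\neq 0$, and Lemma~\ref{lem:Omega-Lambda-convex}.\eqref{item:Om-Lambda-in-Hpq} places $[\hat m_1]$ in $\Om(\L)\cap\P(W)\subset\H^{p,q}$. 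Thus $\pi_W$ descends to a well-defined map $M\to\Om(\L)\cap\P(W)$, whose image $M'$ is contained in $\Om(\L)$.

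The heart of the proof is a Cauchy--Schwarz/AM--GM estimate. Given $\hat m,\tilde m\in\hat M$, set
\[
 a := -\sfb(\hat m_2,\hat m_2),\ b := -\sfb(\tilde m_2,\tilde m_2),\ \beta := -\sfb(\hat m_2,\tilde m_2),\ \mu := -\sfb(\hat m,\tilde m).
\]
The previous paragraph gives $a,b\in[0,1)$ (since $-\sfb(\hat m_1,\hat m_1)=1-a>0$), while $\mu\geq 1$ because $M$ is weakly spacelike. Cauchy--Schwarz applied to the positive definite form $-\sfb|_{W^\perp}$ yields $|\beta|\leq\sqrt{ab}$, and AM--GM ($a+b\geq 2\sqrt{ab}$) rewrites as $1-\sqrt{ab}\geq\sqrt{(1-a)(1-b)}$. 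Combined with the $\sfb$-orthogonality of $W$ and $W^\perp$, which gives $-\sfb(\hat m_1,\tilde m_1)=\mu-\beta$, we obtain
\[
 -\sfb(\hat m_1,\tilde m_1) \,=\, \mu-\beta \,\geq\, 1-\sqrt{ab} \,\geq\, \sqrt{(1-a)(1-b)}.
\]
Normalizing $\hat m'_1:=\hat m_1/\sqrt{1-a}\in\hat\H^{p,q}$ (and similarly $\tilde m'_1$), this reads $-\sfb(\hat m'_1,\tilde m'_1)\geq 1$, so no two points of $\hat M':=\{\hat m'_1\mid\hat m\in\hat M\}\subset\hat\H^{p,q}$ are in timelike position. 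If moreover $M$ is spacelike, then $\mu>1$ for $\hat m\neq\tilde m$, the estimate is strict, and $M'$ is spacelike.

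Equality analysis gives injectivity of the map $\hat M\to\hat\H^{p,q}$: $\hat m'_1=\tilde m'_1$ forces all inequalities above to be equalities, so $\mu=1$, $\beta=\sqrt{ab}$, and $a=b$; Cauchy--Schwarz equality with $\beta\geq 0$ combined with $a=b$ gives $\hat m_2=\tilde m_2$, whence $\hat m_1=\tilde m_1$ and $\hat m=\tilde m$. The map is also proper: in a $\sfb$-orthonormal basis adapted to $W\oplus W^\perp$, the Euclidean norm $\|\hat m_2\|=\sqrt{a}<1$ is uniformly bounded on $\hat M$, so any sequence $(\hat m_n)\subset\hat M$ with $\|\hat m_n\|\to\infty$ satisfies $\|\hat m'_{n,1}\|\geq\|\hat m_{n,1}\|\to\infty$. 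A proper continuous injection into a locally compact Hausdorff space is a closed topological embedding, so $\hat M'$ is closed in $\hat\H^{p,q}$ and homeomorphic to $\hat M$, hence a connected $p$-dimensional topological manifold. Combined with the previous paragraph, $\hat M'$ is a weakly spacelike $p$-graph in $\hat\H^{p,q}$, and so $M'$ is one in $\H^{p,q}$.

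Finally, $\di M'=\L$: points of $\hat\L$ lie in $W$ and are fixed by $\pi_W$, giving $\hat\L\subset\di\hat M'$; conversely, the bounded-$W^\perp$ argument above shows that any sequence in $\hat M$ accumulating at a point $\tilde\ell\in\di\hat\H^{p,q}$ has projections accumulating at the same $\tilde\ell$, which must then lie in $W\cap\di\hat\H^{p,q}$, \ie in $\hat\L$, yielding $\di\hat M'\subset\hat\L$. The main technical point is packaging the Cauchy--Schwarz/AM--GM estimate so that it simultaneously delivers the weakly spacelike property, injectivity, and preservation of the spacelike condition; the topological conclusions (closedness, the graph property, and the boundary equality) then follow by routine arguments.
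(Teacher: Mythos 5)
Your proof is correct, and it takes a genuinely different route from the paper's. The paper avoids explicit estimates entirely: from the observation that $\sfb|_{\L^\perp}$ is negative definite it deduces $\sfb(\pi(v),\pi(v))\geq\sfb(v,v)$, hence that $\pi$ maps the complement of $\H^{p,q}$ (resp.\ of $\ov\H^{p,q}$) into itself, and then — since $\pi$ sends projective lines to projective lines or points — concludes that non-timelike (resp.\ spacelike) pairs in $\Om(\L)$ are sent to non-timelike (resp.\ spacelike) pairs whenever the images are distinct. Instead of then controlling the projection directly, the paper verifies the characterization of Proposition~\ref{prop:weakly-sp-gr-is-a-graph}.\ref{item:weakly-sp-gr-1-bis}: given a timelike $q'$-plane $T'$ of $\spa(\L)$, it extends $T'$ to the timelike $(q+1)$-plane $T'\oplus\L^\perp$, uses that $M$ meets $\P(T'\oplus\L^\perp)$ in a unique point, and deduces that $M'$ meets $\P(T')$ in a unique point; closedness of $M'$ is obtained from the local timelike foliation of Remark~\ref{rem:local-timelike-foliation}. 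Your approach replaces both of these steps by a single Cauchy--Schwarz/AM--GM computation that simultaneously yields $\sfb(\hat m_1',\tilde m_1')\leq -1$ (resp.\ $<-1$), injectivity via the equality case, and, combined with the uniform bound $\|\hat m_2\|<1$, the properness needed for closedness and the manifold structure. Both are valid; the paper's argument is more geometric and structureless (no inequality bookkeeping, and it automatically places $M'$ as a graph inside $\H^{p,q'-1}$ via the graph characterization), while yours is more self-contained at this point of the development, at the cost of a more computational flavour. One small phrasing slip worth fixing: in the verification $\di\hat M'\subset\hat\L$, the accumulation point $\tilde\ell$ belongs to $\hat\L$ because it lies in $\di\hat M=\hat\L$, not because $W\cap\di\hat\H^{p,q}=\hat\L$ (the latter can fail); your argument does supply the correct reason, but the side remark ``which must then lie in $W\cap\di\hat\H^{p,q}$, i.e., in $\hat\L$'' is misleading and should be dropped.
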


\begin{proof}
By Lemma~\ref{lem:Omega-Lambda-convex}.\eqref{item:Om-Lambda-non-empty} and Proposition~\ref{prop:bdynonpossphere}.\eqref{item:bound-non-pos-sph-1}, the set $\L$ is a non-degenerate non-positive $(p-1)$-sphere in $\di\hat{\H}^{p,q}$, and so we can write $\R^{p,q+1}$ as the $\sfb$-orthogonal direct sum of $\spa(\L)$ and~$\L^{\perp}$.
By Lemma~\ref{l.kernonpossphere}, the restriction $\sfb|_{\spa(\L)}$ of $\sfb$ to $\spa(\L)$ has signature $(p,q'|0)$ for some $1\leq q'\leq q+1$, and the restriction $\sfb|_{\L^{\perp}}$ of $\sfb$ to~$\L^{\perp}$ is negative definite (of signature $(0,q+1-q'|0)$).
The $\sfb$-orthogonal projection $\pi$ from $\R^{p,q+1}$ to $\spa(\L)$ induces a projection from $\P(\R^{p,q+1}) \smallsetminus \P(\L^{\perp})$ to $\P(\spa(\L))$, which we still denote by~$\pi$.

It follows from the definition of $\Om(\L)$ (Notation~\ref{not:Omega-Lambda}) that $\Om(\L)$ is contained in the domain $\P(\R^{p,q+1}) \smallsetminus \P(\L^{\perp})$ of~$\pi$ and that $\pi$ sends $\Om(\L)$ inside itself, hence (Lemma~\ref{lem:Omega-Lambda-convex}.\eqref{item:Om-Lambda-in-Hpq}) inside $\H^{p,q}$.
In particular, $\pi$ is well defined on~$M$; we set $M' := \pi(M) \subset \Om(\L) \subset \H^{p,q}$.

Since $\sfb|_{L^{\perp}}$ is negative definite, we have $\sfb(\pi(v),\pi(v)) \geq \sfb(v,v)$ for all $v\in\R^{p,q+1}$.
Therefore $\pi$ sends $\P(\R^{p,q+1}) \smallsetminus \H^{p,q}$ (\resp $\P(\R^{p,q+1}) \smallsetminus \ov{\H}^{p,q}$) inside itself.
Since $\pi$ sends any projective line of $\P(\R^{p,q+1})$ to a projective line or a point, we deduce that for any $x\neq y$ in $\Om(\L)$, if $x$ and~$y$ are not in timelike position (\resp are in spacelike position) and $\pi(x)\neq \pi(y)$, then $\pi(x)$~and~$\pi(y)$ are not in timelike position (\resp are in spacelike position).

We claim that $M' = \pi(M)$ meets every $(q'-1)$-dimensional timelike totally geodesic subspace of $\H^{p,q} \cap \P(\spa(\L)) \simeq \H^{p,q'-1}$ in a unique point.
Indeed, let $T'$ be a timelike $q'$-plane of $\spa(\L)$.
Then $T := T' \oplus \L^{\perp}$ is a timelike $(q+1)$-plane of~$\R^{p,q+1}$.
By Proposition~\ref{prop:weakly-sp-gr-is-a-graph}, the weakly spacelike $p$-graph $M \subset \H^{p,q}$ meets $\P(T)$, and so $M' = \pi(M)$ meets $\P(T')$.
The intersection $M' \cap \P(T')$ is a singleton because no two distinct points of~$M'$ are in timelike position.

Finally, we claim that $M'$ is closed in $\H^{p,q} \cap \P(\spa(\L)) \simeq \H^{p,q'-1}$.
Indeed, this follows from the fact that $M$ is closed in~$\H^{p,q}$, that $M$ meets every $q$-dimensional timelike totally geodesic subspace of $\H^{p,q}$ in a unique point (Proposition~\ref{prop:weakly-sp-gr-is-a-graph}) and that, by Remark~\ref{rem:local-timelike-foliation}, any point of $\H^{p,q} \cap \P(\spa(\L))$ admits an open neighborhood in $\H^{p,q}$ which is foliated by $q$-dimensional timelike totally geodesic subspaces of $\H^{p,q}$ of the form $\P(T' \oplus \L^{\perp})$ where $T'$ is a timelike $q'$-plane of $\spa(\L)$.

By Proposition~\ref{prop:weakly-sp-gr-is-a-graph}, the set $M'$ is a weakly spacelike $p$-graph in $\H^{p,q} \cap \P(\spa(\L)) \simeq \H^{p,q'-1}$.
It then follows from Definition~\ref{def:weakly-sp-gr} that $M'$ is also a weakly spacelike $p$-graph in $\H^{p,q}$.
Moreover, if $M$ is a spacelike $p$-graph in $\H^{p,q}$, then so is~$M'$.
\end{proof}

\section{Two useful preliminary results} \label{sec:two-prelim-results}

In this section we prepare the proofs of Proposition~\ref{prop:non-deg-limit} and Theorems \ref{thm:non-deg-limit-sphere} and~\ref{thm:geom-action-weakly-sp-gr} by establishing two preliminary results involving weakly spacelike $p$-graphs in~$\H^{p,q}$.

\subsection{Working in a properly convex open set} \label{subsec:construct-Om}

Recall from Proposition~\ref{prop:bdynonpossphere}.\eqref{item:bound-non-pos-sph-1} that the boundary at infinity $\L$ of a weakly spacelike $p$-graph of~$\H^{p,q}$ is a non-positive $(p-1)$-sphere in $\di\H^{p,q}$.
In the setting of Theorem~\ref{thm:geom-action-weakly-sp-gr}, the convex open set $\Omega(\L) \subset \H^{p,q}$ is properly convex if $\L$ spans (Lemma~\ref{lem:Omega-Lambda-convex}.\eqref{item:Om-Lambda-prop-convex}), but not in general.
In order to use Hilbert metrics in Sections \ref{subsec:max-crowns} and~\ref{subsec:crown-not-hyperb} below, we need to work in an appropriate properly convex open subset $\Om$ of $\Om(\L)$ containing~$M$.
The goal of this section is to establish the existence of such~$\Om$.

\begin{prop} \label{prop:M-in-prop-conv-Omega}
For $p,q\geq 1$, let $\Gamma$ be a discrete subgroup of $\PO(p,q+1)$ acting properly discontinuously on a weakly spacelike $p$-graph $M$ in $\H^{p,q}$ with $M \subset \Om(\L)$ where $\L := \di M \subset \di\H^{p,q}$.
Then
\begin{enumerate}
  \item\label{item:M-in-Om-1} the action of $\Gamma$ on $\Om(\L)$ is properly discontinuous.
\end{enumerate}
Assume in addition that the action of $\Gamma$ on~$M$ is cocompact.
Then
\begin{enumerate}\setcounter{enumi}{1}
  \item\label{item:M-in-Om-2} $M$ is contained in some $\Gamma$-invariant \emph{properly convex} open subset $\Om \subset \Om(\L)\subset\H^{p,q}$;
  \item\label{item:M-in-Om-3} for any $\G$-invariant closed subset $\mathcal{Z}$ of $\Om(\L)$ such that $\de_{\H}\mathcal{Z}\cap \de_{\H}\Om(\L)=\emptyset$, we can choose the set $\Om$ of~\eqref{item:M-in-Om-2} to contain~$\mathcal{Z}$, which implies that the action of $\Gamma$ on~$\mathcal{Z}$ is properly discontinuous; moreover, the action of $\Gamma$ on~$\mathcal{Z}$ is cocompact.
\end{enumerate}
\end{prop}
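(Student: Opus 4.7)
The plan is to exploit the $\sfb$-orthogonal decomposition $\R^{p,q+1} = \spa(\L) \oplus \L^{\perp}$, well-defined because $M \subset \Om(\L)$ is non-empty, which forces $\L$ to be $\sfb$-non-degenerate by Lemma~\ref{lem:Omega-Lambda-convex}.\eqref{item:Om-Lambda-non-empty} and Lemma~\ref{l.kernonpossphere}. In this decomposition, $\sfb|_{\spa(\L)}$ has signature $(p,q'|0)$ for some $1 \leq q' \leq q+1$ and $\sfb|_{\L^{\perp}}$ is negative definite of rank $q+1-q'$. Since $\Gamma$ preserves $\L$, it preserves both summands and acts on $\L^{\perp}$ through the compact group $\OO(\L^{\perp})\simeq\OO(q{+}1{-}q')$; the $\sfb$-orthogonal projection descends to a continuous $\Gamma$-equivariant map $\pi \colon \Om(\L) \to \Om_{\spa(\L)}(\L)$, with $\Om_{\spa(\L)}(\L)$ properly convex by Remark~\ref{rem:C-Lambda}.

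For~\eqref{item:M-in-Om-1}, I would first show that the restriction map $\Gamma \to \PO(\sfb|_{\spa(\L)})$ has finite kernel (contained in the compact group $\OO(\L^{\perp})$, which is discrete by discreteness of $\Gamma$) and discrete image (by a standard convergence argument in $\PO(p,q+1)$). Hence $\Gamma$ acts discretely by automorphisms on the properly convex set $\Om_{\spa(\L)}(\L)$, so properly discontinuously there via the Hilbert metric. Proper discontinuity on $\Om(\L)$ then follows by $\pi$-equivariance: for any compact $K \subset \Om(\L)$, the set $\{\gamma \in \Gamma : \gamma K \cap K \neq \emptyset\}$ is contained in the finite set $\{\gamma : \gamma\pi(K)\cap\pi(K)\neq\emptyset\}$.

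For~\eqref{item:M-in-Om-2}, I would introduce the $\Gamma$-invariant projective height function $h \colon \Om(\L)\to [0,1)$ defined in the normalized lift $\hat\H^{p,q}$ by $h(\hat x) := -\sfb(\pi_{\L^{\perp}}(\hat x), \pi_{\L^{\perp}}(\hat x))$ (the squared $\L^{\perp}$-norm under the normalization $\sfb(\hat x,\hat x)=-1$); this is a well-defined projective function, and is $\Gamma$-invariant since $\Gamma$ preserves the decomposition. By cocompactness of $\Gamma$ on $M$ together with compactness of $\OO(\L^{\perp})$, the value of $h$ on $M$ is bounded by some $R_0<1$. I would then construct a $\Gamma$-invariant properly convex open $\Om\subset\Om(\L)$ containing $M$ by combining, in projective coordinates, the properly convex base $\Om_{\spa(\L)}(\L)$ (via $\pi$) with a height cutoff, verifying proper convexity by the ``no complete projective line in $\overline{\Om}$'' criterion: any such line projects via $\pi$ either to a complete line in $\overline{\Om_{\spa(\L)}(\L)}$, impossible by proper convexity, or lies in a single fiber of $\pi$, impossible by the height bound. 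One subtlety is that the naive quadratic cutoff $\{h<R\}$ is projectively modelled on $\H^{p+q+1-q',q'-1}$ and fails to be convex when $q' \geq 2$, so the construction requires a more careful choice adapted to the half-space description of $\Om(\L)$.

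For~\eqref{item:M-in-Om-3}, the hypothesis $\partial_\H N \cap \partial_\H \Om(\L) = \emptyset$ combined with $N$ being closed in $\Om(\L)$ forces $\overline{N}\cap\H^{p,q}\subset N$, so any accumulation point of a sequence $(x_n)\subset N$ outside $N$ must lie in $\di\H^{p,q}$. I would show that $h$ is bounded away from~$1$ on $N$: a rescaling analysis of $(x_n) \in N$ with $h(x_n)\to 1$ in $\hat\H^{p,q}$ shows that accumulation in $\P(\R^{p,q+1})$ occurs either in $\partial_\H \Om(\L)$ (excluded by the hypothesis, as above) or in $\L\subset\di\H^{p,q}$; the latter escape-to-infinity scenario I would exclude by combining proper discontinuity on $\Om(\L)$ from~\eqref{item:M-in-Om-1} with cocompactness on $M$, translating by $\Gamma$ to bring $\pi(x_n)$ into a compact of $\Om_{\spa(\L)}(\L)$ and then deriving a contradiction since bounded base and $h \to 1$ would force an accumulation in $\partial_\H\Om(\L)$. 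Then $N\subset\Om$ of~\eqref{item:M-in-Om-2} for $R$ chosen appropriately, yielding proper discontinuity on~$N$. The main obstacle I anticipate is the claim that $\Gamma$ acts cocompactly on~$N$: I expect to derive it by first establishing cocompactness of $\Gamma$ on $\pi(N) \subset \Om_{\spa(\L)}(\L)$ (using the bounded-height property and the hypothesis to prevent $\pi(N)$ from escaping the base modulo $\Gamma$), then lifting cocompactness from $\pi(N)$ to $N$ using that fibers of $\pi|_N$ lie in the bounded region $\{h\leq R\}$.
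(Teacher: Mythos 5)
Your plan for part \eqref{item:M-in-Om-1}, via the $\sfb$-orthogonal decomposition $\R^{p,q+1}=\spa(\L)\oplus\L^\perp$, the compactness of $\OO(\L^\perp)$, and the Hilbert metric on the properly convex domain $\Om(\L)\cap\P(\spa(\L))$, is correct; it is close in spirit to the paper's proof of the neighbouring Proposition~\ref{prop:proper-action-M}, whereas the paper's own argument for \eqref{item:M-in-Om-1} works directly with the weakly spacelike graph structure, using that every point of $\H^{p,q}$ is in timelike position with some point of~$M$.

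Parts \eqref{item:M-in-Om-2} and \eqref{item:M-in-Om-3}, however, each contain a genuine gap. For \eqref{item:M-in-Om-2}, you correctly observe that the quadratic height sublevel fails to be convex when $\sfb|_{\spa(\L)}$ has signature $(p,q')$ with $q'\geq 2$, but you leave the construction unresolved; and the ``no complete projective line in $\ov\Om$'' test characterizes proper convexity only among sets that are already convex, so it cannot replace producing a convex $\Om$. For \eqref{item:M-in-Om-3}, the step from cocompactness on $\pi(N)$ to cocompactness on $N$ via bounded fibers is sound, but cocompactness on $\pi(N)$ is asserted rather than proved: ``the bounded-height property and the hypothesis prevent $\pi(N)$ from escaping'' is not an argument, and it becomes vacuous in the case where $\L$ spans $\R^{p,q+1}$ (then $\L^\perp=\{0\}$, $h\equiv 0$, $\pi=\mathrm{id}$, and the height mechanism gives nothing, while cocompactness on $N$ is exactly as hard as before). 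The geometric input your plan lacks is that $M$, being a weakly spacelike $p$-graph, meets every $q$-dimensional timelike totally geodesic subspace of $\H^{p,q}$: the paper uses this to show that $N$ is covered by the $\Gamma$-translates of the set $D'$ of points of $N$ in timelike or lightlike position with a compact fundamental domain $D\subset M$, and that $D'$ is compact precisely because the hypothesis $\partial_{\H}N\cap\partial_{\H}\Om(\L)=\emptyset$ forces $\ov N\subset N\cup\L$ with $\L$ in spacelike position to~$D$. For \eqref{item:M-in-Om-2} the paper likewise avoids a cutoff, taking $\Om$ to be the interior of the convex hull of $\bigcup_{\gamma\in\Gamma}\gamma\cdot\mathcal{K}$ in $\Om(\L)$ for a compact $\mathcal{K}$ --- convex by construction --- and deducing proper convexity from the orbit-accumulation property underlying~\eqref{item:M-in-Om-1}.
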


Recall from Lemma~\ref{lem:Omega-Lambda-convex}.\eqref{item:Om-Lambda-invisible-dom} and Proposition~\ref{prop:bdynonpossphere}.\eqref{item:bound-non-pos-sph-1} that the condition $M \subset \Om(\L)$ means that any point of~$M$ sees any point of~$\L$ in a spacelike direction.
By Lemma~\ref{lem:Omega-Lambda-convex}.\eqref{item:Om-Lambda-non-empty}, this condition implies that $\L$ is non-degenerate.

Proposition~\ref{prop:M-in-prop-conv-Omega} is an easy consequence of the following lemma.

\begin{lem} \label{lem:build-prop-conv-Omega}
For $p,q\geq 1$, let $\Gamma$ be a discrete subgroup of $\PO(p,q+1)$ acting properly discontinuously on a weakly spacelike $p$-graph $M$ in $\H^{p,q}$ with $M \subset \Om(\L)$ where $\L := \di M \subset \di\H^{p,q}$.
Then
\begin{enumerate}
  \item\label{item:build-Omega-1} any accumulation point in $\ov{\Om(\L)}$ of the $\Gamma$-orbit of a compact subset of $\Om(\L)$ is contained in~$\L$; in particular, the action of $\Gamma$ on $\Om(\L)$ is properly discontinuous;
  \item\label{item:build-Omega-2} for any compact subset $\mathcal{K}$ of $\Om(\L)$, the interior of the convex hull of $\bigcup_{\g\in\G} \g\cdot\mathcal{K}$ in $\Om(\L)$ is properly convex.
\end{enumerate}
\end{lem}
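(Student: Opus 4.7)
The plan is to prove (1) by transferring the proper discontinuity from $M$ to $\Om(\L)$ via a continuous (but not equivariant) projection $\pi:\Om(\L)\to M$ and the $\PO(p,q+1)$-invariant function $\delta(x,y):=\sfb(\hat x,\hat y)^2$ on $\H^{p,q}\times\H^{p,q}$. Then (2) follows from (1) together with a reduction to a properly convex ambient space via the $\sfb$-orthogonal projection to $\P(\spa(\L))$ from Lemma~\ref{lem:restrict-span-L}.

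For (1): fix a timelike $(q+1)$-plane $T_0$ of $\R^{p,q+1}$ and pass to the conformal model $\hat\H^{p,q}\simeq\bB^p\times\bS^q$ of Proposition~\ref{p.doublecoverasproduct}; by Proposition~\ref{prop:weakly-sp-gr-is-a-graph}, lift $M$ to $\hat M=\gph(f)$ with $f:\bB^p\to\bS^q$ a $1$-Lipschitz map, and (after passing to an index-two subgroup of $\G$ if needed) lift $\G$ to $\tilde\G\subset \OO(p,q+1)$ preserving $\hat M$ and $\hat\Om(\L)$. Define the continuous map $\pi:\hat\Om(\L)\to\hat M$ by $(u,u')\mapsto(u,f(u))$. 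Now suppose for contradiction that $\g_n\hat x_n\to\hat y\in\hat\Om(\L)$ for a sequence of pairwise distinct $\g_n\in\tilde\G$ and $\hat x_n$ in a compact lift $\hat{\mathcal K}\subset\hat\Om(\L)$ of $\mathcal K$. Set $\hat m_n:=\pi(\hat x_n)\in\hat M$. By $\tilde\G$-invariance of~$\sfb$,
\[
\sfb(\g_n\hat x_n,\,\g_n\hat m_n)=\sfb(\hat x_n,\hat m_n),
\]
which is bounded by continuity of $\pi$ and compactness of~$\hat{\mathcal K}$. Passing to a subsequence, either (a)~$\g_n\hat m_n$ stays bounded in $\R^{p,q+1}$, in which case it converges in the closed subset $\hat M\subset\hat\H^{p,q}$, whence $\g_n m_n\to m\in M$ contradicting proper discontinuity of $\G$ on $M$; or (b)~$|\g_n\hat m_n|\to\infty$ in the Euclidean sense, in which case $\g_n\hat m_n/|\g_n\hat m_n|\to v'$ with $v'$ of unit Euclidean norm and $\sfb(v',v')=\lim(-1/|\g_n\hat m_n|^2)=0$, and $[v']\in\di\H^{p,q}$ is a limit of points in $M$, hence $[v']\in\L=\di M$. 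Since $y\in\Om(\L)$ we have $|\sfb(\hat y,v')|>0$, giving $|\sfb(\g_n\hat x_n,\g_n\hat m_n)|\sim|\g_n\hat m_n|\cdot|\sfb(\hat y,v')|\to\infty$, contradicting boundedness. Hence no accumulation point of $\bigcup_\g \g\mathcal K$ in $\ov{\Om(\L)}$ lies in $\Om(\L)$.

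For (2), let $C:=\text{conv}\bigl(\bigcup_\g\g\mathcal K\bigr)\subset\Om(\L)$ and treat two cases. If $\L$ spans $\R^{p,q+1}$, then $\Om(\L)$ is itself properly convex by Lemma~\ref{lem:Omega-Lambda-convex}.\eqref{item:Om-Lambda-prop-convex}, and any open convex subset of a properly convex set is properly convex; since the interior of $C$ in $\Om(\L)$ coincides with its interior in $\P(\R^{p,q+1})$, we are done. If $\L$ does not span, set $V:=\spa(\L)$: by Lemma~\ref{l.kernonpossphere} the restriction $\sfb|_V$ is non-degenerate of signature $(p,q'|0)$, and $\sfb|_{V^\perp}$ is negative definite. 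The $\sfb$-orthogonal projection $\pi_V:\P(\R^{p,q+1})\smallsetminus\P(V^\perp)\to\P(V)$ is $\G$-equivariant, continuous, and (by Lemma~\ref{lem:restrict-span-L}) sends $\Om(\L)$ into $\Om(\L)\cap\P(V)$, which is properly convex since $\L$ spans $V$. Hence $\pi_V(C)$ is a convex subset of a properly convex set, and the interior of $\pi_V(C)$ is properly convex. On the other hand, $\G$ acts on $V^\perp$ via the compact group $\OO(-\sfb|_{V^\perp})$, so that after choosing unit-norm lifts in $\R^{p,q+1}$ of points of $\hat{\mathcal K}$, the orthogonal components in $V^\perp$ of all $\G$-translates are bounded; this will confine $C$ to a set whose $V^\perp$-extent is bounded while its $V$-projection is properly convex, giving that the closure of $\text{int}(C)$ in $\P(\R^{p,q+1})$ is contained in a bounded region of an affine chart, hence is properly convex.

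The main obstacle lies in the second case of~(2): we need to reconcile the compact action of $\G$ on $V^\perp$ (which controls the $V^\perp$-direction in linear/Euclidean terms) with the properly convex projection to $\P(V)$, in order to conclude that $C\subset\P(\R^{p,q+1})$ has no full projective line in its closure despite $\Om(\L)$ itself containing such lines. Carrying this out cleanly requires a careful choice of affine chart transverse to $\P(V^\perp)$ and a careful bookkeeping of the scaling ambiguity inherent in the projectivization; part~(1) plays a supporting role by ensuring that $\ov C\cap\ov{\Om(\L)}$ meets $\H^{p,q}$ only inside $\Om(\L)$, so that the sole contribution to possible lines in $\ov{\text{int}(C)}$ must come from $\L$ and from $\P(V^\perp)$, both of which can be ruled out by the combination of proper convexity of $\Om(\L)\cap\P(V)$ and compactness of the $V^\perp$-action.
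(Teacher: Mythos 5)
Your proposal for part~(1) is in the right family of arguments but has a real gap: you only rule out accumulation points lying in $\Om(\L)$, whereas item~(1) asserts that accumulation points lie in~$\L$, i.e.\ that they avoid all of $\H^{p,q}$, including $\partial_{\H}\Om(\L)$. Your case~(b) requires $\sfb(\hat y, v')\neq 0$, which you obtain from $\hat y\in\hat\Om(\L)$; but if the accumulation point $\hat y$ happened to lie in $\partial_{\H}\Om(\L)$, it could well be that $\hat y\in v'^{\perp}$ (nothing in your argument controls \emph{which} point $[v']$ of $\L$ the normalized translates $\g_n\hat m_n/|\g_n\hat m_n|$ converge to), and then $\sfb(\g_n\hat x_n,\g_n\hat m_n)$ can stay bounded with no contradiction. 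The paper avoids this by running the argument the other way: given an accumulation point $o'\in\H^{p,q}$ (which a priori may lie on $\partial_{\H}\Om(\L)$), it picks a point $m\in M$ in \emph{timelike} position to~$o'$ (this exists because $M$ is a weakly spacelike $p$-graph and so meets every timelike $q$-slice through~$o'$, Proposition~\ref{prop:weakly-sp-gr-is-a-graph}~\ref{item:weakly-sp-gr-1-bis}); then for large~$n$ one has $o_n$ timelike to $\g_n^{-1}\cdot m$, and extracting limits $o_n\to o\in\mathcal K$ and $\g_n^{-1}\cdot m\to w\in\L$ (using proper discontinuity on~$M$), the non-spacelike relation survives the limit and contradicts $o\in\mathcal K\subset\Om(\L)$. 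The decisive move you are missing is choosing the auxiliary point $m\in M$ \emph{as a function of the limit point}, rather than projecting each $\hat x_n$ to~$M$.

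Your treatment of part~(2) is correct in the spanning case (and, pleasantly, does not even need part~(1) there), but in the non-spanning case you yourself flag an unresolved difficulty, and I think the difficulty is real: having $\pi_V(C)$ properly convex in $\P(V)$ does not control lines in $\ov{C}$ lying in fibres of $\pi_V$, and such a fibre necessarily meets $\P(V^\perp)\subset\partial_{\H}\Om(\L)$; arguing around this via a compactness bound on the $V^\perp$-components runs into exactly the scaling ambiguity you describe. The paper handles both cases at once by first establishing, as a consequence of the \emph{full} strength of~(1), that $\ov{\Om}\cap\partial\Om(\L)\subset\ov{\mathscr{C}(\L)}\subset\P(\spa(\L))$ (each limit of simplices with vertices in $\g_{i,n}\mathcal K$ has vertices in $\Om(\L)\cup\L$, so if the limit lies in $\partial\Om(\L)$ all vertices are in~$\L$). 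Then if $\Om$ were not properly convex, a full line $\ell\subset\ov{\Om}$ would force, as in the proof of Lemma~\ref{lem:Omega-Lambda-convex}.\eqref{item:Om-Lambda-prop-convex}, the existence of a point $o\in\ell\cap\partial\Om(\L)$ with $o\in\L^\perp$; the claim forces $o\in\P(\spa(\L))$, hence $\spa(\L)\cap\L^\perp\neq 0$, i.e.\ $\L$ is $\sfb$-degenerate, contradicting $\Om(\L)\neq\emptyset$ (Lemma~\ref{lem:Omega-Lambda-convex}.\eqref{item:Om-Lambda-non-empty}). So the two difficulties are linked: once you repair~(1), the paper's claim gives a uniform and short route to~(2) that does not require handling the spanning and non-spanning cases separately or worrying about the $V^\perp$-direction at all.
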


\begin{proof}[Proof of Lemma~\ref{lem:build-prop-conv-Omega}]
\eqref{item:build-Omega-1} Let $\mathcal{K}$ be a compact subset of~$\Om(\L)$, let $(o_n)_{n\in\N}$ be a sequence of points of~$\mathcal{K}$, and let $(\gamma_n)_{n\in\N}$ be a sequence of pairwise distinct elements of~$\Gamma$ such that $(\gamma_n\cdot o_n)_{n\in\N}$ converges to some $o'\in\ov{\Om(\L)}$.
Let us check that $o'\in\L$.
By Lemma~\ref{lem:Omega-Lambda-convex}.\eqref{item:Om-Lambda-in-Hpq} we have $\ov{\Om(\L)} \subset \H^{p,q}\cup\L$, hence it is enough to check that $o'\notin\H^{p,q}$.
Suppose by contradiction that $o'\in\H^{p,q}$.
Consider a point $m\in M$ in timelike position to $o'$ (such a point exists by Proposition \ref{prop:weakly-sp-gr-is-a-graph}.\ref{item:weakly-sp-gr-1-bis}).
For large enough~$n$, the point $\g_n\cdot o_n$ is in timelike position to~$m$, and so $o_n$ is in timelike position to $\g_n^{-1}\cdot m$.
Up to passing to a subsequence, we may assume that $(o_n)_{n\in\N}$ converges to some point $o\in \mathcal{K}$ (because $\mathcal{K}$ is compact) and that $(\g_n^{-1}\cdot m)_{n\in\N}$ converges to some point $w\in \L = \di M$ (because the action of $\Gamma$ on~$M$ is properly discontinuous).
Then $o$ and~$w$ are not in spacelike position: contradiction with Lemma~\ref{lem:Omega-Lambda-convex}.\eqref{item:Om-Lambda-invisible-dom}.

\eqref{item:build-Omega-2} Let $\mathcal{K}$ be a compact subset of $\Om(\L)$, and let $\Om$ be the interior of the convex hull of $\bigcup_{\g\in\G} \g\cdot\mathcal{K}$ in $\Om(\L)$.
Then $\Om$ is a $\Gamma$-invariant convex open subset of $\Om(\L)$.
We may assume that $\Omega$ is non-empty, otherwise there is nothing to prove.

We claim that $\ov\Om \cap \partial\Om(\L) \subset \ov{\mathscr{C}(\L)}$, where $\mathscr{C}(\L)$ is the convex hull of $\L$ in $\Om(\L)$ (Notation~\ref{not:C-Lambda}).
Indeed, consider a point $o\in\ov\Om$: it is a limit of points $o_n\in\Om$.
By definition of~$\Om$, each $o_n$ lies in an open projective simplex in $\Om(\L)$ whose vertices are of the form $\gamma_{i,n}\cdot k_{i,n}$ where $\gamma_{i,n}\in\G$ and $k_{i,n}\in\mathcal{K}$ (with $1\leq i\leq p+q+1$).
Up to passing to a subsequence, we may assume that the $k_{i,n}$ converge in~$\mathcal{K}$ for each~$i$, and that each sequence $(\gamma_{i,n})_{n\in\N}$ is either constant or consists of pairwise distinct elements of~$\Gamma$.
Using \eqref{item:build-Omega-1}, we deduce that $o$ lies in the relative interior of a simplex in $\ov{\Om(\L)}$ whose vertices lie in $\Om(\L) \cup \L$.
If at least one of the vertices lies in $\Om(\L)$, then $o\in\Om(\L)$.
Taking the contrapositive, we get that if $o\in\partial\Om(\L)$, then all vertices lie in $\L$, and so $o\in\ov{\mathscr{C}(\L)}$.

Suppose by contradiction that $\Om$ is not properly convex.
Arguing as in the proof of Lemma \ref{lem:Omega-Lambda-convex}.\eqref{item:Om-Lambda-prop-convex}, we find a projective line $\ell$ fully contained in $\overline{\Om} \subset \overline{\Om(\L)}$ and a point $o \in \ell \cap \partial\Om \cap \partial\Om(\L)$ such that $o\in x^{\perp}$ for all $x\in \L$.
By the claim above we have $o \in \ov{\mathscr{C}(\L)} \subset \P(\spa(\L))$, and so $\L$ is degenerate.
On the other hand, by assumption $\Om(\L)$ is non-empty, and so $\L$ is non-degenerate by Lemma~\ref{lem:Omega-Lambda-convex}.\eqref{item:Om-Lambda-non-empty}: contradiction.
\end{proof}

\begin{proof}[Proof of Proposition~\ref{prop:M-in-prop-conv-Omega}]
\eqref{item:M-in-Om-1} This is contained in Lemma~\ref{lem:build-prop-conv-Omega}.\eqref{item:build-Omega-1}.

\eqref{item:M-in-Om-2} Let $D$ be a compact fundamental domain for the action of $\Gamma$ on~$M$.
By Lemma~\ref{lem:build-prop-conv-Omega}, for any compact neighborhood $\mathcal{K}$ of $D$ in $\Om(\L)$, the interior $\Om$ of the convex hull of $\bigcup_{\g\in\G} \g\cdot\mathcal{K}$ in $\Om(\L)$ is a properly convex open subset of $\Om(\L) \subset \H^{p,q}$ which contains~$M$.

\eqref{item:M-in-Om-3} Let $\mathcal{Z}$ be a $\Gamma$-invariant closed subset of $\Om(\L)$ such that $\de_{\H}\mathcal{Z} \cap \de_{\H}\Om(\L)=\emptyset$.
Then $\mathcal{Z}$ is closed in $\H^{p,q}$, and so Lemma~\ref{lem:Omega-Lambda-convex}.\eqref{item:Om-Lambda-in-Hpq} implies that $\ov{\mathcal{Z}} = \mathcal{Z} \cup \di \mathcal{Z} \subset \mathcal{Z} \cup \L$.

The set $D'$ of points of~$\mathcal{Z}$ that are in timelike or lightlike position with a point of~$D$ is compact.
Indeed, the closure $\ov{D'}$ of $D'$ in $\ov\H^{p,q}$ still consists of points that are in timelike or lightlike position with a point of~$D$.
Moreover, $\ov{D'} \cap \di \mathcal{Z} \subset \ov{D'} \cap \L$ is empty because any point of $\L$ is in spacelike position with any point of~$D$ (Lemma~\ref{lem:Omega-Lambda-convex}.\eqref{item:Om-Lambda-invisible-dom}).

We note that $\mathcal{Z}$ is contained in the union of the $\Gamma$-translates of the compact subset~$D'$.
Indeed, any point $o\in \mathcal{Z}$ is in timelike position with some point $m\in M$ (by Definition~\ref{def:weakly-sp-gr} of a weakly spacelike $p$-graph); if $m\in\gamma\cdot D$ where $\gamma\in\Gamma$, then $o\in\gamma\cdot D'$.

Taking $\mathcal{K}$ to be a neighborhood of~$D$ containing $D'$ in the proof of~\eqref{item:M-in-Om-2} above, we obtain that the interior $\Om$ of the convex hull of $\bigcup_{\g\in\G} \g\cdot\mathcal{K}$ in $\Om(\L)$ is a properly convex open subset of $\Om(\L) \subset \H^{p,q}$ which contains $M$ and also~$\mathcal{Z}$.

Since $\Gamma$ preserves the properly convex set $\Om$, the action of $\Gamma$ on~$\Om$ is properly discontinuous (because it preserves the Hilbert metric $d_{\Omega}$, see Section~\ref{subsec:prop-conv-proj}).
Since $\mathcal{Z}\subset\Om$, the action of $\Gamma$ on~$\mathcal{Z}$ is also properly discontinuous. 
This last action is cocompact because $\mathcal{Z}$ is contained in the union of the $\Gamma$-translates of the compact subset~$D'$.
\end{proof}

\begin{proof}[Proof of Proposition~\ref{prop:proper-action-M}]
\eqref{item:proper-action-M-1}~$\Rightarrow$~\eqref{item:proper-action-M-2} is clear.

\eqref{item:proper-action-M-2}~$\Rightarrow$~\eqref{item:proper-action-M-1}:
By Lemma~\ref{lem:Omega-Lambda-convex}.\eqref{item:Om-Lambda-non-empty} and Proposition~\ref{prop:bdynonpossphere}.\eqref{item:bound-non-pos-sph-1}, the set $\L$ is a non-degenerate non-positive $(p-1)$-sphere in $\di\H^{p,q}$.
By Lemma~\ref{l.kernonpossphere}, the restriction of $\sfb$ to $\spa(\L)$ has signature $(p,q')$ for some $1\leq q'\leq q+1$.
The group $\rho(\Gamma)$ preserves $\P(\spa(\L))$, hence preserves the orthogonal projection $M'$ of $M$ to $\P(\spa(\L))$ from Lemma~\ref{lem:restrict-span-L}, which is a weakly spacelike $p$-graph in $\H^{p,q}$, and in fact in $\H^{p,q} \cap \P(\spa(\L)) \simeq \H^{p,q'-1}$.
By Lemma~\ref{lem:Omega-Lambda-convex}.\eqref{item:Om-Lambda-prop-convex}, the set $\Om(\L) \cap \P(\spa(\L))$ is a $\rho(\Gamma)$-invariant properly convex open subset of $\P(\spa(\L))$.
If $\rho$ has finite kernel and discrete image, then the action of $\Gamma$ on $\Om(\L) \cap \P(\spa(\L))$ via~$\rho$ is properly discontinuous because it preserves the Hilbert metric $d_{\Om(\L)}$, see Section~\ref{subsec:prop-conv-proj}.
In particular, the action of $\Gamma$ on~$M'$ via~$\rho$ is properly discontinuous.
By Proposition~\ref{prop:M-in-prop-conv-Omega}.\eqref{item:M-in-Om-1}, the action of $\Gamma$ on the whole of $\Om(\L)$ via~$\rho$ is properly discontinuous.
In particular, the action of $\Gamma$ on~$M$ via~$\rho$ is properly discontinuous.

Recall from Proposition~\ref{prop:weakly-sp-gr-is-a-graph} that $M$ is contractible.
If \eqref{item:proper-action-M-1} holds and if $\vcd(\Gamma) = p$, then the action on $M$ via $\rho$ is cocompact by Fact~\ref{fact:vcd}, and so $M$ is contained in some $\rho(\Gamma)$-invariant properly convex open subset of $\H^{p,q}$ by Proposition~\ref{prop:M-in-prop-conv-Omega}.\eqref{item:M-in-Om-2}.
\end{proof}

\subsection{A weakly spacelike graph in the non-degenerate part} \label{subsec:weakly-sp-gr-non-deg-part}

The following will be used in the proofs of Proposition~\ref{prop:non-deg-limit} and Theorem~\ref{thm:non-deg-limit-sphere} in Sections \ref{subsec:Lambda-nondegen} and~\ref{subsec:proof-non-deg-limit-sphere}.
We refer to Definitions \ref{def:non-pos-sphere} and~\ref{def:weakly-sp-gr} for the notions of non-positive $(p-1)$-sphere in $\di\H^{p,q}$ and of weakly spacelike $p$-graph in~$\H^{p,q}$.

\begin{prop} \label{prop:ME-weakly-sp-gr}
For $p\geq 2$ and $q\geq 1$, let $\L$ be a non-positive $(p-1)$-sphere in $\di\H^{p,q}$.
Let $V \subset \spa(\L) \subset \R^{p,q+1}$ be the kernel of $\sfb|_{\spa(\L)}$, of dimension $k:=\dim V\geq 0$.
Suppose $k<p$.
Let $E$ be a linear subspace of~$\R^{p,q+1}$ such that $V^{\perp} = V\oplus E$.
Then
\begin{enumerate}
  \item\label{item:ME-1} the restriction $\sfb|_E$ is non-degenerate, of signature $(p-k,q+1-k|0)$, so we can consider $\H_E := \H^{p,q}\cap\P(E) \simeq \H^{p-k,q-k}$;
  \item\label{item:ME-2} $\L_E := \L \cap \P(E)$ is a non-degenerate non-positive $(p-k-1)$-sphere in $\di\H_E \simeq \di\H^{p-k,q-k}$;
  \item\label{item:ME-3} for any weakly spacelike $p$-graph $M \subset \H^{p,q}$ with $\di M = \L$, the set $M_E := M\cap\nolinebreak\P(E)$ is a weakly spacelike $(p-k)$-graph in $\H_E \simeq \H^{p-k,q-k}$ with $\di M_E = \L_E$.
\end{enumerate}
\end{prop}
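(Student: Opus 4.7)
The plan is to prove (1) by standard linear algebra, and then to prove (2) and (3) together by working in a conformal-model splitting (Proposition~\ref{p.doublecoverasproduct} and Lemma~\ref{lem:Hpq-hat-bar-prod}) with a basis of $\R^{p,q+1}$ adapted to both $V$ and~$E$.

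For part~(1), since $V$ is totally isotropic of dimension $k \leq \min(p, q+1)$ (Lemma~\ref{l.kernonpossphere}), the form $\sfb$ descends to a non-degenerate form on $V^\perp/V$ of signature $(p-k, q+1-k|0)$, and via the isomorphism $E \simeq V^\perp/V$ induced by $V^\perp = V \oplus E$, the restriction $\sfb|_E$ inherits the same signature. The assumptions $k < p$ and $k \leq q+1$ give $p - k \geq 1$ and $q + 1 - k \geq 1$, so $\H_E \simeq \H^{p-k, q-k}$.

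For parts~(2) and~(3), I will choose a $\sfb$-orthonormal basis $(e_1,\dots,e_{p+q+1})$ of $\R^{p,q+1}$ such that $E = \spa(e_{k+1},\dots,e_p, e_{p+k+1},\dots,e_{p+q+1})$ and $V = \spa(e_i + e_{p+i} : 1 \leq i \leq k)$. Such a basis exists because $\sfb|_{E^\perp}$ is a non-degenerate hyperbolic form of signature $(k,k|0)$ containing $V$ as a maximal isotropic subspace, so one completes any orthonormal basis of $E$ by choosing an isotropic complement $V^*$ to $V$ in $E^\perp$ with a suitably dual basis and renormalizing. Taking $T := \spa(e_{p+1},\dots,e_{p+q+1})$ as the timelike $(q+1)$-plane, the splittings $\di\hat\H^{p,q} \simeq \bS^{p-1}\times\bS^q$ and $\hat\H^{p,q} \simeq \bB^p\times\bS^q$ identify $\di\hat\H_E$ and $\hat\H_E$ with the subproducts obtained by requiring the first $k$ coordinates of both factors to vanish. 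By Corollary~\ref{cor:non-positive-sphere-as-graph} and Proposition~\ref{prop:weakly-sp-gr-is-a-graph}, lifts $\hat\L$ and $\hat M$ are graphs of $1$-Lipschitz maps $f : \bS^{p-1} \to \bS^q$ and $\tilde f : \bB^p \to \bS^q$ with $\tilde f|_{\bS^{p-1}} = f$, and Lemma~\ref{lem:kernel-of-graph} identifies $\bS := \{u : f(-u) = -f(u)\}$ with $\{u_{k+1} = \dots = u_p = 0\}$, on which $f(\pm e_j) = \pm e_{p+j}$ for $1 \leq j \leq k$.

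The key step is the following claim: whenever $y \in \ov\bB^p$ satisfies $y_1 = \dots = y_k = 0$, one has $\tilde f(y) \in \bS_f := \{u' \in \bS^q : u'_1 = \dots = u'_k = 0\}$. Indeed, for such~$y$ and $j \leq k$, we have $d_{\ov\bB^p}(y, \pm e_j) = \pi/2$, so the $1$-Lipschitz property and $\tilde f(\pm e_j) = \pm e_{p+j}$ force $d_{\bS^q}(\tilde f(y), \pm e_{p+j}) \leq \pi/2$; since $d_{\bS^q}(e_{p+j}, -e_{p+j}) = \pi$, both inequalities must be equalities, hence $\tilde f(y) \perp e_{p+j}$ for each $j\leq k$. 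Using the explicit lift formula, a point of $\hat M$ projects into $\P(E)$ if and only if $y_1 = \dots = y_k = 0$ and $\tilde f(y) \in \bS_f$, and the claim makes the second condition automatic from the first; thus $\hat M \cap \P(E)$ is the graph of $\tilde f$ restricted to $\bB^p \cap \{y_1 = \dots = y_k = 0\} \simeq \bB^{p-k}$, and similarly $\hat\L \cap \P(E)$ is the graph of $f|_{\bS'}$ where $\bS' \simeq \bS^{p-k-1}$. Projecting down to $\H^{p,q}$ and applying Proposition~\ref{prop:weakly-sp-gr-is-a-graph} and Corollary~\ref{cor:non-positive-sphere-as-graph} inside $\hat\H_E \simeq \hat\H^{p-k, q-k}$, one concludes that $M_E$ is a weakly spacelike $(p-k)$-graph in~$\H_E$ and $\L_E$ is a non-positive $(p-k-1)$-sphere in $\di\H_E$ with $\di M_E = \L_E$; the $\sfb$-non-degeneracy of $\L_E$ then follows from the corresponding assertion of Lemma~\ref{lem:kernel-of-graph} applied to $f|_{\bS'}$. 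The main subtlety is that $E$ is given rather than canonical, which is why the basis must be built starting from~$E$ rather than the other way around.
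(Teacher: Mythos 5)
Your proposal is correct and follows essentially the same route as the paper: work in the conformal model of Proposition~\ref{p.doublecoverasproduct} with a timelike $(q+1)$-plane $T$ compatible with the splitting $E^\perp\oplus E$, invoke Lemma~\ref{lem:kernel-of-graph} to identify $\bS$ with the image of $V$, and use a spherical triangle-inequality rigidity argument (equality in a chain from $f(u)$ to $f(-u)$) to force values of $f$ and $\tilde f$ on the orthogonal complement of $\bS$ to land in $\bS^{q-k}$. The only stylistic difference is that you fix a single adapted orthogonal basis and run one argument covering both $\bS^{p-1}\cap E$ and $\bB^p\cap E$ at once (yielding (2) and (3) together), whereas the paper chooses general timelike planes $T_{E^\perp}$, $T_E$ and treats the two cases by parallel computations.
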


\begin{proof}
We may assume $k>0$, otherwise there is nothing to prove.

\eqref{item:ME-1} is clear.

\eqref{item:ME-2} It follows from \eqref{item:ME-1} that the restriction $\sfb|_{E^{\perp}}$ is non-degenerate of signature $(k,k|0)$.
Choose a timelike $k$-plane $T_{E^{\perp}}$ of $E^{\perp}$ and a timelike $(q+1-k)$-plane $T_E$ of $E$, so that $T := T_{E^{\perp}} \oplus T_E$ is a timelike $(q+1)$-plane of~$\R^{p,q+1}$.
This determines a splitting
$$\di\hat{\H}^{p,q} \simeq \{ v\in T^{\perp} ~|~ \sfb(v,v)=1\} \times \{ v\in T ~|~ (-\sfb)(v,v)=1\} \simeq \bS^{p-1}\times\bS^q$$
as in \eqref{eqn:di-Psi}.
Taking the intersection of $\{ v\in T^{\perp} \,|\, \sfb(v,v)=1\}$ with $E^{\perp}$ (\resp $E$) gives a totally geodesic copy $\bS$ (\resp $\bS'$) of $\bS^{k-1}$ (\resp $\bS^{p-k-1}$) in $\{ v\in T^{\perp} \,|\, \sfb(v,v)=1\} \simeq \bS^{p-1}$.
Similarly, taking the intersection of $\{ v\in T \,|\, (-\sfb)(v,v)=1\}$ with $E^{\perp}$ (\resp $E$) gives a totally geodesic copy $\bS''$ (\resp $\bS'''$) of $\bS^{k-1}$ (\resp $\bS^{q-k}$) in $\{ v\in T \,|\, \sfb(v,v)=1\} \simeq \bS^q$.
By construction, $\bS'$ is the set of points at distance $\pi/2$ from $\bS$ in $\bS^{p-1}$, and $\bS'''$ is the set of points at distance $\pi/2$ from $\bS''$ in $\bS^q$.

The non-positive $(p-1)$-sphere $\L$ of $\di\H^{p,q}$ is by definition the projection of a non-positive $(p-1)$-sphere $\hat\L$ of $\di\hat\H^{p,q}$, which by Corollary~\ref{cor:non-positive-sphere-as-graph} is, in the above splitting, the graph of some $1$-Lipschitz map $f : \bS^{p-1}\to\bS^q$.

Since $V\subset E^{\perp}$, the image of $V\smallsetminus\{0\}$ in $\di\hat\H^{p,q}$ is contained in $\bS\times\bS''$.
Lemma~\ref{lem:kernel-of-graph} then implies that the image of $V$ in $\di\hat\H^{p,q}$ is the graph of $f|_{\bS}$, that $\bS$ is the set of $u\in\bS^{p-1}$ such that $f(-u) = -f(u)$, and that $f(\bS) = \bS''$.

We claim that $f(\bS') \subset \bS'''$.
Indeed, for any $u\in\bS$ and any $u_1\in\bS'$ we have
\begin{align*}
\pi = d_{\bS^q}(f(u),f(-u)) & \leq d_{\bS^q}(f(u),f(u_1)) + d_{\bS^q}(f(u_1),f(-u))\\
&  \leq d_{\bS^{p-1}}(u,u_1) + d_{\bS^{p-1}}(u_1,-u) = \pi.
\end{align*}
All inequalities must be equalities, hence $d_{\bS^q}(f(u),f(u_1)) = \pi/2$ for all $u\in\bS$ and all $u_1\in\bS'$, proving the claim.

Thus $\gph(f) \cap (\bS'\times\bS''') = \gph(f|_{\bS'})$, and so $\L_E := \L\cap\P(E)$ is the image in $\di\H^{p,q}$ of $\gph(f|_{\bS'})$, where $f|_{\bS'} : \bS'\to\bS'''$ is $1$-Lipschitz.
Corollary~\ref{cor:non-positive-sphere-as-graph} then implies that $\L_E$ is a non-positive $(p-1-k)$-sphere in $\di\H_E \simeq \di\H^{p-k,q-k}$.
Since $f(-u) \neq f(u)$ for all $u\in\bS'$, this non-positive $(p-1-k)$-sphere is non-degenerate by Lemma~\ref{lem:kernel-of-graph}.

\eqref{item:ME-3} Let $M$ be a weakly spacelike $p$-graph in $\H^{p,q}$ with $\di M = \L$.
The set $M_E := M\cap\H_E$ is a closed subset of $\H_E$ with $\di M_E = \L_E$.
By Proposition~\ref{prop:weakly-sp-gr-is-a-graph}, in order to prove that $M_E$ is a weakly spacelike $(p-k)$-graph in~$\H_E$, it is enough to check that $M_E$ meets every $(q-k)$-dimensional timelike totally geodesic subspace of $\H_E$ in a unique point.
Fix a timelike $(q-k+1)$-plane $T_E$ of~$E$.
As in the proof of~\eqref{item:ME-2}, we choose a timelike $k$-plane $T_{E^{\perp}}$ of $E^{\perp}$, so that $T := T_{E^{\perp}} \oplus T_E$ is a timelike $(q+1)$-plane of~$\R^{p,q+1}$.
This determines a splitting
$\hat{\ov{\H}}^{p,q} \simeq \ov{\bB}^p\times \bS^q$ as in Lemma~\ref{lem:Hpq-hat-bar-prod}.
By Proposition~\ref{prop:weakly-sp-gr-is-a-graph}, the weakly spacelike $p$-graph $M$ admits a lift to $\hat\H^{p,q}$ which, in our splitting, is the graph of a $1$-Lipschitz map $f : \bB^p\to\bS^q$.
This map extends continuously to a $1$-Lipschitz map $f : \ov\bB^p = \bB^p\sqcup\bS^{p-1} \to \bS^q$, such that $\L = \di M$ is the projection of the graph of $f|_{\bS^{p-1}}$.
Define $\bS,\bS' \subset \bS^{p-1}$ and $\bS'',\bS''' \subset \bS^q$ as in~\eqref{item:ME-2}.
By construction (see Proposition~\ref{p.doublecoverasproduct}), the image of $T$ (\resp $T_{E^{\perp}}$, \resp $T_E$) in $\hat\H^{p,q}$ is $\{u_0\}\times\bS^q$ (\resp $\{u_0\}\times\bS''$, \resp $\{u_0\}\times\bS'''$), where $u_0 = (1,0,\dots,0)$ is the mid-point of the upper hemisphere~$\bB^p$.

We claim that $f(u_0) \in \bS'''$.
Indeed, for any $u\in\bS$ we have
\begin{align*}
\pi = d_{\bS^q}(f(u),f(-u)) & \leq d_{\bS^q}(f(u),f(u_0)) + d_{\bS^q}(f(u_0),f(-u))\\
&  \leq d_{\bS^{p-1}}(u,u_0) + d_{\bS^{p-1}}(u_0,-u) = \pi.
\end{align*}
All inequalities must be equalities, hence $d_{\bS^q}(f(u),f(u_0)) = \pi/2$ for all $u\in\bS$, proving the claim.

Thus $\gph(f)$ meets the image of $T_E$ in a unique point (namely $(u_0,f(u_0))$), and so $M_E \cap\nolinebreak \P(T_E)$ is a singleton, which completes the proof.
\end{proof}

\section{Weakly spacelike $p$-graphs with proper cocompact group actions} \label{sec:suff-cond-Hpq-cc}

The goal of this section is to prove Theorem~\ref{thm:geom-action-weakly-sp-gr}, which contains Theorem~\ref{thm:geom-action-weakly-sp-gr-basic}.

In Section~\ref{subsec:crowns}, we first introduce and discuss a generalization, in $\di\H^{p,q}$, of Barbot's \emph{crowns} \cite[\S\,4.7]{bar15} in the Einstein universe $\di\H^{p,1} = \di\mathrm{AdS}^{p+1}$.
In Section~\ref{subsec:max-crowns} we show that in the setting of Theorem~\ref{thm:geom-action-weakly-sp-gr}, crowns in~$\L$ of maximal cardinality are not boundary crowns; from this we deduce, by contraposition, the implication \eqref{item:geom-action-4}~$\Rightarrow$~\eqref{item:geom-action-5} of Theorem~\ref{thm:geom-action-weakly-sp-gr}.
In Section~\ref{subsec:foliate-crown} we introduce natural foliations of convex hulls of $j$-crowns by $j$-dimensional complete spacelike submanifolds.
In Section~\ref{subsec:crown-not-hyperb} we use such foliations and the fact that crowns in~$\L$ of maximal cardinality are not boundary crowns to show that the existence of a $j$-crown for $j\geq 2$ prevents Gromov hyperbolicity (implication \eqref{item:geom-action-1}~$\Rightarrow$~\eqref{item:geom-action-4} of Theorem~\ref{thm:geom-action-weakly-sp-gr}).
Then in Section~\ref{subsec:proof-thm-ConvexCo} we complete the proof of Theorem~\ref{thm:geom-action-weakly-sp-gr} using Proposition~\ref{prop:M-in-prop-conv-Omega} and \cite[Th.\,1.24]{dgk-proj-cc}.

On the other hand, in Section~\ref{subsec:split-spacetimes} we give examples of weakly spacelike $p$-graphs with proper cocompact actions by \emph{non-hyperbolic} groups.

\subsection{Crowns in $\di\H^{p,q}$} \label{subsec:crowns}

Recall that $x,y\in\di \H^{p,q}$ are called \emph{transverse} if $x\notin y^{\perp}$, or equivalently if $y\notin x^{\perp}$.
The following definition is a generalization of \cite[Def.\,4.30]{bar15}.

\begin{defn} \label{def:j-crown}
Let $\Lambda$ be a subset of $\di\H^{p,q}$.
For $j\geq 1$, a \emph{$j$-crown in $\Lambda$} is a collection $\sfC$ of $2j$ points $x_1^+,x_1^-,\dots,x_j^+,x_j^-$ of $\Lambda$ such that for any $1\leq i,i'\leq j$ and any $\varepsilon,\varepsilon'\in\{+,-\}$, the points $x_i^{\varepsilon}$ and $x_{i'}^{\varepsilon'}$ are transverse if and only if $i=i'$ and $\varepsilon\neq\varepsilon'$.
Such a $j$-crown $\sfC$ is a \emph{boundary $j$-crown in~$\L$} if there exists $w\in\L$ such that $\sfC\subset w^{\perp}$.
\end{defn}

\begin{remarks} \label{rem:crowns}
\begin{enumerate}
  \item\label{item:crown-1} $j$-crowns in $\di\H^{p,q}$ exist only for $j\leq\min(p,q+1)$, as the restriction of~$\sfb$ to the span of a $j$-crown has signature $(j,j|0)$.
  \item $2$-crowns in $\di\H^{p,1} = \di\mathrm{AdS}^{p+1}$ were first introduced in \cite{bar15}, where Barbot called them simply \emph{crowns}.
  See Figure~\ref{fig:A_Corbit} in Section~\ref{subsec:weakly-spacelike-graphs} for an example of a $2$-crown $\sfC = \{ x_1^{\pm},x_2^{\pm}\}$ in $\di\H^{2,1} = \di\mathrm{AdS}^3$.
  \item Let $\L$ be a subset of $\di\H^{p,q}$.
  A $1$-crown in~$\L$ is a pair of transverse points in~$\L$.
  A boundary $1$-crown in~$\L$ is a pair of transverse points of~$\L$ which are both contained in the orthogonal of some point of~$\L$.
\end{enumerate}
\end{remarks}

\begin{lem} \label{l.crown-existence}
Let $\L \subset \di\H^{p,q}$ be a non-degenerate non-positive $(p-1)$-sphere.
If\linebreak $w^{\perp} \cap\nolinebreak \de_{\H} \mathscr{C}(\L) \neq \emptyset$ for some $w\in\L$, then there is a boundary $1$-crown in $\L$.
\end{lem}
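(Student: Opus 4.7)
\medskip

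\noindent\textbf{Proof plan.} Let $o \in w^{\perp} \cap \de_{\H}\mathscr{C}(\L)$, and fix lifts $\tilde{o} \in \R^{p,q+1}$ with $\sfb(\tilde{o},\tilde{o}) < 0$ (which exists since $o \in \H^{p,q}$) and $\tilde{w} \in \R^{p,q+1}$ with $\sfb(\tilde{w},\tilde{w}) = 0$, together with a non-positive lift $\tilde{\L}$ of~$\L$ (so $\sfb(\tilde{x},\tilde{y}) \leq 0$ for all $\tilde{x},\tilde{y}\in\tilde{\L}$). By construction $\sfb(\tilde{o},\tilde{w}) = 0$. The plan is to write $\tilde{o}$ as a non-negative combination of elements of~$\tilde{\L}$ and then exploit the sign condition on~$\sfb$.

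First, I would argue that $o$ lies in the convex hull of~$\L$ in $\P(\R^{p,q+1})$. Indeed, by Remark~\ref{rem:C-Lambda}, the closure $\ov{\mathscr{C}(\L)}$ of $\mathscr{C}(\L)$ in $\P(\R^{p,q+1})$ is the union of the closed projective simplices with vertices in~$\L$ contained in $\ov{\Om(\L)}$, so up to rescaling I can write
\[
\tilde{o} \;=\; \sum_{i=1}^{N} \alpha_i\, \tilde{x}_i,
\qquad \alpha_i > 0,\; \tilde{x}_i \in \tilde{\L}.
\]

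Next, applying $\sfb(\cdot,\tilde{w})$ to this identity, I get
\[
0 \;=\; \sfb(\tilde{o},\tilde{w}) \;=\; \sum_{i=1}^N \alpha_i\, \sfb(\tilde{x}_i,\tilde{w}).
\]
Each summand is non-positive (because $\tilde{\L}$ is non-positive and $\tilde{w}$ projects to a point of $\L$, so after possibly rescaling $\tilde{w}$ I can assume $\tilde{w}\in\tilde\L$), and the $\alpha_i$ are strictly positive, so every term vanishes: $\sfb(\tilde{x}_i,\tilde{w}) = 0$ for all~$i$. Equivalently, every $x_i$ lies in $w^{\perp} \cap \L$.

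Finally, I would use the strict negativity of $\sfb(\tilde{o},\tilde{o})$ to produce a transverse pair. Expanding
\[
0 > \sfb(\tilde{o},\tilde{o}) \;=\; \sum_{i,j=1}^N \alpha_i \alpha_j\, \sfb(\tilde{x}_i,\tilde{x}_j),
\]
and noting that $\sfb(\tilde{x}_i,\tilde{x}_i) = 0$ and each $\sfb(\tilde{x}_i,\tilde{x}_j) \leq 0$, I conclude that there exist $i \neq j$ with $\sfb(\tilde{x}_i,\tilde{x}_j) < 0$. Then $\{x_i, x_j\}$ is a pair of transverse points of $\L$ both lying in $w^{\perp}$, i.e.\ a boundary $1$-crown in~$\L$ (Definition~\ref{def:j-crown}). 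No real obstacle here; the only subtle point is to arrange that $w$ itself appears (or can be compared) among the $\tilde{x}_i$ so that the sign argument applies, which is handled by working with any non-positive lift of $\L \cup \{w\}$.
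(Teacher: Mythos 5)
Your proof is correct and follows essentially the same route as the paper's: express a lift of the boundary point as a strictly positive combination $\sum_i \alpha_i \tilde{x}_i$ of finitely many elements of a common non-positive lift $\tilde{\L}$ (via Carath\'eodory), pair against $\tilde{w}\in\tilde{\L}$ to force $\sfb(\tilde{x}_i,\tilde{w})=0$, and then use $\sfb(\tilde{o},\tilde{o})<0$ to extract a transverse pair among the $x_i$. The only cosmetic difference is the closing caveat: there is no need for $w$ itself to ``appear among the $\tilde{x}_i$'' --- the sign argument only requires $\tilde{w}$ and all the $\tilde{x}_i$ to lie in one non-positive lift of $\L$, which you already arranged correctly in the body of the argument.
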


\begin{proof}
Since $\L$ is non-positive, we can lift it to $\tilde{\L} \subset \R^{p,q+1}$ such that $\sfb(\tilde{x},\tilde{y}) \leq 0$ for all $\tilde{x},\tilde{y}\in\tilde{\L}$.
Let $y \in \de_{\H} \mathscr{C}(\L)$; we can lift it to $\tilde{y}\in\R^{p,q+1}$ such that $\sfb(\tilde{x},\tilde{y})\leq 0$ for all $\tilde{x}\in\tilde\L$.
Now suppose $y \in w^{\perp}$ for some $w\in\L$: we can lift $w$ to $\tilde w\in\tilde \L$ such that $\sfb(\tilde{w},\tilde{y})=0$.
Observe that if $L$ is a subset of~$\R^d$, then any point in the convex hull of~$L$ lies in the convex hull of at most $d+1$ points of~$L$.
Applying this observation to an affine chart of $\P(\R^{p+q+1})$ containing $\Om(\L)$, and given that $y\in \de_{\H} \mathscr{C}(\L)$, we get the existence of $\tilde{x}_1,\ldots,\tilde{x}_{p+q+1}\in\tilde \Lambda$ and $a_1,\dots,a_{p+q+1}>0$ such that $\tilde{y} = \sum_{i=1}^{p+q+1} a_i \tilde{x}_i$.
By assumption on $\tilde\Lambda$ we have $\sfb(\tilde{w},\tilde{x}_i)\leq 0$ for all~$i$.
Thus $\sfb(\tilde{w},\tilde{y})=0$ implies that $\sfb(\tilde{w},\tilde{x}_i)=0$ for all $1\leq i\leq p+q+1$.
Moreover $\sfb(\tilde{y},\tilde{y})<0$ implies that, up to re-ordering, $\sfb(\tilde{x}_1,\tilde{x}_2)<0$.
Setting $x^+$ and $x^-$ to be the projections of $\tilde{x}_1$ and $\tilde{x}_2$ to $\di \H^{p,q}$, we obtain a $1$-crown $\sfC = \{ x^+,x^-\}$ in~$\L$.
This is a boundary $1$-crown in~$\L$ as $x^{\pm}\in w^{\perp}$.
\end{proof}

\subsection{Crowns of maximal cardinality are not boundary crowns} \label{subsec:max-crowns}

Recall from Remark \ref{rem:crowns}.\eqref{item:crown-1} that the cardinality of crowns in $\di\H^{p,q}$ is uniformly bounded.

\begin{prop} \label{p.crown-boundary}
Let $\Gamma$ be a discrete subgroup of $\PO(p,q+1)$ acting properly discontinuously and cocompactly on a weakly spacelike $p$-graph $M$ in $\H^{p,q}$ with $M \subset \Om(\L)$ where $\L := \di M \subset \di\H^{p,q}$.
For any $j\geq 1$, if there exists a \emph{boundary} $j$-crown in $\L$, then there exists a $(j+1)$-crown in~$\L$.
\end{prop}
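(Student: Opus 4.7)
\smallskip
\noindent\textbf{Plan.} Write the boundary $j$-crown as $\sfC=\{x_i^\pm\}_{i=1}^{j}$ with $\sfC\subset w^\perp$ for some $w\in\L$. Set $E:=\mathrm{span}(\sfC)\subset\R^{p,q+1}$, of signature $(j,j|0)$, and $F:=E^\perp$, of signature $(p-j,q+1-j|0)$; then $w\in F$. Since any pair $y^+,y^-\in\L\cap\P(F)$ is automatically $\sfb$-orthogonal to every $x_i^\varepsilon$, producing a $(j+1)$-crown extending $\sfC$ reduces to exhibiting \emph{two transverse points} of $\L$ lying in $\P(F)$. This is the object I would construct.

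The first step is to identify a lightlike face of $\mathscr{C}(\L)$ carrying $\sfC$. Choose lifts $\tilde x_i^\pm$ with $\sfb(\tilde x_i^+,\tilde x_i^-)<0$ and $\sfb(\tilde x_i^\varepsilon,\tilde x_{i'}^{\varepsilon'})=0$ otherwise. A direct computation on convex combinations shows the open simplex $\sigma:=\mathrm{relint}\,\mathrm{conv}(\sfC)\subset\P(E)$ lies in $\H^{p,q}$; since $\sfC\subset w^\perp$ the whole closed simplex $\overline\sigma$ lies in $w^\perp\cap\overline{\Omega(\L)}$. Applying Lemma~\ref{lem:Omega-Lambda-convex}\eqref{item:Om-Lambda-invisible-dom} (valid here since $p\geq 2$: if $p=1$ then $j\leq\min(p,q+1)=1$ and no $2$-crown exists in $\di\H^{1,q}$, so the statement is vacuous) gives $w^\perp\cap\Omega(\L)=\emptyset$. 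Hence $\overline\sigma\subset\partial_\H\Omega(\L)\cap\partial_\H\mathscr{C}(\L)$: the crown sits on a genuine lightlike boundary face of $\overline{\mathscr{C}(\L)}$ supported by the hyperplane $w^\perp$.

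Next I would import a working ambient properly convex domain. Proposition~\ref{prop:M-in-prop-conv-Omega} provides a $\Gamma$-invariant properly convex open $\Omega\subset\Omega(\L)$ containing $M$, on which $\Gamma$ acts isometrically for the Hilbert metric $d_\Omega$, and in which the closed $\Gamma$-invariant convex subset $\mathscr{C}(\L)\cap\Omega$ inherits a cocompact $\Gamma$-action as soon as its $\H^{p,q}$-boundary avoids $\partial_\H\Omega(\L)$ (the part on the relevant strata of $\partial\Omega$ is controlled by Proposition~\ref{prop:M-in-prop-conv-Omega}\eqref{item:M-in-Om-3}). The main construction is then a dynamical degeneration of the crown. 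Pick an interior point $b\in\sigma$, approach it by a sequence $(c_n)$ in $\mathscr{C}(\L)\cap\Omega$, and write $c_n=\gamma_n\cdot c_n'$ with $c_n'$ in a fixed compact of $\mathscr{C}(\L)\cap\Omega$ using cocompactness; then $c_n\to b$ forces the projective divergence of $(\gamma_n)$. Passing to a subsequence, each of the $2j+1$ points $\gamma_n^{-1}(w),\gamma_n^{-1}(x_i^\pm)$ converges to a point of $\L$. A careful North--South-type analysis relative to the lightlike face $\overline\sigma\subset w^\perp$ then produces, in the limit, a point $w'\in\L\cap\P(F)$ with $\sfb(\tilde w,\tilde w')<0$. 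The pair $\{w,w'\}$ together with $\sfC$ forms the sought $(j+1)$-crown.

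The hardest step is this last limit analysis, and I expect two obstructions. First, $\Omega(\L)$ is only convex, not properly convex, so the dynamics must be read inside the auxiliary properly convex $\Omega$ while its effect is tracked on $\partial_\H\Omega(\L)$; this is where Proposition~\ref{prop:M-in-prop-conv-Omega} is essential. Second, $\overline\sigma$ is a \emph{lightlike} (non-strictly-extreme) face, so the asymptotics of $\gamma_n^{-1}$ there are not of standard proximal type and one cannot naively read off a transverse pair from attractors and repellers. To handle this I would use the graph description of $M$ from Proposition~\ref{prop:weakly-sp-gr-is-a-graph} to follow the $\Gamma$-translates of $w$ along $\L$, and rule out the degenerate alternative in which \emph{every} limit $\gamma_n^{-1}(w)$ still lies in $w^\perp$: such an alternative would force the $\Gamma$-orbit of $w$ to accumulate entirely on $w^\perp\cap\L$, which together with the cocompact action on $M$ and the fact that $\overline\sigma\subset\partial_\H\mathscr{C}(\L)$ pushes interior points of $M$ onto $\sigma$, contradicting $M\subset\Omega(\L)$.
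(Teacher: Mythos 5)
Your plan differs fundamentally from the paper's, and it has a gap at the key dynamical step: you need to write the points $c_n\to b\in\sigma$ as $\gamma_n\cdot c_n'$ with $c_n'$ in a fixed compact subset of $\mathscr{C}(\L)\cap\Omega$, but the cocompactness of the $\Gamma$-action on $\mathscr{C}(\L)\cap\Omega$ is not available here. You correctly note that Proposition~\ref{prop:M-in-prop-conv-Omega}.\eqref{item:M-in-Om-3} yields such cocompactness only when $\partial_\H\mathscr{C}(\L)$ avoids $\partial_\H\Om(\L)$, yet in the sentence just before you have established that $\overline\sigma\subset\partial_\H\mathscr{C}(\L)\cap\partial_\H\Om(\L)$, so that condition fails precisely in the situation under study (indeed cocompactness on the convex core is essentially equivalent to $\H^{p,q}$-convex cocompactness, which is exactly not known — and is to be characterized — in this proposition). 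The only cocompact action available by hypothesis is the one on $M$; but $M$ is closed in $\H^{p,q}$ and contained in $\Om(\L)$, so points of $M$ cannot accumulate on $\sigma\subset\partial_\H\Om(\L)\cap\H^{p,q}$ either, and your sequence $(c_n)$ cannot be taken in $M$ instead. A second, independent problem is that even granted a divergent sequence $(\gamma_n)$, there is no mechanism forcing the limit of $\gamma_n^{-1}(w)$ (or any other auxiliary point) to fall in the specific subspace $\P(F)=\P(\mathrm{span}(\sfC)^\perp)$: the $\gamma_n$ have no reason to preserve $F$. So the reduction "find two transverse points of $\L$ in $\P(F)$" is both unsupported and, structurally, a strictly stronger statement than what is required (you are trying to \emph{extend} the given crown, whereas the proposition only asserts existence of \emph{some} $(j+1)$-crown).

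The paper's proof avoids both obstacles. It constructs a single point $y\in M$ tied to the crown (using the timelike $j$-plane spanned by the midpoints $o_i$ of $[x_i^-,x_i^+]$ and the graph property of $M$), moves a family of points $y_t\in M$ along a natural path toward the ideal point $w$, and applies the cocompactness of $\Gamma$ on $M$ — which is genuinely given — to produce $\gamma_n$ with $\gamma_n\cdot y_n$ in a compact fundamental domain. One then translates the \emph{entire} configuration $\sfC\cup\{w,y\}$ by $\gamma_n$ and passes to the limit: the relative orthogonality relations between the points are preserved along the way, and the new limit configuration $\{\mathsf x_1^\pm,\dots,\mathsf x_j^\pm,\mathsf w,\mathsf y\}$ is shown to be a $(j+1)$-crown. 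Crucially, it is a fresh crown in $\L$, not an extension of $\sfC$, which sidesteps the need for anything to land in $\P(F)$.
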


Thus, if $\sfC$ is a crown of maximal cardinality in~$\L$, then it is not a boundary crown in~$\L$.

\begin{proof}
Our strategy is inspired by the proof of \cite[Prop.\,5.2]{bar15}.
Let $\sfC = \{x_1^{\pm},\ldots,x_j^{\pm}\}$ be a boundary $j$-crown in~$\L$: there exists $w\in\Lambda$ such that $\sfC\subset w^{\perp}$.
The idea is to carefully construct a suitable point $y\in M$ and a sequence $(\g_n) \in \G^{\N}$ such that, up to passing to a subsequence, $\g_n\cdot (\sfC \cup \{ w,y\})$ converges to a $(j+1)$-crown in~$\L$.

\medskip

\noindent
$\bullet$ \textbf{Preliminary set-up.}
By Proposition~\ref{prop:bdynonpossphere}.\eqref{item:bound-non-pos-sph-1}, the set $\L$ is a non-positive $(p-1)$-sphere.
We lift it to a non-positive subset $\tilde{\L}$ of $\R^{p,q+1}\smallsetminus\{0\}$ (Definition~\ref{def:nonpos-neg}).
By assumption $M \subset \Om(\L)$, hence we can lift $M$ to $\tilde M \subset \hat\H^{p,q} \subset \R^{p,q+1}$ such that $\sfb(\tilde{x},\tilde{o})<0$ for all $\tilde o\in \tilde M$ and all $\tilde{x}\in \tilde\L$.

The convex open subset $\Om(\L)$ of $\H^{p,q}$ is not necessarily properly convex.
By Proposition~\ref{prop:M-in-prop-conv-Omega}.\eqref{item:M-in-Om-2}, there exists a $\Gamma$-invariant \emph{properly} convex open subset $\Om$ of $\Om(\L) \subset \H^{p,q}$ containing~$M$, and by Lemma~\ref{lem:Omega-Lambda-convex}.\eqref{item:Om-Lambda-in-Hpq} we have $\di\Om = \di M = \di\Om(\L) = \L$.

\medskip

\noindent
$\bullet$ \textbf{Step 1: Construction of~$y$.}
Choose respective lifts $\tilde w, \tilde x_1^{\pm}, \dots, \tilde x_j^{\pm} \in \tilde \L$ of $w, x_1^{\pm}, \dots, x_j^{\pm}$, so that $\sfb(\tilde x_i^+, \tilde x_i^-) < 0$ and $\sfb(\tilde x_i^{\pm}, \tilde x_j^{\pm}) = \sfb(\tilde x_i^{\pm}, \tilde x_{i'}^{\mp}) = 0$ for all $i\neq i'$ in $\{1,\dots,j\}$.
For any $1\leq i\leq j$, we set
$$\tilde{o}_i := \frac{1}{2}\left(\tilde{x}_i^+ + \tilde{x}_i^-\right) \in \R^{p,q+1} \quad\mathrm{and}\quad o_i := [\tilde{o}_i] \in \P(\R^{p,q+1}).$$
Then $\sfb(\tilde{o}_i, \tilde{o}_i) = \sfb(\tilde x_i^+, \tilde x_i^-)/2 < 0$ and $\sfb(\tilde{o}_i, \tilde{o}_{i'}) = 0$ for all $i\neq i'$ in $\{1,\dots,j\}$.
In particular, $o_i\in\H^{p,q}$ for all~$i$ and $T_{\sfC} := \spa(\tilde{o}_1,\dots,\tilde{o}_j)$ is a timelike $j$-plane of $\R^{p,q+1}$, contained in~$w^{\perp}$.
The spacelike geodesic $(x_i^-,x_i^+)$ of $\H^{p,q}$ meets the $(j-1)$-dimensional timelike totally geodesic subspace $\P(T_{\sfC})$ of~$\H^{p,q}$ in the singleton $\{o_i\}$.

Consider a timelike $(q+1)$-plane $T_0$ of $\R^{p,q+1}$ containing~$T_{\sfC}$.
Since $M$ is a weakly spacelike $p$-graph, by Proposition~\ref{prop:weakly-sp-gr-is-a-graph} it meets $\P(T_0)$ in a unique point, which we call $y\in\H^{p,q}$.
We have $y\in M\subset\Om(\L)$ by assumption, hence $y\notin w^{\perp}$, and so $y\notin\P(T_{\sfC})$
(in particular, $j<q+1$).

\medskip

\noindent
$\bullet$ \textbf{Step 2: Construction of~$(\g_n)$.}
Lift $y\in M$ to $\tilde{y}\in\tilde{M}$.
Then $\spa(T_{\sfC},\tilde{y})$ is a timelike $(j+1)$-plane in $\R^{p,q+1}$.
By construction, the restriction of $\sfb$ to $\spa(\tilde y,\tilde w)$ has signature $(1,1|0)$, and so the restriction of $\sfb$ to $\spa(T_{\sfC},\tilde{y},\tilde{w})$ has signature $(1,j+1|0)$.
Taking the orthogonal, the restriction of $\sfb$ to $\spa(T_{\sfC},\tilde{y},\tilde{w})^{\perp}$ has signature $(p-1,q-j|0)$.
Choose a timelike $(q-j)$-plane $T'$ in $\spa(T_{\sfC},\tilde{y},\tilde{w})^{\perp}$; then $T := \spa(T_{\sfC}, T')$ is a timelike $q$-plane of~$\R^{p,q+1}$, which does \emph{not} contain~$\tilde{y}$.

For any $t\in [0,1]$ we set
$$\tilde{r}(t) := (1-t)\tilde{y} + t\tilde{w} \in \R^{p,q+1} \quad\mathrm{and}\quad r(t) := [\tilde{r}(t)] \in \P(\R^{p,q+1}).$$
Then $t\mapsto r(t)$, for $t\in [0,1)$, is a reparametrization of the geodesic ray $[y,w)$ of~$\H^{p,q}$.

We claim that $\spa(T,\tilde r(t))$ is a timelike $(q+1)$-plane of $\R^{p,q+1}$ for all $t\in [0,1)$.
Indeed, by construction we have $\tilde{r}(t)\in (T')^{\perp}$, and so it is enough to check that $\spa(T_{\sfC},\tilde r(t))$ is timelike for $t\in [0,1)$.
Let $\tilde{v}$ be the unique element of~$T_{\sfC}$ such that $\tilde{v} + \tilde{y} \in T_{\sfC}^{\perp}$.
For any $t\in [0,1)$, consider the vector
$$\tilde{x}_t := (1-t)\tilde{v} + \tilde{r}(t) \in \spa(T_{\sfC},\tilde r(t)).$$
Then $\tilde{x}_t = (1-t)(\tilde{v}+\tilde{y})+ t\tilde{w}$ belongs to $T_{\sfC}^{\perp}$.
Moreover, the (in-)equalities
\begin{align*}
\sfb(\tilde v,\tilde v)<0,\quad \sfb(\tilde y,\tilde y)<0,\quad \sfb(\tilde w,\tilde w)=0,\\
\sfb(\tilde v,\tilde w)=0,\quad \sfb(\tilde y,\tilde v)\leq 0,\quad \sfb(\tilde y,\tilde w)<0,
\end{align*}
yield $\sfb(\tilde{x}_t,\tilde{x}_t)<0$ for all $t\in [0,1)$.
Therefore $\spa(T_{\sfC},\tilde r(t)) = \spa(T_{\sfC},\tilde x_t)$ is timelike, proving that $\spa(T,\tilde r(t))$ is a timelike $(q+1)$-plane of $\R^{p,q+1}$ for $t\in [0,1)$.
It then follows from Proposition~\ref{prop:weakly-sp-gr-is-a-graph} that for any $t\in [0,1)$, the intersection  $\ov M \cap \P(\spa(T,\tilde r(t)))$ is a singleton~$\{y_t\}$.

Note that, taking $t=0$, we have $\ov M \cap \P(\spa(T,\tilde y)) = \{ y\}$.
Since $y \notin \P(T)$, we deduce $\ov M \cap \P(T) = \emptyset$.
In particular, $y_t \notin \P(T)$ for all $t \in [0,1)$.

We claim that
$$\ov M\cap \P(\spa(T,\tilde r(1)))= \{w\}.$$
Indeed, by construction we have $w \in \ov M \cap \P(\spa(T,\tilde r(1)))$.
Moreover, the restriction of $\sfb$ to $\spa(T,\tilde r(1))$ has signature $(0,q|1)$, hence $\di\H^{p,q} \cap \P(\spa(T,\tilde r(1)))$ is reduced to $\{w\}$, and so $\di M\cap \P(\spa(T,\tilde r(1))) = \{w\}$.
On the other hand, by Lemma~\ref{lem:Omega-Lambda-convex}.\eqref{item:Om-Lambda-invisible-dom}, the assumption $M \subset \Om(\L)$ implies that the geodesic ray from any point of~$M$ to~$w$ is spacelike; this geodesic ray therefore cannot be contained in $\P(\spa(T,\tilde r(1)))$, and so $M \cap \P(\spa(T,\tilde r(1))) = \emptyset$.

The $(q+1)$-planes $\spa(T,\tilde r(t))$ of $\R^{p,q+1}$ vary continuously with $t\in [0,1]$, and so does their unique intersection points with~$M$.
Therefore $y_t\to w$ as $t\to 1$.

Choose a sequence $(t_n)\in (0,1)^{\N}$ converging to~$1$.
For any~$n$ we set $y_n := y_{t_n}$.
Let $D\subset M$ be a compact fundamental domain for the action of $\G$ on~$M$.
We then define $(\g_n)_{n\in\N}$ to be a sequence of elements of~$\G$ such that $\g_n\cdot y_n\in D$ for all~$n$.

\medskip

\noindent
$\bullet$ \textbf{Passing to subsequences.}
Up to passing to a subsequence, we may assume that\linebreak $(\g_n\cdot y_n)\in D^{\N}$ converges to some $\sf z\in M$.

Let $d_{\Om}$ be the Hilbert metric on $\Om$ (Section~\ref{subsec:prop-conv-proj}).
Since $y_n\to w\in\partial\Om$, we have $d_{\Om}(y,y_n) \to +\infty$, hence $d_{\Om}(\g_n\cdot y,\g_n\cdot y_n) \to +\infty$ since $\Gamma$ acts on $(\Om,d_{\Om})$ by isometries, and so $d_{\Om}(\g_n\cdot y,z) \to +\infty$.
Therefore, up to passing further to a subsequence, we may assume that $(\g_n\cdot y)\in M^{\N}$ converges to some $\sf{y} \in \ov M\cap \partial \Om=\L$.

Since $\Lambda$ is compact, preserved by $\G$ and $x_i^{\pm}, w\in\L$, up to passing further to a subsequence, we may assume that $(\g_n\cdot x_i^{\pm})\in\Lambda^{\N}$, for $1\leq i\leq j$, and $(\g_n\cdot w)\in\Lambda^{\N}$ converge respectively to some $\sf x_i^{\pm}\in\Lambda$ and $\sf {w}\in\Lambda$.

Consider the points $o_1,\dots,o_j\in\P(T_{\sfC})\subset\H^{p,q}$ from Step~1.
Up to passing to a further subsequence, we may assume that $(\g_n\cdot o_i)_{n\in\N}$ converges in $\overline{\H}^{p,q}$ to some $\sf{o}_i$ for each $1\leq i\leq j$.
We claim that $\sf{o}_i \in \H^{p,q}$.
Indeed, for any $n\in\N$ the points $y_n$ and $o_i$ are in timelike position (as the span of $y_n$ and $T_{\sfC}$ is timelike), and the same holds for $\g_n\cdot y_n$ and $\g_n\cdot o_i$.
Thus $\sf z$ and $\sf{o}_i$ are in timelike or lightlike position.
However $\g_n\cdot o_i\in\overline{\mathscr{C}(\L)}$ for all~$n$.
Thus if $\sf{o}_i\in\di\H^{p,q}$, then $\sf o_i\in\di\mathscr{C}(\L) \subset \di\Om(\L) = \L$ (Lemma~\ref{lem:Omega-Lambda-convex}.\eqref{item:Om-Lambda-in-Hpq}).
However, by Lemma~\ref{lem:Omega-Lambda-convex}.\eqref{item:Om-Lambda-invisible-dom}, the assumption $M \subset \Om(\L)$ implies that $\sf z\in M$ sees any point of~$\L$ in a spacelike direction.
Therefore $\sf{o}_i\notin \di \H^{p,q}$.

\medskip

\noindent
$\bullet$ \textbf{Step 3: $\{\sf x_1^{\pm}, \dots, \sf x_j^{\pm}, \sf {w}, \sf{y}\}$ is a $(j+1)$-crown in~$\L$.}
We have just seen that $\sf o_i\in\H^{p,q}$ for all~$i$.
It follows that $\sf{x}_i^+$ is transverse to $\sf{x}_i^-$, as the projective line between $\sf{x}_i^+$ and $\sf{x}^-_i$ contains $\sf{o}_i\in \H^{p,q}$ and is thus not isotropic.

For any $i\neq i'$ the points $x_i^{\pm}$ and $x_{i'}^{\pm}$ are non-transverse.
The same holds for $\g_n\cdot x_i^{\pm}$ and $\g_n\cdot x_{i'}^{\pm}$ for all~$n$.
Since being non-transverse is a closed condition, by passing to the limit we find that $\sf{x}_i^{\pm}$ and $\sf{x}_{i'}^{\pm}$ are non-transverse for $i\neq i'$.

Similarly, since $\spa(x_1^{\pm},\dots,x_j^{\pm}) \subset w^{\perp}$, we have $\spa(\g_n\cdot x_1^{\pm},\dots,\g_n\cdot x_j^{\pm}) \subset \g_n\cdot w^{\perp}$ for all~$n$, hence $\spa(\sf{x}_1^{\pm},\ldots,\sf{x}_j^{\pm}) \subset \sf{w}^{\perp}$ by passing to the limit.

We claim that
$$\spa(\sf{x}_1^{\pm},\ldots,\sf{x}_j^{\pm}) \subset \sf{y}^{\perp}.$$
Indeed, given any $1\leq i\leq j$, the point $o_i$ sees $y$ in a timelike direction, hence the same holds for $\g_n\cdot o_i$ and $\g_n\cdot y$ for all~$n$, and by passing to the limit we get that $\sf{o}_i$ sees $\sf{y}$ in a timelike or lightlike direction.
Since $\sf{y}\in \L\subset \di\H^{p,q}$, it has to be a lightlike direction, \ie $\sf{o}_i \in \sf{y}^{\perp}$.
Consider respective lifts $\tilde{\sf y},\tilde{\sf x}_i^+,\tilde{\sf x}_i^-\in\tilde\L\subset \R^{p,q+1}$ of $\sf{y},\sf{x}_i^+,\sf{x}_i^-\in\L$.
Since $\sf{o}_i \in [\sf{x}_i^+,\sf{x}_i^-]$, there exists $t_i\in (0,1)$ such that $\sf{o}_i$ lifts to $\tilde{\sf{o}}_i := t_i\tilde{\sf x}_i^+ + (1-t_i)\tilde{\sf x}_i^- \in \R^{p,q+1}$.
Since $\tilde{\L}$ is non-positive we have $\sfb(\tilde{\sf y},\tilde{\sf x}_i^+)\leq 0$ and $\sfb(\tilde{\sf y},\tilde{\sf x}_i^-)\leq 0$, and since $\sf{o}_i \in \sf{y}^{\perp}$ we have $\sfb(\tilde{\sf y},\tilde{\sf{o}}_i) = t_i\,\sfb(\tilde{\sf y},\tilde{\sf x}_i^+) + (1-t_i)\,\sfb(\tilde{\sf y},\tilde{\sf x}_i^-) = 0$.
Therefore $\sfb(\tilde{\sf y},\tilde{\sf x}_i^+) = \sfb(\tilde{\sf y},\tilde{\sf x}_i^-) = 0$, which proves the claim. 

It remains to show that $\sf{w}$ is transverse to $\sf{y}$.
For this, lift each $y_n\in M$ to $\tilde{y}_n\in\tilde{M}$.
We have seen in Step~2 that $y_n\in\P(\spa(\tilde{r}(t_n),T))$ does not belong to $\P(T)$.
Therefore $\P(\spa(T,\tilde{y}_n)) = \P(\spa(T,\tilde{r}(t_n)))$ is a projective hyperplane of $\P(\spa(T,\tilde{w},\tilde{y}))$ which separates the properly convex subset $\ov{\mathscr{C}(\L)}\cap\P(\spa(T,\tilde{w},\tilde{y}))$ of $\P(\spa(T,\tilde{w},\tilde{y}))$ into two connected components, one containing $w$ and the other one containing~$y$.
(Recall that $\ov{\mathscr{C}(\L)}$ is properly convex by Remark~\ref{rem:C-Lambda}.)

Up to passing again to a subsequence, we may assume that $(\g_n\cdot\spa(T,\tilde y_n))_{n\in\N}$ (\resp $(\g_n\cdot\nolinebreak\spa(T,\tilde w,\tilde y))_{n\in\N}$) converges to some element $\sf{T}_{\sf z}$ (\resp $\sf{T}_{\sf w,\sf y}$) in the (compact) Grassmannian of $(q+1)$-planes (\resp $(q+2)$-planes) of~$\R^{p,q+1}$, with $\sf{T}_{\sf z} \subset \sf{T}_{\sf w,\sf y}$ and $\sf{z}\in\P(\sf{T}_{\sf{z}})$ and $\sf{w},\sf{y}\in\P(\sf{T}_{\sf w,\sf y})$.
We note that $\sf{w},\sf{y}\notin  \P(\sf{T}_{\sf z})$, as both are in spacelike position to $\sf{z}\in M$, while $\sf{T}_{\sf z}$ is a limit of timelike $(q+1)$-planes, hence negative semi-definite with respect to $\sfb$ and therefore cannot contain spacelike geodesics.
Therefore, translating by~$\gamma_n$ and passing to the limit in the previous paragraph, we obtain that $\P(\sf{T}_{\sf{z}})$ is a projective hyperplane of $\P(\sf{T}_{\sf w,\sf y})$ which separates the properly convex subset $\ov{\mathscr{C}(\L)}\cap\P(\sf{T}_{\sf w,\sf y})$ of $\P(\sf{T}_{\sf w,\sf y})$ into two connected components, one containing $\sf w$ and the other one containing~$\sf y$.

Suppose by contradiction that the two points $\sf w,\sf y \in \L \subset \di\H^{p,q}$ are not transverse.
Then the segment $[\sf w,\sf y]$ in $\ov{\Om(\L)}$ is contained in $\di\Om(\L) = \ov{\Om(\L)} \cap \di\H^{p,q}$, hence in~$\L$ (Lemma~\ref{lem:Omega-Lambda-convex}.\eqref{item:Om-Lambda-in-Hpq}).
Let $\sf u\in \L$ be the intersection point $[\sf w,\sf y]\cap \P(\sf{T}_{\sf z})$, which exists and is unique because $\sf w$ and $\sf y$ lie in different components of $(\ov{\mathscr{C}(\L)}\cap\P(\sf{T}_{\sf w,\sf y})) \smallsetminus \P(\sf{T}_{\sf z})$.
Then $\sf u\in \L\cap \P(\sf{T}_{\sf z})$ and $\sf z\in M$ are in spacelike position (Lemma~\ref{lem:Omega-Lambda-convex}.\eqref{item:Om-Lambda-invisible-dom}).
However $\sf{T}_{\sf z}$ is the limit of timelike planes and therefore does not contain a spacelike geodesic: contradiction.
Therefore $\sf{w}$ is transverse to $\sf{y}$.

This shows that $\{\sf x_1^{\pm}, \dots, \sf x_j^{\pm}, \sf {w}, \sf{y}\}$ is a $(j+1)$-crown in~$\L$, as desired.
\end{proof}

\subsection{Convex hulls of $j$-crowns} \label{subsec:foliate-crown}

We shall use the following terminology.

\begin{defn} \label{def:crown-conv-hull}
Let $\sfC = \{ x_1^{\pm},\dots,x_j^{\pm}\}$ be a $j$-crown in $\di\H^{p,q}$.
\begin{itemize}
  \item An \emph{open convex hull} of $\sfC$ in $\H^{p,q}$ is an open projective simplex of $\P(\spa(\sfC))$ contained in $\H^{p,q}$, with set of vertices~$\sfC$; in other words, it is a connected component of $\P(\spa(\sfC)) \smallsetminus \bigcup_{i=1}^j ((x_i^+)^{\perp} \cup (x_i^-)^{\perp})$ contained in~$\H^{p,q}$.
   \item A \emph{basis adapted to~$\sfC$} is a basis $(e'_1,\dots,e'_{2j})$ of $\spa(\sfC)$ such that $x_i^+ = [e'_i]$ and $x_i^- = [e'_{j+i}]$ for all $1\leq i\leq j$ and such that $\sfb(e'_i,e'_{i'}) = -\delta_{j+i,i'}$ for all $1\leq i\leq i'\leq 2j$.
\end{itemize}
\end{defn}

A basis adapted to~$\sfC$ always exists since the restriction of $\sfb$ to $\spa(\sfC)$ has signature $(j,j|0)$.
It is unique up to the action of $(\R^*)^j$ given by
$$(t_1,\dots,t_j) \cdot (e'_1,\dots,e'_{2j}) = (t_1e'_1,\dots,t_je'_j,t_1^{-1}e'_{j+1},\dots,t_j^{-1}e'_{2j}).$$

\begin{remark} \label{rem:basis-adapted-C}
Let $(e'_1,\dots,e'_{2j})$ be a basis adapted to~$\sfC$.
Then
\begin{itemize}
  \item $\sfb(e'_i,e'_{i'})\leq 0$ for all $1\leq i,i'\leq 2j$; this shows that $\sfC$ is a non-positive subset of $\di\H^{p,q}$ (Definition~\ref{def:nonpos-neg});
  \item $(e'_1,\dots,e'_{2j})$ defines a unique open convex hull $\mathcal{O}$ of $\sfC$ in $\H^{p,q}$, namely the projectivization of the $\R_{>0}$-span of $(e'_1,\dots,e'_{2j})$; the set $\di\mathcal{O} = \ov{\mathcal{O}}\cap\di\H^{p,q}$ is a non-degenerate non-positive $(j-1)$-sphere in $\di\H^{p,q} \cap \P(\spa(\sfC)) \simeq \di\H^{j,j-1}$ as in Example~\ref{ex:non-pos-sphere}.(ii) with $T = \spa(e'_1+e'_{j+1},\dots,e'_j+e'_{2j})$; in particular, $\di\mathcal{O} = \ov{\mathcal{O}}\cap\di\H^{p,q}$ is a union of $2^j$ closed faces of~$\ov{\mathcal{O}}$ of dimension $j-1$, each determined by $j$ vertices of the form $x_1^{\varepsilon_1},\dots,x_j^{\varepsilon_j}$ for $\varepsilon_1,\dots,\varepsilon_j\in\{\pm\}$;
  \item if $t_1,\dots,t_j\in\R^*$ all have the same sign, then $(t_1,\dots,t_j) \cdot (e'_1,\dots,e'_{2j})$ defines the same open convex hull $\mathcal{O}$ as $(e'_1,\dots,e'_{2j})$;
  \item by replacing $(e'_1,\dots,e'_{2j})$ with $(t_1,\dots,t_j) \cdot (e'_1,\dots,e'_{2j})$ for $t_1,\dots,t_j\in\R^*$ of varying signs, we see that there are $2^{j-1}$ possible open convex hulls of~$\sfC$ in $\H^{p,q}$, and $2^{j-1}$ possible corresponding non-degenerate non-positive $(j-1)$-spheres of $\di\H^{p,q}$.
\end{itemize}
\end{remark}

\begin{lem} \label{lem:crown-conv-hull-Om-Lambda}
Let $\L$ be a non-degenerate non-positive $(p-1)$-sphere in $\di\H^{p,q}$ and $\sfC = \{ x_1^{\pm},\dots,x_j^{\pm}\}$ a $j$-crown in~$\L$.
Then there is a unique open convex hull of $\sfC$ in $\H^{p,q}$ that is contained in $\ov{\Om(\L)}$; we shall denote it by $\mathcal{O}_{\L}(\sfC)$.
We have $\Om(\L) \cap \P(\spa(\sfC)) \subset \mathcal{O}_{\L}(\sfC)$, with equality if and only if $\sfC$ is \emph{not} a boundary $j$-crown in~$\L$.
\end{lem}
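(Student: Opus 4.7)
The plan is to reduce everything to the choice of an adapted basis whose elements are \emph{lifts} of the crown vertices inside a non-positive lift $\tilde\L$ of~$\L$. Since $\L$ is non-positive, fix such a lift $\tilde\L\subset\R^{p,q+1}$. For each $i$, pick lifts $\tilde x_i^\pm\in\tilde\L$ of $x_i^\pm$; because $x_i^+$ and $x_i^-$ are transverse and $\sfb(\tilde x_i^+,\tilde x_i^-)\leq 0$, we have $\sfb(\tilde x_i^+,\tilde x_i^-)<0$, and after rescaling the lifts by positive scalars (preserving membership in $\tilde\L$ and the crown relations) we may arrange $\sfb(\tilde x_i^+,\tilde x_i^-)=-1$. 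For $i\neq i'$ and $\varepsilon,\varepsilon'\in\{+,-\}$ the vertices $x_i^\varepsilon,x_{i'}^{\varepsilon'}$ are non-transverse, so $\sfb(\tilde x_i^\varepsilon,\tilde x_{i'}^{\varepsilon'})=0$. Thus $e'_i:=\tilde x_i^+$, $e'_{j+i}:=\tilde x_i^-$ form a basis adapted to $\sfC$, and I take $\mathcal{O}_\L(\sfC)$ to be the open convex hull defined by this basis as in Remark~\ref{rem:basis-adapted-C}, i.e.\ the projectivization of the $\R_{>0}$-span of $\tilde x_1^+,\dots,\tilde x_j^-$.

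Next I will verify the two containments. For $\mathcal{O}_\L(\sfC)\subset\ov{\Om(\L)}$: any lift $\tilde o=\sum a_i\tilde x_i^++\sum b_i\tilde x_i^-$ with $a_i,b_i>0$ satisfies $\sfb(\tilde o,\tilde w)=\sum a_i\sfb(\tilde x_i^+,\tilde w)+\sum b_i\sfb(\tilde x_i^-,\tilde w)\leq 0$ for every $\tilde w\in\tilde\L$ by non-positivity of $\tilde\L$, which by Notation~\ref{not:Omega-Lambda} places $[\tilde o]$ in $\ov{\Om(\L)}$. For $\Om(\L)\cap\P(\spa(\sfC))\subset\mathcal{O}_\L(\sfC)$: a point $[\tilde o]$ in this intersection has a lift $\tilde o=\sum a_i\tilde x_i^++\sum b_i\tilde x_i^-$, and testing against $\tilde x_k^+\in\tilde\L$ yields $\sfb(\tilde o,\tilde x_k^+)=-b_k<0$ (by $\Om(\L)$) and all cross-terms vanish; hence $b_k>0$, and symmetrically $a_k>0$, so $[\tilde o]\in\mathcal{O}_\L(\sfC)$.

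For uniqueness of the open convex hull inside $\ov{\Om(\L)}$: by Remark~\ref{rem:basis-adapted-C} the $2^{j-1}$ open convex hulls of $\sfC$ in $\H^{p,q}$ are obtained from the adapted basis $(\tilde x_1^+,\dots,\tilde x_j^-)$ by flipping the pair $(\tilde x_i^+,\tilde x_i^-)\mapsto(-\tilde x_i^+,-\tilde x_i^-)$ on some subset $I\subset\{1,\dots,j\}$, modulo the global flip $I\leftrightarrow I^c$. Suppose such a flipped hull is in $\ov{\Om(\L)}$ with $I\notin\{\emptyset,\{1,\dots,j\}\}$; pick $k\in I$ and any $i_0\notin I$, and take the point $\tilde o$ whose coordinates in the flipped basis are all $1$; then pairing $\tilde o$ with $\tilde x_k^+\in\tilde\L$ gives $+1>0$, contradicting $\tilde o\in\tilde{\ov\Om}(\tilde\L)$. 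Therefore the hull built from the lifts in $\tilde\L$ is the only one contained in $\ov{\Om(\L)}$.

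Finally, the equality criterion follows from the same pairing computation: for $[\tilde o]\in\mathcal{O}_\L(\sfC)$ with $\tilde o=\sum a_i\tilde x_i^++\sum b_i\tilde x_i^-$ and $a_i,b_i>0$, and for $\tilde w\in\tilde\L$, the number $\sfb(\tilde o,\tilde w)=\sum a_i\sfb(\tilde x_i^+,\tilde w)+\sum b_i\sfb(\tilde x_i^-,\tilde w)$ is a sum of non-positive terms, which vanishes iff all terms vanish, iff $w\in\sfC^\perp$, iff $\sfC\subset w^\perp$. Hence if $\sfC$ is not a boundary $j$-crown then $\sfb(\tilde o,\tilde w)<0$ for every $\tilde w\in\tilde\L$, giving $[\tilde o]\in\Om(\L)$ and thus $\mathcal{O}_\L(\sfC)\subset\Om(\L)\cap\P(\spa(\sfC))$; whereas if $\sfC$ is a boundary $j$-crown with witness $w$, then $\sfb(v,\tilde w)=0$ for every $v\in\spa(\sfC)$, so $\Om(\L)\cap\P(\spa(\sfC))=\emptyset\subsetneq\mathcal{O}_\L(\sfC)$. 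I do not anticipate a real obstacle here; the only subtlety is bookkeeping the sign flips carefully so as to confirm that precisely one of the $2^{j-1}$ open hulls meets $\ov{\Om(\L)}$.
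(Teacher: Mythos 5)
Your argument is essentially the paper's: lift the crown vertices into a non-positive lift $\tilde\L$ of~$\L$, take $\mathcal{O}_\L(\sfC)$ to be the projectivization of the $\R_{>0}$-span of those lifts, and read off everything from the diagonal computation $\sfb\big(\textstyle\sum a_i\tilde x_i^+ + b_i\tilde x_i^-,\ \tilde x_k^\pm\big) = -a_k$ or $-b_k$ combined with the non-positivity of $\tilde\L$. That matches the published proof in both the key device and the key calculations.

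One step needs a one-sentence repair. In your uniqueness argument you take $[\tilde o]$ in a flipped hull (with $I\notin\{\emptyset,\{1,\dots,j\}\}$) and observe $\sfb(\tilde o,\tilde x_k^+)=+1>0$, but then you conclude only that this contradicts $\tilde o\in\tilde{\ov\Om}(\tilde\L)$. The hypothesis $[\tilde o]\in\ov{\Om(\L)}$ does not give you $\tilde o\in\tilde{\ov\Om}(\tilde\L)$; it gives you that one of $\tilde o$, $-\tilde o$ lies in $\tilde{\ov\Om}(\tilde\L)$ (the projective image of a cone cannot distinguish the two antipodal lifts). You declared $i_0\notin I$ precisely to handle the other lift but never used it: pairing $\tilde o$ with $\tilde x_{i_0}^+\in\tilde\L$ gives $-1<0$, so $-\tilde o$ fails the non-positivity test at $\tilde x_{i_0}^+$. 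With both pairings in hand neither sign of $\tilde o$ lies in $\tilde{\ov\Om}(\tilde\L)$, and the contradiction is complete. (The paper avoids the case split by computing the signs of the coefficients $s_i,t_i$ of an arbitrary $v\in\spa(\sfC)$ directly and deducing they must all have a common sign, which is a slightly more economical packaging of the same idea.)
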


\begin{proof}
As in Notation~\ref{not:Omega-Lambda}, let $\tilde{\L}$ be a subset of the non-zero isotropic vectors of $\R^{p,q+1}$ whose projection to $\di\hat\H^{p,q} \subset (\R^{p,q+1}\smallsetminus\{0\})/\R_{>0}$ is a non-positive $(p-1)$-sphere projecting onto~$\L$.
We can lift each point $x_i^{\pm}\in\sfC$ to a point $\tilde{x}_i^{\pm}\in\tilde{\L}$.
We then have $\sfb(\tilde{x}_i^-,\tilde{x}_i^+) < 0$ for all $1\leq i\leq j$ and $\sfb(\tilde{x}_i^{\pm},\tilde{x}_{i'}^{\pm}) = 0$ for all $1\leq i<i'\leq j$, as well as $\sfb(\tilde{x}_i^{\pm},\tilde{w}) \leq 0$ for all $1\leq i\leq j$ and $\tilde{w}\in\tilde{\L}$.

Consider a vector $v \in \spa(\sfC)$.
We can write $v = \sum_{i=1}^j (s_i \tilde{x}_i^- + t_i \tilde{x}_i^+)$ for some $s_i,t_i\in\R$.
For any $\tilde{w}\in\tilde{\L}$ we have $\sfb(v,\tilde{w}) = \sum_{i=1}^j (s_i\,\sfb(\tilde{x}_i^-,\tilde{w}) + t_i\,\sfb(\tilde{x}_i^+,\tilde{w}))$.
In particular, $\sfb(v,\tilde{x}_i^+) = s_i\,\sfb(\tilde{x}_i^-,\tilde{x}_i^+)$ and $\sfb(v,\tilde{x}_i^-) = t_i\,\sfb(\tilde{x}_i^-,\tilde{x}_i^+)$ for all~$i$.
Therefore we have $v \in \tilde{\ov{\Om}}(\tilde{\L})$ if and only if $s_i,t_i\geq 0$ for all~$i$, and if $v \in \tilde{\Om}(\tilde{\L})$ then $s_i,t_i>0$ for all~$i$.
This shows that the projectivization $\mathcal{O}_{\L}(\sfC)$ of the $\R_{>0}$-span of $\tilde{x}_1^{\pm},\dots,\tilde{x}_j^{\pm}$ is the unique open convex hull of $\sfC$ contained in $\ov{\Om(\L)}$, and that $\Om(\L) \cap \P(\spa(\sfC)) \subset \mathcal{O}_{\L}(\sfC)$.

By Lemma~\ref{lem:Omega-Lambda-convex}.\eqref{item:Om-Lambda-invisible-dom}, we have $\mathcal{O}_{\L}(\sfC) \cap \partial\Om(\L) \neq \emptyset$ if and only if there exists $w\in\L$ such that $\mathcal{O}_{\L}(\sfC) \cap w^{\perp} \neq \emptyset$.
Consider $\tilde{w}\in\tilde{\L}$ lifting~$w$.
Then $\mathcal{O}_{\L}(\sfC) \cap w^{\perp} \neq \emptyset$ if and only if there exist $s_i,t_i>0$ such that $v := \sum_{i=1}^j (s_i \tilde{x}_i^- + t_i \tilde{x}_i^+)$ satisfies $\sfb(v,\tilde{w}) = 0$.
Since $\sfb(v,\tilde{w}) = \sum_{i=1}^j (s_i\,\sfb(\tilde{x}_i^-,\tilde{w}) + t_i\,\sfb(\tilde{x}_i^+,\tilde{w}))$ where $\sfb(\tilde{x}_i^{\pm},\tilde{w})\leq 0$ for all~$i$, we deduce that $\mathcal{O}_{\L}(\sfC) \cap w^{\perp} \neq\nolinebreak \emptyset$ if and only if $\sfb(\tilde{x}_i^{\pm},\tilde{w})=0$ for all~$i$, or in other words if and only if $\sfC \subset w^{\perp}$.
Thus $\mathcal{O}_{\L}(\sfC) \cap\nolinebreak \partial\Om(\L) \neq \emptyset$ if and only if $\sfC$ is a boundary $j$-crown in~$\L$.
\end{proof}

Let $\rm{stab}_{\PO(p,q+1)}(\spa(\sfC))$ be the stabilizer of $\spa(\sfC)$ in $\PO(p,q+1)$.
Let $G_{\sfC} \simeq \PO(j,j)$ be the subgroup of $\rm{stab}_{\PO(p,q+1)}(\spa(\sfC))$ centralizing $\spa(\sfC)^{\perp}$, and let $A_{\sfC}$ be the identity component of the stabilizer of $\sfC$ in $G_{\sfC}$.
Then $A_{\sfC}$ is a closed subgroup of $\PO(p,q+1)$ isomorphic to~$\R^j$.
Indeed, in a basis adapted to~$\sfC$ (Definition~\ref{def:crown-conv-hull}), the elements of $A_{\sfC}$ restricted to $\spa(\sfC)$ are diagonal of the form
\begin{equation} \label{eqn:A-C}
{\boldsymbol\exp}(a_1,\ldots,a_j):=\mathrm{diag}(\exp(a_1),\ldots,\exp(a_j),\exp(-a_1),\ldots,\exp(-a_j))
\end{equation}
for $a_1,\ldots,a_j\in\R$; the map ${\boldsymbol\exp}$ defines an isomorphism between $\R^j$ and~$A_{\sfC}$.
Note that this isomorphism depends only on~$\sfC$ (not on the chosen basis adapted to~$\sfC$).

We consider the foliation of $\P(\spa(\sfC)) \smallsetminus \bigcup_{i=1}^j ((x_i^+)^{\perp} \cup (x_i^-)^{\perp}$) by $A_{\sfC}$-orbits.
It is not difficult to see that the stabilizer in~$A_{\sfC}$ of any $x \in \P(\spa(\sfC)) \smallsetminus \bigcup_{i=1}^j ((x_i^+)^{\perp} \cup (x_i^-)^{\perp}$ is trivial, hence we can identify any leaf $F = A_{\sfC}\cdot x$ of the foliation with~$A_{\sfC}$.
The following lemma is designed to clarify the context; it will not be used anywhere in the sequel.

\begin{lem} \label{lem:F-spacelike}
Let $\sfC = \{ x_1^{\pm},\dots,x_j^{\pm}\}$ be a $j$-crown in $\di\H^{p,q}$ and let $(e'_1,\ldots,e'_{2j})$ be a basis adapted to~$\sfC$, defining an open convex hull $\mathcal{O}$ of $\sfC$ in $\H^{p,q}$ (Definition~\ref{def:crown-conv-hull} and Remark~\ref{rem:basis-adapted-C}).
Then
\begin{enumerate}
  \item\label{item:F-spacelike-1} any $A_{\sfC}$-orbit $F$ in~$\mathcal{O}$ is a $j$-dimensional complete spacelike $C^{\infty}$ submanifold of $\H^{p,q} \cap \P(\spa(\sfC)) \simeq \H^{j,j-1}$, with boundary at infinity $\di F = \di\mathcal{O} = \ov{\mathcal{O}}\cap\di\H^{p,q}$; in particular, the convex hull of $F$ in~$\mathcal{O}$ is all of~$\mathcal{O}$;
  \item\label{item:F-spacelike-2} there is exactly one such $A_{\sfC}$-orbit which is maximal in the sense of Definition~\ref{def:max-submfd}, namely the one passing through $[e'_1 + \dots + e'_{2j}]$;
  \item\label{item:F-spacelike-3} in the splitting $\hat\H^{p,q} \cap \spa(\sfC) \simeq \hat\H^{j,j-1} \simeq \bB^j \times \bS^{j-1}$ from Proposition~\ref{p.doublecoverasproduct} associated to the $\sfb$-orthogonal basis $(e_1,\dots,e_{2j}) := (e'_1 - e'_{j+1}, \dots, e'_j - e'_{2j}, e'_1 + e'_{j+1}, \dots, e'_j + e'_{2j})/\sqrt{2}$ of $\spa(\sfC)$ which is standard for $T := \spa(e_{j+1},\dots,e_{2j})$, the $A_{\sfC}$-orbit of $\sum_{i=1}^j \tau_i e_{j+i}$ (where $\tau_1,\dots,\tau_j\in\R$ satisfy $\sum_{i=1}^j \tau_i^2 = 1$) is the graph of the $1$-Lipschitz map $f : \bB^j\to\bS^{j-1}$ given by
$$f(y_0, \dots, y_j) = \Big(\sqrt{\tau_1^2\,y_0^2 + y_1^2}, \dots, \sqrt{\tau_j^2\,y_0^2 + y_j^2}\Big).$$
\end{enumerate}
\end{lem}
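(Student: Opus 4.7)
The plan is to work in explicit coordinates and read off all three statements from direct computation. Taking $e_i := (e'_i - e'_{j+i})/\sqrt{2}$ and $e_{j+i} := (e'_i + e'_{j+i})/\sqrt{2}$ for $1 \le i \le j$, a direct check gives $\sfb(e_i,e_i)=1$, $\sfb(e_{j+i},e_{j+i})=-1$ and all other pairings zero, so $T := \spa(e_{j+1},\ldots,e_{2j})$ is a timelike $j$-plane and the basis is standard for~$T$ in the sense of Section~2.3. Any point of $\mathcal{O}$ has a representative $v = \sum_i(\sigma_i e'_i + \tau_i e'_{j+i})$ with $\sigma_i,\tau_i > 0$, and its $A_{\sfC}$-orbit is exactly the set of such representatives with fixed products $\sigma_i\tau_i = c_i > 0$. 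Writing $\sigma_i = \sqrt{c_i}\,e^{a_i}$, $\tau_i = \sqrt{c_i}\,e^{-a_i}$, setting $\alpha_i := \sqrt{2c_i}$, and converting to the new basis yields the parameterization
\[
v(a_1,\ldots,a_j) \,=\, \sum_{i=1}^j \alpha_i\bigl(\sinh(a_i)\,e_i + \cosh(a_i)\,e_{j+i}\bigr),
\]
with $\sfb(v(a),v(a)) = -\sum_i \alpha_i^2$. Every $A_{\sfC}$-orbit in $\mathcal{O}$ arises this way, and after rescaling (transparent in $\P(\spa(\sfC))$) one may normalize $\sum_i \alpha_i^2 = 1$, in which case $\alpha_i$ equals the $\tau_i$ of part~(3) and the lift $\{v(a) : a \in \R^j\}$ lies in $\hat\H^{j,j-1}$.

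Part~(1) follows quickly: the tangent vectors $\partial_i v = \alpha_i(\cosh(a_i)e_i + \sinh(a_i)e_{j+i})$ satisfy $\sfb(\partial_i v, \partial_k v) = \delta_{ik}\alpha_i^2$, so the induced metric on the orbit is the flat positive definite metric $\sum_i \alpha_i^2\,da_i^2$ on~$\R^j$, proving that $F$ is a $j$-dimensional complete spacelike $C^{\infty}$ submanifold. The ideal boundary is $\di F = \di\mathcal{O}$: the inclusion $\di F \subset \di\mathcal{O}$ is automatic from $F \subset \mathcal{O}$, while the reverse is obtained by sending the $|a_i|$ to infinity at different rates and rescaling, which collapses the representative onto individual vertices $x_i^\pm$ when one $|a_i|$ dominates and, more generally, onto each face of $\overline{\mathcal{O}}$. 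Since $\sfC \subset \di F$ and $\mathcal{O}$ is the open convex hull of~$\sfC$, the convex hull of $F$ in $\mathcal{O}$ is all of~$\mathcal{O}$. Part~(3) is then direct substitution into the formula $\Psi_T$ of Proposition~2.1: one finds $r_T(v(a))^2 = \sum_i \tau_i^2\cosh^2(a_i)$, hence $y_0 = 1/r_T$ and $y_i = \tau_i\sinh(a_i)/r_T$, and the identity $\tau_i^2 y_0^2 + y_i^2 = \tau_i^2 \cosh^2(a_i)/r_T^2 = (v_{j+i}/r_T)^2$ matches the second-factor coordinate exactly.

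Part~(2) is the only substantive computation. Using $\nabla^{\hat\H}_X Y = \nabla^{\R}_X Y - \sfb(X,Y)\,v$ for the Levi-Civita connection of $\hat\H^{j,j-1} \subset \R^{j,j}$, I would compute $\nabla^{\R}_{\partial_i}\partial_k = \delta_{ik} v_i$, where $v_i := \alpha_i(\sinh(a_i)e_i + \cosh(a_i)e_{j+i})$ so that $v = \sum_i v_i$. A short verification using $\sfb(v_i,v_k) = -\delta_{ik}\alpha_i^2$ and $\sfb(v_i,\partial_k v) = 0$ shows that $v_i^T := v_i - \alpha_i^2 v$ already lies in the normal bundle to $F$ inside $T\hat\H^{j,j-1}$, so $\mathrm{II}(\partial_i,\partial_k) = \delta_{ik}(v_i - \alpha_i^2 v)$. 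In the orthonormal frame $\tilde\partial_i := \partial_i/\alpha_i$ the mean curvature simplifies, using $v = \sum_i v_i$, to
\[
H \,=\, \frac{1}{j}\sum_{i=1}^j \frac{v_i - \alpha_i^2 v}{\alpha_i^2} \,=\, \frac{1}{j}\sum_{i=1}^j \Bigl(\frac{1}{\alpha_i^2} - j\Bigr) v_i.
\]
The vectors $v_i$ live in the pairwise $\sfb$-orthogonal $2$-planes $\spa(e_i,e_{j+i})$, hence are linearly independent, so $H = 0$ forces $\alpha_i^2 = 1/j$ for every~$i$. This specifies a unique orbit, passing through $v(0) = (1/\sqrt{j})\sum_i e_{j+i} = (1/\sqrt{2j})\sum_{k=1}^{2j} e'_k$, which projectivizes to $[e'_1 + \cdots + e'_{2j}]$.

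The one mildly delicate step is the identification $\di F = \di\mathcal{O}$ in part~(1), which requires tracking how the rescaled $v(a)$ behaves as different $|a_i|$ grow at different rates; this is combinatorial bookkeeping on the simplex $\overline{\mathcal{O}}$ but must be done carefully to recover every face. The pseudo-Riemannian calculations in part~(2) are routine once the decomposition $v = \sum_i v_i$ into $\sfb$-orthogonal summands is recognized.
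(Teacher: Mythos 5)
Your proof is correct and follows essentially the same route as the paper's, just worked entirely in the orthogonal basis $(e_1,\dots,e_{2j})$ with the explicit $\sinh$--$\cosh$ parameterization rather than in the adapted basis $(e'_1,\dots,e'_{2j})$; your $v_i$ is exactly the paper's $u_i$, and the observation that $v_i-\alpha_i^2 v$ lies directly in the normal bundle reproduces the paper's projection argument with slightly less bookkeeping. The final criterion $\alpha_1^2=\dots=\alpha_j^2=1/j$ matches the paper's condition $\sfb(u_1,u_1)=\dots=\sfb(u_j,u_j)$ under the dictionary $\sfb(u_i,u_i)=-\alpha_i^2$.
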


In the case $j=p=2$, the maximal $A_{\sfC}$-orbit in~\eqref{item:F-spacelike-2} is called in \cite{lt} a \emph{Barbot surface}.
See Figure~\ref{fig:A_Corbit} in Section~\ref{subsec:weakly-spacelike-graphs} for an illustration.

\begin{proof}
\eqref{item:F-spacelike-1} Let $F$ be an $A_{\sfC}$-orbit in~$\mathcal{O}$.
One easily checks that $F$ is a $j$-dimensional $C^{\infty}$ embedded submanifold of $\H^{p,q}$.
Fix $x\in F$ and let us check that the restriction of the metric $\sfg$ to the tangent space $T_xF$ is positive definite.
Since by construction $\mathcal{O}$ is the projectivization of the $\R_{>0}$-span of $(e'_1,\dots,e'_{2j})$, we can lift $x$ to a point of the double cover $\hat\H^{p,q} = \{ v\in\nolinebreak\R^{p,q+1}\,|\, \sfb(v,v)=-1\}$ of the form $\hat{x} = \sum_{i=1}^{2j} v_i e'_i$ with $v_i>0$ for all~$i$.
It is sufficient to check that the restriction of the metric $\hat{\sfg}$ to the tangent space $T_{\hat{x}}\hat\H^{p,q}$ is positive definite.
Recall that $T_{\hat{x}}\hat\H^{p,q}$ identifies with the orthogonal ${\hat{x}}^{\perp}$ of $\hat{x}$ in~$\R^{p,q+1}$, and the restriction of $\hat{\sfg}$ to $T_{\hat{x}}\hat\H^{p,q}$ with the restriction of $\sfb$ to ${\hat{x}}^{\perp}$ (see Section~\ref{subsec:Hpq}).
For any $a = (a_1,\ldots,a_j)\in \R^j\smallsetminus\{0\}$, the tangent vector
\begin{equation} \label{eqn:v-a}
\sf{v}_a := \frac{d}{dt}\Big|_{t=0} \, {\boldsymbol\exp}(ta_1,\ldots,ta_j)\cdot \hat{x} \in T_{\hat{x}}\hat\H^{p,q}
\end{equation}
(see \eqref{eqn:A-C}) is given by $\sf{v}_a = \sum_{i=1}^j a_i (v_i\,e'_i - v_{j+i}\,e'_{j+i})$, hence $\sfb(\sf{v}_a,\sf{v}_a)= 2\sum_{i=1}^j a_i^2\,v_i\,v_{j+i} >\nolinebreak 0$.
This shows that the restriction of $\sfg$ to $T_xF$ is positive definite.
This holds for any $x\in F$, hence $F$ is spacelike.

Since $F$ is a closed subset of~$\mathcal{O}$, we have $\di F \subset \di\mathcal{O}$.
Let us check the reverse inclusion.
As in Remark~\ref{rem:basis-adapted-C}, the boundary at infinity $\di\mathcal{O} = \ov{\mathcal{O}}\cap\di\H^{p,q}$ is a union of $2^j$ closed faces of~$\ov{\mathcal{O}}$ of dimension $j-1$, each determined by $j$ vertices of the form $x_1^{\varepsilon_1},\dots,x_j^{\varepsilon_j}$ for $\varepsilon_1,\dots,\varepsilon_j\in\{\pm\}$.
Thus any point $z$ of $\di\mathcal{O}$ lifts to a nonzero vector of $\R^{p,q+1}$ of the form $\sum_{i\in I^+} z_i e'_i + \sum_{i\in I^-} z_i e'_{j+i}$ where $I^+,I^-$ are disjoint subsets of $\{1,\dots,j\}$ and where $z_i>0$ for all $i\in I^+\cup I^-$.
Let $x$ be any point of~$F$, lifting to $\hat{x} = \sum_{i=1}^{2j} v_i e'_i$ with $v_i>0$ for all~$i$.
Setting $a_{i,t} := \log(tz_i/ v_i)$ (\resp $-\log(tz_i/v_i)$) for all $i\in I^+$ (\resp $I^-$) and $a_{i,t} := 0$ for all $i\in\{ 1,\dots,j\}\smallsetminus (I^+\cup I^-)$, we then have ${\boldsymbol\exp}(a_{1,t},\dots,a_{j,t}) \cdot x \to z$ as $t\to +\infty$.
This shows that $\di F = \di\mathcal{O}$.

\eqref{item:F-spacelike-2} Let us compute the mean curvature of an $A_{\sfC}$-orbit $F$ in~$\mathcal{O}$.
Fix $x\in F$ and lift it to a point of $\hat\H^{p,q}$ of the form $\hat{x} = \sum_{i=1}^{2j} v_i e'_i$ with $v_i>0$ for all $1\leq i\leq 2j$.
For each $1\leq i\leq j$, we set $u_i := v_i e'_i + v_{j+i} e'_{j+i} \in \R^{p,q+1}$ (so that $\hat{x} = \sum_{i=1}^j u_i$) and $a^{(i)} := (0,\dots,0,1,0,\dots,0) \in \R^j$, where $1$ is at the $i$-th position.
Using notation \eqref{eqn:v-a}, we then have $\sf{v}_{a^{(i)}}= v_i e'_i - v_{j+i} e'_{j+i}$, the $\sf{v}_{a^{(i)}}$ are $\sfb$-orthogonal to each other, and $\sfb(\sf{v}_{a^{(i)}},\sf{v}_{a^{(i)}})=|\sfb(u_i,u_i)|$ for all~$i$.
Thus the $j$-tuple $(\sf{v}_{a^{(1)}}/\sqrt{|\sfb(u_1,u_1)|},\dots,\sf{v}_{a^{(j)}}/\sqrt{\sfb(u_j,u_j)})$ is an orthonormal basis of $T_{\hat{x}}(A_{\sfC}\cdot\hat{x})$ in $T_{\hat{x}}\hat\H^{p,q} \simeq {\hat{x}}^{\perp}$, and so the mean curvature of $A_{\sfC}\cdot\hat{x}$ at~$\hat{x}$ (which is the same as the mean curvature of $F = A_{\sfC}\cdot x$ at~$x$) is given by
$$H(\hat{x}) = \sum_{i=1}^j \frac{1}{j\,|\sfb(u_i,u_i)|} \, \mathrm{II}(\sf{v}_{a^{(i)}},\sf{v}_{a^{(i)}})$$
(see Section~\ref{subsec:max-submfd}).
Since the Levi-Civita connection of $\hat{\H}^{p,q}$ is given by the orthogonal projection with respect to $\sfb$ of the standard connection on $\R^{p+q+1}$, for each $1\leq i\leq j$ the vector $\mathrm{II}(\sf{v}_{a^{(i)}},\sf{v}_{a^{(i)}}) \in T_{\hat{x}}\hat\H^{p,q} \simeq {\hat{x}}^{\perp}$ is the orthogonal projection to $(T_{\hat{x}}(A_{\sfC}\cdot\hat{x}))^{\perp} \cap {\hat{x}}^{\perp}$ of
$$\frac{d^2}{dt^2}\Big|_{t=0} \, {\boldsymbol\exp}(ta^{(i)})\cdot \hat{x} = u_i.$$
Observe that $u_i\in\R^{p,q+1}$ belongs to the $\sfb$-orthogonal complement of $T_{\hat{x}}(A_{\sfC}\cdot\hat{x})$ in $\R^{p,q+1}$; therefore, its orthogonal projection to $(T_{\hat{x}}(A_{\sfC}\cdot\hat{x}))^{\perp} \cap {\hat{x}}^{\perp}$ is just its orthogonal projection to ${\hat{x}}^{\perp}$, and $H(\hat{x})$ is the orthogonal projection to ${\hat{x}}^{\perp}$ of $\sum_{i=1}^j u_i/(j\,|\sfb(u_i,u_i)|)$.
In particular, $H(\hat{x}) = 0$ if and only if $\sum_{i=1}^j u_i/(j\,|\sfb(u_i,u_i)|)$ is a multiple of $\hat{x} = \sum_{i=1}^j u_i$, if and only if $\sfb(u_1,u_1) = \dots = \sfb(u_j,u_j)$.
This is equivalent to $e'_1 + \dots + e'_{2j}$ belonging to $A_{\sfC}\cdot\hat{x}$, \ie to $[e'_1 + \dots + e'_{2j}]$ belonging to $F = A_{\sfC}\cdot x$.

\eqref{item:F-spacelike-3} For $\tau_1,\dots,\tau_j\in\R$ with $\sum_{i=1}^j \tau_i^2 = 1$, the $A_{\sfC}$-orbit of $\sum_{i=1}^j \tau_i e_{j+i}$ is the set of points of $\hat\H^{p,q} \cap \spa(\sfC) \simeq \hat\H^{j,j-1}$ of the form $\sum_{i=1}^j \tau_i \, (\sinh(a_i)\,e_i + \cosh(a_i)\,e_{j+i})$ where $a_1,\dots,a_j \in \R$, or in other words of the form $\sum_{i=1}^j \tau_i \, \big(s_i\,e_i + \sqrt{1+s_i^2}\,e_{j+i})$ where $s_1,\dots,s_j \in \R$.
In the splitting $\hat\H^{p,q} \cap \spa(\sfC) \simeq \hat\H^{j,j-1} \simeq \bB^j \times \bS^{j-1}$ given explicitly by Proposition~\ref{p.doublecoverasproduct}, this corresponds to the graph of $f : (y_0,\dots,y_j) \mapsto (\sqrt{\tau_1^2\,y_0^2 + y_1^2}, \dots, \sqrt{\tau_j^2\,y_0^2 + y_j^2})$.
\end{proof}

If $\sfC$ is a $1$-crown, then it has a unique open convex hull in $\H^{p,q}$, namely the open projective segment $\H^{p,q}\cap\spa(\sfC)$, which is a single $A_{\sfC}$-orbit.
On the other hand, when $j\geq 2$ the following holds.

\begin{prop} \label{prop:A-orbit-not-hyperb}
For $j\geq 2$, let $\sfC$ be a $j$-crown in $\di\H^{p,q}$, let $\mathcal{O}$ be an open convex hull of $\sfC$ in $\H^{p,q}$ (Definition~\ref{def:crown-conv-hull}), and let $F$ be an $A_{\sfC}$-orbit in $\mathcal{O}$.
Then for any $\delta>0$, there exist geodesic quadrilaterals for the Hilbert metric $d_{\mathcal{O}}$, consisting of four projective line segments contained in~$F$, such that a side of the quadrilateral is not contained in the uniform $\delta$-neighborhood for $d_{\mathcal{O}}$ of the other three sides.
\end{prop}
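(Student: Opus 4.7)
The plan is to identify the orbit $F$, equipped with the restriction of the Hilbert metric $d_{\mathcal{O}}$, with $\R^j$ endowed with its supremum norm, and then to exhibit inside $F$ an explicit parallelogram whose four sides are projective line segments.

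I would proceed in three steps. \emph{First}, after fixing a basis $(e'_1,\dots,e'_{2j})$ adapted to~$\sfC$ (Definition~\ref{def:crown-conv-hull}) and a base point $u\in F$ with equal homogeneous coordinates in that basis, I would establish the formula $d_{\mathcal{O}}({\boldsymbol\exp}(a)\cdot u,{\boldsymbol\exp}(b)\cdot u)=\max_{1\leq i\leq j}|a_i-b_i|$ for all $a,b\in\R^j$. By $A_{\sfC}$-equivariance of $d_{\mathcal{O}}$ this reduces to the case $b=0$, and the formula then follows from a direct cross-ratio computation along the projective line joining $u$ to ${\boldsymbol\exp}(a)\cdot u$: its two intersection points with $\partial\mathcal{O}$ lie on the codimension-one faces where, respectively, the coordinates realizing $\max_i a_i$ and $\min_i a_i$ vanish, producing a cross-ratio equal to $e^{2\max_i|a_i|}$. (Equivalently, this is the instance, applied to $F$, of the classical identification of the Hilbert geometry of an open projective simplex with the normed space $\R^{2j}/\R(1,\dots,1)$ equipped with the hexagonal norm $\|x\|=\tfrac{1}{2}(\max_i x_i-\min_i x_i)$, via the logarithmic chart.) \emph{Second}, I would verify that for every sign vector $\sigma\in\{\pm 1\}^j$ and every $u_0\in F$ the one-parameter subgroup orbit $\{{\boldsymbol\exp}(s\sigma)\cdot u_0:s\in\R\}$ is a projective line segment in~$F$: dividing homogeneous coordinates by~$e^s$ rewrites this orbit as $\{[w^++e^{-2s}\,w^-]:s\in\R\}$ for two fixed vectors $w^{\pm}\in\spa(\sfC)$ depending on~$\sigma$ and on~$u_0$.

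\emph{Third}, using $j\geq 2$, I would choose the two distinct sign vectors $\sigma^{(1)}=(1,1,\dots,1)$ and $\sigma^{(2)}=(1,-1,1,\dots,1)$, fix $T>2\delta$, and consider the quadrilateral with vertices $A=u$, $B={\boldsymbol\exp}(T\sigma^{(1)})\cdot u$, $C={\boldsymbol\exp}(T\sigma^{(1)}+T\sigma^{(2)})\cdot u$ and $D={\boldsymbol\exp}(T\sigma^{(2)})\cdot u$. Each of its four sides is an $A_{\sfC}$-translate of a one-parameter subgroup orbit of direction $\pm\sigma^{(1)}$ or $\pm\sigma^{(2)}$, hence a projective line segment in~$F$ by Step~2, and therefore a geodesic for~$d_{\mathcal{O}}$. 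A direct calculation with the sup-norm formula of Step~1 then gives that the midpoint of side $AB$ lies at $d_{\mathcal{O}}$-distance exactly $T/2$ from each of the adjacent sides $BC$ and~$DA$ (and at distance~$T$ from the opposite side~$CD$), hence at distance $>\delta$ from the union of the three other sides.

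The main obstacle is Step~1, the identification of $(F,d_{\mathcal{O}}|_F)$ with $(\R^j,\|\cdot\|_\infty)$: the heart of the matter is locating, for general $a\in\R^j$, the two points where the projective line through $u$ and ${\boldsymbol\exp}(a)\cdot u$ meets $\partial\mathcal{O}$ and extracting the explicit cross-ratio. Once this is in hand, the remaining verifications are elementary; the underlying geometric picture is simply that for $j\geq 2$ the normed space $(\R^j,\|\cdot\|_\infty)$ contains arbitrarily large parallelograms with coordinate-parallel sides that are very far from being $\delta$-thin.
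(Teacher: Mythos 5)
Your proposal is correct and follows essentially the same line as the paper: establish the $\ell^\infty$ formula $d_{\mathcal{O}}\bigl(x,{\boldsymbol\exp}(a)\cdot x\bigr)=\max_i|a_i|$ by a cross-ratio computation in the simplex, verify that one-parameter $A_{\sfC}$-orbits in sign-vector directions are projective line segments, and then exhibit an explicit parallelogram in the $\ell^\infty$ chart whose centre is far from three of the sides. The only cosmetic difference is the choice of vertices: the paper takes $\mathtt{a}_R,\mathtt{b}_R,\mathtt{c}_R,\mathtt{d}_R$ at $(R,-R,\dots,-R)$, $(-R,R,\dots,R)$, $(-R,-3R,\dots,-3R)$, $(-3R,-R,\dots,-R)$ (so that the base point $x$ is already the midpoint of one side, making the distance estimate a one-line observation), whereas you centre the quadrilateral at $u$ with sides in directions $(1,\dots,1)$ and $(1,-1,1,\dots,1)$; both are parallelograms with sides along $\pm1$-vectors. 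One small point worth flagging: your restriction of the ``path lies in a projective line'' check to sign vectors $\sigma\in\{\pm1\}^j$ is exactly the right scope (for such $\sigma$ only the two exponents $0$ and $-2$ occur after normalisation), and is in fact a bit more careful than the paper's parenthetical, which asserts the projective-line containment for arbitrary directions $(a_1,\dots,a_j)$ even though it genuinely holds only when all $|a_i|$ are equal; this does not affect the paper's argument since it too only uses sign-vector directions.
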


\begin{proof}
Let $(e'_1,\ldots,e'_{2j})$ be a basis adapted to~$\sfC$ (Definition~\ref{def:crown-conv-hull}), defining~$\mathcal{O}$ as in Remark~\ref{rem:basis-adapted-C}: namely, $\mathcal{O}$ is the projectivization of the $\R_{>0}$-span of $(e'_1,\dots,e'_{2j})$.
Let ${\boldsymbol\exp} : \R^j \overset{\sim}{\longrightarrow} A_{\sfC}$ be the isomorphism given by \eqref{eqn:A-C}.

We first observe that for any $x\in\mathcal{O}$ and any $a_1,\dots,a_j\in\R$,
\begin{equation} \label{eqn:Hilbert-dist-simplex}
d_{\mathcal{O}}\big(x,{\boldsymbol\exp}(a_1,\dots,a_j)\cdot x\big) = \max_{1\leq i\leq j} |a_i|.
\end{equation}
Indeed, consider a lift $\tilde{x} = \sum_{i=1}^{2j} v_i e'_i$ of $x$ to the $\R_{>0}$-span of $(e'_1,\dots,e'_{2j})$, where $v_i>0$ for all~$i$.
Then $y := {\boldsymbol\exp}(a_1,\dots,a_j)\cdot x$ lifts to $\tilde{y} = \sum_{i=1}^j (e^{a_i}\,v_i\,e'_i + e^{-a_i}\,v_{j+i}\,e'_{j+i})$, and the projective line $\P(\spa(x,y))$ meets $\partial\mathcal{O}$ in two points, namely $[\tilde{x} - e^{-\max_i |a_i|}\,\tilde{y}]$ and $[\tilde{y} - e^{-\max_i |a_i|}\,\tilde{x}]$.
We conclude using the formula \eqref{eqn:d-Omega} for the Hilbert metric $d_{\mathcal{O}}$.

In particular, for any $x\in\mathcal{O}$ and any $(a_1,\dots,a_j) \in \R^j\smallsetminus\{ (0,\dots,0)\}$, the path $t \mapsto {\boldsymbol\exp}(ta_1,\dots,ta_j)\cdot x$ is a geodesic line for the Hilbert metric $d_{\mathcal{O}}$, which is parametrized at speed $\max_i |a_i|$, and whose image is contained in a projective line.

Fix $x\in F$ and, for any $R>0$, consider the four points
$$\begin{array}{ll}
\mathtt{a}_R := {\boldsymbol\exp}(R,-R,\dots,-R)\cdot x, & \mathtt{b}_R := {\boldsymbol\exp}(-R,R,\dots,R)\cdot x,\\
\mathtt{c}_R := {\boldsymbol\exp}(-R,-3R,\dots,-3R)\cdot x, & \mathtt{d}_R := {\boldsymbol\exp}(-3R,-R,\dots,-R)\cdot x
\end{array}$$
of $F \subset \mathcal{O}$.
By what we have just seen, the geodesic segment in $(\mathcal{O},d_{\mathcal{O}})$ between $\mathtt{a}_R$ and $\mathtt{b}_R$ (\resp $\mathtt{a}_R$ and $\mathtt{c}_R$, \resp $\mathtt{b}_R$ and $\mathtt{d}_R$, \resp $\mathtt{c}_R$ and $\mathtt{d}_R$) which is contained in a projective line is given by $t \mapsto {\boldsymbol\exp}(-t,t,\dots,t)\cdot\nolinebreak\mathtt{a}_R$ (\resp $t \mapsto {\boldsymbol\exp}(-t,-t,\dots,-t)\cdot\mathtt{a}_R$, \resp $t \mapsto {\boldsymbol\exp}(-t,-t,\dots,-t)\cdot\nolinebreak\mathtt{b}_R$, \resp $t \mapsto {\boldsymbol\exp}(-t,t,\dots,t)\cdot\mathtt{c}_R$) for $t\in [0,2R]$, hence its image is contained in~$F$.
In particular, $x$ belongs to the projective line segment $[\mathtt{a}_R,\mathtt{b}_R]$.
By \eqref{eqn:Hilbert-dist-simplex} we have $d_{\mathcal{O}}(x,y)\geq R$ for all $y \in [\mathtt{a}_R,\mathtt{c}_R] \cup [\mathtt{c}_R,\mathtt{d}_R] \cup [\mathtt{d}_R,\mathtt{b}_R]$.
Therefore $[\mathtt{a}_R,\mathtt{b}_R]$ is \emph{not} contained in the uniform $\delta$-neighborhood for $d_{\mathcal{O}}$ of $[\mathtt{a}_R,\mathtt{c}_R] \cup [\mathtt{c}_R,\mathtt{d}_R] \cup [\mathtt{d}_R,\mathtt{b}_R]$ for $\delta<R$.
\end{proof}

\subsection{Crowns prevent Gromov hyperbolicity} \label{subsec:crown-not-hyperb}

In this section we use Propositions \ref{prop:M-in-prop-conv-Omega}, \ref{p.crown-boundary}, and~\ref{prop:A-orbit-not-hyperb}, together with Lemma~\ref{lem:crown-conv-hull-Om-Lambda}, to prove the implication \eqref{item:geom-action-1}~$\Rightarrow$~\eqref{item:geom-action-4} of Theorem~\ref{thm:geom-action-weakly-sp-gr}.

\begin{prop} \label{p.boundaries dont intersect for hyperbolic}
Let $\Gamma$ be a discrete subgroup of $\PO(p,q+1)$ acting properly discontinuously and cocompactly on a weakly spacelike $p$-graph $M$ in $\H^{p,q}$ with $M \subset \Om(\L)$ where $\L := \di M \subset \di\H^{p,q}$.
If there is a $j$-crown in~$\L$ for some $j\geq 2$, then $\Gamma$ is not Gromov hyperbolic.
\end{prop}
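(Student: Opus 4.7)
The plan is to prove the contrapositive: assuming that $\L$ contains a crown of cardinality $\geq 2$, I will exhibit inside a suitable $\Gamma$-invariant properly convex open subset $\Omega\subset\Om(\L)$ containing $M$ an $A_\sfC$-orbit $F_0$ that is non-Gromov-hyperbolic for $d_\Omega|_{F_0}$ and lies in a uniformly bounded $d_\Omega$-tube around $M$; the Milnor--\v{S}varc lemma will then transfer non-hyperbolicity to $\Gamma$ itself.

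First I would pick a crown $\sfC=\{x_1^\pm,\dots,x_j^\pm\}$ in $\L$ of \emph{maximal} cardinality (such a maximum exists by Remark~\ref{rem:crowns}.(1)); by hypothesis $j\geq 2$, and by Proposition~\ref{p.crown-boundary} maximality forces $\sfC$ to be a non-boundary crown in $\L$. Lemma~\ref{lem:crown-conv-hull-Om-Lambda} then yields the key identification
\[
\Om(\L)\cap\P(\spa(\sfC))\;=\;\mathcal{O}_\L(\sfC).
\]
I would then invoke Proposition~\ref{prop:M-in-prop-conv-Omega} to build a $\Gamma$-invariant properly convex open set $\Omega\subset\Om(\L)$ containing both $M$ and a leaf $F_0$ of the $A_\sfC$-foliation of $\mathcal{O}_\L(\sfC)$. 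Because $\Omega\cap\P(\spa(\sfC))\subset\mathcal{O}_\L(\sfC)$, every projective line in $\P(\spa(\sfC))$ exits $\Omega$ precisely where it exits $\mathcal{O}_\L(\sfC)$; hence the cross-ratio formula \eqref{eqn:d-Omega} gives $d_\Omega=d_{\mathcal{O}_\L(\sfC)}$ on $\mathcal{O}_\L(\sfC)\times\mathcal{O}_\L(\sfC)$, so in particular the equality \eqref{eqn:Hilbert-dist-simplex} underlying Proposition~\ref{prop:A-orbit-not-hyperb} survives for $d_\Omega$ on $F_0$. This provides, for every $\delta>0$, a geodesic quadrilateral in $(\Omega,d_\Omega)$ whose four sides are projective line segments contained in $F_0$ and one side of which is not $\delta$-close to the union of the other three.

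A separate dedicated lemma (of the form ``some leaf remains at bounded Hilbert distance from $M$'', as announced in the introduction) then guarantees that $F_0$ can be chosen to lie within a uniform $d_\Omega$-distance $\leq C$ of $M$. Thus each of the above thick quadrilaterals pushes, at additive cost $\leq 2C$, to an arbitrarily thick geodesic quadrilateral in the length metric $d_M$ that $d_\Omega$ induces on $M$; hence $(M,d_M)$ is not Gromov hyperbolic. Since $\Gamma$ acts properly discontinuously (by Proposition~\ref{prop:M-in-prop-conv-Omega}.(1)), cocompactly, and isometrically on the proper length space $(M,d_M)$, the Milnor--\v{S}varc lemma yields a $\Gamma$-equivariant quasi-isometry between $\Gamma$ and $(M,d_M)$, and so $\Gamma$ cannot be Gromov hyperbolic.

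The principal difficulty is the construction of a leaf $F_0$ lying in a bounded $d_\Omega$-tube around $M$. Since $M$ is only a closed weakly spacelike topological graph (not smooth) and $\Omega$ need not be strictly convex, no induced Riemannian structure is available; I expect the construction to combine the compactness of $\overline{F_0}$ in $\overline{\Om(\L)}$, the inclusion $\partial F_0=\partial\mathcal{O}_\L(\sfC)\cap\di\H^{p,q}\subset\L=\di M$ (Lemma~\ref{lem:F-spacelike}.(1) together with Lemma~\ref{lem:Omega-Lambda-convex}.(1)), the unique intersection property of $M$ with $q$-dimensional timelike totally geodesic subspaces (Proposition~\ref{prop:weakly-sp-gr-is-a-graph}), the $\Gamma$-cocompactness of $M$, and the freedom to enlarge $\Omega$ provided by Proposition~\ref{prop:M-in-prop-conv-Omega}.(3).
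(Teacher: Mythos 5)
Your strategy coincides with the paper's in every structural respect: take a crown $\sfC$ of maximal cardinality so that Proposition~\ref{p.crown-boundary} forces it to be a non-boundary crown in~$\L$, confine an $A_\sfC$-leaf $F_0$ of $\mathcal{O}_\L(\sfC)=\Om(\L)\cap\P(\spa(\sfC))$ in a uniform $d_\Omega$-tube around~$M$, extract thick quadrilaterals from $F_0$ via Proposition~\ref{prop:A-orbit-not-hyperb}, and conclude by Milnor--\v{S}varc. This is exactly the content of the paper's Lemmas~\ref{l.tubular-contains-A-orbit} and~\ref{lem:path-metric-exists} combined with Propositions~\ref{p.crown-boundary} and~\ref{prop:A-orbit-not-hyperb}. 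The one genuine deviation is your choice of Milnor--\v{S}varc space: you want to push the thick quadrilaterals from $F_0$ to $(M,d_M)$ ``at additive cost $\leq 2C$.'' That step is not as cheap as it looks. Once you replace the four vertices by nearest points $a',\dots,d'$ of~$M$, the sides of the new quadrilateral must be $d_M$-geodesics, \ie paths running \emph{inside}~$M$; they are not the $d_\Omega$-projective-segment sides you started from, and nothing in the $2C$ control tells you where they wander. To make this work you would need to know in advance that $(M,d_M)\hookrightarrow(\mathcal{U}_R(M),\text{path metric of }d_\Omega)$ is a $\Gamma$-equivariant coarse equivalence (true, by two applications of Milnor--\v{S}varc), an extra step you did not budget for. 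The paper sidesteps all of this: Lemma~\ref{lem:path-metric-exists} simply equips $\mathcal{U}_R(M)$ itself with the path metric of $d_\Omega$, and since $F_0\subset\mathcal{U}_R(M)$ the four projective-segment sides \emph{are already geodesics} there, so the thick quadrilaterals live in the target space natively.

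Two smaller remarks. First, the inclusion $\Omega\cap\P(\spa(\sfC))\subset\mathcal{O}_\L(\sfC)$ alone does not force projective lines to exit $\Omega$ and $\mathcal{O}_\L(\sfC)$ at the same points; you also need the reverse inclusion, which holds because the convex hull of the leaf $F_0\subset\Omega$ inside $\P(\spa(\sfC))$ is all of $\mathcal{O}_\L(\sfC)$ by Lemma~\ref{lem:F-spacelike}.\eqref{item:F-spacelike-1}. Second, in the tube lemma you correctly list the ingredients, but the paper's proof of Lemma~\ref{l.tubular-contains-A-orbit} reuses Proposition~\ref{p.crown-boundary} a second essential time: after renormalizing a degenerating orbit $\gamma_n\cdot y_n$ by a compensating sequence $\gamma'_n\in\Gamma$, the limiting configuration $\sfC_\infty$ is again a $j$-crown, and maximality of $j$ together with Proposition~\ref{p.crown-boundary} forces it to be non-boundary, which is precisely what stops the limit from lying on $\partial_{\H}\Omega(\L)$. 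Without that second application the confinement argument collapses.
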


We start with the following preliminary result.

\begin{lem} \label{l.tubular-contains-A-orbit}
In the setting of Proposition~\ref{p.boundaries dont intersect for hyperbolic}, let $\sfC$ be a $j$-crown in~$\L$, and suppose that $j$ is maximal in the sense that there does not exist any $(j+1)$-crown in~$\L$.
Let $F$ be an $A_{\sfC}$-orbit in the open convex hull $\calO_{\L}(\sfC)\subset\ov{\Om(\L)}$ of Lemma~\ref{lem:crown-conv-hull-Om-Lambda}.
Then there exist $R\geq 0$ and a $\Gamma$-invariant properly convex open subset $\Om$ of $\Om(\L)$ containing~$M$ such that $F$ is contained in the closed uniform $R$-neighborhood $\mathcal{U}_R(M)$ of $M$ in $(\Om,d_{\Om})$.
\end{lem}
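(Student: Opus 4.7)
Since $j$ is maximal with no $(j+1)$-crown in~$\L$, Proposition~\ref{p.crown-boundary} forces $\sfC$ not to be a boundary $j$-crown in~$\L$; Lemma~\ref{lem:crown-conv-hull-Om-Lambda} then yields $\mathcal{O}_\L(\sfC) = \Om(\L)\cap\P(\spa(\sfC))$, so in particular $F\subset\mathcal{O}_\L(\sfC)\subset\Om(\L)$. The plan is to first construct a $\Gamma$-invariant properly convex open subset $\Om\subset\Om(\L)$ containing both $M$ and~$F$, and then to bound $d_\Om(x,M)$ uniformly for $x\in F$.

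\textbf{Construction of $\Om$.} Fix a compact fundamental domain $D\subset M$ for the $\Gamma$-action and a point $x_0\in F$, and apply Lemma~\ref{lem:build-prop-conv-Omega}.\eqref{item:build-Omega-2} to $\mathcal{K}:=D\cup\{x_0\}$: the interior $\Om$ in $\Om(\L)$ of the convex hull of $\bigcup_{\gamma\in\Gamma}\gamma\cdot\mathcal{K}$ is $\Gamma$-invariant, properly convex, and contains $M\cup\{x_0\}$. Cocompactness of the $\Gamma$-action on~$M$, together with Lemma~\ref{lem:build-prop-conv-Omega}.\eqref{item:build-Omega-1}, gives $\ov\Om\cap\di\H^{p,q}=\L$; hence every vertex $x_i^\pm$ of~$\sfC$ lies in~$\ov\Om$ and convexity forces $\ov{\mathcal{O}_\L(\sfC)}\subset\ov\Om$. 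Combined with $x_0\in\mathcal{O}_\L(\sfC)\cap\Om$ and a short supporting-hyperplane argument at any putative point of $\mathcal{O}_\L(\sfC)\cap\partial_\H\Om$, this yields $\mathcal{O}_\L(\sfC)\subset\Om$ and therefore $F\subset\Om$.

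\textbf{Distance bound and main obstacle.} Fix the splitting $\hat\H^{p,q}\simeq\bB^p\times\bS^q$ of Proposition~\ref{p.doublecoverasproduct} associated to a $\sfb$-orthogonal basis of $\R^{p,q+1}$ extending one adapted to~$\sfC$ as in Lemma~\ref{lem:F-spacelike}.\eqref{item:F-spacelike-3}. In this model $\hat M=\gph(f)$ for some $1$-Lipschitz $f\colon\bB^p\to\bS^q$ (Proposition~\ref{prop:weakly-sp-gr-is-a-graph}), and $\hat F=\gph(f_F)$ for some $1$-Lipschitz $f_F\colon\bB^j\to\bS^{j-1}$ (Lemma~\ref{lem:F-spacelike}.\eqref{item:F-spacelike-3}), viewed inside $\bB^p\times\bS^q$ via the natural inclusions $\bB^j\hookrightarrow\bB^p$ and $\bS^{j-1}\hookrightarrow\bS^q$; since $\di F\subset\L=\di M$, the two maps agree on the boundary: $f|_{\bS^{j-1}}=f_F|_{\bS^{j-1}}$. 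For $x=(u,f_F(u))\in F$ with $u\in\bB^j$, set $m(x):=(u,f(u))\in M$, the unique point of~$M$ sharing the timelike fiber $\P(T_u)\cap\hat\H^{p,q}=\{u\}\times\bS^q$ with~$x$; by convexity the projective segment $[x,m(x)]\subset\P(T_u)$ lies in~$\Om$. Writing $d_\Om(x,m(x))=\tfrac12\log[a,x,m(x),b]$ with $a,b\in\partial\Om$ on $\P(\spa(x,m(x)))$, the crux will be to show that this cross-ratio stays uniformly bounded as $x$ ranges over~$F$; this is the main obstacle, since $d_\Om$ may a priori diverge near~$\partial\Om$. The boundedness rests on the estimate $d_{\bS^q}(f(u),f_F(u))\le 2\,\mathrm{dist}_{\bB^p}(u,\bS^{j-1})$, an immediate consequence of $f|_{\bS^{j-1}}=f_F$ and the $1$-Lipschitz property, combined with the confinement of $[x,m(x)]$ to the single timelike fiber $\P(T_u)$: together they enforce a synchronized approach of $x$, $m(x)$, and the ideal endpoints $a,b$ to a common limit point $(u_\infty,f(u_\infty))\in\L\subset\partial\Om$ as $u\to u_\infty\in\bS^{j-1}$, making the cross-ratio bounded by a universal constant~$R$.
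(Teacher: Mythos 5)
Your opening moves (deducing via Proposition~\ref{p.crown-boundary} that $\sfC$ is not a boundary crown, hence $F\subset\calO_{\L}(\sfC)\subset\Om(\L)$, and constructing a $\Gamma$-invariant properly convex $\Om$ containing $M$ and $F$) are sound modulo a small fix: $\mathcal{K}=D\cup\{x_0\}$ need not lie in the interior of the convex hull of its $\Gamma$-orbit, so you should take $\mathcal{K}$ to be a compact \emph{neighborhood} of $D\cup\{x_0\}$ in $\Om(\L)$, as the paper does in its proof of Proposition~\ref{prop:M-in-prop-conv-Omega}.

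The genuine gap is in the final ``distance bound'' paragraph, which is precisely where all the difficulty of the lemma lives. The estimate $d_{\bS^q}(f(u),f_F(u))\leq 2\,\mathrm{dist}_{\bB^p}(u,\bS^{j-1})$ only controls the separation of $x=(u,f_F(u))$ and $m(x)=(u,f(u))$ in the ambient $\bB^p\times\bS^q$ coordinates; the Hilbert distance $d_\Om(x,m(x))$ additionally depends on \emph{how far $x$ and $m(x)$ are from the boundary $\partial\Om$ along the chord} $\P(\spa(x,m(x)))$, and your argument gives no lower bound on that. For a $1$-dimensional caricature, if $\Om_0=(0,1)\subset\R$, the pair $(1/n,1/n^2)$ has separation tending to $0$ while its Hilbert distance diverges; your ``synchronized approach'' phrase is asserting that this does not happen, but nothing you have established rules it out for the actual $\partial\Om$, whose local geometry near the degenerate boundary faces of $\calO_\L(\sfC)$ is not controlled by the two Lipschitz-graph facts.

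The paper sidesteps this direct estimate entirely. It forms the $\Gamma$-invariant closed set $N:=\Om(\L)\cap\overline{\bigcup_{y\in F}\Gamma\cdot y}$ and proves $\partial_{\H}N\cap\partial_{\H}\Om(\L)=\emptyset$ by a compactness argument: given $\gamma_n\cdot y_n$ escaping towards $\partial\Om(\L)$, it builds auxiliary timelike planes $T_n$ through $y_n$ meeting all $j$ diagonals of the crown, produces companion points $z_n\in M$, renormalizes to pin $\gamma'_n\gamma_n\cdot z_n$ in $D$, and shows the translated crowns $\gamma'_n\gamma_n\cdot\sfC$ subconverge to a $j$-crown $\sfC_\infty$ in $\L$. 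Crucially, the maximality of $j$ is invoked a \emph{second time}, now for $\sfC_\infty$, to conclude via Proposition~\ref{p.crown-boundary} that $\sfC_\infty$ is not a boundary crown, hence $\calO_\L(\sfC_\infty)\subset\Om(\L)$ and the supposed escape is impossible. Once $\partial_{\H}N\cap\partial_{\H}\Om(\L)=\emptyset$, Proposition~\ref{prop:M-in-prop-conv-Omega}.\eqref{item:M-in-Om-3} gives a common $\Om$ in which $\Gamma$ acts cocompactly on $N$, and the uniform $R$-bound is then automatic: a compact fundamental domain of $N$ sits inside some $\mathcal{U}_R(M)$, and $\Gamma$-invariance spreads the bound to all of $N\supset F$. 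Without some version of this second use of crown-maximality (applied to limiting crowns, not just $\sfC$), or some other mechanism giving cocompactness of $\Gamma$ on a set containing $F$, a direct cross-ratio bound does not seem accessible from your ingredients.
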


Recall that in this setting the convex open set $\Omega(\L) \subset \H^{p,q}$ is properly convex if $\L$ spans (Lemma~\ref{lem:Omega-Lambda-convex}.\eqref{item:Om-Lambda-prop-convex}), but not in general.

\begin{proof}[Proof of Lemma~\ref{l.tubular-contains-A-orbit}]
By Proposition~\ref{p.crown-boundary}, the maximality of~$j$ implies that $\sfC$ is \emph{not} a boundary $j$-crown in~$\L$.
Therefore, Lemma~\ref{lem:crown-conv-hull-Om-Lambda} implies that $F \subset \calO_{\L}(\sfC) \subset \Om(\L)$.

Consider the $\Gamma$-invariant closed subset
$$\mathcal{Z} := \Om(\L) \cap \overline{\bigcup_{y\in F} \Gamma\cdot y}$$
of $\Om(\L)$, which contains~$F$.
It is sufficient to prove that $\de_{\H}\mathcal{Z} \cap \de_{\H}\Om(\L) = \emptyset$.
Indeed, if this is the case, then by Proposition~\ref{prop:M-in-prop-conv-Omega}.\eqref{item:M-in-Om-3} there exists a $\Gamma$-invariant properly convex open subset $\Omega$ of $\Omega(\L)$ containing $M$ and~$\mathcal{Z}$ (hence~$F$), and the action of $\Gamma$ on~$\mathcal{Z}$ is properly discontinuous and cocompact.
Consider a compact fundamental domain for the action of $\Gamma$ on~$\mathcal{Z}$: it is contained in some closed uniform neighborhood $\mathcal{U}_R(M)$ of $M$ in $(\Om,d_{\Om})$.
Since $M$ and~$\mathcal{Z}$, and the Hilbert metric $d_{\Om}$, are all $\Gamma$-invariant, we have $\mathcal{Z} \subset \mathcal{U}_R(M)$, hence $F \subset  \mathcal{U}_R(M)$, as desired.

Our goal for the rest of the proof is to show that $\de_{\H}\mathcal{Z} \cap \de_{\H}\Om(\L) = \emptyset$.
For this, consider sequences $(\gamma_n)\in\Gamma^{\N}$ and $(y_n)\in F^{\N}$, and let us show that the sequence $(\gamma_n\cdot y_n)_{n\in\N}$ in~$\H^{p,q}$ cannot converge to a point of $\de_{\H}\Om(\L)$.

Write $\sfC = \{ x_1^{\pm},\ldots,x_j^{\pm}\}$ where $x_i^-$ and~$x_i^+$ are transverse and $x_i^{\pm}$ and $x_{i'}^{\pm}$ are \emph{not} transverse for $1\leq i\neq i'\leq j$.
By definition of $\calO_{\L}(\sfC)$ (see Lemma~\ref{lem:crown-conv-hull-Om-Lambda}), we can lift $\L$ to a non-positive subset $\tilde\L$ of $\R^{p,q+1}\smallsetminus\{0\}$, and lift each $x_i^{\pm}$ to $\tilde{x}_i^{\pm}\in\tilde\L$ and $\calO_{\L}(\sfC)$ to $\tilde{\calO}_{\L}(\sfC) \subset \R^{p,q+1}\smallsetminus\{0\}$ so that $\tilde{\calO}_{\L}(\sfC)$ is the $\R_{>0}$-span of $\tilde{\sfC} := \{ \tilde{x}_1^{\pm}, \dots, \tilde{x}_j^{\pm}\}$.

For each $n$, we claim that there exists a timelike $(q+1)$-plane $T_n$ of $\R^{p,q+1}$ whose projectivization $\P(T_n)$ contains $y_n$ and meets all $j$ geodesics $(x_i^-,x_i^+)$ of $\calO_{\L}(\sfC)$ for $1\leq i\leq j$.
Indeed, consider a lift $\tilde{y}_n \in \tilde{\calO}_{\L}(\sfC)$ of $y_n \in \calO_{\L}(\sfC)$: there exist $s_i,t_i>0$ such that $\tilde{y}_n = \sum_{i=1}^j (s_i\tilde x^-_i + t_i\tilde x^-_i)$.
Then the span of the $s_i\tilde x^-_i + t_i\tilde x^-_i$ for $1\leq i\leq j$ is a timelike $j$-plane of~$\R^{p,q+1}$ whose projectivization contains~$y_n$, and we can extend it to a timelike $(q+1)$-plane $T_n$ of $\R^{p,q+1}$ with the desired properties.

Since $M$ is a weakly spacelike $p$-graph, by Proposition~\ref{prop:weakly-sp-gr-is-a-graph}, for each~$n$ there is a unique intersection point $\{ z_n\} := \P(T_n) \cap M$.

Let $D$ be a compact fundamental domain for the action of $\G$ on~$M$.
For any~$n$ there exists $\gamma'_n\in\Gamma$ such that $\gamma'_n\gamma_n\cdot z_n\in D$.
Up to passing to a subsequence, we may and shall assume that $(\gamma'_n\gamma_n\cdot z_n)_{n\in\N}$ converges to some $\sf{z} \in D$, that $(\gamma'_n\gamma_n\cdot x_i^{\pm})_{n\in\N}$ converges to some $\sf{x}_i^{\pm}\in\L$ for each~$i$, and that $(\gamma'_n\gamma_n\cdot y_n)_{n\in\N}$ converges to some $\sf{y} \in \ov{\H}^{p,q}$. 

We claim that $\sfC_{\infty}:= \{\sf{x}_1^{\pm},\ldots,\sf{x}_j^{\pm}\}$ is a $j$-crown in~$\L$.
Indeed, consider $1\leq i\neq i'\leq\nolinebreak j$.
For any~$n$ the points $\gamma'_n\gamma_n\cdot x_i^{\pm}$ and $\gamma'_n\gamma_n\cdot x_{i'}^{\pm}$ are non-transverse (\ie orthogonal), and being non-transverse is a closed condition; therefore $\sf{x}_i^{\pm}$ and $\sf{x}_{i'}^{\pm}$ are non-transverse. 
Assume by contradiction that $\sf{x}_i^-$ and $\sf{x}_i^+$ are non-transverse.
Then the segment $[\sf{x}_i^-,\sf{x}_i^+]$ is contained in $\di\Om$, which is equal to $\L$ by Lemma~\ref{lem:Omega-Lambda-convex}.\eqref{item:Om-Lambda-in-Hpq} and Proposition~\ref{prop:bdynonpossphere}.\eqref{item:bound-non-pos-sph-1}.
But each $z_n \in \P(T_n)$ is in timelike position with $\P(T_n) \cap [x_i^-,x_i^+]$, hence $\gamma'_n\gamma_n\cdot z_n$ is in timelike position with $\P(\gamma'_n\gamma_n\cdot T_n) \cap [\gamma'_n\gamma_n\cdot x_i^-,\gamma'_n\gamma_n\cdot x_i^+]$.
By passing to a subsequence and taking a limit, we find a point of $[\sf{x}_i^-,\sf{x}_i^+]$ that is in timelike or lightlike position with~$\sf z$.
This is a contradiction, as $\sf z\in M$ is in spacelike position with all of~$\L$ by Lemma~\ref{lem:Omega-Lambda-convex}.\eqref{item:Om-Lambda-invisible-dom}.
Thus $\sf{x}_i^-$ and $\sf{x}_i^+$ are transverse.
This shows that $\sfC_{\infty}$ is a $j$-crown in~$\L$.

Note that $\sf{y} \in \ov{\calO_{\L}(\sfC_{\infty})}$.
Indeed, $y_n \in F \subset \calO_{\L}(\sfC)$ for all~$n$, hence $\gamma'_n\gamma_n\cdot y_n \in \gamma'_n\gamma_n\cdot\nolinebreak\calO_{\L}(\sfC) = \calO_{\L}(\gamma'_n\gamma_n\cdot\sfC)$ for all~$n$, hence $\sf{y} \in \ov{\calO_{\L}(\sfC_{\infty})}$ by passing to the limit.

We claim that $\sf{y} \in \H^{p,q}$.
Indeed, we have $\sf{y} \in \ov{\calO_{\L}(\sfC_{\infty})} \subset \ov{\Om(\L)}$.
By Lemma~\ref{lem:Omega-Lambda-convex}.\eqref{item:Om-Lambda-in-Hpq} we have $\ov{\Om(\L)} \cap \di\H^{p,q} = \di\Om(\L) = \L$, hence it is enough to check that $\sf{y} \notin \L$.
Since $y_n$ and $z_n$ are in timelike position for all~$n$, the same holds for $\gamma'_n\gamma_n\cdot y_n$ and $\gamma'_n\gamma_n\cdot z_n$, and by passing to the limit $\sf{y}$ is in timelike or lightlike position with $\sf{z}\in M$.
Therefore $\sf y\notin \L$ by Lemma~\ref{lem:Omega-Lambda-convex}.\eqref{item:Om-Lambda-invisible-dom}, proving the claim.

We now check that $\sf{y}$ belongs to $\calO_{\L}(\sfC_{\infty})$ (not only $\ov{\calO_{\L}(\sfC_{\infty})}$).
Observe that the group $\PO(p,q+1)$ acts transitively on the set of $2j$-tuples of vectors of $\R^{p,q+1}$ forming, for some $j$-crown $\sfC'$ of $\di\H^{p,q}$, a basis adapted to~$\sfC'$ (Definition~\ref{def:crown-conv-hull}).
Therefore we can find a converging sequence $(g_n)\in\PO(p,q+1)^{\N}$ with limit $g_{\infty}\in \PO(p,q+1)$ such that for any $n\in\N$ we have $g_n\gamma'_n\gamma_n\cdot x_i^{\pm} = x_i^{\pm}$ for all $1\leq i\leq j$ and $g_n \cdot \calO_{\L}(\g_n\cdot\sfC) = \calO_{\L}(\sfC)$, and $g_{\infty}\cdot\sf x_i^{\pm} = x_i^{\pm}$ for all $1\leq i\leq j$ and $g_{\infty} \cdot \calO_{\L}(\sfC_{\infty}) = \calO_{\L}(\sfC)$.
Note that $\calO_{\L}(\gamma'_n\gamma_n\cdot\sfC) = \gamma'_n\gamma_n\cdot\calO_{\L}(\sfC)$ for all~$n$, by $\Gamma$-invariance of~$\L$.
Since the subgroup of the stabilizer of $\calO_{\L}(\sfC)$ in $G_{\sfC}$ fixing $\sfC$ pointwise is equal to~$A_{\sfC}$, for any~$n$ we can write $g_n\gamma'_n\gamma_n = a_n h_n$ where $a_n\in A_{\sfC}$ and where $h_n\in\PO(p,q+1)$ fixes $\spa(\sfC)$ pointwise.
Then for any~$n$ we have $g_n\gamma'_n\gamma_n\cdot F = a_nh_n\cdot F = a_n\cdot F = F$. 
In particular, $g_n\gamma'_n\gamma_n\cdot y_n\in F$ for all $n\in\N$. 
Moreover, $(g_n\gamma'_n\gamma_n\cdot y_n)_{n\in\N}$ converges to $g_{\infty}\cdot\sf y$, which belongs to $\H^{p,q}$ since $\sf y\in \H^{p,q}$.
Therefore $g_{\infty}\cdot\sf y \in F\subset\calO_{\L}(\sfC)$, and so $\sf y\in g^{-1}_{\infty}\cdot\calO_{\L}(\sfC)=\calO_{\L}(\sfC_{\infty})$, as desired.

By Proposition~\ref{p.crown-boundary}, the maximality of~$j$ implies that $\sfC_{\infty}$ is \emph{not} a boundary $j$-crown in~$\L$.
Therefore, $\calO_{\L}(\sfC_{\infty}) \subset \Om(\L)$ by Lemma~\ref{lem:crown-conv-hull-Om-Lambda}.
In particular, we have $\sf y = \lim_n \gamma'_n\gamma_n\cdot\nolinebreak y_n \in \Om(\L)$.

Now suppose that $(\gamma_n\cdot y_n)_{n\in\N}$ converges to some $y_{\infty} \in \H^{p,q}$, and let us check that $y_{\infty}\in\nolinebreak\Om(\L)$.
Let $\mathcal{K}$ be a small compact neighborhood of $\sf{y}$ contained in $\Om(\L)$.
For all large enough~$n$, we have $\gamma'_n\gamma_n\cdot y_n\in\mathcal{K}$, \ie $\gamma_n\cdot y_n\in{\gamma'_n}^{-1}\cdot\mathcal{K}$.
By Proposition~\ref{prop:M-in-prop-conv-Omega}.\eqref{item:M-in-Om-1}, the action of $\Gamma$ on $\Om(\L)$ is properly discontinuous.
This implies that the sequence $(\gamma'_n)_{n\in\N}$ has to be bounded: otherwise some subsequence of ${\gamma'_n}^{-1}\cdot\mathcal{K}$ would converge to a point of $\di\Om(\L)$, contradicting the assumption that $y_{\infty} = \lim_n\,\gamma_n\cdot y_n \in \H^{p,q}$.
Therefore, up to subsequence all $\gamma'_n$ are equal to some given $\gamma'\in\Gamma$, and $y_{\infty} = {\gamma'}^{-1}\cdot\sf y$ belongs to $\Om(\L)$.

This shows that $\de_{\H}\mathcal{Z} \cap \de_{\H}\Om(\L) = \emptyset$, hence concludes the proof.
\end{proof}

\begin{lem} \label{lem:path-metric-exists}
In the setting of Proposition~\ref{p.boundaries dont intersect for hyperbolic}, let $\Om$ be a $\Gamma$-invariant properly convex open subset of $\Om(\L)$ containing~$M$ (Proposition~\ref{prop:M-in-prop-conv-Omega}.\eqref{item:M-in-Om-2}).
Then there exists $R_0\geq 0$ such that for any $R\geq R_0$, the path metric with respect to $d_{\Om}$ exists on the closed uniform $R$-neighborhood $\mathcal{U}_R(M)$ of $M$ in $(\Om,d_{\Om})$.
\end{lem}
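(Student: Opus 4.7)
The plan is to exhibit an explicit rectifiable path inside $\mathcal{U}_R(M)$ between any two of its points, exploiting that $\Om$ is properly convex (so projective line segments are $d_\Om$-geodesics) and that $\Gamma$ acts cocompactly on~$M$ by isometries of $(\Om,d_\Om)$. I would fix a compact fundamental domain $D\subset M$ for the $\Gamma$-action, a basepoint $m_0\in D$, and a finite generating set $S\subset\Gamma$, whose existence follows from the standard observation that a group acting properly discontinuously and cocompactly on a connected, locally compact Hausdorff space (here, the topological manifold $M$) is finitely generated. By continuity of $d_\Om$ and compactness of $D$, the quantity
\[
R_0 := \max\!\Big(\,\mathrm{diam}_{d_\Om}(D),\ \max_{s\in S}\ \sup_{m\in D} d_\Om(m,\,s\cdot m)\,\Big)
\]
is finite. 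I claim any $R\geq R_0$ works. The key geometric observation is that for any $u,v\in\Om$, the projective segment $[u,v]$ is a $d_\Om$-geodesic of length $d_\Om(u,v)$, and every $p\in[u,v]$ satisfies $d_\Om(p,v)\leq d_\Om(u,v)$. Hence whenever $v\in M$ and $d_\Om(u,v)\leq R$, the whole segment $[u,v]$ lies in $\mathcal{U}_R(M)$; in particular, any $\Gamma$-translate of a segment of the form $[m_0,\,s\cdot m_0]$ for $s\in S$, or of a segment between two points of~$D$, is a path in $\mathcal{U}_{R_0}(M)\subset\mathcal{U}_R(M)$ of length at most $R_0$.

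Given $x,y\in\mathcal{U}_R(M)$, I would choose $m_x,m_y\in M$ with $d_\Om(x,m_x),d_\Om(y,m_y)\leq R$, write $m_x=\gamma_x m'_x$ and $m_y=\gamma_y m'_y$ with $m'_x,m'_y\in D$, and express $\gamma_x^{-1}\gamma_y=s_1\cdots s_n$ with $s_i\in S$. Concatenate the projective segments
\[
[x,m_x],\ [m_x,\gamma_x m_0],\ [\gamma_x m_0,\gamma_x s_1 m_0],\ \ldots,\ [\gamma_x s_1\cdots s_n m_0,\,\gamma_y m'_y],\ [m_y,y].
\]
By the observation above, each intermediate segment is a $\Gamma$-translate either of a segment inside $D$ or of some $[m_0,s\cdot m_0]$, hence lies in $\mathcal{U}_{R_0}(M)\subset\mathcal{U}_R(M)$ and has length $\leq R_0$; the first and last segments lie in $\mathcal{U}_R(M)$ and have length $\leq R$. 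The total $d_\Om$-length is at most $(n+2)R_0+2R<\infty$, so the path metric on $\mathcal{U}_R(M)$ is well defined. I do not anticipate any serious obstacle: the only points requiring care are the finite generation of $\Gamma$ and the verification that each piece of the concatenation remains in $\mathcal{U}_R(M)$, both of which follow from cocompactness of the $\Gamma$-action on~$M$ and the elementary property of Hilbert geodesics recalled above.
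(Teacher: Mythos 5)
Your proof is correct, and it is essentially the same approach as the paper's, implemented with slightly different bookkeeping. The paper works with a chain of overlapping translates $\gamma_1\cdot D,\dots,\gamma_\ell\cdot D$ along a path from $m_o$ to $m_{o'}$ inside~$M$ (using connectedness of~$M$), and controls each intermediate segment via the compact convex hull $\mathscr{C}(D)$, which it includes in some $\mathcal{U}_{R_0}(M)$; you instead fix a finite generating set $S$ of $\Gamma$, write a word $s_1\cdots s_n$ for $\gamma_x^{-1}\gamma_y$, and control each intermediate segment directly by your observation that any projective segment with one endpoint $v\in M$ and $d_\Om$-length at most $R$ stays in $\mathcal{U}_R(M)$. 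The Cayley-graph decomposition and the fundamental-domain chain are interchangeable consequences of cocompactness, and your endpoint observation and the paper's ``$\mathscr{C}(D)$ is compact hence in some $\mathcal{U}_{R_0}(M)$'' serve the same purpose; neither route is materially simpler than the other. One minor point of care you flagged correctly: finite generation of $\Gamma$ is not stated explicitly in Proposition~\ref{p.boundaries dont intersect for hyperbolic}, but it follows from the proper cocompact action on the connected manifold $M$, so your invocation of it is legitimate.
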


\begin{proof}[Proof of Lemma~\ref{lem:path-metric-exists}]
Let $D\subset M$ be a compact fundamental domain for the action of $\Gamma$ on~$M$.
The convex hull $\mathscr{C}(D)$ of $D$ in~$\Om$ is still compact, hence is contained in $\mathcal{U}_{R_0}(M)$ for some $R_0\geq 0$.
Then $\mathscr{C}(D) \subset \mathcal{U}_R(M)$ for all $R\geq R_0$.

Fix $R\geq R_0$, and let us check that the path metric with respect to $d_{\Om}$ exists on $\mathcal{U}_R(M)$.
It is enough to show that any two points $o,o'$ of $\mathcal{U}_R(M)$ can be joined by a finite sequence of closed projective line segments completely contained in $\mathcal{U}_R(M)\subset\Om$, as these are paths of finite length for~$d_{\Om}$.
For this, consider points $m_o,m_{o'}\in M$ such that $d_{\Om}(o,m_o) = \min_{m\in M} d_{\Om}(o,m)$ and $d_{\Om}(o',m_{o'}) = \min_{m'\in M} d_{\Om}(o',m')$.
The projective line segments $[o,m_o]$ and $[o',m_{o'}]$ are contained in $\mathcal{U}_R(M)$.
Because $\mathcal{U}_R(M)$ is preserved by $\G$, we can assume without loss of generality that $m_o\in D$.
By cocompactness we find finitely many translates $\g_i\cdot D$ for $i=1,\ldots,\ell$ such that there is path from $m_o$ to $m_{o'}$ contained in these translates; we order them such that there exists $m_i\in \g_i\cdot D \cap \g_{i+1}\cdot D$ for all~$i$ and $m_{o'}\in \g_{\ell}\cdot D$.
Then each segment $[m_i,m_{i+1}]$ lies inside a $\G$-translate of $\mathscr{C}(D)$, and since $\G$ preserves $\mathcal{U}_R(M)$, each such segment lies in~$\mathcal{U}_R(M)$.~Thus the segments $[o,m_o],[o',m_{o'}]$ and $[m_i,m_{i+1}]$ for $i=1,\ldots,\ell$ constitute a finite sequence of closed projective line segments completely contained in $ \mathcal{U}_R(M) \subset \Om$ connecting $o$ and $o'$ as~desired.
\end{proof}

\begin{proof}[Proof of Proposition~\ref{p.boundaries dont intersect for hyperbolic}]
Suppose there is a $j$-crown in~$\L$ for some $j\geq 2$.
Take $j$ to be maximal in the sense that there does not exist any $(j+1)$-crown in~$\L$.
Let $\sfC$ be a $j$-crown in~$\L$.
By Proposition~\ref{p.crown-boundary} we know that $\sfC$ is \emph{not} a boundary $j$-crown in~$\L$.
Let $\mathcal{O}_{\L}(\sfC)$ be as in Lemma~\ref{lem:crown-conv-hull-Om-Lambda}, and choose an $A_{\sfC}$-orbit $F$ in $\mathcal{O}_{\L}(\sfC)$.
By Lemma~\ref{l.tubular-contains-A-orbit}, there exist $R\geq 0$ and a $\Gamma$-invariant properly convex open subset $\Om$ of $\Om(\L)$ containing~$M$ such that $F$ is contained in the closed uniform $R$-neighborhood $\mathcal{U}_R(M)$ of $M$ in $(\Om,d_{\Om})$.
By Lemma~\ref{lem:path-metric-exists}, up to increasing $R$ we can equip $\mathcal{U}_R(M)$ with the path metric of~$d_{\Om}$.

By Proposition~\ref{prop:A-orbit-not-hyperb}, for any $\delta>0$, there is a geodesic quadrilateral for the Hilbert metric $d_{\calO_{\L}(\sfC)}$, consisting of four projective line segments contained in~$F$, such that a side of the quadrilateral is not contained in the uniform $\delta$-neighborhood of the other three sides in $(\calO_{\L}(\sfC),d_{\calO_{\L}(\sfC)})$.
We claim that a side of the quadrilateral is also not contained in the uniform $\delta$-neighborhood of the other three sides in $(\Om,d_{\Om})$.
Indeed, for this it is sufficient to check that $\calO_{\L}(\sfC) = \Om \cap \P(\spa(\sfC))$; then the restriction of $d_{\Om}$ to $\calO_{\L}(\sfC)$ coincides with $d_{\calO_{\L}(\sfC)}$, as $\calO_{\L}(\sfC)$ is geodesically convex with respect to $d_{\Om}$.
The inclusion $\calO_{\L}(\sfC) \subset \Om \cap\nolinebreak\P(\spa(\sfC))$ follows from the fact that $\calO_{\L}$ is contained in the interior of $\mathscr{C}(\L)\cap\nolinebreak\P(\spa(\sfC))$; the reverse inclusion follows from the equality $\calO_{\L}(\sfC) = \Om(\L) \cap \P(\spa(\sfC))$ in Lemma~\ref{lem:crown-conv-hull-Om-Lambda}.

Note that the same property (a side of the quadrilateral is not contained in the uniform $\delta$-neighborhood of the other three sides) also holds for the path metric, as passing to the path metric is distance non-decreasing (and geodesics for $d_{\Om}$ are also geodesics for the path metric).
Therefore $\mathcal{U}_R(M)$ with the path metric is not Gromov hyperbolic.

The group $\Gamma$ preserves $\mathcal{U}_R(M)$ (which is a geodesic metric space with the path metric of $d_{\Om}$) and acts properly discontinuously and cocompactly on it by isometries.
Since $\mathcal{U}_R(M)$ is not Gromov hyperbolic, the Milnor--\v{S}varc lemma implies that $\Gamma$ is also not Gromov hyperbolic. 
\end{proof}

\subsection{Proof of Theorem~\ref{thm:geom-action-weakly-sp-gr}.} \label{subsec:proof-thm-ConvexCo}

The implication \eqref{item:geom-action-1}~$\Rightarrow$~\eqref{item:geom-action-4} is Proposition~\ref{p.boundaries dont intersect for hyperbolic}.
The implication \eqref{item:geom-action-2}~$\Rightarrow$~\eqref{item:geom-action-4} is immediate from Definition~\ref{def:j-crown} of a $j$-crown.
The implication \eqref{item:geom-action-3}~$\Rightarrow$~\eqref{item:geom-action-2} follows from Definition~\ref{def:Hpq-cc} of $\H^{p,q}$-convex cocompactness and from Lemma~\ref{lem:Omega-Lambda-convex}.\eqref{item:Om-Lambda-in-Hpq}.
The implication \eqref{item:geom-action-3}~$\Rightarrow$~\eqref{item:geom-action-1} is contained in \cite[Th.\,1.24]{dgk-proj-cc} (see Fact~\ref{fact:Hpq-cc-Ano}).

The implication \eqref{item:geom-action-4}~$\Rightarrow$~\eqref{item:geom-action-5} follows from Lemma~\ref{l.crown-existence} and Proposition~\ref{p.crown-boundary}: indeed, if $\de_{\H}\mathscr{C}(\L)$ meets $\partial\Om(\L)$, then it meets $w^{\perp}$ for some $w\in\L$ (see Notation~\ref{not:Omega-Lambda}).

We now check the implication \eqref{item:geom-action-5}~$\Rightarrow$~\eqref{item:geom-action-3}.
Suppose that $\de_{\H} \mathscr{C}(\L)$ does not meet $\partial\Om(\L)$.
By Proposition~\ref{prop:M-in-prop-conv-Omega}.\eqref{item:M-in-Om-3}, there exists a $\G$-invariant properly convex open subset $\Om$ of $\Om(\L)\subset \H^{p,q}$ containing $\mathscr{C}(\L)$, and $\mathscr{C}(\L)$ has compact quotient by~$\Gamma$.
By Lemma~\ref{lem:Omega-Lambda-convex}.\eqref{item:Om-Lambda-in-Hpq} we have $\di\Om = \di\mathscr{C}(\L) = \L$, and so $\di\mathscr{C}(\L) = \L$ contains the full orbital limit set $\Lambda^{\mathsf{orb}}_{\Omega}(\Gamma)$ (see Section~\ref{subsec:Hpq-cc-def}).
In fact, $\L = \di M$ is equal to $\L^{\mathsf{orb}}_{\Omega}(\Gamma)$ because $\Gamma$ acts cocompactly on~$M$.
Therefore $\Gamma$ acts convex cocompactly on~$\Om$, and so \cite[Th.\,1.24]{dgk-proj-cc} implies that $\Gamma$ is $\H^{p,q}$-convex cocompact (see Fact~\ref{fact:Hpq-cc-proj}).

\subsection{Split spacetimes} \label{subsec:split-spacetimes}

The following gives examples of properly discontinuous and cocompact actions of \emph{non-hyperbolic} groups on (weakly) spacelike $p$-graphs in $\hat\H^{p,q}$ and~$\H^{p,q}$.

\begin{prop} \label{prop:split-spacetime}
For $p,q\geq 2$ and $2\leq r\leq\min(p,q)$, let $V_1,\dots,V_r$ be linear subspaces of $\R^{p,q+1}$ such that the restriction of $\sfb$ to any $V_i$ is non-degenerate of signature $(k_i,\ell_i)$ for some $k_i,\ell_i\geq 1$ and $\R^{p,q+1}$ is the $\sfb$-orthogonal direct sum of $V_1,\dots,V_r$.
For any $1\leq i\leq r$, let $\Gamma_i$ be a discrete subgroup of $\OO(\sfb|_{V_i}) \simeq \OO(k_i,\ell_i)$ acting properly discontinuously on a weakly spacelike (\resp a spacelike) $k_i$-graph $\hat M_i$ in $\hat\H_i := \hat\H^{p,q}\cap V_i \simeq \hat\H^{k_i,\ell_i-1}$ with boundary at infinity $\hat\L_i := \di\hat M_i \subset \di\hat\H_i$.
Then
\begin{enumerate}
  \item\label{item:split-1} $\hat M := \{ \sum_{i=1}^r m_i/\sqrt{r} \,|\, m_i\in\hat M_i\}$ is a weakly spacelike (\resp a spacelike) $p$-graph in $\hat\H^{p,q}$;
  \item\label{item:split-2} the discrete subgroup $\Gamma := \Gamma_1 \times \dots \times \Gamma_r$ of $\OO(p,q+1)$ acts properly discontinuously on~$\hat M$; this action is cocompact if and only if the action of $\Gamma_i$ on~$\hat M_i$ is cocompact for all $1\leq i\leq r$;
  \item\label{item:split-3} with the identification $\di\hat\H^{p,q} \simeq \{ v\in\R^{p,q+1}\smallsetminus\{0\} \,|\, \sfb(v,v)=0\}/\R_{>0}$ of \eqref{eqn:di-Hpq-hat-quotient}, the boundary at infinity $\hat\L := \di\hat M$ of $\hat M$ is the image of $\tilde\L := \{ \sum_{i=1}^r v_i \,|\, v_i\in\tilde\L_i\cup\{ 0\}\} \smallsetminus \{0\}$ where $\tilde\L_i$ is the preimage of~$\hat\L_i$ in $\{ v\in\R^{p,q+1}\smallsetminus\{0\} \,|\, \sfb(v,v)=0\}$;
  \item\label{item:split-4} $\hat M \subset \tilde\Om(\tilde\L)$ if and only if $\hat M_i \subset \tilde\Om(\tilde\L_i)$ for all $1\leq i\leq r$.
\end{enumerate}
\end{prop}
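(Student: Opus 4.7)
The entire proposition rests on a single observation. Define
\[
\Phi : \hat\H_1 \times \cdots \times \hat\H_r \longrightarrow \hat\H^{p,q}, \quad (m_1,\dots,m_r) \longmapsto (m_1+\cdots+m_r)/\sqrt r.
\]
This is well-defined because $\sfb$-orthogonality of the $V_i$ yields $\sfb(\Phi(\underline m),\Phi(\underline m)) = \tfrac1r\sum_i \sfb(m_i,m_i) = -1$; and it restricts to a homeomorphism $\prod_i \hat M_i \to \hat M$ with inverse $m \mapsto (\sqrt r\,\pi_i(m))_i$, where $\pi_i$ denotes the $\sfb$-orthogonal projection onto $V_i$. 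Moreover $\Phi$ is $\Gamma$-equivariant, since each $\gamma_i\in\Gamma_i$ preserves $V_i$ and acts trivially on every $V_j$ with $j\neq i$. Granting this, the bulk of the proposition is formal.

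For (1), $\hat M$ is closed in $\hat\H^{p,q}$ (each $\pi_i$ is continuous and each $\hat M_i$ is closed in $\hat\H_i$), and via $\Phi$ it is homeomorphic to the product $\prod_i \hat M_i$, a connected $p$-dimensional topological manifold without boundary since $\sum_i k_i = p$. For $m=\Phi(\underline m)$ and $m'=\Phi(\underline m')$ in $\hat M$, $\sfb$-orthogonality gives $\sfb(m,m') = \tfrac1r\sum_i \sfb(m_i,m_i')$; because each $\hat M_i$ is weakly spacelike and connected with $\sfb(\cdot,\cdot)\equiv -1$ on the diagonal, continuity forces $\sfb(m_i,m_i')\le -1$, whence $\sfb(m,m')\le -1$, so no two points of $\hat M$ are in timelike position. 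In the spacelike case, if $m\neq m'$ then some $m_{i_0}\neq m'_{i_0}$ yields a strict inequality, and Propositions~\ref{prop:weakly-sp-gr-is-a-graph}--\ref{prop:sp-gr-is-a-graph} conclude. Claim (2) is immediate from $\Gamma$-equivariance of~$\Phi$. For (4), a direct computation gives $\sfb(m,\tilde x) = \tfrac1{\sqrt r}\sum_i \sfb(m_i,v_i)$ for $m=\Phi(\underline m)\in\hat M$ and $\tilde x=\sum_i v_i\in\tilde\L$; if each $\hat M_i\subset\tilde\Om(\tilde\L_i)$, then the nonzero summands are strictly negative and the rest vanish, so $\sfb(m,\tilde x)<0$, while the contrapositive of the converse picks a witness $v_{i_0}\in\tilde\L_{i_0}$ to the failure and extends it to an element of $\tilde\L$ by zero on the other factors.

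The remaining claim (3) contains the main technical point. For the inclusion $\hat\L \subset \pi(\tilde\L)$, take a sequence $(m^{(n)}) = (\Phi(m_1^{(n)},\dots,m_r^{(n)}))$ in $\hat M$ whose projective class converges in $\di\hat\H^{p,q}$. Fix timelike $\ell_i$-planes $T_i\subset V_i$ and rescale by $\lambda_n := 1/\max_i r_{T_i}(m_i^{(n)}) \to 0$; passing to a subsequence, each $\lambda_n m_i^{(n)}$ converges to some $v_i\in V_i$, and the Lipschitz-graph description of $\hat M_i$ relative to $T_i$ (Proposition~\ref{prop:weakly-sp-gr-is-a-graph}) forces $v_i$ to be either $0$ or $\sfb$-isotropic with $[v_i]\in\hat\L_i$, i.e.\ $v_i\in\tilde\L_i\cup\{0\}$. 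For the reverse inclusion, given $v=\sum_i v_i\in\tilde\L$, one builds for each $i$ with $v_i\ne 0$ a sequence $m_i^{(n)}\in\hat M_i$ such that $\lambda_n m_i^{(n)} \to v_i/\sqrt r$ for a single common $\lambda_n\to 0$ (and takes $m_i^{(n)}$ constant when $v_i=0$). The chief obstacle is this synchronization of scalings across the factors, which is enabled precisely by the Lipschitz-graph picture: writing $\hat M_i$ as the graph of a 1-Lipschitz map $f_i:\bB^{k_i}\to\bS^{\ell_i-1}$, one parametrizes a ray in $\hat M_i$ approaching $[v_i]\in\hat\L_i$ by its $r_{T_i}$-coordinate, so the rate of escape in each factor can be prescribed freely and matched to $1/\lambda_n$.
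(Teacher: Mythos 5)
Your proof is correct and follows essentially the same route as the paper's: the $\sfb$-orthogonal decomposition of $\R^{p,q+1}$ into the $V_i$ reduces everything to elementary $\sfb$-computations, which you organize via the explicit product map $\Phi$ (the paper instead verifies the graph characterization of Proposition~\ref{prop:weakly-sp-gr-is-a-graph} directly, but the content is the same). The one place you elaborate beyond the paper, which labels~\eqref{item:split-3} as clear, is the rescaling argument showing $\di\hat M$ coincides with the image of $\tilde\L$; this is indeed the only non-formal point, and your synchronization of scales across the factors handles it correctly.
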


\begin{proof}
\eqref{item:split-1} Since each $\hat M_i$ is a non-positive (\resp negative) subset of $\hat\H^{p,q}$ (Definition~\ref{def:nonpos-neg}), so is~$\hat M$.
For any $1\leq i\leq r$, choose a timelike $\ell_i$-plane $T_i$ of $V_i \simeq \R^{k_i,\ell_i}$, so that $T := T_1\oplus\dots\oplus T_r$ is a timelike $(q+1)$-plane of $\R^{p,q+1}$, with orthogonal complement $T^{\perp} = (T_1^{\perp}\cap V_1) \oplus \dots \oplus (T_r^{\perp}\cap V_r)$.
By Proposition \ref{prop:weakly-sp-gr-is-a-graph} (\resp \ref{prop:sp-gr-is-a-graph}), the set $\hat M_i$ is the graph of a $1$-Lipschitz (\resp strictly $1$-Lipschitz) map in the splitting $\hat\H_i \simeq \bB^{k_i}\times\bS^{\ell_i-1}$ associated to~$T_i$ (see Proposition~\ref{p.doublecoverasproduct}).
The fact that $\hat M_i$ is a graph means (see the proof of Lemma~\ref{lem:Hpq-hat-bar-prod}) that for any $v\in T^{\perp}\cap V_i = T_i^{\perp}\cap V_i$, there is a unique $v'\in T_i$ such that some positive multiple of $v+v'$ belongs to~$\hat M_i$.
We deduce that for any $v\in T^{\perp}$, there is a unique $v'\in T$ such that some positive multiple of $v+v'$ belongs to~$\hat M$.
Therefore $\hat M$ is a graph in the splitting $\hat\H^{p,q} \simeq \bB^p \times \bS^q$ associated to~$T$ as in Proposition~\ref{p.doublecoverasproduct}.
Since $\hat M$ is non-positive (\resp negative), it is the graph of a $1$-Lipschitz (\resp strictly $1$-Lipschitz) map by Lemma~\ref{l.nonpossets}.
Therefore $\hat M$ is a weakly spacelike (\resp a spacelike) $p$-graph in $\hat\H^{p,q}$ by Proposition \ref{prop:weakly-sp-gr-is-a-graph} (\resp \ref{prop:sp-gr-is-a-graph}).

\eqref{item:split-2} and \eqref{item:split-3} are clear.

\eqref{item:split-4} The fact that $\hat M \subset \tilde\Om(\tilde\L)$ means (see Notation~\ref{not:Omega-Lambda}) that $\sfb(m,\tilde x)<0$ for all $m\in\hat M$ and all $\tilde x\in\tilde\L$.
By definition of~$\hat M$, by \eqref{item:split-3}, and by Proposition~\ref{prop:bdynonpossphere}.\eqref{item:bound-non-pos-sph-1}, this holds if and only if $\sfb(m_i,\tilde x_i)<0$ for all $m_i\in\hat M_i$, all $\tilde x_i\in\tilde\L_i$, and all~$i$; equivalently, $\hat M_i \subset \tilde\Om(\tilde\L_i)$ for all~$i$.
\end{proof}

\begin{remark} \label{rem:split-spacetime-lift}
Let $V_1,\dots,V_r$ be as in Proposition~\ref{prop:split-spacetime}.
For any $1\leq i\leq r$, let $\Gamma_i$ be a discrete subgroup of $\OO(\sfb|_{V_i}) \simeq \OO(k_i,\ell_i)$ acting properly discontinuously on a weakly spacelike (\resp a spacelike) $k_i$-graph $M_i$ in $\H_i := \H^{p,q}\cap\P(V_i) \simeq \H^{k_i,\ell_i-1}$ with boundary at infinity $\L_i := \di M_i \subset \di\H_i$.
Then each $\Gamma_i$ has a subgroup $\Gamma'_i$ of index $\leq 2$ which preserves a weakly spacelike (\resp a spacelike) $k_i$-graph $\hat M_i$ in $\hat\H_i := \hat\H^{p,q}\cap V_i \simeq \hat\H^{k_i,\ell_i-1}$ lifting~$M_i$.
Proposition~\ref{prop:split-spacetime} applies to the $\Gamma'_i$ and~$\hat M_i$; it gives that the image $M$ of $\{ \sum_{i=1}^r m_i/\sqrt{r} \,|\, m_i\in\hat M_i\}$ in~$\H^{p,q}$ is a weakly spacelike (\resp a spacelike) $p$-graph, on which $\Gamma' := \Gamma'_1 \times \dots \times \Gamma'_r$ acts properly discontinuously; this action is cocompact if and only if the action of $\Gamma_i$ on~$M_i$ is cocompact for all $1\leq i\leq r$.
Moreover, $M \subset \Om(\L)$ (where $\L := \di M$) if and only if $M_i \subset \Om(\L_i)$ for all~$i$.
\end{remark}

Note that by Fact~\ref{fact:vcd}, the action of $\Gamma_i$ on~$M_i$ is cocompact if and only if $\vcd(\Gamma_i)=k_i$.
This is the case for instance if $\Gamma_i$ is a uniform lattice in a copy of $\OO(k_i,1)$ inside $\OO(\sfb|_{V_i}) \simeq \OO(k_i,\ell_i)$, in which case $\Gamma_i$ acts properly discontinuously and cocompactly on a copy $M_i$ of $\H^{k_i}$ in $\H^{p,q}\cap\P(V_i) \simeq \H^{k_i,\ell_i-1}$ (a totally geodesic spacelike $k_i$-manifold).
For $q=1$ and $r=2$, such $\Gamma = \Gamma_1 \times \Gamma_2$ with $\Gamma_i$ a uniform lattice in $\SO(k_i,1) = \SO(k_i,\ell_i)$ were considered in \cite[\S\,4.6]{bar15}, where the corresponding quotients $\Gamma\backslash\Om(\L)$ were called \emph{split AdS spacetimes}.

\begin{remark} \label{rem:split-spacetime}
In the setting of Proposition~\ref{prop:split-spacetime} and Remark~\ref{rem:split-spacetime-lift},
\begin{enumerate}
  \item\label{item:split-spacetime-1} the set $\L = \di M$ is a non-positive $(p-1)$-sphere in $\di\H^{p,q}$ (Proposition~\ref{prop:bdynonpossphere}.\eqref{item:bound-non-pos-sph-1}) which contains \emph{non-transverse} points (\eg a point of $\L_i$ is never transverse to a point of~$\L_j$ for $1\leq i<j\leq r$);
  \item\label{item:split-spacetime-2} if $M \subset \Om(\L)$, then $\L$ is non-degenerate (Lemma~\ref{lem:Omega-Lambda-convex}.\eqref{item:Om-Lambda-non-empty}) and contains $r$-crowns of the form $\sfC = \{x_1^{\pm},\ldots,x_r^{\pm}\}$ where $x_i^+$ and~$x_i^-$ are any two transverse points of~$\L_i$;
  \item\label{item:split-spacetime-3} if at least two groups $\Gamma_i$ are infinite (\eg they act cocompactly on their corresponding $M_i$), then $\Gamma = \Gamma_1\times\dots\times\Gamma_r$ and $\Gamma' = \Gamma'_1\times\dots\times\Gamma'_r$ are \emph{not} Gromov hyperbolic;
  \item\label{item:split-spacetime-4} if one group $\Gamma_i$ is $\H^{k_i,\ell_i-1}$-convex cocompact and the other groups $\Gamma_i$ are finite, then $\Gamma = \Gamma_1\times\dots\times\Gamma_r$ and $\Gamma' = \Gamma'_1\times\dots\times\Gamma'_r$ are Gromov hyperbolic, but the actions of $\Gamma$ on~$\hat M$ and of $\Gamma'$ on~$M$ are \emph{not} cocompact.
\end{enumerate}
Points \eqref{item:split-spacetime-1}--\eqref{item:split-spacetime-2}--\eqref{item:split-spacetime-4} show that if we remove the cocompactness assumption in Theorem~\ref{thm:geom-action-weakly-sp-gr-basic} or Corollary~\ref{cor:hyp-spacelike-compact->cc}, then the Gromov hyperbolicity of~$\Gamma$ does not imply the transversality of~$\L$ anymore, hence does not imply that $\Gamma$ is $\H^{p,q}$-convex cocompact.
\end{remark}

\begin{example}
In the setting of Proposition~\ref{prop:split-spacetime} and Remark~\ref{rem:split-spacetime-lift}, suppose there exists $1\leq j\leq r$ such that $k_i=\ell_i=1$ for all $1\leq i\leq j$.
For any $1\leq i\leq j$ we can take for $M_i$ a spacelike geodesic of $\H_i$ with endpoints $x_i^+$ and~$x_i^-$, so that $\L_i := \di M_i = \{x_i^+,x_i^-\}$, and for $\Gamma_i \simeq \Z$ a discrete subgroup of $\OO(\sfb|_{V_i}) \simeq \OO(1,1)$ generated by a proximal element with attracting fixed point $x_i^+$ and repelling fixed point~$x_i^-$.
Then the set $\sfC := \L_1 \cup \dots \cup \L_j$ is a $j$-crown (Definition~\ref{def:j-crown}) in the non-positive $(p-1)$-sphere $\L = \di M$, and the subgroup $\Gamma_1\times\dots\times\Gamma_j \simeq \Z^j$ of~$\Gamma$ is a lattice in the group $A_{\sfC} \simeq \R^j$ of Lemma~\ref{lem:F-spacelike}.
If $j=r=p$, then $M$ is an $A_{\sfC}$-orbit of an open convex hull $\mathcal{O}$ of $\sfC$ as in Lemma~\ref{lem:F-spacelike}; in particular, $M$ is a complete spacelike embedded $p$-submanifold of~$\H^{p,q}$ and $\L = \di\mathcal{O}$.
For general $2\leq j\leq r$, if each $M_i$ for $j+1\leq i\leq r$ is a weakly spacelike (\resp a spacelike) smooth embedded submanifold of $\H_i$, then $M$ is a weakly spacelike (\resp a spacelike) smooth embedded $p$-submanifold of~$\H^{p,q}$.
Note that the sectional curvature on~$M$ has vanishing directions.
\end{example}

\section{Non-degeneracy for reductive limits} \label{sec:non-deg-limit}

The goal of this section is to prove Proposition~\ref{prop:non-deg-limit}.

In the case $q=1$, one can give the following short argument, which implies Proposition~\ref{prop:non-deg-limit} using Fact~\ref{fact:fd-closed}.

\begin{lem} \label{lem:Lambda-non-deg-q=1}
For $p\geq 2$, let $\Gamma$ be a finitely generated group with no infinite nilpotent normal subgroups, such that $\vcd(\Gamma)=p$.
Let $\rho : \Gamma\to\PO(p,2)$ be a representation with finite kernel and discrete image, such that the Zariski closure of $\rho(\Gamma)$ in $\PO(p,2)$ is reductive, and let $M$ be a $\rho(\Gamma)$-invariant weakly spacelike $p$-graph in~$\H^{p,1}$.
Then $\L := \di M$ is non-degenerate.
\end{lem}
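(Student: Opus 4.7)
The plan is to argue by contradiction, exploiting the splitting $\rho = \kappa\oplus\kappa^*\oplus\nu$ introduced in Section~\ref{subsec:split-rho} together with a short Riemannian dimension count due to Barbot. Suppose $\L$ is $\sfb$-degenerate, so that $k := \dim V \geq 1$, where $V = \ker(\sfb|_{\spa(\L)})$. Since $q=1$, Lemma~\ref{l.kernonpossphere} forces $k\in\{1,2\}$, and I will treat the three cases $k=1$, $k=2<p$, and $k=p=2$ separately.

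First I would verify that $(\kappa,\nu) : \Gamma \to \GL(V)\times\OO(\sfb|_E) \simeq \GL(k,\R)\times\OO(p-k,2-k)$ has finite kernel and discrete image. The kernel coincides with $\ker\rho$, because the dual piece $\kappa^*=\kappa_{tr}$ is determined algebraically by~$\kappa$; and if a sequence $(\kappa(\gamma_n),\nu(\gamma_n))$ tends to the identity, then so does $\rho(\gamma_n)=(\kappa(\gamma_n),\kappa^*(\gamma_n),\nu(\gamma_n))$, forcing $\gamma_n$ to be eventually trivial by discreteness of $\rho(\Gamma)$. Since the target is a reductive Lie group, its discrete subgroups act properly discontinuously on the associated Riemannian symmetric space
\[
X_k := \GL(k,\R)/\OO(k)\,\times\,\OO(p-k,2-k)/\big(\OO(p-k)\times\OO(2-k)\big),
\]
a contractible manifold of dimension $k(k+1)/2+(p-k)(2-k)$. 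Concretely, $X_1=\R\times\H^{p-1}$ has dimension $p$, while $X_2=\H^2\times\R$ has dimension~$3$.

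Next I would rule out each case. For $k=1$, the equality $\dim X_1=p=\vcd(\Gamma)$ forces, by Fact~\ref{fact:vcd}, the $\Gamma$-action on $X_1$ to be cocompact; the Milnor--\v{S}varc lemma then yields a quasi-isometry $\Gamma\simeq\R\times\H^{p-1}$. However, a Riemannian product of two unbounded geodesic spaces is never Gromov hyperbolic, contradicting the hypothesis on~$\Gamma$. The case $k=2$ with $k<p$, i.e., $p\geq 3$, is analogous: Fact~\ref{fact:vcd} gives $p\leq\dim X_2=3$, so $p=3$ and the action is cocompact, producing a quasi-isometry $\Gamma\simeq\H^2\times\R$, which is again not Gromov hyperbolic. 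Finally, in the remaining case $k=p=2$ the subspace $E$ vanishes and $\L$ is totally $\sfb$-degenerate, equal to the projectivization of the $2$-dimensional totally isotropic subspace $V\subset\R^{2,2}$; this is excluded by Remark~\ref{rem:weakly-sp-tot-deg-bound}, since $p=q+1=2$.

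The main obstacle I expect is precisely this borderline case $k=p=2$: the dimension count used when $k<p$ breaks down once the non-degenerate part $E$ vanishes (the symmetric space $\GL(2,\R)/\OO(2)=\H^2\times\R$ has dimension $3>p=2$, so one cannot promote the $\Gamma$-action to a cocompact one and the Milnor--\v{S}varc argument is inapplicable). Fortunately, precisely in this situation the non-existence result of Remark~\ref{rem:weakly-sp-tot-deg-bound} (whose hypothesis $p\geq q+1$ is satisfied here) applies and supplies the missing contradiction. Apart from this borderline case, the argument amounts to a clean Riemannian dimension count, as in Barbot's original treatment of the $q=1$ case.
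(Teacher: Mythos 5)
Your proof is correct and follows essentially the same route as the paper: a Riemannian dimension count combined with Milnor--\v{S}varc, exactly as in Barbot's argument. The only real difference is the case organization. The paper first invokes the stronger part of Lemma~\ref{l.kernonpossphere} (namely: if $k<p$ then the signature of $\sfb|_{\spa(\hat\L)}$ is $(p-k,q'|k)$ with $q'\geq 1$, which for $q=1$ forces $k\leq 1$) together with Remark~\ref{rem:weakly-sp-tot-deg-bound} for the borderline $k=p=2$, to conclude $k\leq 1$ outright before doing any analysis of Levi subgroups; it then only has to treat $k=1$. You instead allow $k\in\{1,2\}$ and dispose of $k=2<p$ by an extra dimension count on the symmetric space $\GL(2,\R)/\OO(2)$, not noticing that this case is already vacuous by Lemma~\ref{l.kernonpossphere} (since $k<p$ and $q=1$ force $q'\leq q+1-k\leq 0$, impossible). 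Your handling is not wrong — the contradiction you derive is valid — it is just superfluous. Everything else, including the reduction to the Levi factor via reductivity of the Zariski closure, the identification of the symmetric space $\R\times\H^{p-1}$ of dimension $p$ in the $k=1$ case, the cocompactness conclusion from $\vcd(\Gamma)=p$ via Fact~\ref{fact:vcd}, and the appeal to Remark~\ref{rem:weakly-sp-tot-deg-bound} when $k=p=2$, matches the paper.
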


\begin{proof}
By Proposition~\ref{prop:bdynonpossphere}.\eqref{item:bound-non-pos-sph-1}, the set $\L = \di M$ is a non-positive $(p-1)$-sphere in $\di\H^{p,1}$.
Let $0\leq k\leq 2$ be the dimension of the kernel $V$ of $\sfb|_{\spa(\L)}$.
We note that $k\leq 1$: indeed, this follows from Lemma~\ref{l.kernonpossphere} if $p>2=q+1$, and from Proposition~\ref{prop:deg-weakly-sp-foliation} if $p=2=q+1$.

Suppose by contradiction that $k=1$.
Then $\rho(\Gamma)$ is contained in the stabilizer of an isotropic line of~$\R^{p,2}$, which is a proper parabolic subgroup of $\PO(p,2)$.
Since the Zariski closure $G$ of $\rho(\Gamma)$ in $\PO(p,2)$ is reductive, it is actually contained in a Levi subgroup of this parabolic subgroup, namely a Lie subgroup $L$ isomorphic to $(\GL(1,\R) \times \OO(p-1,1))/\{\pm\mathrm{Id}\}$.
The group $G$ is the set of real points of a connected reductive real algebraic group~$\sfG$.
Let $G^{ss}$ be the set of real points of the commutator subgroup of~$\sfG$, and let $\pi : G\to G^{ss}$ be the natural projection.
By Fact~\ref{fact:proj-ss-discrete}, the group $\pi(\Gamma)$ is discrete and Zariski-dense in~$G^{ss}$.
By assumption $\Gamma$ has no infinite nilpotent normal subgroups, hence the restriction of $\pi$ to~$\Gamma$ has finite kernel.
Therefore the action of $\Gamma$ via~$\pi$ on the Riemannian symmetric space of~$G^{ss}$ is properly discontinuous.
But this Riemannian symmetric space has dimension $<p$.
Indeed, $G^{ss}$ is a semi-simple subgroup of $L \simeq (\GL(1,\R) \times \OO(p-1,1))/\{\pm\mathrm{Id}\}$, hence its intersection with the central subgroup $\GL(1,\R)$ of~$L$ is trivial (or of cardinality~$2$), and so $G^{ss}$ is isomorphic to (or a double covering of) its projection to $\PO(p-1,1)$; therefore the Riemannian symmetric space of~$G^{ss}$ embeds into that of $\PO(p-1,1)$, namely $\H^{p-1}$.
Since $\vcd(\Gamma)=p$, we obtain a contradiction with Fact~\ref{fact:vcd}.

This shows that $k=0$, \ie $\L$ is non-degenerate.
\end{proof}

When $q>1$, proving the non-degeneracy of~$\L$ (Proposition~\ref{prop:non-deg-limit}) is more difficult, as the Riemannian symmetric space of $\GL(k,\R) \times \OO(p-k,q+1-k)$ has dimension $k(k+1)/2 + (p-\nolinebreak k)(q+1-k)$ which is then strictly larger than $p = \vcd(\Gamma)$ (the case $k=q+1<p$ is excluded by Lemma~\ref{l.kernonpossphere}).
Our proof of Proposition~\ref{prop:non-deg-limit} then goes by establishing a more complicated general bound on cohomological dimension (Proposition~\ref{prop:semiprox-vcd}), involving positively semi-proximal representations in the sense of \cite{ben05}.
We next show that we can control this bound in our situation to prove that $V=\{0\}$: see Section~\ref{subsec:Lambda-nondegen}.

\subsection{Reminders on semi-proximality} \label{subsec:semi-proximality}

Let $V$ be a finite-dimensional real vector space.
An element $g\in\GL(V)$ is called \emph{proximal} if it has a unique complex eigenvalue of maximal modulus (this eigenvalue is then necessarily real), and this eigenvalue has multiplicity~$1$; equivalently, $g$ has a unique attracting fixed point in the projective space $\P(V)$.
The element $g\in\GL(V)$ is called \emph{positively proximal} if it is proximal and its eigenvalue of maximal modulus is positive.
The element $g\in\GL(V)$ is called \emph{semi-proximal} (\resp \emph{positively semi-proximal}) if it admits a real (\resp positive) eigenvalue which has maximal modulus among all complex eigenvalues of~$g$ (without assuming anything on multiplicity nor uniqueness of the eigenvalues of maximal modulus).

\begin{remark} \label{rem:prox-semi-prox}
Proximality implies semi-proximality, and positive proximality is equivalent to the conjunction of proximality and positive semi-proximality.
\end{remark}

We now fix a connected reductive real algebraic group $\sfG$, and denote by $G$ its set of real points.
By a \emph{linear representation} $(\tau,V)$ of~$G$ we mean the restriction to the real points of a homomorphism of real algebraic groups from $\sfG$ to $\mathsf{GL(V)}$.
Fix such a linear representation $(\tau,V)$ of~$G$.

As in Section~\ref{subsec:limit-cone}, let $\mathfrak{a}$ be a Cartan subspace of the Lie algebra of~$G$, let $\mathfrak{a}^+$ be a closed Weyl chamber in~$\mathfrak{a}$, and let $\lambda : G\to\mathfrak{a}^+$ be the Jordan projection.
An element $g\in G$ is called \emph{loxodromic} (or \emph{$\R$-regular}) if $\lambda(g)$ belongs to the interior $\mathrm{Int}(\mathfrak{a}^+)$ of~$\mathfrak{a}^+$.

\begin{defn}
Let $\Gamma$ be a subsemigroup of~$G$.
The linear representation $(\tau,V)$ of~$G$ is \emph{$\Gamma$-proximal} (\resp \emph{positively $\Gamma$-proximal}, \resp \emph{$\Gamma$-semi-proximal}, \resp \emph{positively $\Gamma$-semi-proximal}) if for any element $\gamma\in\Gamma$ which is loxodromic in~$G$, the image $\tau(\gamma)\in\GL(V)$ is proximal (\resp positively proximal, \resp semi-proximal, \resp positively semi-proximal).
\end{defn}

Let $T$ be a maximal compact torus in the centralizer of $A := \exp(\mathfrak{a})$ in~$G$.

\begin{remark} \label{rem:char-pos-semiprox}
Let $g$ be a loxodromic element of~$G$.
Then $g$ is conjugate in~$G$ to an element of the form $ta$ where $t\in T$ and $a\in\exp(\mathrm{Int}(\mathfrak{a}^+))$.
The element $\tau(g)$ is positively semi-proximal if and only if $\tau(t)$ fixes some non-zero vector in the highest eigenspace $V^+$ of $\tau(a)$ in~$V$, or in other words $\det(\tau(t)|_{V^+} - \mathrm{Id}_{V^+})=0$.
\end{remark}

\begin{remark} \label{rem:pos-semiprox-remove-doubles}
Let $\Gamma$ be a subsemigroup of~$G$ and let $\tau_1,\dots,\tau_{\ell}$ be linear representations of~$G$.
\begin{enumerate}
  \item\label{item:semiprox-doubles-1} Suppose $\tau_1$ and $\tau_2$ are isomorphic representations.
  Then $\tau_1\oplus\dots\oplus\tau_{\ell}$ is positively $\Gamma$-semi-proximal if and only if $\tau_2\oplus\dots\oplus\tau_{\ell}$ is.
  \item\label{item:semiprox-doubles-2} If $\tau_1,\dots,\tau_{\ell}$ are all positively $\Gamma$-semi-proximal, then so is $\tau_1\oplus\dots\oplus\tau_{\ell}$.
\end{enumerate}
\end{remark}

We shall also use the following fact; recall that the linear representation $(\tau,V)$ of~$G$ is called \emph{irreducible} if $V$ does not admit any non-trivial $\tau(G)$-invariant linear subspace.

\begin{fact}[{\cite[Cor.\,5.1]{ben05}}] \label{fact:semiprox-Gamma-G}
Let $\Gamma$ be a Zariski-dense subgroup of~$G$.
If the linear representation $(\tau,V)$ of~$G$ is irreducible and $\Gamma$-semi-proximal, then it is in fact $G$-semi-proximal.
\end{fact}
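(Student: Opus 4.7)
The plan is to translate $G$-semi-proximality into the vanishing of a single regular function on a compact torus, and then to combine this with a Zariski density statement for the toral parts of loxodromic elements of~$\Gamma$. First, I would use the analog of Remark~\ref{rem:char-pos-semiprox} for ordinary (not necessarily positive) semi-proximality: any loxodromic $g\in G$ is $G$-conjugate to a product $ta$ with $a\in\exp(\mathrm{Int}(\mathfrak{a}^+))$ and $t$ in a maximal compact torus $T$ of the centralizer of $A=\exp(\mathfrak{a})$. Under such a decomposition, $\tau(g)$ is semi-proximal if and only if the unitary restriction $\tau(t)|_{V^+}$ of $\tau(t)$ to the highest $\tau(a)$-weight space $V^+$ admits $\pm 1$ as an eigenvalue. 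Note that $V^+$ depends only on the fixed chamber~$\mathfrak{a}^+$ (not on the particular regular~$a$), and is nonzero by irreducibility. Consequently $(\tau,V)$ is $G$-semi-proximal if and only if the regular function
\[
P\colon T \longrightarrow \R, \qquad t\longmapsto \det\!\bigl(\tau(t)|_{V^+}-\mathrm{Id}\bigr)\,\det\!\bigl(\tau(t)|_{V^+}+\mathrm{Id}\bigr),
\]
vanishes identically on~$T$.

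The central step consists in proving that, for $\Gamma$ Zariski dense in~$G$, the set of toral parts $\{t_{\gamma} : \gamma\in\Gamma\text{ loxodromic}\}$ (well defined up to conjugation inside~$T$) is Zariski dense in~$T$. This sharpens the Zariski density of the Jordan projections in~$\mathfrak{a}^+$ behind Fact~\ref{fact:limit-cone} to the full ``complex'' Jordan projection, which takes values in $\mathfrak{a}\oplus i\,\mathrm{Lie}(T)$ and whose imaginary part records the toral component. Combined with Fact~\ref{fact:subsemigroup-in-cone}, the idea is that one can perturb loxodromic elements of~$\Gamma$ so as to realize arbitrary semisimple complex spectra compatible with the limit cone; by iterating such perturbations inside open sub-semigroups of~$G$ meeting~$\Gamma$, one shows that the image of the toral-part map is not contained in any proper closed algebraic subvariety of~$T$.

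Granted the two steps above, the conclusion is immediate: the $\Gamma$-semi-proximality hypothesis says precisely that $P(t_{\gamma})=0$ for every loxodromic $\gamma\in\Gamma$; by the density of $\{t_{\gamma}\}$ in~$T$ we get $P\equiv 0$ on~$T$, whence $(\tau,V)$ is $G$-semi-proximal. The main technical obstacle is the Zariski density of the toral parts in~$T$: the Jordan-projection counterpart (density in~$\mathfrak{a}^+$) is classical, but its torus refinement requires a more delicate approximation argument using loxodromic ping-pong pairs inside~$\Gamma$. This is also where the irreducibility of $(\tau,V)$ is crucial, since it prevents the toral parts of~$\Gamma$ from being algebraically trapped in a proper subtorus of~$T$ by a splitting of~$V$ into smaller $\tau(G)$-invariant pieces with incompatible highest weights.
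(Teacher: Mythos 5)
Your reduction of $G$-semi-proximality to the identical vanishing of the regular function $P$ on the compact torus $T$ is sound, and it does capture the right framework (via the analogue of Remark~\ref{rem:char-pos-semiprox}): since $T$ is connected, $P\equiv 0$ on $T$ is equivalent to some $T$-weight appearing in $V^+$ being trivial, which is precisely $G$-semi-proximality. Note, however, that irreducibility of $(\tau,V)$ enters here and not where you say: it is used to ensure that the highest restricted weight of $\tau$ is unique, so that the highest-eigenvalue subspace $V^+$ does not depend on the choice of $a\in\exp(\mathrm{Int}\,\mathfrak{a}^+)$ and $P$ is a single well-defined polynomial on~$T$. Your claim that irreducibility ``prevents the toral parts of $\Gamma$ from being algebraically trapped in a proper subtorus'' is incorrect; the set of toral parts $\{t_\gamma\}$ depends only on $\Gamma$ and $G$, not on the representation $\tau$.

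The genuine gap is the central claim you label as ``the main technical obstacle'': that the toral parts of loxodromic elements of any Zariski-dense $\Gamma\subset G$ are Zariski dense in $T$. This is not a routine sharpening of Fact~\ref{fact:limit-cone}: the Jordan-projection density lives in the Euclidean cone $\mathfrak{a}^+$ and comes from growth, whereas the toral component lives in a compact group and requires a separate equidistribution/approximation mechanism (this is the role of results like \cite[Cor.\,8.5]{ben05}, quoted in the proof of Lemma~\ref{lem:semiprox-Gamma'}). Your sketch — ``one can perturb loxodromic elements so as to realize arbitrary semisimple complex spectra compatible with the limit cone, by iterating inside open sub-semigroups'' — gestures at a ping-pong argument but does not show why the resulting toral parts exhaust $T$ up to Zariski closure, nor why they cannot remain inside a proper algebraic subgroup of $T$. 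As written, the proof of the statement has been entirely transferred to this unproven lemma, so the argument is not complete.
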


The statement is not true in general if one changes ``semi-proximal'' into ``positively semi-proximal''.

\begin{lem} \label{lem:semiprox-Gamma'}
Let $\Gamma$ be a Zariski-dense subgroup of~$G$.
Let $G'$ be an open subsemigroup of~$G$ meeting~$\Gamma$, and $\Gamma' := \Gamma\cap G'$.
If the linear representation $(\tau,V)$ of~$G$ is positively $\Gamma'$-semi-proximal, then it is also positively $\Gamma$-semi-proximal.
\end{lem}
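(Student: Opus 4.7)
The plan is to reformulate positive semi-proximality as a Zariski-closed condition on the torus component of the Jordan decomposition (Remark~\ref{rem:char-pos-semiprox}), and then to show that any loxodromic element of~$\G$ can be approximated, at the level of this torus data, by loxodromic elements of~$\G'$ so that the closed condition passes to the limit.

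First, I would fix a loxodromic $\gamma_0\in\G$. Up to $G$-conjugation, write $\gamma_0 = t_0 a_0$ with $a_0\in\exp(\mathrm{Int}(\mathfrak{a}^+))$ and $t_0\in T$, the maximal compact torus centralizing~$A$. Let $V^+$ be the highest restricted weight space of $\tau(a_0)$. By Remark~\ref{rem:char-pos-semiprox}, what we want is $P(t_0)=0$, where $P(t):=\det(\tau(t)|_{V^+}-\mathrm{Id}_{V^+})$ is a regular function on~$T$, invariant under the identity component of the centralizer of~$V^+$ in~$T$.

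Next, I would use the open-semigroup structure of $G'$ together with \cite{ben97} to build a sequence of loxodromic elements $\delta_n\in\G'$ whose Cartan decomposition $\delta_n = t_n a_n$ (in the same $G$-conjugate of $TA$) has the following properties: $\lambda(\delta_n)=n\lambda(\gamma_0)+O(1)$, the attracting flag of $\delta_n$ tends to that of $\gamma_0$, and, up to subsequence, the torus parts $t_n$ converge to an element~$t_\infty$ that differs from~$t_0$ only by an element of the identity component of the centralizer of~$V^+$ in~$T$. The sequence would be obtained by taking products of a fixed loxodromic $\gamma'\in\G'$ with high powers of $\gamma_0$ (or appropriate conjugates thereof), the contraction-expansion dynamics of $\gamma_0^n$ forcing such products eventually into the open set~$G'$, hence into~$\G'$. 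The positive $\G'$-semi-proximality hypothesis gives $P(t_n)=0$ for every~$n$; passing to the limit and using the invariance of $P$ under the identity component just mentioned then yields $P(t_0)=0$, as desired.

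The main obstacle is the torus bookkeeping: the compact torus $T$ has in general a non-trivial component group, and we need the sequence $(t_n)$ to accumulate at~$t_0$ modulo the correct finite coset of the relevant stabilizer. This requires a careful analysis of the Cartan decomposition of the product elements $\gamma'\gamma_0^n$ (or their conjugated analogues), together with a pigeonhole argument over~$n$, the flexibility being provided by the freedom to prescribe $\gamma'$ (and, if needed, to shrink $G'$ further via Fact~\ref{fact:subsemigroup-in-cone}).
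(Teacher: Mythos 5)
Your plan shares the paper's core idea — reduce positive semi-proximality to a determinant condition $P(t)=0$ on the Jordan torus part via Remark~\ref{rem:char-pos-semiprox}, and transfer it from $\Gamma'$ to $\Gamma$ by an approximation argument — but the approximation step as you sketch it has a real gap, and the paper handles things differently.

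First, the paper argues by \emph{contraposition}: it picks a loxodromic $\gamma\in\Gamma$ with $P(t)\neq 0$, and invokes \cite[Cor.\,8.5]{ben05} to find elements $\gamma'\in\Gamma'$ that are proximal in~$G$ and whose Jordan torus part $t'$ is \emph{arbitrarily close} to~$t$; since $P(t)\neq 0$ is an open condition, $P(t')\neq 0$, so $\tau$ is not positively $\Gamma'$-semi-proximal. This is strictly easier than what you attempt: you argue the statement directly from a loxodromic $\gamma_0\in\Gamma$, which means you need the closed condition $P(t_n)=0$ to pass to a limit \emph{and} the limit $t_\infty$ to agree with $t_0$ (modulo the stabilizer of~$V^+$). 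The contraposition avoids the second requirement entirely, replacing it with openness. You have correctly flagged this as the main obstacle, but the pigeonhole/shrinking sketch does not resolve it.

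Second, the explicit sequence $\delta_n = \gamma'\gamma_0^n$ does not do what you need, for two reasons. (1) There is no reason for $\gamma'\gamma_0^n$ to lie in $G'$: $G'$ is an open subsemigroup, so it is stable under products of its own elements, but $\gamma_0$ need not be in~$G'$, and the ``contraction–expansion'' dynamics of $\gamma_0^n$ only control attracting/repelling flags and Cartan projections, not membership in an arbitrary open set of~$G$ (the sequence $\gamma'\gamma_0^n$ leaves every compact set, and $G'$ need have no good behavior at infinity). (2) Even granting loxodromicity of $\gamma'\gamma_0^n$ for large~$n$, its Jordan torus part has no reason to converge to~$t_0$: already for $\gamma_0^n$ itself the torus part is $t_0^n$, which is dense in the closure of $\langle t_0\rangle$ in~$T$ and convergent only when $t_0$ has finite order, and the Jordan decomposition of the product $\gamma'\gamma_0^n$ is not a simple function of those of the factors. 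The result you would need — that the angular/torus data of loxodromic elements of $\Gamma'$ accumulates at any prescribed angular datum of a loxodromic element of~$\Gamma$ — is precisely the content of Benoist's \cite[Cor.\,8.5]{ben05}, which the paper cites; it is not something that falls out of a single power-of-$\gamma_0$ construction. If you want a self-contained route, you would essentially have to reproduce that argument (which involves showing Zariski density and equidistribution properties of the angular part of $\Gamma'$), and that is a substantial piece of work rather than a ``careful analysis.''

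Finally, a smaller point: when $\tau$ is reducible the highest eigenspace $V^+$ of $\tau(a)$ depends on $a$, so even after approximating the torus part one must check that $a'$ can be taken in the same subcone of $\mathrm{Int}(\mathfrak{a}^+)$ where $V^+$ is locally constant. This is delivered by the direction control in \cite[Cor.\,8.5]{ben05}, and is another place where quoting that result saves work.
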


\begin{proof}
By contraposition, suppose $(\tau,V)$ is not $\Gamma$-semi-proximal: there exists an element $\gamma\in\Gamma$ which is loxodromic in~$G$ and such that $\tau(\gamma)$ is not positively semi-proximal.
By Remark~\ref{rem:char-pos-semiprox}, the element $\gamma$ is conjugate in~$G$ to an element of the form $ta$ where $t\in T$ and $a\in\exp(\mathrm{Int}(\mathfrak{a}^+))$, such that $\det(\tau(t)|_{V^+} - \mathrm{Id}_{V^+})\neq 0$ where $V^+$ is the highest eigenspace of $\tau(a)$ in~$V$.
By \cite[Cor.\,8.5]{ben05}, we can find elements $\gamma'\in\Gamma'$ that are proximal in~$G$ and conjugate to elements of the form $t'a'$ with $a'\in\exp(\mathrm{Int}(\mathfrak{a}^+))$ and $t'\in T$ arbitrarily close to~$t$.
In particular, we still have $\det(\tau(t')|_{V^+} - \mathrm{Id}_{V^+})\neq 0$, and so $\tau(\gamma')$ is not positively semi-proximal, which shows that $\tau$ is not positively $\Gamma'$-semi-proximal.
\end{proof}

\begin{cor} \label{cor:pos-semiprox-domin}
Let $\Gamma$ be a Zariski-dense subgroup of~$G$.
For $1\leq i\leq\ell$, let $(\tau_i,V_i)$ be an irreducible linear representation of~$G$ with highest weight $\chi_i\in\mathfrak{a}^*$.
Suppose that the linear representation $\tau_1\oplus\dots\oplus\tau_{\ell}$ of~$G$ is positively $\Gamma$-semi-proximal.
\begin{enumerate}
  \item\label{item:pos-semiprox-domin-1} (\cite[Lem.\,6.3]{ben05}) If there exists an element $X$ of the limit cone $\mathcal{L}_{\Gamma}$ such that $\langle\chi_1,X\rangle > \langle\chi_i,X\rangle$ for all $2\leq i\leq\ell$, then $\tau_1$ is positively $\Gamma$-semi-proximal.
  \item\label{item:pos-semiprox-domin-2} If there exists an element $X$ of the limit cone $\mathcal{L}_{\Gamma}$ such that $\langle\chi_1,X\rangle > \langle\chi_2,X\rangle$, then $\tau_1\oplus\tau_3\oplus\dots\oplus\tau_{\ell}$ is positively $\Gamma$-semi-proximal.
\end{enumerate}
\end{cor}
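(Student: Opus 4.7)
The plan is to mimic the strategy behind Benoist's proof of part~\eqref{item:pos-semiprox-domin-1}: use Fact~\ref{fact:subsemigroup-in-cone} to cut out a subsemigroup $\Gamma' \subset \Gamma$ whose Jordan projections concentrate in an open cone around $X$ where the required domination of highest weights is already strict, observe that on such loxodromic elements the summand $\tau_2$ cannot contribute to the top eigenspace of the direct sum, and then transfer positive semi-proximality from $\Gamma'$ back to $\Gamma$ via Lemma~\ref{lem:semiprox-Gamma'}.

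More concretely, for part~\eqref{item:pos-semiprox-domin-2}, I would first consider the open cone $\omega := \{Y \in \mathfrak{a} \,|\, \langle\chi_1,Y\rangle > \langle\chi_2,Y\rangle\}$ of $\mathfrak{a}$, which meets $\mathcal{L}_\Gamma$ at the point $X$. By Fact~\ref{fact:subsemigroup-in-cone}, there exists an open subsemigroup $G'$ of $G$ meeting $\Gamma$ whose limit cone is contained in $\omega$; set $\Gamma' := \Gamma \cap G'$. For any $\gamma \in \Gamma'$ which is loxodromic in~$G$, the Jordan projection $\lambda(\gamma)$ lies in $\omega$, and since each irreducible representation $(\tau_i,V_i)$ has spectral radius $e^{\langle\chi_i,\lambda(\gamma)\rangle}$ on $\gamma$ by highest-weight theory, we obtain $\lambda_1(\tau_2(\gamma)) < \lambda_1(\tau_1(\gamma))$. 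In particular, the spectral radius of $(\tau_1\oplus\dots\oplus\tau_\ell)(\gamma)$ coincides with that of $(\tau_1\oplus\tau_3\oplus\dots\oplus\tau_\ell)(\gamma)$, and the top eigenspace of the former is contained in $V_1\oplus V_3\oplus\dots\oplus V_\ell$.

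By the standing hypothesis of positive $\Gamma$-semi-proximality, the element $(\tau_1\oplus\dots\oplus\tau_\ell)(\gamma)$ admits a positive real eigenvalue of maximal modulus; by the previous observation, the associated eigenvector actually lies in $V_1\oplus V_3\oplus\dots\oplus V_\ell$, so this same positive eigenvalue remains a maximal-modulus eigenvalue of $(\tau_1\oplus\tau_3\oplus\dots\oplus\tau_\ell)(\gamma)$. Hence $\tau_1\oplus\tau_3\oplus\dots\oplus\tau_\ell$ is positively $\Gamma'$-semi-proximal, and Lemma~\ref{lem:semiprox-Gamma'} promotes this to positive $\Gamma$-semi-proximality, establishing~\eqref{item:pos-semiprox-domin-2}.

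I do not foresee any serious obstacle: the argument is a direct variant of the one proving part~\eqref{item:pos-semiprox-domin-1}, with the hypothesis weakened from ``$\chi_1$ strictly dominates every other $\chi_i$ at $X$'' to ``$\chi_1$ strictly dominates $\chi_2$ at $X$'', and the conclusion correspondingly weakened from positive semi-proximality of $\tau_1$ alone to that of $\tau_1\oplus\tau_3\oplus\dots\oplus\tau_\ell$. The only conceptual point worth isolating is that a strictly subdominant irreducible summand contributes nothing to the top eigenspace and may therefore be excised without disturbing positive semi-proximality; everything else reduces to a direct application of Fact~\ref{fact:subsemigroup-in-cone} and Lemma~\ref{lem:semiprox-Gamma'}.
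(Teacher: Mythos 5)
Your proposal is correct and follows exactly the route the paper takes: pass to the open cone $\omega=\{\langle\chi_1,\cdot\rangle>\langle\chi_2,\cdot\rangle\}$, apply Fact~\ref{fact:subsemigroup-in-cone} to get $\Gamma'=\Gamma\cap G'$, observe that for loxodromic $\gamma\in\Gamma'$ the $\tau_2$-summand is strictly subdominant so the positive maximal-modulus eigenvalue survives the excision, and conclude via Lemma~\ref{lem:semiprox-Gamma'}. The only difference is that you spell out the highest-weight spectral-radius computation behind the paper's terse ``because'' step, which is the right justification.
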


We refer \eg to \cite[\S\,2.1--2.2]{ben05} for reminders on highest (restricted) weights of irreducible linear representations of~$G$.

\begin{proof}
\eqref{item:pos-semiprox-domin-1} Suppose there exists $X \in \mathcal{L}_{\Gamma}$ such that $\langle\chi_1,X\rangle > \langle\chi_i,X\rangle$ for all $2\leq i\leq\ell$.~Then
$$\omega := \{ X\in\mathfrak{a} ~|~ \langle\chi_1,X\rangle > \max_{2\leq i\leq\ell} \langle\chi_i,X\rangle\}$$
is an open convex cone of~$\mathfrak{a}$ that meets $\mathcal{L}_{\Gamma}$.
By Fact~\ref{fact:subsemigroup-in-cone}, there is an open semigroup $G'$ of~$G$ meeting~$\Gamma$ and whose limit cone is contained in~$\omega$.
The representation $\tau_1$ is positively $\Gamma'$-semi-proximal, where $\Gamma' := \Gamma\cap G'$, because $\tau_1\oplus\dots\oplus\tau_{\ell}$ is positively $\Gamma$-semi-proximal.
Therefore $\tau_1$ positively $\Gamma$-semi-proximal by Lemma~\ref{lem:semiprox-Gamma'}.

\eqref{item:pos-semiprox-domin-2} Suppose there exists $X \in \mathcal{L}_{\Gamma}$ such that $\langle\chi_1,X\rangle > \langle\chi_2,X\rangle$.
Then
$$\omega := \{ X\in\mathfrak{a} ~|~ \langle\chi_1,X\rangle > \langle\chi_2,X\rangle\}$$
is an open convex cone of~$\mathfrak{a}$ that meets $\mathcal{L}_{\Gamma}$.
By Fact~\ref{fact:subsemigroup-in-cone}, there is an open subsemigroup $G'$ of~$G$ meeting~$\Gamma$ whose limit cone is contained in~$\omega$.
The representation $\tau_1\oplus\tau_3\oplus\dots\oplus\tau_{\ell}$ is positively $\Gamma'$-semi-proximal, where $\Gamma' := \Gamma\cap G'$, because $\tau_1\oplus\dots\oplus\tau_{\ell}$ is positively $\Gamma$-semi-proximal.
Therefore $\tau_1\oplus\tau_3\oplus\dots\oplus\tau_{\ell}$ is positively $\Gamma$-semi-proximal by Lemma~\ref{lem:semiprox-Gamma'}.
\end{proof}

The following is well known, and follows \eg from \cite[Lem.\,5.5]{ben05} by a straightforward induction on the number of factors.

\begin{fact}[{see \eg \cite[Lem.\,5.5]{ben05}}] \label{fact:irred-rep-prod}
Suppose $\sfG = \sfG_1 \times \dots \times \sfG_r$ is a direct product of finitely many connected reductive real algebraic groups $\sfG_s$, with real points~$G_s$.
Suppose the linear representation $(\tau,V)$ of~$G$ is irreducible.
Then there exist $\mathbb{K}=\R$ or~$\C$ and, for each $1\leq s\leq r$, an irreducible real linear representation $(\sigma_s,W_s)$ of~$G_s$ such that
$$(\tau, V) = (\sigma_1 \otimes_{\mathbb{K}} \dots \otimes_{\mathbb{K}} \sigma_r, W_1 \otimes_{\mathbb{K}} \dots \otimes_{\mathbb{K}} W_r),$$
where $\mathbb{K}=\R$ if some non-trivial $\sigma_s$ has an irreducible complexification, and $\mathbb{K}=\C$ if each $W_s$ has an invariant complex structure for $1\leq s\leq r$.
Moreover, $\tau$ is $G$-proximal (\resp $G$-semi-proximal) if and only if each non-trivial $\sigma_s$ is $G_s$-proximal (\resp $G_s$-semi-proximal) for $1\leq s\leq r$.
\end{fact}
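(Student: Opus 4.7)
The argument is an induction on the number of factors~$r$, with base case $r=2$ being exactly \cite[Lem.\,5.5]{ben05}. The case $r=1$ is trivial (take $\sigma_1=\tau$). For $r\geq 3$, I would group $G=G_1\times H$ with $H:=G_2\times\dots\times G_r$ and apply Benoist's lemma to this two-factor product to obtain a decomposition $\tau=\sigma_1\otimes_{\mathbb{K}_1}\eta$, where $\sigma_1$ is an irreducible $\mathbb{K}_1$-linear representation of~$G_1$, $\eta$ is an irreducible $\mathbb{K}_1$-linear representation of~$H$, and $\mathbb{K}_1\in\{\R,\C\}$ is determined by the real-structure types of the factors. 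The inductive hypothesis applied to $\eta$ (viewed as an irreducible real representation of~$H$, or as an irreducible complex one when $\mathbb{K}_1=\C$) then yields $\eta=\sigma_2\otimes_{\mathbb{K}_2}\dots\otimes_{\mathbb{K}_2}\sigma_r$ for some $\mathbb{K}_2\in\{\R,\C\}$. The coherence $\mathbb{K}_1=\mathbb{K}_2=:\mathbb{K}$ follows from the base case: $\mathbb{K}_1=\C$ holds exactly when both $\sigma_1$ and~$\eta$ carry compatible invariant complex structures, and iterating inside~$H$ shows this is in turn equivalent to each $W_s$ for $s\geq 2$ carrying an invariant complex structure. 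Hence $\mathbb{K}=\C$ if and only if each $W_s$ has an invariant complex structure, and $\mathbb{K}=\R$ if and only if at least one non-trivial $\sigma_s$ has irreducible complexification, matching the statement.

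For the proximality equivalence, an element $g=(g_1,\dots,g_r)\in G$ is loxodromic if and only if each $g_s\in G_s$ is loxodromic, and the complex eigenvalues of $\tau(g)$ are the products $\alpha_1\cdots\alpha_r$ of complex eigenvalues of the $\sigma_s(g_s)$ (with realification producing conjugate pairs $\{\lambda,\bar\lambda\}$ when $\mathbb{K}=\C$). The maximum modulus of an eigenvalue of $\tau(g)$ is thus the product of the maximum moduli for the factors~$\sigma_s(g_s)$. In particular, $\tau(g)$ is proximal if and only if each non-trivial $\sigma_s(g_s)$ is proximal. For the ``if'' direction of semi-proximality, the product of real maximal-modulus eigenvalues of the factors is itself a real maximal-modulus eigenvalue of $\tau(g)$. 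For the ``only if'' direction, given a loxodromic $g_{s_0}\in G_{s_0}$ such that $\sigma_{s_0}(g_{s_0})$ has no real maximal-modulus eigenvalue, I would take loxodromic $g_s\in G_s$ for $s\neq s_0$ with $\sigma_s(g_s)$ positively proximal (such exist by choosing $g_s=\exp(X_s)$ with $X_s$ in the interior of a Weyl chamber of~$G_s$), so that the maximal-modulus eigenvalues of $\tau(g)$ are exactly those of $\sigma_{s_0}(g_{s_0})$ rescaled by a positive real number, hence all non-real.

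The main obstacle is the bookkeeping of scalar fields and invariant complex structures across tensor factors in the inductive step, which requires careful tracking of the Schur endomorphism algebras ($\R$, $\C$, or~$\H$) of the irreducible factors; the proximality statement then follows from an elementary analysis of eigenvalues of tensor products.
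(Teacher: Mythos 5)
Your approach matches the paper's, which simply invokes ``a straightforward induction on the number of factors'' applied to Benoist's two-factor lemma \cite[Lem.\,5.5]{ben05}, and your outline of both the inductive bookkeeping of scalar fields and the eigenvalue analysis for (semi-)proximality is sound. One minor slip: when $\sigma_s$ has Schur algebra $\C$ or $\H$ (which is exactly the case $\mathbb{K}=\C$), the element $\sigma_s(\exp X_s)$ is \emph{never} proximal because the top real eigenspace has dimension $\geq 2$, so it cannot be ``positively proximal'' as you claim --- but your argument for the only-if direction of semi-proximality only needs $\sigma_s(\exp X_s)$ to have all positive real eigenvalues (equivalently, to be positively semi-proximal), which it does, so the substance is correct.
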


The following is readily obtained from \cite[Lem.\,5.4 \& 5.7]{ben05}, as we explain just below.

\begin{fact}[{\cite[Lem.\,5.4 \& 5.7]{ben05}}] \label{fact:semiprox-nonprox-nonmin}
Suppose the algebraic group $\sfG$ is connected, simply connected, semi-simple.
Let $(\sigma,W)$ be a non-trivial irreducible linear representation of~$G$.
Suppose that $\sigma$ is $G$-semi-proximal but not $G$-proximal.
Then there exists an irreducible linear representation $(\varsigma,\mathcal{W})$ of~$G$ which is $G$-proximal, with $\dim(\mathcal{W})<\dim(W)$, and satisfies the following property: for any connected reductive real algebraic group with real points~$G'$, any irreducible linear representation $(\sigma',W')$ of $G'$, and any Zariski-dense subgroup $\Gamma$ of $G\times G'$, if $\sigma\otimes\sigma' : G\times G' \to W\otimes W'$ is positively $\Gamma$-semi-proximal, then so is $\varsigma\otimes\sigma' : G\times G' \to \mathcal{W}\otimes W'$.
\end{fact}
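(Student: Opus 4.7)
The plan is to follow Benoist's argument from \cite[\S\,5]{ben05}, which constructs $(\varsigma,\mathcal W)$ as an irreducible summand of an appropriate tensor or exterior power of $(\sigma,W)$, chosen so that both the dimension drop and the transfer of positive semi-proximality are visible at the level of the compact torus $M := Z_K(A)$. The first step is to translate the hypotheses to~$M$: by the Jordan decomposition, any loxodromic $g \in G$ is conjugate to $\exp(X)m$ with $X \in \mathrm{Int}(\mathfrak a^+)$ and $m \in M$, and if $\chi \in \mathfrak a^*$ is the highest restricted weight of $\sigma$ with highest restricted weight space $W^+ \subset W$, the eigenvalues of $\sigma(g)$ of maximal modulus are exactly the numbers $e^{\chi(X)}\mu$ for $\mu \in \mathrm{Spec}(\sigma(m)|_{W^+})$. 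Thus $G$-semi-proximality of $\sigma$ is equivalent to every element of $M$ acting on $W^+$ with a real eigenvalue, while $G$-proximality would require additionally $\dim W^+ = 1$; by assumption this fails, so $d := \dim W^+ \geq 2$.

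Next I would construct the smaller proximal representation. The natural candidate is the irreducible subrepresentation of $\mathrm{Sym}^2 W$ or $\Lambda^2 W$ with highest restricted weight $2\chi$; its highest weight space is the corresponding $M$-invariant subspace of $W^+ \otimes W^+$, which is strictly smaller-dimensional than $W$ when $d \geq 2$ and can be arranged to be one-dimensional by a careful choice of symmetrization, depending on whether $M$ preserves a real or a complex structure on~$W^+$. In cases where no single Cartan-power component suffices, one instead takes a concrete irreducible summand of $W^{\otimes k}$ for small~$k$, picked according to the Satake diagram of~$G$ and the structure of $M$ acting on~$W^+$. This case analysis is the technical heart of \cite[Lem.\,5.7]{ben05} and in every case produces $(\varsigma, \mathcal W)$ with $\dim \mathcal W < \dim W$ and one-dimensional highest restricted weight space, hence $G$-proximal.

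The last step is to verify the transfer property. Fix $G'$, $\sigma'$, and a Zariski-dense $\Gamma \subset G \times G'$ such that $\sigma \otimes \sigma'$ is positively $\Gamma$-semi-proximal. For a loxodromic $\gamma = (g,g') \in \Gamma$, writing $g$ conjugate to $\exp(X)m$ and $g'$ conjugate to $\exp(X')m'$, Remark~\ref{rem:char-pos-semiprox} translates positive semi-proximality of $(\sigma \otimes \sigma')(\gamma)$ into the existence of a $1$-eigenvector of $\sigma(m)|_{W^+} \otimes \sigma'(m')|_{(W')^+}$. Since $\mathcal W^+$ was built as an explicit $M$-subrepresentation of $(W^+)^{\otimes k}$ via a symmetry or antisymmetry projector, such a $1$-eigenvector functorially projects to a $1$-eigenvector of $\varsigma(m)|_{\mathcal W^+} \otimes \sigma'(m')|_{(W')^+}$, yielding the required positive semi-proximality of $(\varsigma \otimes \sigma')(\gamma)$; Zariski-density of~$\Gamma$ together with Lemma~\ref{lem:semiprox-Gamma'}-type openness arguments reduce the general case to loxodromic elements of~$\Gamma$. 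The main obstacle is the construction in Step~2: neither $\mathrm{Sym}^2$ nor $\Lambda^2$ alone works uniformly, and one appears forced into a case analysis across the Satake diagrams of simply connected simple real Lie groups, isolating for each type the irreducible representations that are semi-proximal but not proximal and exhibiting the smaller proximal companion by hand.
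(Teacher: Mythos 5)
There is a genuine gap, in both Step~2 and Step~3. For Step~2: an irreducible constituent of $\mathrm{Sym}^2 W$ or $\Lambda^2 W$ with highest restricted weight $2\chi$ is a Cartan component, and its total dimension is \emph{larger} than $\dim W$, not smaller (you appear to have conflated the dimension of the highest weight space, which you can hope to make one-dimensional, with the dimension of the representation itself, which is what the statement requires to drop). Benoist's \cite[Lem.\,5.7]{ben05} does not proceed by taking tensor or Cartan powers of $W$; it exploits a case analysis over simple real forms to produce a proximal representation strictly smaller than $W$ whose highest weight is congruent to $\chi_\sigma$ modulo $2P$ (where $P$ is the restricted weight lattice), equivalently which has the same \emph{sign} as $\sigma$ in the sense of \cite[Def.\,5.3]{ben05}. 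This congruence is the invariant that matters.

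For Step~3, the ``functorial projection'' does not give what you need. If a loxodromic $\gamma=(g,g')$ has compact part $(m,m')$, positive semi-proximality of $(\sigma\otimes\sigma')(\gamma)$ produces a $+1$-eigenvector of $\sigma(m)|_{W^+}\otimes\sigma'(m')|_{(W')^+}$, but this eigenvector need not have any component in the line $\mathcal{W}^+\otimes(W')^+$: in general its projection there is zero, so no $+1$-eigenvector of $\varsigma(m)|_{\mathcal{W}^+}\otimes\sigma'(m')|_{(W')^+}$ is produced. The actual mechanism, which is also how the paper argues, is that positive $\Gamma$-semi-proximality of an irreducible representation of a reductive group depends only on its highest restricted weight modulo $2P$ \cite[Lem.\,5.4.(c)]{ben05}. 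Since $\chi_\varsigma\equiv\chi_\sigma\pmod{2P}$, one gets $\chi_\varsigma+\chi_{\sigma'}\equiv\chi_\sigma+\chi_{\sigma'}$ modulo twice the restricted weight lattice of $G\times G'$, and the transfer is immediate from that equivalence --- no explicit eigenvector bookkeeping can replace it.
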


\begin{proof}
By \cite[Lem.\,5.7]{ben05}, there exists an irreducible linear representation $(\varsigma,\mathcal{W})$ of~$G$ which is $G$-proximal, with $\dim(\mathcal{W})<\dim(W)$, and such that $(\varsigma,\mathcal{W})$ has the same \emph{sign} (in the sense of \cite[Def.\,5.3]{ben05}) as $(\sigma,W)$.
By \cite[Lem.\,5.4.(a)]{ben05}, this means that the highest weights $\chi_{\sigma}$ of~$\sigma$ and $\chi_{\varsigma}$ of~$\varsigma$ differ by an element of $2P$, where $P \subset \mathfrak{a}^*$ is the (restricted) weight lattice of~$G$.

Consider any connected reductive real algebraic group with real points~$G'$.
Choose a Cartan subspace $\mathfrak{a}'$ of the Lie algebra of~$G'$, and a closed Weyl chamber ${\mathfrak{a}'}^+$ of~$\mathfrak{a'}$.
For any irreducible linear representation $(\sigma',W')$ of $G'$, with highest weight $\chi_{\sigma'} \in {\mathfrak{a}'}^*$, the representation $\sigma\otimes\sigma'$ of $G\times G'$ is irreducible with highest weight $\chi_{\sigma} + \chi_{\sigma'} \in (\mathfrak{a} + \mathfrak{a}')^*$, where we see $\chi_{\sigma}$ (\resp $\chi_{\sigma'}$) as an element of $(\mathfrak{a} + \mathfrak{a}')^*$ vanishing on $\mathfrak{a'}$ (\resp $\mathfrak{a}$).
Similarly, the representation $\varsigma\otimes\sigma'$ of $G\times G'$ is irreducible with highest weight $\chi_{\varsigma} + \chi_{\sigma'} \in (\mathfrak{a} + \mathfrak{a}')^*$.
The fact that $\chi_{\sigma}$ and $\chi_{\varsigma}$ differ by an element of $2P$ implies that $\chi_{\sigma} + \chi_{\sigma'}$ and $\chi_{\varsigma} + \chi_{\sigma'}$ differ by an element of $2P''$, where $P'' \subset (\mathfrak{a} + \mathfrak{a}')^*$ is the (restricted) weight lattice of $G\times G'$.
By \cite[Lem.\,5.4.(c)]{ben05}, this implies that for any Zariski-dense subgroup $\Gamma$ of $G\times G'$, the representation $\sigma\otimes\sigma'$ is positively $\Gamma$-semi-proximal if and only if $\varsigma\otimes\sigma'$ is.
\end{proof}

We note that $G'$ and $(\sigma',W')$ are allowed to be trivial in this property, yielding (using Remark~\ref{rem:prox-semi-prox}) that if $\sigma$ is positively $\Gamma$-semi-proximal, then $\varsigma$ is positively $\Gamma$-proximal.

\begin{fact}[{\cite[Fait\,2.9 \& Lem.\,3.2]{ben05}}] \label{fact:pos-semiprox-preserve-conv}
Let $\Gamma$ be a subgroup of~$G$.
\begin{enumerate}
  \item\label{item:semiprox-pres-conv-1} Suppose $\tau$ is irreducible and $\Gamma$ is Zariski-dense in~$G$.
  Then $\tau$ is positively $\Gamma$-proximal if and only if $\tau(\Gamma)$ preserves a non-empty properly convex open cone in~$V$.
  \item\label{item:semiprox-pres-conv-2} In general, if $\tau(\Gamma)$ preserves a non-empty properly convex open cone in~$V$, then any element of $\tau(\Gamma)$ is positively semi-proximal.
\end{enumerate}
\end{fact}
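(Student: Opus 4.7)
The plan is to treat the two statements separately. Statement~(2) is a soft Perron--Frobenius-type argument, and it will provide the key input for one direction of~(1); the other direction of~(1) requires an explicit construction using the Zariski-density and irreducibility hypotheses.

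For~(2), I would use the Brouwer fixed point theorem. Let $C \subset V$ be a non-empty properly convex open cone preserved by $\tau(\G)$, and fix $g \in \tau(\G)$. The projectivization $\P(\ov{C})$ is a non-empty compact convex subset of an affine chart of $\P(V)$, and $g$ induces a continuous self-map of $\P(\ov{C})$. By Brouwer, there exists $[v_0] \in \P(\ov{C})$ with $g v_0 = \mu v_0$ for some $\mu \in \R^*$. Since $v_0 \in \ov{C} \smallsetminus \{0\}$ and $g$ preserves $\ov{C}$ while $\ov{C}$ contains no line, the scalar $\mu$ must be positive. A second step shows that $\mu$ equals the spectral radius of $g$: if some complex eigenvalue $\mu'$ of $g$ satisfied $|\mu'| > \mu$, then iterating $\mu'^{-n} g^n$ on a generic vector of $C$ would force accumulation on a $g$-invariant subspace transverse to the ray $\R_{>0}\, v_0$, producing limit points outside the properly convex cone $\ov{C}$, a contradiction. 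Hence $g$ is positively semi-proximal.

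For the reverse (``cone $\Rightarrow$ positive proximality'') implication in~(1), I would first apply~(2) to conclude that every $\tau(\g)$ is positively semi-proximal. To upgrade semi-proximality to proximality at loxodromic elements, I would exploit that Zariski-density of $\G$ together with irreducibility of $\tau$ ensures that the locus of $\g \in \G$ whose $\tau$-image is proximal is large: combining the density results in the spirit of Fact~\ref{fact:subsemigroup-in-cone} with the fact that proximality is a Zariski-generic condition in the irreducible representation $(\tau, V)$, one obtains proximality at a dense subset of loxodromic elements; positivity is then inherited from~(2). Propagation to every loxodromic $\g \in \G$ follows from the openness of positive proximality and the continuity of the top eigenvalue along small deformations inside $G$.

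For the forward direction of~(1), I would construct the invariant cone from the attracting eigenvectors. Positive proximality of $\tau(\g)$ for loxodromic $\g \in \G$ distinguishes a canonical half-line $\R_{>0}\, v_\g^+ \subset V$, the top positive eigenray of $\tau(\g)$. Let $\tilde{\L}^+ \subset V \smallsetminus \{0\}$ be the union of all such half-lines and let $C$ be the interior in $V$ of the convex hull of $\tilde{\L}^+$. Zariski-density of $\G$ in $G$ combined with irreducibility of $\tau$ ensures that $\tilde{\L}^+$ spans $V$, so $C$ is non-empty, open, and $\tau(\G)$-invariant. The main obstacle, on which the entire construction hinges, is to show that $C$ is \emph{properly convex}, i.e.\ that $\ov{C}$ contains no line; this is where positivity, rather than mere proximality, enters essentially. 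By contradiction, a line in $\ov{C}$ would produce loxodromic elements $\g_1, \g_2 \in \G$ with $v_{\g_1}^+$ and $-v_{\g_2}^+$ simultaneously in $\ov{C}$; using the North--South dynamics of $\tau(\g_1)^N\tau(\g_2)^{-N}$ on $\P(V)$ for large $N$ and Zariski-density to approximate such products by elements of $\G$, one could then exhibit a loxodromic element of $\G$ whose top eigenvector lies on the wrong side of any consistent orientation of $\tilde{\L}^+$, contradicting positive proximality of $\tau(\G)$. The coordination of signs across the Zariski-dense family of attracting eigenrays is the core technical content of the argument.
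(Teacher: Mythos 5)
The paper does not prove this statement: it is quoted verbatim from Benoist's \emph{Convexes divisibles III} (\cite[Fait\,2.9 \& Lem.\,3.2]{ben05}), so there is no internal proof to compare against. I will therefore assess your sketch on its own merits.

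Your treatment of part~\eqref{item:semiprox-pres-conv-2} is fine in outline: a Brouwer fixed point on $\P(\ov{C})$ produces a positive eigenvalue; the usual norm-growth argument (iterating $\|\tau(\gamma)^n\|^{-1}\tau(\gamma)^n$ on an interior point of $C$ and observing that the limit set of directions must stay in $\ov{C}$, which contains no two-plane and no line) shows that this positive eigenvalue has maximal modulus. This is essentially Benoist's Lemme 3.2.

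The genuine gap is in the ``cone $\Rightarrow$ positively $\Gamma$-proximal'' direction of~\eqref{item:semiprox-pres-conv-1}. You propose to get proximality on a dense set of loxodromic elements because ``proximality is a Zariski-generic condition in the irreducible representation $(\tau,V)$'' and then to ``propagate to every loxodromic $\gamma\in\Gamma$ by openness of positive proximality and continuity of the top eigenvalue.'' Both steps fail. First, proximality is \emph{not} Zariski-generic in an arbitrary irreducible $\tau$: if the highest (restricted) weight space $V^\chi$ has dimension $\geq 2$, then for \emph{no} loxodromic $g$ is $\tau(g)$ proximal, because the modulus-maximal eigenvalues of $\tau(g)$ all live in $V^\chi$ and are governed by the compact part of $g$, which acts unitarily there. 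Irreducibility alone does not guarantee $\dim V^\chi=1$; that is precisely the definition of $\tau$ being $G$-proximal. Second, openness of proximality lets you \emph{extend} proximality to a neighbourhood of a proximal element, not deduce it at every loxodromic point of a group from a dense subset—that would require closedness, which fails (eigenvalues can collide in a limit). The missing ingredient is the algebraic dichotomy: one must first show that preservation of a properly convex open cone, irreducibility, and Zariski density force $\tau$ to be $G$-proximal (for instance via the Abels--Margulis--So\u{\i}fer/Benoist theory of proximal elements in irreducible subsemigroups), after which proximality at \emph{every} loxodromic $\gamma$ is automatic from the highest-weight picture, and positivity follows from part~\eqref{item:semiprox-pres-conv-2} together with uniqueness of the modulus-maximal eigenvalue. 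Your forward direction (building $C$ from attracting eigenrays) is the right shape of argument, but as you note, the coordination-of-signs step is where the entire difficulty lies, and the sketch does not indicate how to carry it out.
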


\subsection{A preliminary bound on cohomological dimension} \label{subsec:vcd-bound}

For any non-trivial linear representation $(\tau,V)$ of a reductive real Lie group~$G$, we set
\begin{equation} \label{eqn:delta-tau}
\delta(\tau) := \dim(V) - \ell,
\end{equation}
where $\tau$ splits into irreducible factors as $\tau = \tau_1 \oplus \ldots \oplus \tau_{\ell}$.

\begin{prop} \label{prop:semiprox-vcd}
Let $\sfG = \sfG_1\times\dots\times\sfG_r$ be a real algebraic group which is the direct product of finitely many factors $\sfG_s$, each of which is either the multiplicative group or a connected, simply connected, simple algebraic group.
Let $\mathsf{Z(G)}$ (\resp $\sfG^{ss}$) be the center (\resp the commutator subgroup) of~$\sfG$, \ie the product of those factors $\sfG_s$ which are the multiplicative group (\resp a simple group), so that $\sfG = \mathsf{Z(G)} \times \sfG^{ss}$.
Let $G$, $Z(G)$, and~$G^{ss}$ be the real points of $\sfG$, $\mathsf{Z(G)}$, and~$\sfG^{ss}$, respectively, and $\varpi : G\to Z(G)$ and $\pi : G\to G^{ss}$ the natural projections.
Let $\Gamma$ be a Zariski-dense discrete subgroup of~$G$, such that $\varpi(\Gamma)$ lies in the identity component of $Z(G)$ for the real topology, and the restriction of $\pi$ to~$\Gamma$ has finite kernel.
Let $(\upsilon,F)$ be a linear representation of~$G$ which either is the trivial representation, or satisfies that $\upsilon(\Gamma)$ preserves a properly convex open cone $\tilde{\Om}_F$ of~$F$ and, inside its projectivization $\Om_F \subset \P(F)$, a non-empty subset $M_F$ which is homeomorphic to a contractible topological manifold without boundary.
Let $(\tau,V)$ be a (possibly trivial) linear representation of~$G$ with the following two properties:
\begin{enumerate}[label=(\roman*),ref=\roman*]
  \item\label{item:semiprox-hyp-i} $\tau\oplus\upsilon$ is positively $\Gamma$-semi-proximal and non-trivial;
  \item\label{item:semiprox-hyp-ii} the Lie algebra of the kernel of $\tau\oplus\upsilon$ (or equivalently, of the kernel of $(\tau\oplus\upsilon)|_{G^{ss}}$) meets the limit cone $\mathcal{L}_{\pi(\Gamma)}$ of $\pi(\Gamma)$ (Definition~\ref{def:limit-cone}) only in~$\{0\}$.
\end{enumerate}
Then $\vcd(\Gamma) \leq \delta(\tau) + \dim(M_F)$, where we set $\delta(\tau) := 0$ if $\tau$ is trivial, and $\dim(M_F) := 0$ if $\upsilon$ is trivial.
\end{prop}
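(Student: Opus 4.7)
My goal is to construct a properly discontinuous $\Gamma$-action on a contractible topological manifold without boundary of dimension at most $\delta(\tau) + \dim(M_F)$, and then invoke Fact~\ref{fact:vcd}. By Fact~\ref{fact:proj-ss-discrete}, $\pi(\Gamma)$ is Zariski-dense and discrete in $G^{ss}$, and by Fact~\ref{fact:limit-cone} its limit cone $\mathcal{L}_{\pi(\Gamma)}$ has non-empty interior in $\mathfrak{a}$.

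The core reduction step is to replace the decomposition $\tau = \tau_1 \oplus \cdots \oplus \tau_\ell$ into $G$-irreducibles by a list of pairwise non-isomorphic $G$-irreducible representations $(\sigma_1, W_1), \ldots, (\sigma_m, W_m)$ of $G$ satisfying: (a)~each $\sigma_j$ is $G$-proximal and positively $\Gamma$-proximal; (b)~$\sigma_1 \oplus \cdots \oplus \sigma_m \oplus \upsilon$ is positively $\Gamma$-semi-proximal; (c)~$\sum_j (\dim W_j - 1) \leq \delta(\tau)$; and (d)~the Lie algebra of the kernel of $(\sigma_1 \oplus \cdots \oplus \sigma_m \oplus \upsilon)|_{G^{ss}}$ meets $\mathcal{L}_{\pi(\Gamma)}$ only at $0$. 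I would build this list by iterated elementary moves on the initial decomposition: dropping factors whose highest weight is strictly dominated at some point of $\mathcal{L}_{\pi(\Gamma)}$ by another highest weight (legitimized by Corollary~\ref{cor:pos-semiprox-domin}.\eqref{item:pos-semiprox-domin-2}), collapsing isomorphic copies via Remark~\ref{rem:pos-semiprox-remove-doubles}, and replacing any remaining summand that is $G$-semi-proximal (obtained from $\Gamma$-semi-proximality via Fact~\ref{fact:semiprox-Gamma-G}) but not $G$-proximal by a strictly smaller $G$-proximal representation inheriting positive $\Gamma$-semi-proximality, using Fact~\ref{fact:semiprox-nonprox-nonmin}. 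Each such move strictly decreases either $\sum_j \dim W_j$ or the number of irreducible factors without increasing $\sum_j(\dim W_j-1)$, yielding (c).

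Once this list is in hand, Fact~\ref{fact:pos-semiprox-preserve-conv}.\eqref{item:semiprox-pres-conv-1} provides, for each $j$, a $\sigma_j(\Gamma)$-invariant non-empty properly convex open cone in $W_j$, whose projectivization $\Omega_j \subset \P(W_j)$ is $\Gamma$-invariant, properly convex, open, and contractible of dimension $\dim W_j - 1$. Set
\[
X := \Omega_1 \times \cdots \times \Omega_m \times M_F,
\]
with $\Gamma$ acting diagonally through $\sigma_1 \oplus \cdots \oplus \sigma_m \oplus \upsilon$ (if $\upsilon$ is trivial, the last factor is omitted, contributing $0$ to $\dim X$; if $\tau$ is trivial, there are no $\Omega_j$ factors and $X = M_F$). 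Then $X$ is a contractible topological manifold without boundary of dimension $\sum_j (\dim W_j - 1) + \dim(M_F) \leq \delta(\tau) + \dim(M_F)$. Proper discontinuity of the $\Gamma$-action on $X$ follows from Fact~\ref{fact:proj-surj}: applied to $\pi(\Gamma) \subset G^{ss}$ together with the representation $(\sigma_1 \oplus \cdots \oplus \sigma_m \oplus \upsilon)|_{G^{ss}}$, condition (d) yields discreteness of the image and finiteness of the kernel on $\pi(\Gamma)$; the hypothesis $\varpi(\Gamma) \subset Z(G)^0$ upgrades this to discreteness of $\Gamma$'s image in $\GL(W_1) \times \cdots \times \GL(W_m) \times \GL(F)$, and invariance of the Hilbert metrics on each $\Omega_j$ (together with the properness of the $\upsilon$-action on $M_F \subset \Omega_F$) gives proper discontinuity on $X$. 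Then Fact~\ref{fact:vcd} concludes $\vcd(\Gamma) \leq \dim(X) \leq \delta(\tau) + \dim(M_F)$.

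\textbf{Main obstacle.} The hard part is ensuring condition (d) is preserved throughout the sequence of elementary reductions constructing the $\sigma_j$: each move modifies the kernel of the accumulated representation, and one must carry out careful weight-tracking bookkeeping in the spirit of \cite{ben05} to verify that the Lie algebra of the resulting kernel never grows enough to intersect $\mathcal{L}_{\pi(\Gamma)}$ nontrivially. This is precisely what makes Fact~\ref{fact:proj-surj} available at the end.
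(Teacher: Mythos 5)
Your plan is correct and matches the paper's proof in all essentials: the same three types of ``elementary moves'' (removing duplicates/duals, dropping dominated factors via Corollary~\ref{cor:pos-semiprox-domin}.\eqref{item:pos-semiprox-domin-2}, and trading a non-proximal semi-proximal factor for a smaller proximal one via Fact~\ref{fact:semiprox-nonprox-nonmin}), followed by the same endgame (properly convex cones from Fact~\ref{fact:pos-semiprox-preserve-conv}.\eqref{item:semiprox-pres-conv-1}, a product domain, discreteness from Fact~\ref{fact:proj-surj} with the hypothesis on $\varpi(\Gamma)$, and Fact~\ref{fact:vcd}). The one genuine difference is in organization, and it is exactly where your flagged ``main obstacle'' lives: rather than performing an explicit iterated reduction and tracking condition~\eqref{item:semiprox-hyp-ii} through the whole chain, the paper fixes once and for all a $\tau$ with $\delta(\tau)$ \emph{minimal} among representations satisfying \eqref{item:semiprox-hyp-i}--\eqref{item:semiprox-hyp-ii}, and then shows that if any of the three moves were applicable it would strictly decrease $\delta$ while preserving \eqref{item:semiprox-hyp-i} and \eqref{item:semiprox-hyp-ii} --- a contradiction. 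This requires verifying kernel-preservation only once per move type, not along an arbitrary chain: in the duplicate/dual case the kernel is literally unchanged (the two factors have the same kernel on $G^{ss}$); in the dominated-factor case one argues that $\max_{a\in I\cup J}\langle\chi_a,X\rangle = \max_{a\in I'\cup J}\langle\chi_a,X\rangle$ on $\mathcal{L}_{\pi(\Gamma)}$, so the vanishing locus of the full set of highest weights does not grow; in the proximal-replacement case the kernel of $\varsigma_s$ is finite inside the simple factor $G_s$, so the Lie algebra of the kernel is unchanged. If you wanted to carry out your constructive version, you would need to verify these same three facts at each step, which is the same computation; the minimality phrasing just avoids having to assert an invariant along the whole chain. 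One minor imprecision: Fact~\ref{fact:semiprox-nonprox-nonmin} applies to a \emph{simple factor} $G_s$, replacing a single tensor factor $\sigma_s$ by $\varsigma_s$, not the whole irreducible $\sigma$; you gestured at this but it is worth stating explicitly so that the resulting kernel claim is clear.
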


(In condition~\eqref{item:semiprox-hyp-ii} we implicitly choose a positive Weyl chamber $(\mathfrak{a}^{ss})^+$ in a Cartan subspace $\mathfrak{a}^{ss}$ of the Lie algebra of~$G^{ss}$; the condition does not depend on this choice.)

\begin{proof}
It is sufficient to prove the inequality $\vcd(\Gamma) \leq \delta(\tau) + \dim(M_F)$ for a linear representation $(\tau,V)$ of~$G$ which satisfies \eqref{item:semiprox-hyp-i} and \eqref{item:semiprox-hyp-ii} and for which $\delta(\tau)$ is minimal among all linear representations satisfying \eqref{item:semiprox-hyp-i} and \eqref{item:semiprox-hyp-ii}.
Let us fix such a representation $(\tau,V)$.
If $\tau$ is non-trivial, then we write $\tau = \bigoplus_{i\in I} \tau_i$ as a sum of non-trivial irreducible representations of~$G$, where $I$ is some non-empty finite set; otherwise, we set $I := \emptyset$.
Similarly, if $\upsilon$ is non-trivial, then we write $\upsilon = \bigoplus_{j\in J} \upsilon_j$ as a sum of non-trivial irreducible representations of~$G$, where $J$ is some non-empty finite set; otherwise, we set $J := \emptyset$.
Let $\mathfrak{a}$ be a Cartan subspace of the Lie algebra of~$G$, with $\mathfrak{a} = (\mathfrak{a}\cap\mathfrak{z}(\mathfrak{g})) \oplus \mathfrak{a}^{ss}$ where $\mathfrak{z}(\mathfrak{g})$ is the Lie algebra of $Z(G)$.
For $a$ in $I$ (\resp $J$), let $\chi_a\in\mathfrak{a}^*$ be the highest weight of $\tau_a$ (\resp $\upsilon_a$).

We start with the following three observations:
\begin{enumerate}[label=(\alph*),ref=\alph*]
  \item\label{item:compose-with-pi-1} a linear representation of~$G$ is irreducible if and only if its precomposition with~$\pi$~is;
  \item\label{item:compose-with-pi-2} an element $g\in G$ is loxodromic in~$G$ if and only if $\pi(g)$ is loxodromic in~$G^{ss}$;
  \item\label{item:compose-with-pi-3} since $\varpi(\Gamma)$ lies in the identity component of $Z(G)$ for the real topology, a linear repre\-sentation of~$G$ is positively $\Gamma$-semi-proximal if and only if its precomposition with~$\pi$~is.
\end{enumerate}
In particular, the restriction $(\tau\oplus\upsilon)|_{G^{ss}}$ of $\tau\oplus\upsilon$ to~$G^{ss}$ is positively $\pi(\Gamma)$-semi-proximal.

\medskip

\noindent
$\bullet$ \textbf{Step 1: For any $i_1\neq i_2$ in~$I$, the linear representation $\tau_{i_1}|_{G^{ss}}$ of~$G^{ss}$ is not isomorphic to $\tau_{i_2}|_{G^{ss}}$ nor to its dual $\tau_{i_2}^*|_{G^{ss}}$.}
Suppose by contradiction that there exist $i_1\neq\nolinebreak i_2$ in~$I$ with $\tau_{i_1}|_{G^{ss}} \simeq \tau_{i_2}|_{G^{ss}}$ or $\tau_{i_1}|_{G^{ss}} \simeq \tau_{i_2}^*|_{G^{ss}}$.
Consider the non-trivial subrepresentation $\tau' := \bigoplus_{i\in I\smallsetminus\{i_2\}} \tau_i$ of~$\tau$.
We have $\delta(\tau') < \delta(\tau)$.
The kernel of $(\tau'\oplus\upsilon)|_{G^{ss}}$ is equal to the kernel of $(\tau\oplus\upsilon)|_{G^{ss}}$, hence $\tau'\oplus\upsilon$ still satisfies~\eqref{item:semiprox-hyp-ii}.

If $\tau_{i_1}|_{G^{ss}} \simeq \tau_{i_2}|_{G^{ss}}$, then $(\tau'\oplus\upsilon)|_{G^{ss}}$ is still positively $\pi(\Gamma)$-semi-proximal by Remark~\ref{rem:pos-semiprox-remove-doubles}.\eqref{item:semiprox-doubles-1}, hence $\tau'\oplus\upsilon$ is still positively $\Gamma$-semi-proximal by Observations \eqref{item:compose-with-pi-2} and~\eqref{item:compose-with-pi-3} above: contradiction with the minimality of $\delta(\tau)$.

So we now assume $\tau_{i_1}|_{G^{ss}} \simeq \tau_{i_2}^*|_{G^{ss}}$ and $\tau_{i_1}|_{G^{ss}} \not\simeq \tau_{i_2}|_{G^{ss}}$.
In particular, the highest weight $\chi_{i_1}|_{\mathfrak{a}^{ss}}$ of $\tau_{i_1}|_{G^{ss}}$ is different from the highest weight $\chi_{i_2}|_{\mathfrak{a}^{ss}}$ of $\tau_{i_2}|_{G^{ss}}$, and these two weights are images of each other by the opposition involution.
Since $\pi(\Gamma)$ is Zariski-dense in~$G^{ss}$ (see Fact~\ref{fact:proj-ss-discrete}), its limit cone $\mathcal{L}_{\pi(\Gamma)} \subset \mathfrak{a}^{ss}$ has non-empty interior in~$\mathfrak{a}^{ss}$ (Fact~\ref{fact:limit-cone}), and so it is \emph{not} contained in the hyperplane $\{ \langle\chi_{i_1},\cdot\rangle = \langle\chi_{i_2},\cdot\rangle\}$ of~$\mathfrak{a}^{ss}$.
Since $\mathcal{L}_{\pi(\Gamma)}$ is invariant under the opposition involution, it meets the open cone $\omega := \{ X\in\mathfrak{a}^{ss} ~|~ \langle\chi_{i_1},X\rangle > \langle\chi_{i_2},X\rangle\}$ of~$\mathfrak{a}^{ss}$.
By Corollary~\ref{cor:pos-semiprox-domin}.\eqref{item:pos-semiprox-domin-2}, the representation $(\tau'\oplus\upsilon)|_{G^{ss}}$ is still positively $\pi(\Gamma)$-semi-proximal, hence $\tau'\oplus\upsilon$ is positively $\Gamma$-semi-proximal by Observations \eqref{item:compose-with-pi-2} and~\eqref{item:compose-with-pi-3} above: contradiction again with the minimality of $\delta(\tau)$.
\medskip

\noindent
$\bullet$ \textbf{Step 2: For any $i\in I$, the representation $\tau_i$ of~$G$ is positively $\Gamma$-semi-proximal.}
Suppose $I\neq\emptyset$.
Let $I'$ be the set of elements $i\in I$ for which there exists $X\in\mathcal{L}_{\pi(\Gamma)}$ with $\langle\chi_i,X\rangle > \max_{a\in (I\cup J)\smallsetminus\{i\}} \langle\chi_a,X\rangle$.
For any $i\in I'$, the representation $\tau_i|_{G^{ss}}$ is positively $\pi(\Gamma)$-semi-proximal by Corollary~\ref{cor:pos-semiprox-domin}.\eqref{item:pos-semiprox-domin-1}, hence $\tau_i$ is positively $\Gamma$-semi-proximal by Observations \eqref{item:compose-with-pi-2} and~\eqref{item:compose-with-pi-3} above.

Consider the subrepresentation $\tau' := \oplus_{i\in I'}\,\tau_i$ of~$\tau$.
We claim that the representation $\tau'\oplus\upsilon$ of~$G$ is still non-trivial (even though $\tau'$ may be trivial).
Indeed, this is clear if $\upsilon$ is non-trivial, so let us assume that $\upsilon$ is trivial (\ie $J=\emptyset$) and check that $I'\neq\emptyset$.
By Step~1, the highest weights $\chi_i$, $i\in I$, are all distinct on~$\mathfrak{a}^{ss}$.
Since $\pi(\Gamma)$ is Zariski-dense in~$G^{ss}$, its limit cone $\mathcal{L}_{\pi(\Gamma)}$ has non-empty interior (Fact~\ref{fact:limit-cone}), and so $\mathcal{L}_{\pi(\Gamma)}$ is \emph{not} contained in the union of the hyperplanes $\{ \langle\chi_{i_1},\cdot\rangle = \langle\chi_{i_2},\cdot\rangle\}$ for $i_1\neq i_2$ in~$I$.
This implies that $I'\neq\emptyset$ if $\upsilon$ is trivial.

We claim that $I' = I$ (independently of whether $\upsilon$ is trivial or not).
Indeed, we have just seen that the representation $\tau'\oplus\upsilon$ of~$G$ is non-trivial.
Recall that $(\tau\oplus\upsilon)|_{G^{ss}}$ is positively $\pi(\Gamma)$-semi-proximal.
By Corollary~\ref{cor:pos-semiprox-domin}.\eqref{item:pos-semiprox-domin-2}, the representation $(\tau'\oplus\upsilon)|_{G^{ss}}$ is still positively $\pi(\Gamma)$-semi-proximal, hence $\tau'\oplus\upsilon$ is positively $\Gamma$-semi-proximal by Observations \eqref{item:compose-with-pi-2} and~\eqref{item:compose-with-pi-3} above.
By construction, for any $X\in\mathcal{L}_{\pi(\Gamma)}$ we have $\max_{a\in I\cup J} \langle\chi_a,X\rangle = \max_{a\in I'\cup J} \langle\chi_i,X\rangle$.
Therefore, if $X\in\mathcal{L}_{\pi(\Gamma)}$ satisfies $\langle\chi_a,X\rangle = 0$ for all $a\in I'\cup J$, then it satisfies $\langle\chi_a,X\rangle = 0$ for all $a\in I\cup J$.
This implies that $\tau'\oplus\upsilon$ still satisfies~\eqref{item:semiprox-hyp-ii}.
By minimality of $\delta(\tau)$, we must have $\tau = \tau'$, \ie $I' = I$.

\medskip

\noindent
$\bullet$ \textbf{Step 3: For any $i\in I$, the representation $\tau_i$ is positively $\Gamma$-proximal.}
Suppose $I\neq\emptyset$ and consider $i\in I$.
Let $V_i$ be the representation space of~$\tau_i$.
By Step~2, we know that the non-trivial irreducible representation $(\tau_i,V_i)$ of~$G$ is $\Gamma$-semi-proximal; let us check that it is actually $\Gamma$-proximal.
By Fact~\ref{fact:irred-rep-prod}, there exist $\mathbb{K}=\R$ or~$\C$ and irreducible real linear representations $(\sigma_s,W_s)$ of~$G_s$, for $1\leq s\leq r$, such that $V_i = W_1 \otimes_{\mathbb{K}} \dots \otimes_{\mathbb{K}} W_r$ and $\tau_i = \sigma_1 \otimes_{\mathbb{K}} \dots \otimes_{\mathbb{K}} \sigma_r$.
Suppose by contradiction that $\tau_i$ is not $\Gamma$-proximal.
Then $\tau_i$ is also not $G$-proximal, and so by Fact~\ref{fact:irred-rep-prod} there exists $1\leq s\leq r$ such that $\sigma_s : G_s\to\GL(W_s)$ is non-trivial and not $G_s$-proximal.
Note that $\sfG_s$ cannot be the multiplicative group, hence it is a connected, simply connected, simple factor of~$\sfG$.
Let $\sfG'_s$ be the product of the $\sfG_t$ for $t\neq s$, and $G'_s$ its real points, so that $\sfG = \sfG_s \times \sfG'_s$ and $G=G_s\times G_s'$.
We can write $\tau_i = \sigma_s \otimes \sigma'_s$ where $\sigma'_s$ is the
 irreducible representation of~$G'_s$ which is the tensor product of the $\sigma_t$ for $t\neq s$, with representation space $W'_s$ which is the tensor product of the $W_t$ for $t\neq s$.
Since $\tau_i$ is $\Gamma$-semi-proximal and irreducible, it is $G$-semi-proximal by Fact~\ref{fact:semiprox-Gamma-G}, and so $\sigma_s$ is $G_s$-semi-proximal by Fact~\ref{fact:irred-rep-prod}.
By Fact~\ref{fact:semiprox-nonprox-nonmin}, there is an irreducible linear representation $(\varsigma_s, \mathcal{W}_s)$ of~$G_s$ which is $G_s$-proximal, with $\dim(\mathcal{W}_s) < \dim(W_s)$, such that $\varsigma_s\otimes\sigma'_s$ is still positively $\Gamma$-semi-proximal.
If we set
$$\tau'' := (\varsigma_s\otimes\sigma'_s) \oplus \bigoplus_{j\neq i} \tau_j,$$
then $\tau''\oplus\upsilon$ is still positively $\Gamma$-semi-proximal by Remark~\ref{rem:pos-semiprox-remove-doubles}.\eqref{item:semiprox-doubles-2}.
Note that the kernel $\mathrm{Ker}(\tau''\oplus\nolinebreak\upsilon)$ of $\tau''\oplus\upsilon$ is the direct product of $\mathrm{Ker}(\tau''\oplus\upsilon) \cap G_s$ which is finite (because $\varsigma_s$ is a non-trivial irreducible representation of~$G_s$) and of $\mathrm{Ker}(\tau''\oplus\upsilon) \cap G'_s = \mathrm{Ker}(\tau\oplus\upsilon)$.
Therefore the Lie algebra of the kernel of $\tau''\oplus\upsilon$ is contained in (in fact, equal to) the Lie algebra of the kernel of $\tau\oplus\upsilon$, and so $\tau''\oplus\upsilon$ still satisfies~\eqref{item:semiprox-hyp-ii}.
Since $\delta(\tau'') < \delta(\tau)$, we get a contradiction with the minimality of $\delta(\tau)$.
This shows that $\tau_i$ must be $\Gamma$-proximal, hence positively $\Gamma$-proximal by Remark~\ref{rem:prox-semi-prox}.

\medskip

\noindent
$\bullet$ \textbf{Step 4: Proof of the inequality $\vcd(\Gamma) \leq \delta(\tau) + \dim(M_F)$.}
By Fact~\ref{fact:proj-ss-discrete}, the group $\pi(\Gamma)$ is discrete and Zariski-dense in~$G^{ss}$.
By Fact~\ref{fact:proj-surj} applied to $(\pi(\Gamma),\sfG^{ss},(\tau\oplus\upsilon)|_{G^{ss}})$ instead of $(\Gamma,\sfG,\tau)$, the group $(\tau\oplus\upsilon)(\pi(\Gamma))$ is discrete in $\GL(V\oplus E)$.
We deduce that the group $(\tau\oplus\upsilon)(\Gamma)$ is discrete in $\GL(V\oplus E)$. 
Indeed, the projection to the semi-simple part of $(\tau\oplus\upsilon)(G)$ restricted to $(\tau\oplus\upsilon)(\Gamma)$ has finite kernel and the image of this projection equals $(\tau\oplus\upsilon)(\pi(\Gamma))$.

Moreover, by Fact~\ref{fact:proj-surj} the intersection of $\pi(\Gamma)$ with the kernel of $\tau\oplus\upsilon$ is finite.
Since the restriction of $\pi$ to~$\Gamma$
 has finite kernel by assumption, this implies that the intersection of $\Gamma$ with the kernel of $\tau\oplus\upsilon$ is finite.
In particular, $\vcd(\Gamma) = \vcd((\tau\oplus\upsilon)(\Gamma))$.

By Step~2 and Fact~\ref{fact:pos-semiprox-preserve-conv}.\eqref{item:semiprox-pres-conv-1}, if $I\neq\emptyset$, then for any $i\in I$ the group $\tau_i(\G)$ preserves a non-empty properly convex open set $\Omega_i$ in $\P(V_i)$.
Thus the discrete subgroup $(\tau\oplus\upsilon)(\Gamma)$ of $\GL(V\oplus E) \simeq (\prod_{i\in I} \GL(V_i)) \times \GL(E)$ (where $I$ is now allowed to be empty) preserves the contractible topological manifold without boundary $(\prod_{i\in I} \Om_i) \times M_F$, which has dimension $\delta(\tau) + \dim(M_F)$.
Moreover, this group acts by isometries on the sum of the Hilbert metrics on each factor, including $M_F$, where the metric is taken with respect to the properly convex set $\Om_F$.
Since the Hilbert metric is proper, $(\tau\oplus\upsilon)(\G)$ acts properly discontinuously on $(\prod_{i\in I} \Om_i) \times M_F$.
By Fact~\ref{fact:vcd}, this implies $\vcd((\tau\oplus\upsilon)(\Gamma)) \leq \delta(\tau) + \dim(M_F)$.
\end{proof}

\subsection{Proof of Proposition~\ref{prop:non-deg-limit}} \label{subsec:Lambda-nondegen}

Since $\Gamma$ has no infinite nilpotent normal subgroups, the Kazhdan--Margulis--Zassenhaus theorem implies that the set of injective and discrete representations of~$\Gamma$ is closed in $\Hom(\Gamma,\PO(p,q+1))$ (Fact~\ref{fact:fd-closed}).
Therefore $\rho$, which is the limit of the injective and discrete representations~$\rho_n$, is also injective and discrete.

By Corollary~\ref{c.limit-weakly-sp-gr}, up to passing to a subsequence, we may assume that the $M_n$ converge to some weakly spacelike $p$-graph $M$ in~$\H^{p,q}$.
Then $M$ is $\rho(\Gamma)$-invariant.

By Propositions \ref{prop:weakly-sp-gr-is-a-graph}--\ref{prop:sp-gr-is-a-graph}, we can lift injectively each $M_n$ to a spacelike $p$-graph $\hat{M}_n$ in $\hat\H^{p,q}$, and $M$ to a weakly spacelike $p$-graph $\hat{M}$ in $\hat\H^{p,q}$, in such a way that the $\hat{M}_n$ converge to~$\hat{M}$.
Each element $\rho_n(\gamma)$ for $\gamma\in \Gamma$ has a unique lift to $\OO(p,q+1)$ that preserves $\hat M_n$ (this follows \eg from Proposition~\ref{prop:lift-non-pos-sphere} as such elements also preserve $\hat\L_n := \di\hat{M}_n \subset \di\hat\H^{p,q}$, which is a non-degenerate non-positive $(p-1)$-sphere by Proposition~\ref{prop:bdynonpossphere}.\eqref{item:bound-non-pos-sph-1} and Lemma~\ref{lem:Omega-Lambda-convex}.\eqref{item:Om-Lambda-non-empty}).
Choosing the lift preserving $\hat M_n$ for each $\gamma\in \Gamma$ gives a representation from $\Gamma$ to $\OO(p,q+1)$ lifting $\rho_n$; we still denote it by $\rho_n : \Gamma\to\OO(p,q+1)$. 
Similarly, we can lift $\rho$ to a representation from $\Gamma$ to $\OO(p,q+1)$, still denoted by~$\rho$, such that $\rho(\Gamma)$ preserves~$\hat{M}$ and $\lim_n \rho_n=\rho$.

By Proposition~\ref{prop:proper-action-M}, for any~$n$, the group $\rho_n(\Gamma)$ preserves a properly convex open subset of $\H^{p,q}$ containing~$M_n$.
Therefore any element of $\rho_n(\Gamma)$ is positively semi-proximal by Fact~\ref{fact:pos-semiprox-preserve-conv}.\eqref{item:semiprox-pres-conv-2}.
Passing to the limit, the same property holds for $\rho(\Gamma)$ (see Remark~\ref{rem:char-pos-semiprox}).

Let $\sfG$ be the Zariski closure of $\rho(\Gamma)$ in $\OO(p,q+1)$; it is reductive by assumption.
Let $G$ be the real points of~$\sfG$.
We can view $\Gamma$ as a discrete subgroup of~$G$ and $\rho$ as a linear representation of~$G$ (in the sense of Section~\ref{subsec:semi-proximality}), whose kernel in~$G$ is by construction trivial, and which is positively $\Gamma$-semi-proximal by the argument above.

Up to replacing the algebraic group $\sfG$ by its identity component and $\Gamma$ by its intersection with the real points of the identity component of~$\sfG$ (which is a finite-index subgroup of~$\Gamma$), we may and shall assume that $\sfG$ is connected.
Furthermore, up to replacing $\sfG$ by a finite cover, $\Gamma$ by its preimage in this finite cover (which does not change $\vcd(\Gamma)$), and $\rho$ by the composition of $\rho$ with the covering map, we may and shall assume that $\sfG = \sfG_1 \times \cdots \times \sfG_r$ is a direct product of real algebraic groups $\sfG_s$, each of which is either the multiplicative group or a connected, simply connected, simple algebraic group (see \eg \cite[Prop.\,14.2 \& Th.\,22.10]{bor91}).
The kernel of $\rho : G\to\OO(p,q+1)$ is then finite.

As in Proposition~\ref{prop:semiprox-vcd}, let $\mathsf{Z(G)}$ (\resp $\sfG^{ss}$) be the center (\resp the commutator subgroup) of~$\sfG$, so that $\sfG = \mathsf{Z(G)} \times \sfG^{ss}$.
Let $Z(G)$ and~$G^{ss}$ be the real points of $\mathsf{Z(G)}$ and~$\sfG^{ss}$, respectively, and $\varpi : G\to Z(G)$ and $\pi : G\to G^{ss}$ the natural projections.
Up to replacing $\Gamma$ by a finite-index subgroup (which again does not change $\vcd(\Gamma)$), we may assume that $\varpi(\Gamma)$ lies in the identity component of $Z(G)$ for the real topology.
Since $\Gamma$ has no infinite nilpotent normal subgroups, the restriction of $\pi$ to~$\Gamma$ has finite kernel.

By Proposition~\ref{prop:bdynonpossphere}.\eqref{item:bound-non-pos-sph-1}, the set $\L = \di M \subset \di\H^{p,q}$ is a non-positive $(p-1)$-sphere.
Let $V \subset \spa(\L) \subset \R^{p,q+1}$ be the kernel of $\sfb|_{\spa(\L)}$, of dimension $k:=\dim V\geq 0$.
It is a $\rho(\Gamma)$-invariant, totally isotropic subspace of~$\R^{p,q+1}$.
Its orthogonal $V^{\perp}$ is also $\rho(\Gamma)$-invariant.
Since the Zariski closure of $\rho(\Gamma)$ in $\OO(p,q+1)$ is reductive, we can find
\begin{itemize}
  \item a $\rho(\Gamma)$-invariant complementary subspace $E$ of $V$ in~$V^{\perp}$;
  \item a $\rho(\Gamma)$-invariant complementary subspace $V_{tr}$ of $V^{\perp}$ in~$\R^{p,q+1}$.
\end{itemize}
The signature of $\sfb|_{V\oplus V_{tr}}$ is $(k,k|0)$, and that of $\sfb|_E$ is $(p-k,q+1-k|0)$.
Thus we have a $\sfb$-orthogonal splitting of $\R^{p,q+1}$ into $\rho(\Gamma)$-invariant, non-degenerate subspaces
$$\R^{p,q+1} = (V+V_{tr}) \oplus E.$$
The representation $\r$ splits correspondingly as
\begin{equation} \label{eqn:split-rho}
\r = \k \oplus \k_{tr} \oplus \nu : \G \longrightarrow \GL(V) \times \GL(V_{tr}) \times \OO(\sfb|_E) \subset \OO(p,q+1).
\end{equation}
Since $V$ and~$V_{tr}$ are $\rho(\Gamma)$-invariant totally isotropic subspaces of~$\R^{p,q+1}$ that are transverse, the form $\sfb$ naturally identifies $V_{tr}$ with the dual $V^*$ of~$V$, and $\k_{tr}$ with the dual representation~$\k^*$.

Suppose $k=p$.
Then $\nu$ takes values in the compact group $\OO(\sfb|_E) \simeq \OO(q+1-k)$, and Proposition~\ref{prop:semiprox-vcd} applies with $\tau = \kappa\oplus\kappa^*$ and the trivial representation~$\upsilon$.

Suppose $k<p$.
Then $\H_E := \H^{p,q} \cap \P(E) \simeq \H^{p-k,q-k}$.
By Proposition~\ref{prop:ME-weakly-sp-gr}, the set $\L_E := \L \cap \P(E)$ is a non-degenerate non-positive $(p-k-1)$-sphere in $\di\H_E \simeq \di\H^{p-k,q-k}$.
By assumption, the group $\nu(\Gamma)$ preserves a weakly spacelike $(p-k)$-graph $M_E$ in $\H_E$ with $\di M_E = \L_E$ and $M_E \subset \Om(\L_E)$.
By Lemma~\ref{l.kernonpossphere}, the restriction $\sfb|_{\spa\L_E}$ has signature $(p-k,q'|0)$ for some $1\leq q'\leq q+1-k$, and $\nu$ splits as a direct sum $\nu = \nu_E \oplus \nu_{\perp}$ where $\nu_E$ takes values in the indefinite orthogonal group $\OO(\sfb|_{\spa(\L_E)}) \simeq \OO(p-k,q')$ and $\nu_{\perp}$ in the compact group $\OO(\sfb|_{\L_E^{\perp}}) \simeq \OO(q+1-k-q')$.
By Lemma~\ref{lem:restrict-span-L}, the orthogonal projection $M'_E$ of $M_E$ to $\P(\spa(\L_E))$ is well defined, and is still a weakly spacelike $(p-k)$-graph in~$\H_E$ with $\di M'_E = \L_E$ and $M'_E \subset \Om_E(\L_E)$.
Since $M'_E$ is a weakly spacelike $(p-k)$-graph, it is homeomorphic to a contractible topological manifold without boundary (Definition~\ref{def:weakly-sp-gr} and Remark~\ref{r.contractible}).
Since $M'_E$ is contained in $\Om_E(\L_E)$, it is contained in $\Om_E(\L_E) \cap \P(\spa(\L_E))$, which is a $\nu_E(\Gamma)$-invariant properly convex open subset of $\P(\spa(\L_E))$ by Lemma~\ref{lem:Omega-Lambda-convex}.\eqref{item:Om-Lambda-prop-convex}.
Moreover, $\nu_E(\Gamma)$ preserves a properly convex open cone of $\spa(\L_E)$ projecting to $\Om_E(\L_E) \cap \P(\spa(\L_E))$: indeed, we saw that $\rho(\Gamma)$ preserves a weakly spacelike $p$-graph $\hat{M}$ in $\hat\H^{p,q}$ lifting~$M$; the group $\nu(\Gamma)$ preserves the intersection $\hat\L_E$ of $\di\hat{M}$ with $(E\smallsetminus\{0\})/\R_{>0}$, hence $\nu_E(\Gamma)$ preserves the properly convex open cone $\tilde{\Om}_E(\tilde{\L}_E) \cap \spa(\L_E)$ of $\spa(\L_E)$ where $\tilde{\L}_E$ is any subset of the $\sfb$-isotropic vectors of $E\smallsetminus\{0\}$ whose projection to $(E\smallsetminus\{0\})/\R_{>0}$ is~$\hat\L_E$, as in Notation~\ref{not:Omega-Lambda}.
Since $\nu_{\perp}$ takes values in a compact group, we see that Proposition~\ref{prop:semiprox-vcd} applies with $\tau = \kappa\oplus\kappa^*$ and $\upsilon = \nu_E$ and $M_F = M'_E$.

Suppose by contradiction that the non-positive $(p-1)$-sphere $\L := \di M \subset \di\H^{p,q}$ is degenerate, which means that $\kappa$ is non-trivial, with representation space $V$ of dimension $k\geq 1$.
Write $\kappa = \bigoplus_{i\in I}\, \kappa_i$ as a sum of non-trivial irreducible representations of~$G$, where $I$ is some non-empty finite set.
Let $\tau$ be a subrepresentation of $\k\oplus\k^*$ satisfying properties \eqref{item:semiprox-hyp-i} and \eqref{item:semiprox-hyp-ii} of Proposition~\ref{prop:semiprox-vcd}, and such that $\delta(\tau)$ is minimal among all such subrepresentation of $\k\oplus\k^*$.
We can write $\tau = \bigoplus_{i\in I_{\kappa}} \kappa_i \oplus \bigoplus_{i\in I_{\kappa^*}} \kappa^*_i$ for some subsets $I_{\kappa},I_{\kappa^*}$ of~$I$.
Arguing exactly as in Step~1 of the proof of Proposition~\ref{prop:semiprox-vcd}, we see that $I_{\kappa}\cap I_{\kappa^*}=\emptyset$.
Therefore the dimension of the representation space of~$\tau$ is at most that of~$\kappa$, which is~$k$, and so $\delta(\tau)<k$.
By Proposition~\ref{prop:semiprox-vcd}, we have $p \leq \delta(\tau) + p-k < p$: contradiction.

\section{Closedness of $\H^{p,q}$-convex cocompact and spacelike cocompact representations} \label{sec:proof-main-thm}

In this final section we complete the proofs of the main results of the paper.
We first deduce Theorems \ref{thm:charact-Hpq-cc-sphere} and~\ref{thm:non-deg-limit-sphere} from Theorem~\ref{thm:geom-action-weakly-sp-gr-basic}, Proposition~\ref{prop:non-deg-limit}, Fact~\ref{fact:exist-p-graph}, and Proposition~\ref{prop:ME-weakly-sp-gr}.
We then explain how these results imply Theorem~\ref{thm:main}.
Next we discuss the representations appearing in Theorems \ref{thm:main-general} and~\ref{thm:main-general-spacelike-p-mfd}, which we call \emph{spacelike cocompact}.
Finally, we prove Theorems \ref{thm:main-general} and~\ref{thm:main-general-spacelike-p-mfd}.

\subsection{Proof of Theorem~\ref{thm:charact-Hpq-cc-sphere}}

Suppose that $\Gamma$ preserves a non-degenerate non-positive $(p-1)$-sphere $\L$ in $\di\H^{p,q}$.
By Fact~\ref{fact:exist-p-graph}, it also preserves a weakly spacelike $p$-graph $M$ with $\di M = \L$ and $M \subset \Om(\L)$.
By Proposition~\ref{prop:proper-action-M} and Fact~\ref{fact:vcd}, the action of $\Gamma$ on~$M$ is properly discontinuous and cocompact.
Therefore $\Gamma$ is $\H^{p,q}$-convex cocompact by Theorem~\ref{thm:geom-action-weakly-sp-gr-basic}.

Conversely, suppose $\Gamma$ is $\H^{p,q}$-convex cocompact.
By Fact~\ref{fact:Hpq-cc-Ano}, its proximal limit set $\L_{\Gamma}$ is a non-degenerate non-positive $(p-1)$-sphere in $\di\H^{p,q}$, which is actually negative.
This proximal limit set is invariant under~$\Gamma$.

\subsection{Proof of Theorem~\ref{thm:non-deg-limit-sphere}} \label{subsec:proof-non-deg-limit-sphere}

By assumption, the representation $\rho$ is a limit of injective and discrete representations $\rho_n$, each preserving a non-degenerate non-positive $(p-1)$-sphere $\L_n$ in $\di\H^{p,q}$.
By Fact~\ref{fact:exist-p-graph}, for each~$n$ there is a $\rho_n(\Gamma)$-invariant weakly spacelike $p$-graph $M_n$ in $\H^{p,q}$ with $\di M_n = \L_n$ and $M_n \subset \Om(\L_n)$.
By Corollary~\ref{c.limit-non-pos-sphere}, up to passing to a subsequence, we may assume that the $\L_n$ converge to some $\rho(\Gamma)$-invariant non-positive $(p-1)$-sphere $\L$ in $\di\H^{p,q}$.
Since $\rho(\Gamma)$ has reductive Zariski closure, there is a $\rho(\Gamma)$-invariant complementary subspace $E$ of $V := \mathrm{Ker}(\sfb|_{\spa(\L)})$ in~$V^{\perp}$.
By Proposition~\ref{prop:ME-weakly-sp-gr}, if $k<p$, then $\L_E := \L \cap \P(E)$ is a non-degenerate $(p-k-1)$-sphere in $\di\H^{p,q} \cap \P(E) \simeq \di\H^{p-k,q-k}$, and so Fact~\ref{fact:exist-p-graph} gives the existence of a $\rho(\Gamma)$-invariant weakly spacelike $(p-k)$-graph $M_E$ in $\H^{p,q} \cap \P(E) \simeq \H^{p-k,q-k}$ with $\di M_E = \L_E$ and $M_E \subset \Om(\L_E)$.
We can then apply Proposition~\ref{prop:non-deg-limit}, and obtain that $\rho$ is injective and discrete, $V = \{0\}$, and $\L = \L_E$ is non-degenerate.

\subsection{Proof of Theorem~\ref{thm:main}} \label{subsec:proof-main-thm}

Theorems \ref{thm:charact-Hpq-cc-sphere} and~\ref{thm:non-deg-limit-sphere} imply the following.

\begin{thm} \label{thm:main-closed-finite-kernel}
Let $p\geq 2$ and $q\geq 1$, and let $\Gamma$ be a Gromov hyperbolic group with $\vcd(\Gamma) = p$.
Then for any finite normal subgroup $\Gamma'$ of~$\Gamma$, the set $\Hom_{\text{cc}}^{\Gamma'}(\G,\PO(p,q+1))$ of $\H^{p,q}$-convex cocompact representations of~$\Gamma$ with kernel exactly~$\Gamma'$ is closed in $\Hom(\Gamma,\PO(p,q+1))$.
\end{thm}

\begin{proof}
Since $\vcd(\Gamma)=p\geq 2$, the Gromov hyperbolic group $\Gamma$ has no infinite nilpotent normal subgroups (see \eg \cite[Prop.\,III.$\Gamma$.3.20]{bh99}).

Consider a sequence $(\rho_n)_{n\in\N}$ of elements of $\Hom_{\text{cc}}^{\Gamma'}(\G,\PO(p,q+1))$ converging to a representation $\rho \in \Hom(\Gamma,\PO(p,q+1))$.
Then $\Gamma'$ is contained in the kernel of~$\rho$.
Each representation $\rho_n$ factors through an injective $\H^{p,q}$-convex cocompact representation $\overline{\rho}_n : \Gamma/\Gamma'\to\PO(p,q+1)$, the representation $\rho$ factors through a representation $\overline{\rho} : \Gamma/\Gamma'\to\PO(p,q+1)$, and $\overline{\rho}_n\to\overline{\rho}$.
By Fact~\ref{fact:fd-closed}, the representation $\overline{\rho}$ is injective and discrete, hence $\rho$ has kernel exactly~$\Gamma'$.
Let us check that $\rho$ is still $\H^{p,q}$-convex cocompact.

If the Zariski closure of $\overline{\rho}(\Gamma)=\rho(\Gamma)$ in $\PO(p,q+1)$ is reductive, then $\overline{\rho}$ is $\H^{p,q}$-convex cocompact by Theorems \ref{thm:charact-Hpq-cc-sphere} and~\ref{thm:non-deg-limit-sphere}, and so $\rho$ is also $\H^{p,q}$-convex cocompact.

We now treat the general case where the Zariski closure $G$ of $\rho(\Gamma)$ might not necessarily be reductive.
We can write $G$ as a semi-direct product $L\ltimes U$ where $L$ is reductive and $U$ is unipotent (Levi decomposition).
Let $\pi_L : G\to L$ be the natural projection.
Following \cite{ggkw17}, we call \emph{semi-simplification} of~$\rho$ the representation $\rho^{ss} :=\pi_L\circ\rho : \Gamma\to L$.
(It is unique up to conjugation by an element of~$U$.)
By construction, the Zariski closure of $\rho^{ss}(\Gamma)$ is reductive, and $\rho^{ss}$ is a limit of conjugates $g_m\rho(\cdot)g_m^{-1}$ of $\rho$ where $(g_m)\in\PO(p,q+1)^{\N}$.
For each~$m$, the sequence $(g_m\rho_n(\cdot)g_m^{-1})_{n\in\N}$ converges to $g_m\rho(\cdot)g_m^{-1}$, so by a diagonal extraction argument, there exists $m_n\to +\infty$ such that $(\rho'_n)_{n\in\N} := (g_{m_n}\rho_n(\cdot)g_{m_n}^{-1})_{n\in\N}$ converges to~$\rho^{ss}$.
Each $\rho'_n$ is still $\H^{p,q}$-convex cocompact because this property is invariant under conjugation.
By the reductive case treated above, $\rho^{ss}$ is $\H^{p,q}$-convex cocompact.
But $\H^{p,q}$-convex cocompactness is an open condition (Fact~\ref{fact:Hpq-cc-open}), so for large enough~$m$ the representation $g_m\rho(\cdot)g_m^{-1}$ is $\H^{p,q}$-convex cocompact.
Therefore $\rho$ is $\H^{p,q}$-convex cocompact.
\end{proof}

\begin{cor} \label{cor:main-finite-kernel}
Let $p\geq 2$ and $q\geq 1$, and let $\Gamma$ be a Gromov hyperbolic group with $\vcd(\Gamma) = p$.
Then the set $\Hom_{\text{cc}}(\G,\PO(p,q+1))$ of $\H^{p,q}$-convex cocompact representations from $\Gamma$ to $\PO(p,q+1)$ is a union of connected components of $\Hom(\Gamma,\PO(p,q+1))$.
More precisely, for any finite normal subgroup $\Gamma'$ of~$\Gamma$, the set $\Hom_{\text{cc}}^{\Gamma'}(\G,\PO(p,q+1))$ of $\H^{p,q}$-convex cocompact~rep\-resentations with kernel exactly~$\Gamma'$ is a union of connected components of $\Hom(\Gamma,\PO(p,q+1))$.
\end{cor}

\begin{proof}
Recall (Definition~\ref{def:Hpq-cc}) that the kernel of any $\H^{p,q}$-convex cocompact representation of~$\Gamma$ is a finite normal subgroup $\Gamma'$ of~$\Gamma$.
Since $\Gamma$ is Gromov hyperbolic, there are only finitely many such subgroups~$\Gamma'$ (see \eg \cite[Th.\,III.$\Gamma$.3.2]{bh99}).
Thus the set $\Hom_{\text{cc}}(\G,\PO(p,q+1))$ is a \emph{finite} disjoint union of closed subsets $\Hom_{\text{cc}}^{\Gamma'}(\G,\PO(p,q+1))$ of $\Hom(\G,\PO(p,q+1))$, for $\Gamma'$ ranging through the finite normal subgroups of~$\Gamma$.
In particular, $\Hom_{\text{cc}}(\G,\PO(p,q+1))$ is closed in $\Hom(\G,\PO(p,q+1))$, and each subset $\Hom_{\text{cc}}^{\Gamma'}(\G,\PO(p,q+1))$ is open in $\Hom_{\text{cc}}(\G,\PO(p,q+1))$.
By Fact~\ref{fact:Hpq-cc-open}, the set $\Hom_{\text{cc}}(\G,\PO(p,q+1))$ is open in $\Hom(\G,\PO(p,q+1))$, and so each subset $\Hom_{\text{cc}}^{\Gamma'}(\G,\PO(p,q+1))$ is open in $\Hom(\G,\PO(p,q+1))$.
\end{proof}

Theorems \ref{thm:deform-Fuchsian-inj-discr} and~\ref{thm:main} are contained in Corollary~\ref{cor:main-finite-kernel}.

\subsection{Spacelike cocompact representations} \label{subsec:spacelike-cocompact}

The following is a consequence of Propositions \ref{prop:proper-action-M} and~\ref{prop:bdynonpossphere}, Fact~\ref{fact:vcd}, and \cite{sst}.

\begin{lem} \label{lem:spacelike-cocompact}
Let $p,q\geq 1$, let $\Gamma$ be a finitely generated group, and let $\rho : \Gamma\to\PO(p,q+1)$ be a representation.
Then the following are equivalent:
\begin{enumerate}
  \item\label{item:sc-1} $\vcd(\Gamma) = p$ and $\rho$ has finite kernel and discrete image which preserves a non-degenerate non-positive $(p-1)$-sphere in $\di\H^{p,q}$,
  \item\label{item:sc-2} $\Gamma$ acts properly discontinuously and cocompactly via~$\rho$ on some $p$-dimensional connected complete spacelike submanifold of~$\H^{p,q}$;
  \item\label{item:sc-3} $\Gamma$ acts properly discontinuously and cocompactly via~$\rho$ on some $p$-dimensional connected maximal complete spacelike submanifold of~$\H^{p,q}$.
\end{enumerate}
\end{lem}

\begin{proof}
\eqref{item:sc-1}~$\Rightarrow$~\eqref{item:sc-3}: Let $\L$ be a non-degenerate non-positive $(p-1)$-sphere in $\di\H^{p,q}$, preserved by $\rho(\Gamma)$.
By \cite{sst}, there is a $p$-dimensional connected maximal complete spacelike submanifold $M$ of~$\H^{p,q}$, preserved by $\rho(\Gamma)$, such that $\di M = \L$.
Since $\rho$ has finite kernel and discrete image, $\Gamma$ acts properly discontinuously on~$M$ via~$\rho$ by Proposition~\ref{prop:proper-action-M}.
Since $\vcd(\Gamma) = p$ and $M$ is contractible (see Proposition~\ref{prop:weakly-sp-gr-is-a-graph}), this action is cocompact by Fact~\ref{fact:vcd}.

\eqref{item:sc-3}~$\Rightarrow$~\eqref{item:sc-2}: clear.

\eqref{item:sc-2}~$\Rightarrow$~\eqref{item:sc-1}: Let $M$ be a $p$-dimensional connected complete spacelike submanifold $M$ of~$\H^{p,q}$ on which $\Gamma$ acts properly discontinuously and cocompactly via~$\rho$.
By Proposition~\ref{prop:bdynonpossphere}, the ideal boundary $\L := \di M$ is a non-degenerate non-positive $(p-1)$-sphere in $\di\H^{p,q}$, preserved by $\rho(\Gamma)$.
Since the action of $\rho$ on~$M$ is properly discontinuous, $\rho$ has finite kernel and discrete image by Proposition~\ref{prop:proper-action-M}.
Since this action is cocompact and $M$ is contractible, we have $\vcd(\Gamma) = p$ by Fact~\ref{fact:vcd}.
\end{proof}

We shall use the following terminology.

\begin{defn} \label{def:spacelike-cocompact}
Let $p,q\geq 1$ and let $\Gamma$ be a finitely generated group with $\vcd(\Gamma) = p$.
A representation $\rho : \Gamma\to\PO(p,q+1)$ is \emph{spacelike cocompact} if it satisfies the equivalent conditions \eqref{item:sc-1}, \eqref{item:sc-2}, \eqref{item:sc-3} of Lemma~\ref{lem:spacelike-cocompact}.
\end{defn}

Observe that if $\rho$ is spacelike cocompact, then so is the image of~$\rho$ by conjugation at the target by any element of $\PO(p,q+1)$.
By Theorem~\ref{thm:charact-Hpq-cc-sphere}, if $\Gamma$ is Gromov hyperbolic with $\vcd(\Gamma) = p$, then \emph{spacelike cocompact} is equivalent to \emph{$\H^{p,q}$-convex cocompact}.

\begin{lem} \label{lem:finitely-many-finite-subgroups}
Let $p,q\geq 1$ and let $\Gamma$ be a finitely generated group with $\vcd(\Gamma) = p$.
If $\Gamma$ admits a spacelike cocompact representation into $\PO(p,q+1)$, then $\Gamma$ has only finitely many conjugacy classes of finite subgroups.
In particular, $\Gamma$ has only finitely many finite normal subgroups.
\end{lem}

\begin{proof}
Suppose there is a spacelike cocompact representation $\rho : \Gamma\to\PO(p,q+1)$: the group $\Gamma$ acts properly discontinuously and cocompactly via~$\rho$ on some $p$-dimensional connected complete spacelike submanifold $M$ of~$\H^{p,q}$.
Let $D\subset M$ be a compact fundamental domain for this action.
By proper discontinuity, the set $\mathcal{F}$ of elements $\gamma\in\Gamma$ such that $D\cap\rho(\gamma)\cdot D\neq\emptyset$ is finite.
Let $\Gamma'$ be a finite subgroup of~$\Gamma$.
By Lemma~\ref{lem:finite-group-preserv-M}, the group $\rho(\Gamma')$ has a global fixed point in~$M$.
Up to conjugation in~$\Gamma$ we may assume that this fixed point is contained in~$D$, hence $\Gamma'\subset\mathcal{F}$.
\end{proof}

\subsection{Proof of Theorems \ref{thm:main-general} and~\ref{thm:main-general-spacelike-p-mfd}} \label{subsec:proof-main-thm-general}

We check openness and closedness.

\begin{prop} \label{prop:sc-open}
Let $p,q\geq 1$ and let $\Gamma$ be a finitely generated group with $\vcd(\Gamma) = p$.
Then the set $\Hom_{\text{sc}}(\G,\PO(p,q+1))$ of spacelike cocompact representations from $\Gamma$ to $\PO(p,q+1)$ is open in $\Hom(\G,\PO(p,q+1))$.
\end{prop}

\begin{proof}
We may assume that $\Hom_{\text{sc}}(\G,\PO(p,q+1))$ is non-empty, otherwise there is nothing to prove.
By the Selberg lemma \cite[Lem.\,8]{sel60}, the finitely generated group $\Gamma$ admits a finite-index subgroup $\Gamma_0$ which is torsion-free.

Let $\rho \in \Hom_{\text{sc}}(\G,\PO(p,q+1))$, and let $M$ be a $p$-dimensional connected complete spacelike submanifold of~$\H^{p,q}$ on which $\Gamma$ acts properly discontinuously and cocompactly via~$\rho$.
Let $\L := \di M$.
Then $M \subset \Om(\L)$ by Proposition~\ref{prop:bdynonpossphere}; moreover, $M$ is contained in some $\rho(\Gamma)$-invariant \emph{properly convex} open subset $\Om \subset \Om(\L)$ by Proposition~\ref{prop:M-in-prop-conv-Omega}.
For $\varepsilon>0$, consider the closed $\rho(\Gamma)$-invariant neighborhood
$$\mathcal{C} := \{\exp_x(v) ~|~ x\in M,\ v\in N_xM\subset T_x\H^{p,q},\ |\sfg(v,v)|\leq\varepsilon\}$$
of $M$ in~$\H^{p,q}$, where $NM$ is the normal bundle of $M$ in $\H^{p,q}$ and $\exp$ the pseudo-Riemannian exponential map.
For $\varepsilon$ small enough, $\mathcal{C}$ is diffeomorphic to $M\times\ov\bD^q$, where $\ov\bD^q$ is the closed unit disk of Euclidean~$\R^q$; moreover, since the action of $\Gamma$ on $M$ via~$\rho$ is cocompact, for $\varepsilon$ small enough $\mathcal{C}$ is contained in~$\Omega$.
The action of $\Gamma_0$ on~$\mathcal{C}$ via~$\rho$ is then properly discontinuous, cocompact, and free, and $\underline{\mathcal{C}} := \rho(\Gamma_0)\backslash\mathcal{C}$ is a compact $(\PO(p,q+1),\H^{p,q})$-manifold with boundary, containing the closed submanifold $\underline{M} := \rho(\Gamma_0)\backslash M$.
By the Ehresmann--Thurston principle (see \eg \cite[Ch.\,1]{ceg87} or \cite[\S\,7.2]{gol22}), there is a neighborhood $\mathcal{U}$ of $\rho$ in $\Hom(\G,\PO(p,q+1))$ consisting entirely of representations whose restriction to~$\Gamma_0$ is the holonomy of a $(\PO(p,q+1),\H^{p,q})$-structure on~$\underline{\mathcal{C}}$.
Since the developing map depends continuously in the $C^{\infty}$ topology on the representation \cite[Ch.\,1]{ceg87} and $\underline{M}$ is compact, we can assume, up to making the neighborhood $\mathcal{U}$ of~$\rho$ smaller, that for any $\rho' \in \mathcal{U}$ the pseudo-Riemannian metric coming from the associated $(\PO(p,q+1),\H^{p,q})$-structure restricted to the tangent bundle of $\underline{M}$ is positive definite.
Then $\underline{M}$ lifts in the universal cover of~$\underline{\mathcal{C}}$ to a subset whose image $M_{\rho'}$ under the developing map is a $p$-dimensional connected complete spacelike submanifold of~$\H^{p,q}$ on which $\Gamma_0$ acts properly discontinuously, cocompactly, and freely via~$\rho'$.
In particular, the restriction of $\rho'$ to~$\Gamma_0$ is injective and discrete, and so $\rho'$ itself has finite kernel and discrete image.
In order to conclude that $\rho'$ is spacelike cocompact, we can either check that the non-degenerate non-positive $(p-1)$-sphere $\di M_{\rho'}$ is invariant under the full group $\rho'(\Gamma)$ by showing that the limit set of $\rho'(\Gamma)$ in $\di\H^{p,q}$ in the sense of \cite[Def.\,5.1]{ggkw17} is a closed $\rho'(\Gamma)$-invariant subset of $\di M_{\rho'}$ containing all extremal points of $\di M_{\rho'}$, or we can check that $M_{\rho'}$ is invariant under the full group $\rho'(\Gamma)$ by using \cite[Lem.\,9.3]{sst} which shows that $M_{\rho'}$ varies analytically and $\Gamma$-equivariantly with~$\rho'$.
\end{proof}

Theorems \ref{thm:main-general} and~\ref{thm:main-general-spacelike-p-mfd} are equivalent by Lemma~\ref{lem:spacelike-cocompact}, and contained in the following more precise statement.

\begin{thm} \label{thm:main-general-finite-kernel}
Let $p\geq 2$ and $q\geq 1$, and let $\Gamma$ be a finitely generated group with no infinite nilpotent normal subgroups, such that $\vcd(\Gamma) = p$.
Then the set $\Hom_{\text{sc}}(\G,\PO(p,q+1))$ of spacelike cocompact representations from $\Gamma$ to $\PO(p,q+1)$ is a union of connected components of $\Hom(\Gamma,\PO(p,q+1))$.
More precisely, for any finite normal subgroup $\Gamma'$ of~$\Gamma$, the set $\Hom_{\text{sc}}^{\Gamma'}(\G,\PO(p,q+1))$ of spacelike cocompact representations with kernel exactly~$\Gamma'$ is a union of connected components of $\Hom(\Gamma,\PO(p,q+1))$.
\end{thm}

\begin{proof}
Let us show that for any finite normal subgroup $\Gamma'$ of~$\Gamma$, the set $\Hom_{\text{sc}}^{\Gamma'}(\G,\PO(p,q+1))$ is closed in $\Hom(\Gamma,\PO(p,q+1))$.
Let $(\rho_n)_{n\in\N}$ be a sequence of elements of $\Hom_{\text{sc}}^{\Gamma'}(\G,\PO(p,q+\nolinebreak 1))$ converging to a representation $\rho \in \Hom(\Gamma,\PO(p,q+1))$.
Arguing exactly as in the proof of Theorem~\ref{thm:main-closed-finite-kernel} in Section~\ref{subsec:proof-main-thm}, we see that $\rho$ has kernel exactly~$\Gamma'$.
If the Zariski closure of $\overline{\rho}(\Gamma)=\rho(\Gamma)$ in $\PO(p,q+1)$ is reductive, then $\rho$ is spacelike cocompact by Theorem~\ref{thm:non-deg-limit-sphere}.
For a general Zariski closure, we argue again exactly as in the proof of Theorem~\ref{thm:main}, using the fact that the set $\Hom_{\text{sc}}(\G,\PO(p,q+1))$ is open in $\Hom(\G,\PO(p,q+1))$ (Proposition~\ref{prop:sc-open}) and invariant under conjugation.

Recall (Definition~\ref{def:spacelike-cocompact}) that the kernel of any spacelike cocompact representation of~$\Gamma$ is a finite normal subgroup $\Gamma'$ of~$\Gamma$.
By Lemma~\ref{lem:finitely-many-finite-subgroups}, there are only finitely many such subgroups~$\Gamma'$.
Thus the set $\Hom_{\text{sc}}(\G,\PO(p,q+1))$ is a \emph{finite} disjoint union of closed subsets $\Hom_{\text{sc}}^{\Gamma'}(\G,\PO(p,q+1))$ of $\Hom(\G,\PO(p,q+1))$, for $\Gamma'$ ranging through the finite normal subgroups of~$\Gamma$.
In particular, $\Hom_{\text{sc}}(\G,\PO(p,q+1))$ is closed in $\Hom(\G,\PO(p,q+1))$, and each subset $\Hom_{\text{sc}}^{\Gamma'}(\G,\PO(p,q+1))$ is open in $\Hom_{\text{sc}}(\G,\PO(p,q+1))$.
By Proposition~\ref{prop:sc-open}, the set $\Hom_{\text{sc}}(\G,\PO(p,q+1))$ is open in $\Hom(\G,\PO(p,q+1))$, and so each subset $\Hom_{\text{sc}}^{\Gamma'}(\G,\PO(p,q+1))$ is open in $\Hom(\G,\PO(p,q+1))$.
\end{proof}

\appendix

\section{$\H^{p,q}$-convex cocompact representations with Zariski-dense image} \label{appendix}

In this appendix we explain how to obtain Zariski-dense $\H^{p,q}$-convex cocompact representations for various Gromov hyperbolic groups $\Gamma$ with $\vcd(\Gamma)=p$.
The groups $\Gamma$ we consider include hyperbolic lattices (Section~\ref{subsec:appendix-hyperbolic}), but also more exotic groups which are not commensurable to lattices of $\PO(p,1)$.
Examples of $\H^{p,q}$-convex cocompact representations of such groups were constructed in \cite{lm19,mst} for $q=1$; in Sections \ref{subsec:appendix-Coxeter} and~\ref{subsec:appendix-Gromov-Thurston} below, we explain how to deform them to get Zariski-dense $\H^{p,q}$-convex cocompact representations for $q>1$.

\subsection{Hyperbolic lattices} \label{subsec:appendix-hyperbolic}

Bending \`a la Johnson--Millson \cite{jm87} allows to prove the following.

\begin{prop} \label{prop:Z-dense-deform}
Let $p\geq 2$ and $q\geq 1$.
Let $N$ be a closed orientable real hyperbolic $p$-manifold, and let $\rho_0 : \pi_1(N)\to G:=\SO(p,q+1)$ be the composition of the holonomy representation of~$N$ with the natural inclusion $\PO(p,1)_0\simeq\OO(p,1)_0\hookrightarrow G$.
If $N$ contains $q$ two-sided, connected, totally geodesic embedded hypersurfaces which are pairwise disjoint, then any neighborhood of $\rho_0$ in $\Hom(\pi_1(N),G)$ contains representations whose image is Zariski-dense in~$G$.
\end{prop}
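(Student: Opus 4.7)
The plan is to produce the desired Zariski-dense deformations by applying the Johnson--Millson bending construction simultaneously along the $q$ pairwise disjoint totally geodesic hypersurfaces $N_1,\dots,N_q$, and then to identify the Zariski closure of the bent representation via a direct Lie-algebraic computation. Write $\R^{p,q+1}=V\oplus W$ with $V=\R^{p,1}$ the invariant subspace of $H:=\SO(p,1)_0\subset G$ and $W=\R^{0,q}$ its negative-definite orthogonal complement; as $H$-modules
\[
\mathfrak g \;=\; \mathfrak h \,\oplus\, \mathfrak{so}(W) \,\oplus\, (V\otimes W),
\]
where $V\otimes W\cong\Hom(W,V)$ is the off-diagonal block and $\mathfrak{so}(W)$ is a trivial $H$-module. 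For each~$i$, fix a unit positive normal $n_i\in V$ to a lift of $N_i$ in $\H^p\subset\P(V)$. The stabilizer of $N_i$ in $H$ is the copy of $\SO(p-1,1)_0$ fixing $n_i$, and its centralizer in $G$ is the copy of $\SO(1,q)$ acting on $\langle n_i\rangle\oplus W$, with Lie algebra $\mathfrak{so}(W)\oplus(n_i\otimes W)\subset\mathfrak g$.

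Since the $N_i$ are pairwise disjoint and two-sided, $\pi_1(M)$ is the fundamental group of a finite graph of groups with edge groups $\pi_1(N_i)$ and vertex groups the $\pi_1(M^{(j)})$ of the connected components $M^{(j)}$ of $M\smallsetminus\bigsqcup_i N_i$. Each $M^{(j)}$ is a compact hyperbolic $p$-manifold with non-empty totally geodesic boundary, so its convex core has full dimension and $\rho_0(\pi_1(M^{(j)}))$ is Zariski-dense in $H$. The Johnson--Millson construction \cite{jm87} produces, for each tuple $c=(c_1,\dots,c_q)\in\prod_{i} Z_G(\rho_0(\pi_1(N_i)))$ near the identity, a representation $\rho_c:\pi_1(M)\to G$ depending continuously on $c$, with $\rho_e=\rho_0$, such that $\rho_c|_{\pi_1(M^{(j)})}=\gamma_j\,\rho_0(\cdot)\,\gamma_j^{-1}$ where $\gamma_j=\gamma_j(c)$ is a word in the $c_i^{\pm 1}$ prescribed by a spanning tree (with $\gamma_0=e$). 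Hence the image $\rho_c(\pi_1(M))$ contains $\gamma_j\,\rho_0(\pi_1(M^{(j)}))\,\gamma_j^{-1}$ for each~$j$, and the Lie algebra $\mathfrak L_c$ of the Zariski closure of $\rho_c(\pi_1(M))$ contains $\mathrm{Ad}(\gamma_j)\mathfrak h$ for every~$j$.

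Now take $w_1,\dots,w_q$ to be a basis of $W$, set $X_i:=n_i\otimes w_i$, and $c_i(t):=\exp(tX_i)$ for small~$t$. The key bracket is
\[
[X_i,Y] \;=\; -(Yn_i)\otimes w_i \,\in\,(n_i^\perp\cap V)\otimes w_i \qquad(Y\in\mathfrak h),
\]
so $[X_i,\mathfrak h]=(n_i^\perp\cap V)\otimes w_i$; bracketing further with $\mathfrak h$, whose action carries $n_i^\perp\cap V$ to all of $V$, recovers the missing direction $n_i\otimes w_i$, and since $[V\otimes w_i,V\otimes w_i]\subset\mathfrak h$, the subspace $\mathfrak h+V\otimes w_i=\mathfrak{so}(V\oplus\langle w_i\rangle)$ is a Lie subalgebra. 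Differentiating $\mathrm{Ad}(\gamma_j(c(t)))\mathfrak h$ at $t=0$ recovers $[X_i,\mathfrak h]$ for each $i=1,\dots,q$, so for generic small $t$ the Lie algebra $\mathfrak L_{c(t)}$ contains $\mathfrak h+V\otimes w_i$ for every~$i$, hence $\mathfrak h+V\otimes W$. Finally the brackets $[V\otimes w_i,V\otimes w_j]$ for $i\neq j$ produce $w_i\wedge w_j\in\mathfrak{so}(W)$ and together generate the remaining $\mathfrak{so}(W)$ factor, giving $\mathfrak L_{c(t)}=\mathfrak g$.

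The main obstacle is to pass rigorously from the first-order tangential calculation to the corresponding statement about the Zariski closure of the deformed representation for small $t\neq 0$. To this end one classifies the Lie subalgebras of $\mathfrak g$ containing $\mathfrak h$ as $\mathfrak h\oplus\mathfrak s\oplus(V\otimes W')$ with $W'\subseteq W$ and $\mathfrak s\subseteq\mathfrak{so}(W)$ a subalgebra preserving $W'$ and containing $\mathfrak{so}(W')$, yielding a finite list of proper intermediate subgroups of $G$ up to $G$-conjugacy. For each such proper $G'$, the locus in parameter space of $c$ with $\rho_c(\pi_1(M))$ contained in some $G$-conjugate of $G'$ is closed; the Lie-algebraic computation above, specialized to $N_i$ with $w_i\notin W'$ (which exists because the $w_i$ span~$W$), exhibits an explicit $c$ escaping this locus. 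Thus the complement of the finite union of these proper closed subvarieties is open and dense in every neighborhood of the identity, and any such $c$ yields a deformation $\rho_c$ of $\rho_0$ with Zariski-dense image in $G$.
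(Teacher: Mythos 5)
Your bending framework, the $H$-module decomposition $\mathfrak g=\mathfrak h\oplus\mathfrak{so}(W)\oplus(V\otimes W)$, and the classification of Lie subalgebras containing $\mathfrak h$ as $\mathfrak h\oplus\mathfrak s\oplus(V\otimes W')$ are correct, and the setup is close in spirit to the paper. The problem is concentrated in the final paragraph, which you flag yourself as ``the main obstacle.'' Three steps there do not hold up as written. First, ``a finite list of proper intermediate subgroups up to $G$-conjugacy'' is not established and is in fact false: the pair $(\mathfrak s,W')$ ranges over Grassmannians of $W$ and over subalgebras of $\mathfrak{so}(W)$, which form continuous families even modulo conjugacy. (Note that the right constraint is not ``contained in a conjugate of $G'$'' but ``the Zariski closure contains the fixed $\mathfrak h$,'' since $\rho_c$ always contains $\rho_0(\pi_1(M^{(j_0)}))$; with this the relevant parameter is $W'\subset W$ alone, and the Grassmannian of subspaces of the \emph{definite} space $W$ is compact --- which would salvage the closedness claim --- but your argument does not exploit this.) Second, for a fixed $G'$ the locus of $c$ with $\rho_c(\pi_1(M))\subset gG'g^{-1}$ for some $g$ is not obviously closed, since the set of conjugates of $G'$ is noncompact. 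Third, the statement that differentiating $\mathrm{Ad}(\gamma_j(c(t)))\mathfrak h$ recovers $[X_i,\mathfrak h]$ \emph{for each} $i$ is too quick: $\gamma_j$ is in general a product $c_{k_m}\cdots c_{k_1}$, whose derivative at $t=0$ gives only the sum $\sum_l[X_{k_l},\mathfrak h]$ modulo $\mathfrak h$, and extracting the individual brackets requires a separate combinatorial/linear-algebra argument. Finally, knowing the first-order tangent does not by itself control the Zariski closure at small $t\neq 0$ without a transversality or analyticity input that is never supplied.

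The paper sidesteps all of this by an \emph{inductive} construction rather than a genericity argument. Working along a spanning tree of the graph of groups, it processes the hypersurfaces one at a time: at step $i$, the partial representation already has Zariski closure $\SO(p,i+1)$ (by the induction hypothesis), and Lemma~\ref{lem:Lie-alg} is used to choose the bending parameter $X_i$ in the centralizer of the edge group so that adjoining the next conjugated copy of $\SO(p,1)$ makes the Zariski closure exactly $\SO(p,i+2)$; the non-tree (HNN) edges are then handled the same way. This produces a specific nearby representation with Zariski-dense image, no density argument needed. Your proof could likely be repaired by adopting this inductive scheme, or by using the compactness of the Grassmannian of subspaces of the definite factor $W$ to make the ``escape'' argument rigorous; as written it is not complete.
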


The bending construction was originally introduced in \cite{jm87} for deformations into $\SO(p+\nolinebreak 1,1)$ or $\PGL(p+1,\R)$.
For the reader's convenience, we shall describe it for deformations into $\SO(p,q+1)$, and check Zariski density in this case.
We refer to \cite[\S\,6]{kas12} for the case of deformations into $\SO(p,2)$.

Hyperbolic manifolds $N$ as in Proposition~\ref{prop:Z-dense-deform} exist, which yields the following corollary.
This shows that Theorem~\ref{thm:deform-Fuchsian-inj-discr} gives new examples of higher higher Teichm\"uller spaces corresponding to connected components of $\Hom(\pi_1(N),G)$ containing Zariski-dense representations.

\begin{cor} \label{cor:Z-dense-deform}
Let $p\geq 2$ and $q\geq 1$.
Then there exists a closed orientable real hyperbolic $p$-manifold $N$ with the following property: let $\rho_0 : \pi_1(N)\to G:=\SO(p,q+1)$ be the composition of the holonomy representation of~$N$ with the natural inclusion $\PO(p,1)_0\simeq\OO(p,1)_0\hookrightarrow\nolinebreak G$; then any neighborhood of $\rho_0$ in $\Hom(\pi_1(N),G)$ contains representations whose image is Zariski-dense in~$G$.
\end{cor}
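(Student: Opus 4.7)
The statement reduces to producing, for each $p\geq 2$ and $q\geq 1$, a single closed orientable real hyperbolic $p$-manifold $M$ admitting $q$ two-sided, connected, totally geodesic embedded hypersurfaces which are pairwise disjoint. Once such $M$ is constructed, Proposition~\ref{prop:Z-dense-deform} applied to $M$ directly yields Zariski-dense representations arbitrarily close to $\rho_0$, so the task is purely one of constructing $M$.

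My plan is to take $M$ to be a finite cover of a standard arithmetic hyperbolic $p$-manifold of simplest type. Let $k$ be a totally real number field with ring of integers $\calO_k$, and let $f$ be a non-degenerate quadratic form on $k^{p+1}$ of signature $(p,1)$ at one archimedean place and definite at every other archimedean place. The $\calO_k$-points of the orthogonal group of~$f$ project to a uniform arithmetic lattice $\Gamma_0$ in $\PO(p,1)$. Varying a $k$-rational codimension-one subspace $W\subset k^{p+1}$ on which $f|_W$ still has signature $(p-1,1)$ at the chosen archimedean place and is definite at the other ones produces infinitely many immersed closed totally geodesic hypersurfaces in the orbifold $\Gamma_0\backslash\H^p$; I pick $q$ of them, $H_1,\dots,H_q$.

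The next step is to separate these hypersurfaces inside a common finite cover. The stabilizer in $\Gamma_0$ of a lift of each $H_i$ is a uniform lattice in a copy of $\PO(p-1,1)$, hence a ``geometric'' (quasi-convex, cocompact) subgroup; by the subgroup separability theorem of Bergeron--Haglund--Wise for geometric subgroups of arithmetic lattices of simplest type, each such stabilizer is separable in~$\Gamma_0$. A standard separation argument based on this property then produces a torsion-free normal finite-index subgroup $\Gamma\leq\Gamma_0$ such that, in the closed manifold $M:=\Gamma\backslash\H^p$, the full preimage of each $H_i$ is a disjoint union of closed embedded two-sided totally geodesic hypersurfaces, and a chosen component above $H_i$ can be made disjoint from a chosen component above $H_j$ whenever $i\neq j$. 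Passing to a further finite normal cover, I can also assume $M$ is orientable and the selected components are connected, at which point $M$ satisfies the hypotheses of Proposition~\ref{prop:Z-dense-deform}.

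The main obstacle I expect is the simultaneous separation of the $q$ chosen totally geodesic hypersurfaces inside a single finite cover. Making one immersed totally geodesic hypersurface embedded and two-sided in some cover is relatively soft (it uses only residual finiteness of~$\Gamma_0$), but forcing the $q$ of them to become pairwise disjoint in a \emph{common} cover is precisely what requires separability of each geometric subgroup of~$\Gamma_0$; this is exactly what the Bergeron--Haglund--Wise theorem provides, and without it one could only make the hypersurfaces injectively immersed, which would not suffice to invoke the bending construction used in the proof of Proposition~\ref{prop:Z-dense-deform}.
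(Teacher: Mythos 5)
Your proposal is correct and follows essentially the same route as the paper: start from a standard uniform arithmetic hyperbolic lattice of simplest type, invoke the Bergeron--Haglund--Wise separability result for geometrically finite subgroups, pass to a finite cover where one obtains $q$ pairwise disjoint, embedded, connected, two-sided, closed totally geodesic hypersurfaces, and then apply Proposition~\ref{prop:Z-dense-deform}. The only cosmetic difference is that the paper separates $q$ distinct lifts of a \emph{single} closed totally geodesic hypersurface $N_0\subset M_0$, whereas you pick $q$ different rational hyperplanes and then separate them; both work, and yours is marginally more elaborate, but the key tool and the reduction to Proposition~\ref{prop:Z-dense-deform} are identical.
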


\subsubsection{Preliminary observations}

The following lemma is useful for proving Proposition~\ref{prop:Z-dense-deform}.

\begin{lem} \label{lem:Lie-alg}
Let $p\geq 2$ and $q\geq q'\geq 1$.
\begin{enumerate}
  \item\label{item:Lie-alg-1} The only Lie subalgebra of $\mathfrak{o}(p,q+1)$ that strictly contains $\mathfrak{o}(p,q)$ is $\mathfrak{o}(p,q+1)$.
  \item\label{item:Lie-alg-2} There is an element $X \in \mathfrak{o}(p,q+1)$, centralizing $\mathfrak{o}(p-1,1)$, such that the Lie subalgebra of $\mathfrak{o}(p,q+1)$ generated by $\mathfrak{o}(p,q')$ and $\mathrm{ad}(X)(\mathfrak{o}(p,q'))$ is $\mathfrak{o}(p,q'+1)$.
\end{enumerate}
\end{lem}

\begin{proof}
\eqref{item:Lie-alg-1}
The $\OO(p,q)$-module $\mathfrak{o}(p,q+1)$ decomposes as the direct sum of the $\OO(p,q)$-module $\mathfrak{o}(p,q)$ (where $\OO(p,q)$ acts by the adjoint action) and of the irreducible $\OO(p,q)$-module $\R^{p,q}$ (where $\OO(p,q)$ acts by the natural action).

\eqref{item:Lie-alg-2}
Let $J = E_{1,1} + \dots + E_{p,p} - E_{p+1,p+1} - \dots - E_{p+q+1,p+q+1} \in \mathrm{M}_{p+q+1}(\R)$ be the diagonal matrix with $p$ times the entry~$1$ and $q+1$ times the entry $-1$.
Viewing $\mathfrak{o}(p,q+1)$ as the set of elements $Z\in\mathrm{M}_{p+q+1}(\R)$ such that ${}^{\scriptscriptstyle{T}}\!ZJ + JZ= 0$, and $\mathfrak{o}(p-1,1)$ as its intersection with $\spa(\{E_{i,j} \,|\, 2\leq i,j\leq p+1\})$, we can take $X = E_{1,p+q'+1} + E_{p+q'+1,1}$.
Indeed, $X$ centralizes $\mathfrak{o}(p-1,1)$.
Moreover, $\mathrm{ad}(X)(\mathfrak{o}(p,q'))$ is contained in $\mathfrak{o}(p,q'+1)$ but not in $\mathfrak{o}(p,q')$; therefore the Lie subalgebra of $\mathfrak{o}(p,q+1)$ generated by $\mathfrak{o}(p,q')$ and $\mathrm{ad}(X)(\mathfrak{o}(p,q'))$ is $\mathfrak{o}(p,q'+1)$ by~\eqref{item:Lie-alg-1}.
\end{proof}

\subsubsection{Proof of Proposition~\ref{prop:Z-dense-deform}}

Let $\mathcal{H}_1,\dots,\mathcal{H}_q$ be two-sided, connected, totally geodesic embedded hypersurfaces of~$N$ which are pairwise disjoint.
For each~$i$, the group $\rho_0(\pi_1(\mathcal{H}_i))$ is a uniform lattice in some conjugate of $\OO(p-1,1)$ in $\SO(p,1)$.

Following \cite[\S\,5]{jm87}, we define a finite oriented graph $Y$ in the following way: the vertices $v_1,\dots,v_r$ of~$Y$ correspond to the connected components $N_1,\dots,N_r$ of $N \smallsetminus \bigcup_{i=1}^q \mathcal{H}_i$, with one oriented edge $e_i$ (for some arbitrary choice of orientation) between vertices $v_j$ and~$v_{j'}$ for each hypersurface $\mathcal{H}_i$ between components $N_j$ and~$N_{j'}$.
Then $\Gamma := \pi_1(N)$ is the fundamental group of the graph of groups associated to~$Y$ with edge groups $G_{e_i} = \pi_1(\mathcal{H}_i)$, vertex groups $G_{v_j} = \pi_1(N_j)$, and natural injections $\varphi_{e,0} : G_e\hookrightarrow G_{v_j}$ and $\varphi_{e,1} : G_e\hookrightarrow G_{v_{j'}}$ for each oriented edge $e$ from $v_j$ to $v_{j'}$ (see \cite[Cor.\ to Lem.\,5.3]{jm87}).

Let $T$ be a maximal tree in~$Y$, and let $\Gamma_T$ be the group generated by the vertex groups $G_{v_1},\dots,G_{v_r}$ with the relations $\varphi_{e,0}(g) = \varphi_{e,1}(g)$ for all $g\in G_e$ for all edges $e$ lying in~$T$.
Bass--Serre theory says that $\Gamma$ is generated by $\Gamma_T$ and by one element $\gamma_e$ for each edge $e$ \emph{not} in~$T$, with the relations $\gamma_e\,\varphi_{e,0}(g)\,\gamma_e^{-1} = \varphi_{e,1}(g)$ for all $g\in G_e$ for all edges $e$ \emph{not} in~$T$ (see \cite[\S\,I.5.4]{ser77}).

Up to renumbering, we may assume that the edges in~$T$ are $e_1,\dots,e_r$, and that for any $1\leq i\leq r$, the oriented edge $e_i$ goes between a vertex $v_{j_i}$ in the subgraph of~$T$ spanned by $e_1,\dots,e_{i-1}$, and another vertex $v_{j'_i}$ outside of this subgraph.
We set $\Gamma_0 := \pi_1(N_{j_1})$ and for $1\leq i\leq r$, by induction, $\Gamma_i := \Gamma_{i-1} *_{\pi_1(\mathcal{H}_i)} \pi_1(N_{j'_i})$ (amalgamated product).
Then $\Gamma_r = \Gamma_T$.

For each $1\leq i\leq r$, let $X_i \in \mathfrak{o}(p,q+1)$ be an element of the Lie algebra of the centralizer of $\rho_0(\pi_1(\mathcal{H}_i))$ in $\SO(p,q+1)$.
As in \cite[Lem.\,5.6]{jm87}, we can define by induction a representation $\rho : \Gamma_T\to G$ by
\begin{itemize}
  \item $\rho|_{\Gamma_0}: = \rho_0|_{\Gamma_0}$;
  \item for $1\leq i\leq r$, if the geodesic segment from $v_{j_1}$ to $v_{j'_i}$ in~$T$ consists of the edges $e_{k_1},\dots,$ $e_{k_{m_i}},e_i$ in this order (disregarding orientation), then $\rho|_{\Gamma_i}$ is defined by $\rho|_{\Gamma_{i-1}}$~and~by
  $$\rho|_{\pi_1(N_{j'_i})} := (e^{X_i} e^{X_{k_{m_i}}}\dots e^{X_{k_1}}) \circ \rho_0|_{\pi_1(N_{j'_i})} \circ (e^{X_i} e^{X_{k_{m_i}}}\dots e^{X_{k_1}})^{-1}.$$
\end{itemize}
(Indeed, the fact that $X_i$ belongs to the Lie algebra of the centralizer of $\rho_0(\pi_1(\mathcal{H}_i))$ ensures that $\rho|_{\Gamma_{i-1}}$ and $\rho|_{\pi_1(N_{j'_i})}$ agree on $\pi_1(\mathcal{H}_i)$.)

We claim that for some appropriate choice of the~$X_i$, the Zariski closure of $\rho(\Gamma_T)$ in $\SO(p,q+\nolinebreak 1)$ is $\SO(p,r+1)$.
Indeed, by \cite[Lem.\,5.9]{jm87}, for any $1\leq j\leq r$ the group $\rho_0(\pi_1(N_j))$ is Zariski-dense in $\SO(p,1)$.
Since the group $\rho(\Gamma_1)$ is generated by $\rho_0(\pi_1(N_{j_1}))$ and $e^{X_1}\,\rho_0(\pi_1(N_{j'_2}))\,e^{-X_1}$, it has the same Zariski closure in $\SO(p,q+1)$ as the group generated by $\SO(p,1)$ and\linebreak $e^{X_1}\,\SO(p,1)\,e^{-X_1}$.
Up to conjugation in $\SO(p,1)$, the group $\rho(\pi_1(\mathcal{H}_1)) = \rho_0(\pi_1(\mathcal{H}_1))$ is a uniform lattice in $\OO(p-1,1)$, hence its Zariski closure in $\SO(p,q+1)$ has Lie algebra $\mathfrak{o}(p-1,1)$.
By Lemma~\ref{lem:Lie-alg}.\eqref{item:Lie-alg-2} with $q'=1$, we may choose $X_1$ so that the Lie subalgebra of $\mathfrak{o}(p,q+1)$ generated by $\mathfrak{o}(p,1)$ and $\mathrm{ad}(X_1)(\mathfrak{o}(p,1))$ is $\mathfrak{o}(p,2)$.
Then the Zariski closure of the group generated by $\SO(p,1)$ and $e^{X_1}\,\SO(p,1)\,e^{-X_1}$ is $\SO(p,2)$, and so the Zariski closure of $\rho(\Gamma_1)$ in $\SO(p,q+1)$ is also $\SO(p,2)$.
Similarly, by induction, using Lemma~\ref{lem:Lie-alg}.\eqref{item:Lie-alg-2} with $q'=i$, for any $1\leq i\leq r$ we may choose $X_i$ so that the Zariski closure of $\rho(\Gamma_i)$ in $\SO(p,q+1)$ is $\SO(p,i+1)$.
In particular, the Zariski closure of $\rho(\Gamma_T) = \rho(\Gamma_r)$ in $\SO(p,q+1)$ is $\SO(p,r+1)$.

For $r+1\leq i\leq q$, we define by induction $\Gamma_i := \Gamma_{i-1} *_{\pi_1(\mathcal{H}_i)}$ (HNN extension): namely, $\Gamma_i$ is generated by $\Gamma_{i-1}$ and $\gamma_{e_i}$ with the relations $\gamma_{e_i}\,\varphi_{e_i,0}(g)\,\gamma_{e_i}^{-1} = \varphi_{e_i,1}(g)$ for all $g\in G_{e_i} = \pi_1(\mathcal{H}_i)$.
Then $\Gamma_q = \Gamma$.
Again, choose an element $X_i \in \mathfrak{o}(p,q+1)$ in the Lie algebra of the centralizer of $\rho_0(\pi_1(\mathcal{H}_i))$ in $\SO(p,q+1)$.
As in \cite[Lem.\,5.7]{jm87}, we can define by induction a representation $\rho : \Gamma\to G$ by
\begin{itemize}
  \item $\rho|_{\Gamma_r} = \rho|_{\Gamma_T}$ as above;
  \item for $r+1\leq i\leq q$, if the edge $e_i$ goes from $v_{j_i}$ to $v_{j'_i}$ and if the geodesic segment from $v_{j_1}$ to $v_{j_i}$ (\resp $v_{j'_i}$) in~$T$ consists of the edges $e_{k_1},\dots,e_{k_{m_i}}$ (\resp $e_{\ell_1},\dots,e_{\ell_{n_i}}$) in this order (disregarding orientation), then $\rho|_{\Gamma_i}$ is defined by $\rho|_{\Gamma_{i-1}}$~and~by
  $$\rho(\gamma_{e_i}) := (e^{X_{\ell_{n_i}}}\dots e^{X_{\ell_1}}) \, \rho_0(\gamma_{e_i}) \, e^{X_i} \,(e^{X_{k_{m_i}}}\dots e^{X_{k_1}})^{-1}.$$
\end{itemize}
(Indeed, by construction we have $\rho|_{\pi_1(N_{j_i})} = (e^{X_{k_{m_i}}}\dots e^{X_{k_1}}) \circ \rho_0|_{\pi_1(N_{j_i})} \circ (e^{X_{k_{m_i}}}\dots e^{X_{k_1}})^{-1}$ and $\rho|_{\pi_1(N_{j'_i})} = (e^{X_{\ell_{n_i}}}\dots e^{X_{\ell_1}}) \circ \rho_0|_{\pi_1(N_{j'_i})} \circ (e^{X_{\ell_{n_i}}}\dots e^{X_{\ell_1}})^{-1}$; the fact that $X_i$ belongs to the Lie algebra of the centralizer of $\rho_0(\pi_1(\mathcal{H}_i))$ ensures that the relations $\rho(\gamma_{e_i})\,\rho(\varphi_{e_i,0}(g))\,\rho(\gamma_{e_i})^{-1} = \rho(\varphi_{e_i,1}(g))$ are satisfied for all $g\in G_{e_i} = \pi_1(\mathcal{H}_i)$.)

We claim that for some appropriate choice of the~$X_i$, the group $\rho(\Gamma)$ is Zariski-dense in $\SO(p,q+1)$.
Indeed, the Zariski closure of $\rho(\Gamma_r)$ in $\SO(p,q+1)$ is $\SO(p,r+1)$.
Since the group $\rho(\Gamma_{r+1})$ is generated by $\rho(\Gamma_r)$ and $\rho(\gamma_{e_{r+1}})$, it has the same Zariski closure in $\SO(p,q+1)$ as the group generated by $\SO(p,r+1)$ and $\rho(\gamma_{e_{r+1}})$.
Since $\rho(\gamma_{e_{r+1}})$ is equal to $e^{X_{r+1}}$ multiplied on the left and on the right by elements of $\SO(p,r+1)$, the group generated by $\SO(p,r+1)$ and $\rho(\gamma_{e_{r+1}})$ contains the group generated by $\SO(p,r+1)$ and $e^{X_{r+1}}\,\SO(p,r+1)\,e^{-X_{r+1}}$.
By Lemma~\ref{lem:Lie-alg}.\eqref{item:Lie-alg-2} with $q'=r+1$, we may choose $X_{r+1}$ so that the Lie subalgebra of $\mathfrak{o}(p,q+1)$ generated by $\mathfrak{o}(p,r+1)$ and $\mathrm{ad}(X_{r+1})(\mathfrak{o}(p,r+1))$ is $\mathfrak{o}(p,r+2)$.
Then the Zariski closure of the group generated by $\SO(p,r+1)$ and $e^{X_{r+1}}\,\SO(p,r+1)\,e^{-X_{r+1}}$ is $\SO(p,r+2)$, and so the Zariski closure of $\rho(\Gamma_{r+1})$ in $\SO(p,q+1)$ is also $\SO(p,r+2)$.
Similarly, by induction, using Lemma~\ref{lem:Lie-alg}.\eqref{item:Lie-alg-2} with $q'=i$, for any $r+1\leq i\leq q$ we may choose $X_i$ so that the Zariski closure in $\SO(p,q+1)$ of the group $\rho(\Gamma_i)$ generated by $\rho(\Gamma_{i-1})$ and $\rho(\gamma_{e_i})$ is $\SO(p,i+1)$.
In particular, $\rho(\Gamma) = \rho(\Gamma_q)$ is Zariski-dense in $\SO(p,q+1)$.

Finally, we observe that for any neighborhood $\mathcal{U}$ of $\rho_0$ in $\Hom(\Gamma,\SO(p,q+1))$, up to replacing each $X_i$ by $\varepsilon X_i$ for some small $\varepsilon>0$, we may assume that the representation $\rho$ we have constructed belongs to~$\mathcal{U}$.
This completes the proof of Proposition~\ref{prop:Z-dense-deform}.

\begin{remark}
In the proof we could also replace each $X_i$ by $t_i X_i$ for some arbitrary $t_i>0$.
By Theorem~\ref{thm:main}, the representation $\rho : \Gamma\to\SO(p,q+1)$ obtained in this way is still $\H^{p,q}$-convex cocompact, hence in particular injective and discrete (even when the $t_i$ are arbitrarily large, \ie $\rho$ is arbitrary far away from $\rho_0$ in the connected component of $\rho_0$ inside $\Hom(\Gamma,\SO(p,q+1))$).
\end{remark}

\subsubsection{Proof of Corollary~\ref{cor:Z-dense-deform}}

For a standard uniform arithmetic lattice of $\SO(p,1)$, the corresponding closed real hyperbolic manifold $N_0$ admits a closed totally geodesic hypersurface~$\mathcal{H}_0$: see \cite[\S\,7]{jm87} or \cite[\S\,2]{bhw11}.
By \cite[Cor.\,1.12]{bhw11}, the group $\pi_1(N_0)$ is separable over geometrically finite subgroups.
Therefore we can find a finite covering $N$ of~$N_0$ such that $N$ is orientable and contains $q$ lifts of~$\mathcal{H}_0$ which are \emph{pairwise disjoint} two-sided, connected, totally geodesic embedded hypersurfaces in~$N$.
We conclude by applying Proposition~\ref{prop:Z-dense-deform}.

\subsection{Exotic examples via Coxeter groups} \label{subsec:appendix-Coxeter}

Lee--Marquis \cite[Th.\,E]{lm19} found examples of Coxeter groups $\Gamma$ in $p+3$ generators (for $p=4$ or~$6$, see Table~\ref{table:Coxeter-diag}) with $\vcd(\Gamma) = p$, which are not commensurable to lattices in $\PO(p,1)$, and which admit pairs $(\rho_1,\rho_2)$ of representations as reflection groups in~$\R^{p+2}$ in the sense of Vinberg \cite{vin71} which are $\H^{p,1}$-convex cocompact and such that $\rho_1$ cannot be continuously deformed to~$\rho_2$ inside the space of $\H^{p,1}$-convex cocompact representations of~$\Gamma$.
We now observe that these representations can be deformed to representations in $\Hom(\Gamma,\GL(p+3,\R))$ which are $\H^{p,2}$-convex cocompact and whose image is Zariski-dense in $\OO(p,3)$.

\begin{prop} \label{prop:Z-dense-deform-LM}
Let $\Gamma$ be a Coxeter group in $p+3$ generators as in Table~\ref{table:Coxeter-diag} below, where $p \in \{ 4,6\}$.
Then $\Gamma$ is a Gromov hyperbolic group with $\vcd(\Gamma) = p$ and there exist a one-parameter family $(\rho_t)_{t>0} \subset \Hom(\Gamma,\GL(p+3,\R))$ of representations of $\Gamma$ as a reflection group in~$\R^{p+3}$ in the sense of Vinberg~\cite{vin71} and two positive numbers $t_1<t_2$ such that
\begin{enumerate}[label=(\roman*),ref=\roman*]
  \item for any $t\in (t_1,t_2)$, the group $\rho_t(\Gamma)$ preserves a quadratic form $Q_t$ of signature $(p,3|0)$ on $\R^{p+3}$, is Zariski-dense in $\mathrm{Aut}(Q_t) \simeq \OO(p,3)$ and $\H^{p,2}$-convex cocompact;
  \item for any $t\in (0,t_1)\cup (t_2,+\infty)$, the group $\rho_t(\Gamma)$ preserves a quadratic form $Q_t$ of signature $(p+1,2|0)$ on $\R^{p+3}$, is Zariski-dense in $\mathrm{Aut}(Q_t) \simeq \OO(p+1,2)$ and $\H^{p+1,1}$-convex cocompact;
  \item for any $t\in\{ t_1,t_2\}$, the group $\rho_t(\Gamma)$ preserves a hyperplane $V_t$ of $\R^{p+3}$ and a quadratic form $Q_t$ of signature $(p,2|0)$ on~$V_t$, and the restriction of $\rho_t(\Gamma)$ to~$V_t$ is Zariski-dense in $\mathrm{Aut}(Q_t) \simeq \OO(p,2)$ and $\H^{p,1}$-convex cocompact.
\end{enumerate}
\end{prop}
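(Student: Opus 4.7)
The plan is to construct the family $(\rho_t)_{t>0}$ directly via Vinberg's theory of linear reflection groups~\cite{vin71}. In each Coxeter diagram of Table~\ref{table:Coxeter-diag} there is a distinguished edge labeled $\infty$ joining two generators $s_i,s_j$, and the corresponding off-diagonal entries $a_{ij},a_{ji}$ of an admissible Cartan matrix $A$ are free to vary subject only to $a_{ij},a_{ji}\leq 0$ and $a_{ij}a_{ji}\geq 4$. Fixing the remaining entries of $A$ according to the diagram and letting $a_{ij}a_{ji}$ depend real-analytically on a single parameter $t$ (via a suitable monotone reparametrization) yields via Vinberg's theorem a real-analytic family of faithful reflection representations $\rho_t:\Gamma\to\GL(N,\R)$ preserving the symmetrized quadratic form $Q_t$ on $\R^N$. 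A direct analysis of the principal minors of $A_t$ shows that $\det(A_t)$ is a polynomial in $t$ with exactly two positive real roots $t_1<t_2$ corresponding precisely to the two Lee--Marquis rank $N-1$ representations $\rho_1,\rho_2$ of \cite[Th.\,E]{lm19}; Sylvester's criterion then yields that $Q_t$ has signature $(N-3,3|0)$ for $t\in(t_1,t_2)$, signature $(N-2,2|0)$ for $t\in(0,t_1)\cup(t_2,+\infty)$, and at $t\in\{t_1,t_2\}$ degenerates to a form of rank $N-1$ whose induced signature on the unique $\rho_t(\Gamma)$-invariant hyperplane $V_t\subset\R^N$ is $(N-3,2|0)$.

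The $\H^{p,q}$-convex cocompactness assertions will follow from the Lee--Marquis endpoint behavior together with the openness (Fact~\ref{fact:Hpq-cc-open}) and closedness (Theorem~\ref{thm:main}) of the property. At $t=t_1$ and $t=t_2$, the representation $\rho_t$ factors through the Lee--Marquis $\H^{N-3,1}$-convex cocompact representation into $\PO(N-3,2)$; the inclusion $\OO(N-3,2)\hookrightarrow\OO(N-3,3)$ together with \cite[Th.\,1.16]{dgk-proj-cc} and Fact~\ref{fact:Hpq-cc-Ano} then shows that $\rho_t$, viewed as a representation into $\PO(N-3,3)$, is $\H^{N-3,2}$-convex cocompact. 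By Fact~\ref{fact:Hpq-cc-open} this property persists in a neighborhood of each endpoint inside $(t_1,t_2)$, and by Theorem~\ref{thm:main} -- applicable because $\Gamma$ is Gromov hyperbolic with $\vcd(\Gamma)=N-3$ -- it is also closed in the connected interval $(t_1,t_2)$, hence holds throughout. An analogous argument, using Barbot's closedness result~\cite{bar15} in place of Theorem~\ref{thm:main}, handles $\H^{N-2,1}$-convex cocompactness on $(0,t_1)\cup(t_2,+\infty)$.

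The main obstacle will be upgrading convex cocompactness to Zariski density of $\rho_t(\Gamma)$ in $\mathrm{Aut}(Q_t)$. Once $\H^{p,q}$-convex cocompactness is in hand, each $\rho_t$ is injective and discrete, and standard arguments (using Facts~\ref{fact:proj-ss-discrete}--\ref{fact:Hpq-cc-Ano} and the Gromov hyperbolicity of $\Gamma$) give that the Zariski closure $H_t$ of $\rho_t(\Gamma)$ is reductive. The plan is first to verify that $H_t$ acts irreducibly on the relevant representation space by exploiting the connectedness of the Coxeter diagrams of Table~\ref{table:Coxeter-diag}: any non-trivial $H_t$-invariant subspace would force a partition of the generating reflections $\rho_t(s_k)$ into mutually commuting subsets, contradicting the connectedness of the diagram. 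Next, one invokes a Lie-algebraic argument in the spirit of Vinberg or Benoist~\cite{ben00} showing that an irreducible closed connected reductive subgroup of $\OO(Q_t)$ containing sufficiently many reflections with spanning root vectors must be the full identity component $\mathrm{Aut}(Q_t)^0$; since the reflections $\rho_t(s_k)$ have determinant $-1$, this upgrades to Zariski density in all of $\mathrm{Aut}(Q_t)$. Combined with the already known Zariski density at the endpoints from \cite[Th.\,E]{lm19} and real-analyticity of the family, this will yield Zariski density throughout each open interval, concluding the proof.
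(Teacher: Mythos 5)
Your construction of the one-parameter family $(\rho_t)$ via Vinberg's theory (perturbing the two off-diagonal entries of the Cartan matrix along the $\infty$-edge) and your reduction of the signature computation to an analysis of $\det(\mathcal{A}_t)$ are exactly the paper's approach; the paper makes the key observation explicit — that the submatrix $\mathcal{A}_t^I$ indexed by the white vertices has signature $(N-3,1|0)$, which, combined with the sign of the leading coefficient of the degree-two polynomial $t\mapsto\det(\mathcal{A}_t)$, determines the signature on each subinterval — but your appeal to principal minors amounts to the same thing. Your Zariski-density argument (irreducibility from connectedness of the diagram, then a Benoist--de la Harpe-type classification of irreducible reflection groups) also follows the same route as the paper, which cites this precise generalization of \cite{bh04} from a reference \cite{adlm}; your extra appeal to real-analyticity and ``propagation from the endpoints'' is both unnecessary and somewhat circular, since the Zariski closures at the endpoints are subgroups of the $(N-1)$-dimensional $\OO(N-3,2)$ and do not directly control the closure of $\rho_t(\Gamma)$ in the $N$-dimensional $\OO(Q_t)$ for nearby~$t$.

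The genuine gap is in your treatment of $\H^{p,q}$-convex cocompactness. You propose to start from the Lee--Marquis endpoint representation $\rho_{t_1}|_{V_{t_1}}$, compose with $\OO(N-3,2)\hookrightarrow\OO(N-3,3)$ to get an $\H^{N-3,2}$-convex cocompact representation $\sigma$, and then argue that this property persists near $t_1$ (Fact~\ref{fact:Hpq-cc-open}) and is closed on $(t_1,t_2)$ (Theorem~\ref{thm:main}). But openness and closedness are statements about connected components of $\Hom(\Gamma,\PO(N-3,3))$, and you have not shown that $\sigma$ is in the closure of the path $(\rho_t)_{t\in(t_1,t_2)}$ viewed there: to compare $\rho_t$ with $\sigma$ you need to conjugate each $\rho_t$ by some $g_t\in\GL(N,\R)$ taking $Q_t$ to a \emph{fixed} form of signature $(N-3,3|0)$, and since $Q_t$ degenerates as $t\to t_1^+$ these conjugating elements $g_t$ necessarily blow up, so there is no reason a priori for $g_t\rho_t g_t^{-1}$ to converge to~$\sigma$ (or to converge at all). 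In short, $\rho_t\to\rho_{t_1}$ in $\Hom(\Gamma,\GL(N,\R))$, but $\rho_{t_1}$ does not preserve any nondegenerate form on $\R^N$, so this limit is invisible inside $\Hom(\Gamma,\PO(N-3,3))$. The paper avoids this issue entirely and appeals instead to the established fact (from \cite{dgk18,lm19}, via the projective-geometric description of Anosov Vinberg reflection groups) that \emph{for every} $t>0$ the restriction of $\rho_t(\Gamma)$ to $V_t$ is $\H^{p_t,q_t-1}$-convex cocompact once one knows $\Gamma$ is Gromov hyperbolic — no interval propagation is needed. To repair your argument you would either need to establish such a direct criterion, or carry out a careful renormalization showing that $g_t\rho_t g_t^{-1}$ does converge to a representation in the $\PO(N-3,3)$-orbit of~$\sigma$.
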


Case~(iii) is \cite[Th.\,E]{lm19}.
Below we explain how to obtain cases (i) and~(ii) from computations made in \cite[\S\,8]{lm19} for the various examples in Table~\ref{table:Coxeter-diag}.

\medskip

\begin{table}[ht!]
\begin{tabular}{ccc}
\begin{tikzpicture}[thick,scale=0.7, every node/.style={transform shape}]
\node[draw,circle] (1) at (36-18:1.0514) {};
\node[draw,circle] (2) at (108-18:1.0514){};
\node[draw,circle] (3) at (180-18:1.0514){};
\node[draw,circle,fill=dark-gray] (4) at (252-18:1.0514){};
\node[draw,circle,fill=dark-gray] (5) at (324-18:1.0514){};
\draw (1)--(2)--(3)--(4)--(5)--(1);
\node[draw,circle,right=1cm of 1] (6) {};
\node[draw,circle,left=1cm of 3] (7) {};
\draw (1)--(6) node[above,midway] {$\ell \geqslant 9$};
\draw (3)--(7) node[above,midway] {$k \geqslant 9$};
\draw (4)--(5) node[below,midway] {$\infty$};
\end{tikzpicture}
&
\begin{tikzpicture}[thick,scale=0.7, every node/.style={transform shape}]
\node[draw,circle] (1) at (36-18:1.0514) {};
\node[draw,circle] (2) at (108-18:1.0514){};
\node[draw,circle] (3) at (180-18:1.0514){};
\node[draw,circle,fill=dark-gray] (4) at (252-18:1.0514){};
\node[draw,circle,fill=dark-gray] (5) at (324-18:1.0514){};
\draw (1)--(2)--(3)--(4)--(5)--(1);
\node[draw,circle,right=1cm of 1] (6) {};
\node[draw,circle] (7) at ($(0,0)$) {};
\draw (1)--(6) node[above,midway] {$k \geqslant 11$};
\draw (4)--(5) node[below,midway] {$\infty$};
\draw (4)--(3);
\draw (5)--(1);
\draw (7)--(2);
\draw (7)--(3) node[midway] {$5$};
\draw (7)--(4) ;
\end{tikzpicture}
&
\begin{tikzpicture}[thick,scale=0.7, every node/.style={transform shape}]
\node[draw,circle,fill=dark-gray] (1) at (0,0) {};
\node[draw,circle] (2) at (0,2) {};
\node[draw,circle] (4) at (2,2) {};
\node[draw,circle] (3) at (1,2) {};
\node[draw,circle] (5) at (2,1) {};
\node[draw,circle,fill=dark-gray] (6) at (2,0) {};
\draw (1)--(2)--(3)--(4)--(5)--(6)--(1);
\node[draw,circle] (7) at (3,1) {};
\draw (5)--(7) node[above,midway] {$\ell \geqslant 8$};
\draw (2)--(3) node[above,midway] {$k \geqslant 7$};
\draw (4)--(3) node[above,midway] {$4$};
\draw (1)--(2) node[left,midway] {$4$};
\draw (1)--(6) node[below,midway] {$\infty$};
\end{tikzpicture}
\\
\begin{tikzpicture}[thick,scale=0.7, every node/.style={transform shape}]
\node[draw,circle] (1) at (36-18:1.0514) {};
\node[draw,circle] (2) at (108-18:1.0514){};
\node[draw,circle] (3) at (180-18:1.0514){};
\node[draw,circle,fill=dark-gray] (4) at (252-18:1.0514){};
\node[draw,circle,fill=dark-gray] (5) at (324-18:1.0514){};
\draw (1)--(2)--(3)--(4)--(5)--(1);
\node[draw,circle,right=1cm of 1] (6) {};
\node[draw,circle,left=1cm of 3] (7) {};
\draw (1)--(6) node[above,midway] {$\ell \geqslant 8$};
\draw (3)--(7) node[above,midway] {$k \geqslant 8$};
\draw (4)--(5) node[below,midway] {$\infty$};
\draw (4)--(3) node[left,midway] {$4$};
\draw (5)--(1) node[right,midway] {$4$};
\end{tikzpicture}
&
\begin{tikzpicture}[thick,scale=0.7, every node/.style={transform shape}]
\node[draw,circle] (1) at (36-18:1.0514) {};
\node[draw,circle] (2) at (108-18:1.0514){};
\node[draw,circle] (3) at (180-18:1.0514){};
\node[draw,circle,fill=dark-gray] (4) at (252-18:1.0514){};
\node[draw,circle,fill=dark-gray] (5) at (324-18:1.0514){};
\draw (1)--(2)--(3)--(4)--(5)--(1);
\node[draw,circle,right=1cm of 1] (6) {};
\node[draw,circle] (7) at ($(0,0)$) {};
\draw (1)--(6) node[above,midway] {$k \geqslant 9$};
\draw (4)--(5) node[below,midway] {$\infty$};
\draw (4)--(3) node[left,midway] {$4$};
\draw (5)--(1) node[right,midway] {$4$};
\draw (7)--(2);
\draw (7)--(3) node[midway] {$4$};
\draw (7)--(4) node[right,midway] {$4$};
\end{tikzpicture}
&
\begin{tikzpicture}[thick,scale=0.7, every node/.style={transform shape}]
\node[draw,circle,fill=dark-gray] (1) at (0,0) {};
\node[draw,circle] (2) at (0,2) {};
\node[draw,circle] (3) at (1,2) {};
\node[draw,circle] (4) at (2,2) {};
\node[draw,circle] (5) at (2,1) {};
\node[draw,circle,fill=dark-gray] (6) at (2,0) {};
\draw (1)--(2)--(3)--(4)--(5)--(6)--(1);
\node[draw,circle] (7) at (3,1) {};
\draw (5)--(7) node[midway] {$4$};
\draw (2)--(3) node[above,midway] {$k \geqslant 9$};
\draw (4)--(3) node[above,midway] {$4$};
\draw (1)--(2) node[left,midway] {$4$};
\draw (1)--(6) node[below,midway] {$\infty$};
\draw (4)--(7);
\draw (6)--(7);
\end{tikzpicture}
\\
&
\begin{tikzpicture}[thick,scale=0.7, every node/.style={transform shape}]
\node[draw,circle] (1) at (36-18:1.0514) {};
\node[draw,circle] (2) at (108-18:1.0514){};
\node[draw,circle] (3) at (180-18:1.0514){};
\node[draw,circle,fill=dark-gray] (4) at (252-18:1.0514){};
\node[draw,circle,fill=dark-gray] (5) at (324-18:1.0514){};
\draw (1)--(2)--(3)--(4)--(5)--(1);
\node[draw,circle,right=1cm of 1] (6) {};
\node[draw,circle,right=1cm of 6] (7) {};
\node[draw,circle,right=1cm of 7] (8) {};
\draw (1)--(6)--(7)--(8);
\draw (7)--(8) node[above,midway] {$5$};
\node[draw,circle,left=1cm of 3] (9) {};
\draw (3)--(9) node[below,midway] {$k \geqslant 11$};
\draw (4)--(5) node[below,midway] {$\infty$};
\end{tikzpicture}
&
\end{tabular}
\caption{Coxeter diagrams of some Gromov hyperbolic Coxeter groups to which Proposition~\ref{prop:Z-dense-deform-LM} applies, taken from \cite[Tables 12--13]{lm19}. In the top-left (\resp top-right, \resp bottom-left) example we ask $(k,\ell) \neq (9,9),(9,10),\dots$, $(9,18),(10,10)$ (\resp $(k,\ell) \neq (7,8),(7,9),(8,8)$, \resp $(k,\ell) \neq (8,8)$).}
\label{table:Coxeter-diag}
\end{table}

\begin{proof}[Proof of Proposition~\ref{prop:Z-dense-deform-LM}]
For the fact that $\Gamma$ is a Gromov hyperbolic group with $\vcd(\Gamma) = p$, see \cite[\S\,8]{lm19}.
To simplify notation, we set $N := p+3$.
Consider a presentation of~$\Gamma$ by generators and relations given by the Coxeter diagram of~$\Gamma$ in Table~\ref{table:Coxeter-diag}:
$$\Gamma = \big\langle s_1,\dots,s_N ~|~ (s_i s_j)^{m_{i,j}}=1\ \forall \ 1\leq i,j\leq N\big\rangle,$$
where $m_{i,i}=1$ and $m_{i,j} = m_{j,i} \in\{2,3,\dots,\infty\}$ for $i\neq j$.
(By convention, $(s_i s_j)^{\infty}=1$ means that $s_is_j$ has infinite order in the group~$\Gamma$.)
For each $t\geq 0$, let $\mathcal{A}_t$ be the $(N\times N)$ real matrix whose $(i,j)$-entry is given by $-2\cos(\pi/m_{i,j})$ if $m_{i,j}\neq\infty$, and $-2-t$ if $m_{i,j}=\infty$.
Let $(e_1,\dots,e_N)$ be the canonical basis of~$\R^N$, and $(e_1^*,\dots,e_N^*)$ its dual basis.
For each $1\leq i\leq N$, we set $v_{i,t} := \mathcal{A}_t\cdot e_i$ and let
$$\rho_t(s_i) := \big(v \longmapsto v - e_i^*(v)\,v_{i,t}\big)$$
be the reflection in the hyperplane $\spa\{ e_j\,|\,j\neq i\}$ satisfying $\rho_t(s_i)(v_{i,t}) = -v_{i,t}$.
This defines a representation $\rho_t : \Gamma\to\GL(N,\R)$ of $\Gamma$ as a reflection group in~$\R^N$ in the sense of Vinberg \cite{vin71}: see \eg \cite[\S\,3.3]{dgklm}.
The representation $\rho_t$ preserves the subspace $V_t := \spa(v_{1,t},\dots,v_{N,t}) = \mathrm{Im}(\mathcal{A}_t)$, and acts irreducibly on~$V_t$ (see \eg \cite[Prop.\,3.23.(2)]{dgklm}).
Since $\mathcal{A}_t$ is symmetric, there is a non-degenerate quadratic form $Q_t$ on~$V_t$ such that $Q_t(v_i,v_j)$ is the $(i,j)$-entry of~$\mathcal{A}_t$ for all $1\leq i,j\leq N$ (see \cite[Th.\,6]{vin71}).
Let $(p_t,q_t|0)$ be the signature of~$Q_t$.
Then the restriction of $\rho_t(\Gamma)$ to~$V_t$ is Zariski-dense in $\mathrm{Aut}(Q_t) \simeq \OO(p_t,q_t)$ by an easy generalization of \cite{bh04}: see \cite{adlm}.
Moreover, if $t>0$, then this restriction is $\H^{p_t,q_t-1}$-convex cocompact: see \cite{dgk18,lm19}.
We now compute the signature $(p_t,q_t|0)$ of~$Q_t$ by computing the signature of the symmetric matrix~$\mathcal{A}_t$.

First, observe that if $I$ is the subset of $\{1,\dots,N\}$ corresponding to the $N-2$ white vertices in the Coxeter diagram of Table~\ref{table:Coxeter-diag}, then the symmetric matrix $\mathcal{A}_t^I$ obtained from $\mathcal{A}_t$ by restricting to coefficients $(i,j)$ in~$I^2$ has signature $(N-3,1|0)$.
Indeed, the Coxeter diagram associated to~$I$ contains a Coxeter sub-diagram with $N-3$ vertices which is spherical (of type $I_2(k) \times I_2(m)$ for some $m\geq 3$, or type $I_2(k)\times H_4$ in the last example), and also a Coxeter sub-diagram with $3$ vertices which is Lann\'er (corresponding to $i_1,i_2,i_3 \in \{1,\dots,N\}$ with $m_{i_1,i_2} = k$, $m_{i_1,i_3} = 2$, and $m_{i_2,i_3} = 3$ or~$4$).
This implies (see \cite{lm19}) that $\mathcal{A}_t^I$ contains a submatrix of the form $\mathcal{A}_t^{I'}$ with $\# I' = N-3$ which is positive definite, and also a submatrix of the form $\mathcal{A}_t^{I''}$ with $\# I'' = 3$ with signature $(2,1)$.
Therefore $\mathcal{A}_t^I$ has signature $(N-3,1|0)$.

Note that $t \mapsto \det(\mathcal{A}_t)$ is a second-degree polynomial, because there is exactly one pair $(i,j)$ in $\{ 1,\dots,N\}$ such that $m_{i,j} = \infty$.
By \cite[Th.\,E \& \S\,8]{lm19}, this polynomial has two positive roots $t_1 < t_2$, and the symmetric matrices $\mathcal{A}_{t_1}$ and $\mathcal{A}_{t_2}$ have signature $(N-3,2|1)$, which implies that $V_{t_1}$ and~$V_{t_2}$ are hyperplanes and that $(p_{t_1},q_{t_1}) = (p_{t_2},q_{t_2}) = (N-3,2)$.
For $t\in (0,+\infty) \smallsetminus \{t_1,t_2\}$, the symmetric matrix $\mathcal{A}_t$ is non-degenerate, hence $V_t = \R^N$ and the signature $(p_t,q_t|0)$ of $Q_t$ is equal to the signature of~$\mathcal{A}_t$, which satisfies $p_t\geq N-3$ and $q_t\geq 1$ by the above observation.
By developing along the last row and the last column, we see that the leading coefficient of the polynomial $t \mapsto \det(\mathcal{A}_t)$ is $-\det(\mathcal{A}_t^I)$, which is positive by the above observation.
We deduce that $(p_t,q_t) = (N-2,2)$ if $t\in (0,t_1)\cup (t_2,+\infty)$, and $(p_t,q_t) = (N-3,3)$ if $t\in (t_1,t_2)$.
\end{proof}

\subsection{Exotic examples via Gromov--Thurston manifolds} \label{subsec:appendix-Gromov-Thurston}

Gromov--Thurston \cite{gt87} constructed closed orientable manifolds admitting a negatively-curved Riemannian metric, but no hyperbolic metric.
They are defined as follows.
Fix any $p\geq 4$.
By \cite{gt87}, there exist closed oriented hyperbolic $p$-dimensional manifolds $N$ admitting two connected embedded totally geodesic hypersurfaces $\mathcal{H}_1$ and $\mathcal{H}_2$ such that
\begin{itemize}
  \item each $\mathcal{H}_i$ is the set of fixed points of some isometric involution $\sigma_i$ of~$N$,
  \item $\mathcal{H}_i$ is homologically trivial,
  \item $\mathcal{S} := \mathcal{H}_1 \cap \mathcal{H}_2$ is connected,
  \item $\mathcal{H}_1$ and~$\mathcal{H}_2$ intersect along~$\mathcal{S}$ with an angle $\pi/n$, where $n\geq 2$.
\end{itemize}
We call such an~$N$ an \emph{$n$-dihedral} hyperbolic manifold.
Indeed, the group generated by $\sigma_1$ and~$\sigma_2$ is isomorphic to the dihedral group $D_{2n}$.
The product $\sigma_1\sigma_2$ generates an index-two subgroup which is cyclic of order~$n$.
Let $\underline{N}$ be the quotient of~$N$ by this cyclic subgroup.
The quotient map $N \to \underline{N}$ is a ramified covering of degree~$n$.
A cyclically ramified cover $N^{k/n}$ of~$\underline{N}$ of degree $k\neq n$ is called a $k$-ramified \emph{Gromov--Thurston manifold} over~$N$.

Monclair--Schlenker--Tholozan \cite[Th.\,1.1--1.2]{mst} constructed $\H^{p,1}$-convex cocompact representations of $\pi_1(N^{k/n})$ into $\SO(p,2)$ for such Gromov--Thurston manifolds $N^{k/n}$ with $k>n$.
These representations have Zariski-dense image in $\SO(p,2)$: see \cite[Prop.\,1.4]{gm24}.
We now observe that these representations can be deformed, inside $\Hom(\pi_1(N^{k/n}),\SO(p,q+\nolinebreak 1))$, into $\H^{p,q}$-convex cocompact representations whose image is Zariski-dense in $\SO(p,q+1)$, for any $1\leq q\leq 2k-2$.

\begin{prop} \label{prop:Z-dense-deform-MST}
For $p\geq 4$ and $k>n\geq 2$, let $N^{k/n}$ be a $k$-ramified Gromov--Thurston manifold over an $n$-dihedral hyperbolic manifold of dimension~$p$.
Then for any $1\leq q\leq 2k-2$, the $\H^{p,1}$-convex cocompact representations from $\pi_1(N^{k/n})$ to $\SO(p,2)$ constructed in \cite{mst} can be deformed continuously inside $\Hom(\pi_1(N^{k/n}),\SO(p,q+\nolinebreak 1))$ to representations whose image is Zariski-dense in $\SO(p,q+1)$.
\end{prop}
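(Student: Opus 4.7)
The strategy is to imitate the proof of Proposition~\ref{prop:Z-dense-deform}, but starting from Monclair--Schlenker--Tholozan's representation rather than a Fuchsian one, and bending inside $\SO(p,q+1)$ along suitable hypersurfaces of~$M^{k/n}$.

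First, I would set up the starting representation. By \cite{mst}, the group $\pi_1(M^{k/n})$ admits an $\H^{p,1}$-convex cocompact representation $\tau_0 : \pi_1(M^{k/n}) \to \SO(p,2)$ which, by \cite[Prop.\,1.4]{gm24}, already has Zariski-dense image in $\SO(p,2)$. Composing $\tau_0$ with the natural inclusion $\SO(p,2) \hookrightarrow \SO(p,q+1)$ yields the initial representation $\rho_0 : \pi_1(M^{k/n}) \to \SO(p,q+1)$, which is $\H^{p,q}$-convex cocompact by \cite[Th.\,1.16 \& 1.24]{dgk-proj-cc}, with Zariski closure a conjugate of $\SO(p,2)$ in $\SO(p,q+1)$.

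Next, I would produce a collection of mutually disjoint two-sided connected embedded hypersurfaces in~$M^{k/n}$ along which to bend. Recall that $M^{k/n}$ is obtained from $k$ copies of a fundamental domain for $\langle\sigma_1\sigma_2\rangle$ acting on $M$, glued cyclically around (a copy of) the ramification locus $\mathcal{S}$. Pulling back the $\langle\sigma_1\sigma_2\rangle$-orbits of $\mathcal{H}_1$ and $\mathcal{H}_2$ to $M^{k/n}$ gives a natural family of totally geodesic (for the original hyperbolic metric on each fundamental piece) hypersurfaces. A combinatorial argument using the $k$-fold cyclic symmetry should yield at least $q\leq 2k-2$ hypersurfaces $\mathcal{H}'_1,\dots,\mathcal{H}'_{q-1}$ that are two-sided, connected, and pairwise disjoint; each determines an amalgamated product or HNN decomposition of $\pi_1(M^{k/n})$, and hence a graph-of-groups structure to which the Johnson--Millson formulas apply. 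The restriction $\tau_0|_{\pi_1(\mathcal{H}'_i)}$ lies in a conjugate copy of $\SO(p-1,2)\subset\SO(p,2)$, whose centralizer in $\SO(p,q+1)$ strictly contains its centralizer in $\SO(p,2)$, yielding bending directions $X_i \in \mathfrak{so}(p,q+1)$ transverse to $\mathfrak{so}(p,2)$.

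Then, exactly as in the proof of Proposition~\ref{prop:Z-dense-deform}, I would iterate the bending construction, at each step choosing the direction~$X_i$ using a Lie-algebraic lemma analogous to Lemma~\ref{lem:Lie-alg} (but with base $\mathfrak{so}(p,2)$ instead of $\mathfrak{so}(p,1)$). Each bending enlarges the Zariski closure by one step, going from $\SO(p,i+1)$ to $\SO(p,i+2)$; after $q-1$ bendings, we reach Zariski closure $\SO(p,q+1)$. Rescaling the $X_i$ by small positive factors keeps the resulting representations in any prescribed neighborhood of~$\rho_0$, producing the desired continuous deformation.

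The main obstacle is the combinatorial point in step two: verifying that $M^{k/n}$ really contains at least $2k-2$ mutually disjoint two-sided connected embedded hypersurfaces of the above type (this is the source of the upper bound $q\leq 2k-2$), and that each restricted representation $\tau_0|_{\pi_1(\mathcal{H}'_i)}$ has the Zariski-density properties in a copy of $\SO(p-1,2)$ needed to run the inductive Lie-algebraic argument. A secondary technical point is the explicit verification of the Lie-algebraic lemma analogous to Lemma~\ref{lem:Lie-alg} in this slightly different setting, but this should follow the same $\OO(p-1,1)$-module decomposition argument as in the hyperbolic case, since the relevant centralizers are described by the same Lie-theoretic combinatorics.
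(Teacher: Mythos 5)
Your high-level strategy (iterated Johnson--Millson bending inside $\SO(p,q+1)$, enlarging the Zariski closure by one step at a time using a Lie-algebraic lemma like Lemma~\ref{lem:Lie-alg}) is the right tool, and the final inductive Zariski-density argument would indeed work if the decompositions you need existed. But the proposal has a gap in exactly the place you flag as the ``main obstacle,'' and it is not merely a verification to fill in --- the hypersurfaces you propose do not exist.

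The natural totally geodesic hypersurfaces in $M^{k/n}$ that arise from pulling back $\mathcal{H}_1,\mathcal{H}_2$ are the $2k$ sheets $H_1,\dots,H_{2k}$ (in the notation of \cite{mst}), and these are \emph{not} pairwise disjoint: they all share the codimension-$2$ submanifold $S = H_1\cap H_2$, because in $M$ the hypersurfaces $\mathcal{H}_1$ and $\mathcal{H}_2$ intersect along $\mathcal{S}$ by construction. So there is no family of $q-1\leq 2k-3$ pairwise disjoint two-sided embedded hypersurfaces of this type, and the usual graph-of-groups formalism of \cite{jm87} cannot be applied as you describe. A second problem is the claim that $\tau_0(\pi_1(\mathcal{H}'_i))$ lies in a copy of $\SO(p-1,2)$: in the geometric model of \cite{mst} the subgroup $\rho(\pi_1(H_j))$ has Zariski closure a copy of $\SO(p-1,1)$ (not $\SO(p-1,2)$), since $H_j$ develops onto a half of a totally geodesic copy of $\H^{p-1}$; this changes the centralizer computation one would need.

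The paper circumvents both issues by using a different decomposition: for each $j$, the \emph{union} $H_{j-1}\cup H_{j+1}$ (which is connected but singular along $S$) separates $M^{k/n}$ into two pieces, giving an amalgam $\pi_1(M^{k/n}) = \pi_1(V_{j-1}\cup V_j\cup H_j\smallsetminus S) *_{\pi_1(H_{j-1}\cup H_{j+1})} \pi_1(M^{k/n}\smallsetminus(\overline{V_{j-1}}\cup\overline{V_j}))$. The bending element is then chosen to fix the \emph{two} polygon vertices $v_{j-1}$ and $v_{j+1}$ simultaneously (Lemma~\ref{lem:timelike-polygon}), hence centralizing $\rho(\pi_1(H_{j-1}\cup H_{j+1}))$, while moving the intermediate vertex $v_j$ into a new timelike direction. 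This is a bend at a single vertex of the spacelike polygon, not a bend along an embedded hypersurface, and the bound $q\leq 2k-2$ comes from the number of such bendable vertices, not from a count of disjoint hypersurfaces. You would need to introduce and prove Lemma~\ref{lem:timelike-polygon} (or something equivalent to it) and work directly with the polygon picture from \cite{mst} rather than with abstract hypersurface bending.
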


These continuous deformations are $\H^{p,q}$-convex cocompact by Theorem~\ref{thm:main}.

\subsubsection{Preliminary observations} \label{subsubsec:prelim-Gromov-Thurston}

As in \cite{mst}, for $k>n\geq 2$ we define an \emph{equilateral spacelike $2k$-polygon of side length $\pi/n$ in $\R^{2,1}$} to be a cyclically ordered family $\mathcal{P} = (v_j)_{j\in\Z/2k\Z}$ of points of $\R^{2,1}$ such that for any $j\in\Z/2k\Z$, we have $\sfb(v_j,v_j) = 1$ and $\sfb(v_j,v_{j+1}) = \cos(\pi/n)$, the vectors $v_{j+1} - \cos(\pi/n) v_j$ and $v_{j-1} - \cos(\pi/n) v_j$ belong to two different connected components of spacelike vectors in $v_j^{\perp} \simeq \R^{1,1}$, and the interior of the segment $[v_i,v_{i+1}]$ does not intersect the segment $[v_j,v_{j+1}]$ for $i\neq j$.
We denote by $\mathcal{E}_{k,n}$ the space of such polygons.

For instance, if $(e_1,e_2,e_3)$ is an orthogonal basis of~$\R^{2,1}$ in which the quadratic form $\sfb$ has matrix $\mathrm{diag}(1,1,-1)$, then we can take
\begin{equation} \label{eqn:v_j}
v_j := \sqrt{\alpha_{k,n}} \, \Big(\cos\Big(\frac{j\pi}{k}\Big)\,e_1 + \sin\Big(\frac{j\pi}{k}\Big)\,e_2\Big) + \sqrt{\alpha_{k,n} - 1} \ e_3
\end{equation}
where $\alpha_{k,n} := (1 - \cos(\pi/n))/(1 - \cos(\pi/k))$.

\begin{remark} \label{rem:polygon}
For $k>n\geq 2$, if $\mathcal{P} = (v_j)_{j\in\Z/2k\Z}$ is an equilateral spacelike $2k$-polygon of side length $\pi/n$ in $\R^{2,1}$, then for each $j\in\Z/2k\Z$ the subspace $\spa(v_j,v_{j+1})$ is a $2$-plane on which $\sfb$ is positive definite, and there exists $j_0\in\Z/2k\Z$ such that $\spa(v_{j_0-1},v_{j_0},v_{j_0+1}) = \R^{2,1}$.
\end{remark}

\begin{lem} \label{lem:deform-triple-R2q}
For $q\geq 1$, let $\sfb$ be the standard quadratic form of signature $(2,q)$ on $\R^{2,q}$.
Let $v_0,v_2\in\R^{2,q}$ be two distinct vectors such that $\sfb(v_0,v_0) = \sfb(v_2,v_2) = 1$.
For $0\leq c<1$, let $\mathcal{V}_{v_0,v_2,c}$ be the set of vectors $v_1\in\R^{2,q}$ such that $\sfb(v_1,v_1) = 1$ and $\sfb(v_0,v_1) = \sfb(v_1,v_2) = c$.
\begin{enumerate}
  \item\label{item:triple-R2q-1} The restriction of~$\sfb$ to $\spa(v_0,v_2)$ has signature $(2,0|0)$ (\resp $(1,1|0)$) if and only if $|\sfb(v_0,v_2)|<1$ (\resp $|\sfb(v_0,v_2)|>1$).
  \item\label{item:triple-R2q-2} Suppose that $c\neq 0$ and $-1 < \sfb(v_0,v_2) < 2c^2-1$, or that $c=0$ and $|\sfb(v_0,v_2)|>1$.
  Then $\mathcal{V}_{v_0,v_2,c}$ is non-empty and does not intersect $\spa(v_0,v_2)$.
  If $q=1$, then $\mathcal{V}_{v_0,v_2,c}$ consists of two vectors, which vary continuously with $v_0$ and~$v_2$ (for fixed~$c$).
  \item\label{item:triple-R2q-3} For any $v_1 \in \mathcal{V}_{v_0,v_2,c}$ such that $\spa(v_0,v_1,v_2) = \R^{2,1} \subset \R^{2,q}$, there is a continuous path $(g_t)_{t\in [0,1]} \subset \SO(2,q)$ with $g_0 = \mathrm{Id}$ such that $g_t\cdot v_0 = v_0$ and $g_t\cdot v_2 = v_2$ and $g_t\cdot v_1 \in \mathcal{V}_{v_0,v_2,c} \smallsetminus \R^{2,q-1}$ for all $t>0$.
\end{enumerate}
\end{lem}

\begin{proof}
\eqref{item:triple-R2q-1} The polynomial $t \mapsto \sfb(v_0+tv_2,v_0+tv_2) = t^2 + 2\sfb(v_0,v_2)\,t + 1$ admits no real roots (\resp two distinct real roots) if and only if $|\sfb(v_0,v_2)|<1$ (\resp $|\sfb(v_0,v_2)|>1$).

\eqref{item:triple-R2q-2} Suppose that $c\neq 0$ and $-1 < \sfb(v_0,v_2) < 2c^2-1$.
By~\eqref{item:triple-R2q-1}, the restriction of~$\sfb$ to $\spa(v_0,v_2)$ has signature $(2,0|0)$.
Let $v$ be the unique vector of $\spa(v_0,v_2)$ such that $\sfb(v,v) =\nolinebreak 1$ and $c' := \sfb(v,v_0) = \sfb(v,v_2) > 0$.
We have $\sfb(v_0,v_2) = \cos(2\arccos(c')) = 2 {c'}^2 - 1$, hence the inequality $\sfb(v_0,v_2) < 2c^2-1$ implies $c'<c$.
Let $v'_1 := (c/c')\,v$.
Then $1 - \sfb(v'_1,v'_1) = 1 - (c/c')^2 < 0$, and
$$\mathcal{V}_{v_0,v_2,c} = \big\{ v'_1+w ~|~ w\in\spa(v_0,v_2)^{\perp},\ \sfb(w,w) = 1 - \sfb(v'_1,v'_1)\big\}.$$
The restriction of~$\sfb$ to $\spa(v_0,v_2)^{\perp} \simeq \R^{0,q}$ is negative definite, hence $\mathcal{V}_{v_0,v_2,c}$ is non-empty and does not intersect $\spa(v_0,v_2)$.
If $q=1$, then $\mathcal{V}_{v_0,v_2,c}$ consists of two vectors, which vary continuously with $v_0$ and~$v_2$ (for fixed~$c$).

Suppose that $c=0$ and $|\sfb(v_0,v_2)|>1$.
Then $\mathcal{V}_{v_0,v_2,c} = \{ v_1\in\spa(v_0,v_2)^{\perp} ~|~ \sfb(v_1,v_1) = 1\}$.
By~\eqref{item:triple-R2q-1}, the restriction of~$\sfb$ to $\spa(v_0,v_2)^{\perp}$ has signature $(1,q-1|0)$, hence $\mathcal{V}_{v_0,v_2,c}$ is non-empty; moreover, $\mathcal{V}_{v_0,v_2,c}$ does not intersect $\spa(v_0,v_2)$.
If $q=1$, then $\mathcal{V}_{v_0,v_2,c}$ consists of two vectors, which vary continuously with $v_0$ and~$v_2$ (for fixed~$c$).

\eqref{item:triple-R2q-3} Fix $v_1 \in \mathcal{V}_{v_0,v_2,c}$ such that $\spa(v_0,v_1,v_2) = \R^{2,1} \subset \R^{2,q}$.
Choose a nonzero vector $w \in \R^{2,q}$ in the orthogonal of $\R^{2,q-1}$; then $\sfb(w,w) < 0$.

Suppose that the restriction of $\sfb$ to $\spa(v_0,v_2)$ is non-degenerate.
Then the restriction of $\sfb$ to the $2$-plane $E := (\spa(v_0,v_2)^{\perp}\cap\R^{2,1}) \oplus \R w$ is non-degenerate.
The orthogonal projection of $v_1$ to~$E$ is non-zero and non-isotropic for~$\sfb$.
The identity component of the fixator of $E^{\perp}$ in $\SO(2,q)$ is a one-parameter subgroup $(g_t)_{t\in\R}$ such that $g_t\cdot v_1 \notin \R^{2,q-1}$ for all $0<t\leq 1$.
Since it fixes $v_0$ and~$v_2$, it preserves $\mathcal{V}_{v_0,v_2,c}$.

Suppose that the restriction of $\sfb$ to $\spa(v_0,v_2)$ is degenerate.
Consider a non-zero vector $v \in \spa(v_0,v_2)\cap\spa(v_0,v_2)^{\perp}$; then $\sfb(v,v) = 0$ and $\spa(v_0,v_2) = \spa(v_0,v)$.
The restriction of $\sfb$ to $E := (v_0^{\perp} \cap \R^{2,1}) \oplus \R w$ has signature $(1,2)$.
The orthogonal projection of $v_1$ to~$E$ is non-zero and not a multiple of~$v$.
The identity component of the fixator of $E^{\perp}\oplus\R v$ in $\SO(2,q)$ is a one-parameter unipotent subgroup $(g_t)_{t\in\R}$ such that $g_t\cdot v_1 \notin \R^{2,q-1}$ for all $t>0$.
Since it fixes $v_0$ and~$v_2$, it preserves $\mathcal{V}_{v_0,v_2,c}$.
\end{proof}

\begin{lem} \label{lem:deform-quadruple-R21}
Fix $0\leq c<1$.
Let $v_{-1},v_0,v_1,v_2\in\R^{2,1}$ satisfy $\sfb(v_j,v_j) = 1$ for all $-1\leq j\leq 2$ and $\sfb(v_{-1},v_0) = \sfb(v_0,v_1) = \sfb(v_1,v_2) = c$.
Suppose that $\spa(v_{-1},v_0,v_1) = \R^{2,1}$ and $\spa(v_0,v_2) = \spa(v_0,v_1)$.
Then there exist $t_0>0$ and continuous deformations $(v_{0,t})_{t\in [0,t_0]}$ of $v_0 = v_{0,0}$ and $(v_{1,t})_{t\in [0,t_0]}$ of $v_1 = v_{1,0}$ such that $\sfb(v_{0,t},v_{0,t}) = \sfb(v_{1,t},v_{1,t}) = 1$ and $\sfb(v_{-1},v_{0,t}) = \sfb(v_{0,t},v_{1,t}) = \sfb(v_{1,t},v_2) = c$ for all~$t$, and such that $\spa(v_{0,t},v_{1,t},v_2) = \R^{2,1}$ for all $t>0$.
\end{lem}

\begin{proof}
By Lemma~\ref{lem:deform-triple-R2q}.\eqref{item:triple-R2q-1}, the restriction of $\sfb$ to $\spa(v_{-1},v_0)$ has signature $(2,0|0)$, and similarly for $\spa(v_0,v_1)$.

We can write $v_0 = cv_{-1} + v''_0$ where $v''_0 \in (v_{-1})^{\perp} \cap \spa(v_{-1},v_0)$ satisfies $\sfb(v''_0,v''_0) = 1-c^2 > 0$.
Consider a non-zero vector $w \in \spa(v_{-1},v_0)^{\perp}$; then $\sfb(w,w)<0$.
Since $v_{-1} \notin \spa(v_0,v_1) = \spa(v_0,v_2) \simeq \R^{2,0}$, we have $v_2 \notin \spa(v_{-1},v_0)$, hence $\sfb(w,v_2) \neq 0$.
Up to replacing $w$ by $-w$, we may assume that $\sfb(w,v_2) < 0$.
For any $t\geq 0$ we have $\sfb(v_0+tv''_0,v_0+tv''_0) - 1 = (2t+t^2)(1-c^2) \geq 0$, and so we can define a continuous deformation $(v_{0,t})_{t\geq 0}$ of $v_0 = v_{0,0}$ by
$$v_{0,t} := v_0 + tv''_0 + \sqrt{\frac{\sfb(v_0+tv''_0,v_0+tv''_0)-1}{|\sfb(w,w)|}}\ w.$$
It satisfies $\sfb(v_{-1},v_{0,t}) = c$ and $\sfb(v_{0,t},v_{0,t}) = 1$ for all $t\geq 0$.

Since the restriction of $\sfb$ to $\spa(v_0,v_2) = \spa(v_0,v_1)$ has signature $(2,0|0)$ and since $\sfb(v_0,v_1) = \sfb(v_1,v_2) = c$, we have $\sfb(v_0,v_2) = \cos(2\arccos(c)) = 2c^2 - 1$.
On the other hand,
$$\sfb(v_{0,t},v_2) = \sfb(v_0,v_2) + \sqrt{\frac{2(1-c^2)t}{|\sfb(w,w)|}}\ \sfb(w,v_2) + O(t)$$
as $t\to 0$, and $\sfb(w,v_2) < 0$ by construction, hence there exists $t_0>0$ such that $-1 < \sfb(v_{0,t},v_2) < 2c^2-1$ for all $0<t\leq t_0$.
By Lemma~\ref{lem:deform-triple-R2q}.\eqref{item:triple-R2q-2}, we can find a continuous deformation $(v_{1,t})_{t\in [0,t_0]}$ of $v_1 = v_{1,0}$ such that $\sfb(v_{1,t},v_{1,t}) = 1$ and $\sfb(v_{0,t},v_{1,t}) = \sfb(v_{1,t},v_2) = c$ for all~$t$, and such that $\spa(v_{0,t},v_{1,t},v_2) = \R^{2,1}$ for all $t>0$.
\end{proof}

\begin{lem} \label{lem:deform-polygon-R21}
Let $k>n\geq 2$.
Any element $\mathcal{P} = (v_j)_{j\in\Z/2k\Z}$ of~$\mathcal{E}_{k,n}$ can be continuously deformed inside~$\mathcal{E}_{k,n}$ into an element $\mathcal{P}' = (v'_j)_{j\in\Z/2k\Z}$ such that $\spa(v'_{j-1},v'_j,v'_{j+1}) = \R^{2,1}$ for all $j\in\Z/2k\Z$.
\end{lem}

\begin{proof}
By Remark~\ref{rem:polygon}, there exists $j_0\in\Z/2k\Z$ such that $\spa(v_{j_0-1},v_{j_0},v_{j_0+1}) = \R^{2,1}$.
It follows from Lemma~\ref{lem:deform-quadruple-R21} that for any $j_0 \in \Z/2k\Z$, if $\spa(v_{j_0-1}, v_{j_0},v_{j_0+1}) = \R^{2,1}$ but $\spa(v_{j_0},v_{j_0+1}, v_{j_0+2}) \neq \R^{2,1}$, then we can continuously deform $\mathcal{P}$ inside~$\mathcal{E}_{k,n}$ (by varying $v_{j_0}$ and $v_{j_0+1}$ and keeping all the other $v_j$ fixed) into an element $\mathcal{P}'' = (v''_j)_{j\in\Z/2k\Z}$ of~$\mathcal{E}_{k,n}$ such that $\spa(v''_{j_0},v''_{j_0+1},v''_{j_0+2}) = \R^{2,1}$; moreover, by taking the deformation small enough we can ensure that $\spa(v''_{j_0-1},v''_{j_0},v''_{j_0+1}) = \R^{2,1}$, and that $\spa(v''_{j_0-2},v''_{j_0-1},v''_{j_0}) = \R^{2,1}$ as soon as $\spa(v_{j_0-2},v_{j_0-1},v_{j_0}) = \R^{2,1}$.
By applying this process iteratively, we can continuously deform $\mathcal{P}$ inside~$\mathcal{E}_{k,n}$ into an element $\mathcal{P}' = (v'_j)_{j\in\Z/2k\Z}$ such that $\spa(v'_{j-1},v'_j,v'_{j+1}) = \R^{2,1}$ for all~$j$.
\end{proof}

\subsubsection{Proof of Proposition~\ref{prop:Z-dense-deform-MST}}

Let $\sigma_1$ and~$\sigma_2$ be the involutions of~$N$ defining its $n$-dihedral structure and, as above, let $\mathcal{H}_1$ (\resp $\mathcal{H}_2$) be the set of fixed points of $\sigma_1$ (\resp $\sigma_2$) and $\mathcal{S} := \mathcal{H}_1 \cap \mathcal{H}_2$.
Let $H_1^N$ (\resp $H_2^N$) be the closure of a connected component of $\mathcal{H}_1 \smallsetminus \mathcal{S}$ (\resp $\mathcal{H}_2 \smallsetminus \mathcal{S}$), chosen so that the oriented angle at~$\mathcal{S} $ from $H_1^N$ to~$H_2^N$ is $\pi/n$.
Let $\underline{H}_1$ and~$\underline{H}_2$ be the projections of $H_1^N$ and~$H_2^N$ to $\underline{N} = N/\langle\sigma_1\sigma_2\rangle$.
As in \cite[\S\,2.2]{mst}, we denote by $H_1,\dots,H_{2k}$ the lifts of $\underline{H}_1$ and~$\underline{H}_2$ to the ramified cover $N^{k/n}$, in cyclic order around $S := H_1\cap H_2$, and by $V_j$ the connected component of $N^{k/n} \smallsetminus \bigcup_{i=1}^{2k} H_i$ bounded by $H_j$ and~$H_{j+1}$, for each $j\in\Z/2k\Z$.

Recall the double cover $\hat{\H}^{p,q}$ of $\H^{p,q}$ from \eqref{eqn:Hpq-double}.
We choose a point $z \in \hat{\H}^{p,1} \subset \hat{\H}^{p,q}$ and a totally geodesic copy $Z$ of $\H^{p-2}$ containing~$z$ in $\hat{\H}^{p,1} \subset \hat{\H}^{p,q}$.
We can write the tangent space $T_z\hat{\H}^{p,1} \simeq \R^{p,1}$ as the direct sum of $T_zZ \simeq \R^{p-2,0}$ and of its orthogonal $(T_zZ)^{\perp} \simeq \R^{2,1}$.
We now consider equilateral spacelike $2k$-polygons of side length $\pi/n$ in $\R^{2,1}$ as in Section~\ref{subsubsec:prelim-Gromov-Thurston}, which we see as subsets of $(T_zZ)^{\perp} \subset T_z\hat{\H}^{p,1}$.
As in \cite[Lem.\,5.4]{mst}, such a polygon $\mathcal{P} = (v_j)_{j\in\Z/2k\Z}$ defines a spacelike $p$-graph (Definition~\ref{def:weakly-sp-gr}) in~$\hat{\H}^{p,1}$ which is a finite union $\bigcup_{j=1}^{2k} X_j$ where
\begin{itemize}
  \item $X_j := \{ \exp_z(u+tw) \,|\, u\in T_zZ,\ w\in [v_j,v_{j+1}],\ t\geq 0\}$ is a convex set with nonempty interior inside a totally geodesic copy of $\H^p$ inside~$\hat{\H}^{p,1}$;
  \item the relative boundary of~$X_i$ inside~$\hat{\H}^{p,1}$ is $Y_j \cup Y_{j+1}$, where $Y_j := \{ \exp_z(u+tv_j) \,|\linebreak u\in T_zZ,\ t\geq 0\}$ is a half-space inside a totally geodesic copy of~$\H^{p-1}$ inside $\hat{\H}^{p,1}$;
  \item $X_j \cap X_{j+1} = Y_{j+1}$ and $Y_j \cap Y_{j+1} = Z$ for all $j\in\Z/2k\Z$.
\end{itemize}
As in \cite[Lem.\,5.6]{mst}, we can then construct an atlas of charts on $N^{k/n}$, with values in $\hat{\H}^{p,1}$, in the following way.
For each $x\in N^{k/n}$, choose a small connected open neighborhood $U_x$ of $x$ in $N^{k/n}$ such that for any $j\in\Z/2k\Z$, if $x \notin \ov{H_j}$, then $U_x \cap \ov{H_j} = \emptyset$; choose a continuous chart $\varphi_x : U_x \to \hat{\H}^{p,1}$ mapping isometrically $U_x\cap V_j$ into~$X_j$ and $U_x\cap H_j$ into~$Y_j$ for all $j\in\Z/2k\Z$.
We then consider the atlas of charts $(U_x,g\circ\varphi_x)_{x\in N^{k/n},\,g\in\SO(p,2)_0}$.
Note that each transition map is given by a unique element of $\SO(p,2)_0$, because if $\mathcal{U}_1$ and~$\mathcal{U}_2$ are two relatively open subsets inside two copies of $\H^p$ inside $\hat{\H}^{p,1}$, then any orientation-preserving and time-orientation-preserving isometry between $\mathcal{U}_1$ and~$\mathcal{U}_2$ is given by a unique element of $\SO(p,2)_0$.
Therefore, as explained in \cite[Cor.\,3.32]{mst}, one can argue similarly to the case of classical $(G,X)$-structures to get that this atlas of charts defines a developing map from the universal cover $\widetilde{N}^{k/n}$ of $N^{k/n}$ to~$\hat{\H}^{p,1}$ which is equivariant with respect to a holonomy representation $\rho : \pi_1(N^{k/n}) \to \SO(p,2)$.
By \cite[Th.\,3.33]{mst}, the image of this developing map is a spacelike $p$-graph in $\hat{\H}^{p,1}$, and the representation $\rho : \pi_1(N^{k/n}) \to \SO(p,2)$ is $\H^{p,1}$-convex cocompact.
We note that for any $j\in\Z/2k\Z$, there are natural embeddings $\pi_1(H_j) \hookrightarrow \pi_1(V_j) \hookrightarrow \pi_1(N^{k/n})$, and the Zariski closure of $\rho(\pi_1(H_j))$ (\resp $\rho(\pi_1(V_j))$) in $\SO(p,2)$ is a copy of $\SO(p-1,1)$ (\resp $\SO(p,1)$),
preserving the copy of $\H^{p-1}$ (\resp $\H^p$) in~$\hat{\H}^{p,1}$ with tangent space $T_zZ \oplus \R v_j$ (\resp $T_zZ \oplus \spa(v_j,v_{j+1})$) at~$z$.
Since the vectors $v_j$ are not all collinear, the groups $\pi_1(V_j)$ do not all preserve the same copy of $\H^p$ in $\hat{\H}^{p,1}$;
therefore the group $\rho(\pi_1(N^{k/n}))$ does not preserve a copy of $\H^p$ in $\hat{\H}^{p,1}$, and so it is Zariski-dense in $\SO(p,2)$ by Lemma~\ref{lem:Lie-alg}.\eqref{item:Lie-alg-1}.

By Lemma~\ref{lem:deform-polygon-R21}, up to a continuous deformation inside the space of equilateral spacelike $2k$-polygons of side length $\pi/n$ in $\R^{2,1}$ (which induces a continuous deformation inside the space of $\H^{p,1}$-convex cocompact representations of $\pi_1(N^{k/n})$), we may assume that our polygon $\mathcal{P} = (v_j)_{j\in\Z/2k\Z}$ satisfies that $\spa(v_{j-1},v_j,v_{j+1}) = \R^{2,1}$ for all~$j$.
By the above argument (based on Lemma~\ref{lem:Lie-alg}.\eqref{item:Lie-alg-1}), the image under~$\rho$ of the subgroup generated by $\pi_1(V_{j-1})$ and $\pi_1(V_j)$ is then Zariski-dense in $\SO(p,2)$ for all~$j$.

Let us fix $q\leq 2k-2$, and view $\rho$ as a representation from $\pi_1(N^{k/n})$ to $\SO(p,q+1)$ by composing with the natural inclusion $\SO(p,2) \hookrightarrow \SO(p,q+1)$; then $\rho$ is $\H^{p,q}$-convex cocompact (see \cite{dgk18,dgk-proj-cc}).
If $q=1$, then we already know that $\rho(\pi_1(N^{k/n}))$ is Zariski-dense in $\SO(p,q+1)$, so we now assume $q\geq 2$.
Our goal is to deform $\rho$ continuously into a representation whose image is Zariski-dense in $\SO(p,q+1)$; then $\H^{p,q}$-convex cocompactness will still hold by Theorem~\ref{thm:main}.

The deformation will be done by $q-1$ successive bendings \`a la Johnson--Millson (see Section~\ref{subsec:appendix-hyperbolic}).
For this, we observe that for every $j\in\Z/2k\Z$, the union of the codimension-$1$ submanifolds $H_{j-1}\cup H_{j+1}$ is path-connected and separates $N^{k/n}$ into two connected components, namely $V_{j-1} \cup V_j \cup H_j \smallsetminus S$ and $N^{k/n} \smallsetminus (\overline{V_{j-1}} \cup \overline{V_j})$.
Therefore, by van Kampen's theorem, we can write
\begin{equation} \label{eqn:Gromov-Thurston-amalgam}
\pi_1(N^{k/n}) = \pi_1\big(V_{j-1} \cup V_j \cup H_j \smallsetminus S\big) *_{\pi_1(H_{j-1}\cup H_{j+1})} \pi_1\big(N^{k/n} \smallsetminus (\overline{V_{j-1}} \cup \overline{V_j})\big).
\end{equation}

Consider our point $z \in \hat{\H}^{p,1} \subset \hat{\H}^{p,q}$ and our totally geodesic copy $Z$ of $\H^{p-2}$ containing~$z$ in $\hat{\H}^{p,1} \subset \hat{\H}^{p,q}$ as above.
We write the tangent space $T_z\hat{\H}^{p,q} \simeq \R^{p,q}$ as the direct sum of $T_zZ \simeq \R^{p-2,0}$ and of its orthogonal $(T_zZ)^{\perp} \simeq \R^{2,q}$.
Let $(e_1,\dots,e_{q+2})$ be an orthogonal basis of $(T_zZ)^{\perp} \simeq \R^{2,q}$ in which the quadratic form $\sfb$ has matrix $\mathrm{diag}(1,1,-1,\dots,-1)$.
By construction, we have $\spa(v_{j-1},v_j,v_{j+1}) = \spa(e_1,e_2,e_3)$ for all $j\in\Z/2k\Z$.
The subgroup of $\SO(p,q+1)$ fixing $Z$ pointwise is isomorphic to $\SO(2,q)$, acting on $(T_zZ)^{\perp} = \spa(e_1,\dots,e_{q+2})$.

We proceed in $q-1$ steps: for $1\leq j\leq q-1$, the $j$-th step produces a continuous family $(\rho_t^{(j)})_{t\in [0,1]} \subset \Hom(\pi_1(N^{k/n}),\SO(p,q+1))$ and a corresponding continuous deformation $(v_{j,t})_{t\in [0,1]} \subset \spa(e_1,\dots,e_{j+3}) \smallsetminus \spa(e_1,\dots,e_{j+2})$ of~$v_j$  such that
\begin{itemize}
  \item for any $t\neq 0$, the group $\rho_t^{(j)}(\pi_1(N^{k/n}))$ is contained and Zariski-dense in $\SO(p,j+2)$,
  \item for any $t\neq 0$, the image under $\rho_t^{(j)}$ of the subgroup of $\pi_1(N^{k/n})$ generated by $\pi_1(V_1), \dots$, $\pi_1(V_j), \pi_1(V_{2k-2}), \pi_1(V_{2k-1})$ is already Zariski-dense in $\SO(p,j+2)$,
  \item for any~$t$, we have $\sfb(v_{j,t},v_{j,t}) = 1$ and $\sfb(v_{j,t},v_{j+1}) = \cos(\pi/n)$, and the restriction of $\sfb$ to $\spa(v_{j,t},v_{j+1},v_{j+2})$ has signature $(2,1|0)$.
\end{itemize}
The family $(\rho_t^{(j)})_{t\in [0,1]}$ of the $j$-th step is related to the family $(\rho_t^{(j-1)})_{t\in [0,1]}$ of the $(j-1)$-th step by $\rho_0^{(j)} = \rho_1^{(j-1)}$, so that we can then concatenate the deformation paths and obtain a continuous deformation of~$\rho$ with Zariski-dense image in $\SO(p,q+1)$.
We now explain how to perform the first step, and how to perform the $j$-th step having performed the $(j-1)$-th step.

We start with the first step.
By construction, we have $\sfb(v_1,v_1) = 1$ and $\sfb(v_0,v_1) = \sfb(v_1,v_2) = \cos(\pi/n)$, and $v_1 \in \spa(e_1,e_2,e_3) \smallsetminus \spa(v_0,v_2)$.
Therefore Lemma~\ref{lem:deform-triple-R2q}.\eqref{item:triple-R2q-3} gives a continuous family $(g_t^{(1)})_{t\in [0,1]}$ of elements of $\SO(p,3) \subset \SO(p,q+1)$ fixing $Z$ pointwise and fixing $v_0$ and $v_2$ inside $(T_zZ)^{\perp} = \spa(e_1,\dots,e_{q+2})$, such that $g_0^{(1)} = \mathrm{Id}$, and such that for any $t \in (0,1]$, the vector $v_{1,t} := g_t^{(1)} \cdot v_1$ belongs to $\spa(e_1,e_2,e_3,e_4) \smallsetminus \spa(e_1,e_2,e_3)$ and still satisfies $\sfb(v_{1,t},v_{1,t}) = 1$ and $\sfb(v_0,v_{1,t}) = \sfb(v_{1,t},v_2) = \cos(\pi/n)$; moreover, we may assume that the restriction of $\sfb$ to $\spa(v_{1,t},v_2,v_3)$ still has signature $(2,1|0)$ for all $t\in [0,1]$.
By \eqref{eqn:Gromov-Thurston-amalgam}, for each~$t$ we can define a representation $\rho_t^{(1)} : \pi_1(N^{k/n}) \to \SO(p,q+1)$ to be equal to $\rho$ on $\pi_1(N^{k/n} \smallsetminus (\overline{V_0} \cup \overline{V_1}))$, and to $g_t^{(1)}\rho(\cdot) (g_t^{(1)})^{-1}$ on $\pi_1(V_0 \cup V_1 \cup H_1 \smallsetminus S)$.
Indeed, since $g_t^{(1)}$ fixes $Z$ pointwise and fixes $v_0$, it acts trivially on the copy of $\H^{p-1}$ in $\hat{\H}^{p,q}$ with tangent space $T_zZ \oplus \R v_0$ at~$z$, hence it centralizes $\rho(\pi_1(H_0))$; similarly, since $g_t^{(1)}$ fixes $Z$ pointwise and fixes $v_2$, it centralizes $\rho(\pi_1(H_2))$; therefore $g_t^{(1)}$ centralizes $\rho(\pi_1(H_0\cup H_2))$.
We thus obtain a continuous family $(\rho_t^{(1)})_{t\in [0,1]} \subset \Hom(\pi_1(N^{k/n}),\SO(p,q+1))$ with values in $\SO(p,3)$, such that $\rho_0^{(1)} = \rho$.
We claim that for any $t\neq 0$, the image under $\rho_t^{(1)}$ of the subgroup $\Gamma_1$ of $\pi_1(N^{k/n})$ generated by $\pi_1(V_1), \pi_1(V_{2k-2}), \pi_1(V_{2k-1})$ is Zariski-dense in $\SO(p,3)$.
Indeed, $\rho_t^{(1)}$ coincides with~$\rho$ on the subgroup generated by $\pi_1(V_{2k-2})$ and~$\pi_1(V_{2k-1})$, hence the Zariski closure of $\rho_t^{(1)}(\Gamma_1)$ contains $\SO(p,2)$ by the above.
Moreover, $\rho_t^{(1)}(\pi_1(V_1))$ is Zariski-dense in the group $g_t^{(1)}\SO(p,1)(g_t^{(1)})^{-1}$, which preserves the copy of $\H^p$ with tangent space $T_zZ \oplus \spa(v_0,v_{1,t})$ at~$z$.
Since $v_{1,t} \notin \spa(e_1,e_2,e_3)$, the group $\rho_t^{(1)}(\pi_1(V_1))$ does not preserve $\hat{\H}^{p,1}$; therefore the group generated by $\rho_t^{(1)}(\pi_1(V_1)), \rho_t^{(1)}(\pi_1(V_{2k-2})), \rho_t^{(1)}(\pi_1(V_{2k-1}))$ does not preserve $\hat{\H}^{p,1}$, and so it is Zariski-dense in $\SO(p,3)$ by Lemma~\ref{lem:Lie-alg}.\eqref{item:Lie-alg-1}.

Suppose we have performed the $(j-1)$-th step of our process: we have obtained a continuous family $(\rho_t^{(j-1)})_{t\in [0,1]} \subset \Hom(\pi_1(N^{k/n}),\SO(p,q+1))$ and a corresponding continuous deformation $(v_{j-1,t})_{t\in [0,1]} \subset \spa(e_1,\dots,e_{j+2}) \smallsetminus \spa(e_1,\dots,e_{j+1})$ of~$v_{j-1}$ such that
\begin{itemize}
  \item for any $t\neq 0$, the group $\rho_t^{(j-1)}(\pi_1(N^{k/n}))$ is contained and Zariski-dense in $\SO(p,j+1)$,
  \item for any $t\neq 0$, the image under $\rho_t^{(j-1)}$ of the subgroup of $\pi_1(N^{k/n})$ generated by $\pi_1(V_1), \dots, \pi_1(V_{j-1}), \pi_1(V_{2k-2}), \pi_1(V_{2k-1})$ is already Zariski-dense in $\SO(p,j+1)$,
  \item for any~$t$, we have $\sfb(v_{j-1,t},v_{j-1,t}) = 1$ and $\sfb(v_{j-1,t},v_j) = \cos(\pi/n)$, and the restriction of $\sfb$ to $\spa(v_{j-1,t},v_j,v_{j+1})$ has signature $(2,1|0)$.
\end{itemize}
Let us construct $(\rho_t^{(j)})_{t\in [0,1]}$ and $(v_{j,t})_{t\in [0,1]}$, with $\rho_0^{(j)} = \rho_1^{(j-1)}$.
By Lemma~\ref{lem:deform-triple-R2q}.\eqref{item:triple-R2q-3}, there is a continuous family $(g_t^{(j)})_{t\in [0,1]}$ of elements of $\SO(p,j+2) \subset \SO(p,q+1)$ fixing $Z$ pointwise and fixing $v_{j-1,1}$ and $v_{j+1}$ inside $(T_zZ)^{\perp} = \spa(e_1,\dots,e_{q+2})$, such that $g_0^{(j)} = \mathrm{Id}$, and such that for any $t \in (0,1]$, the vector $v_{j,t} := g_t^{(j)} \cdot v_j$ belongs to $\spa(e_1,\dots,e_{j+3}) \smallsetminus \spa(e_1,\dots,e_{j+2})$ and still satisfies $\sfb(v_{j,t},v_{j,t}) = 1$ and $\sfb(v_{j-1,1},v_{j,t}) = \sfb(v_{j,t},v_{j+1}) = \cos(\pi/n)$; moreover, we may assume that the restriction of $\sfb$ to $\spa(v_{j,t},v_{j+1},v_{j+2})$ still has signature $(2,1|0)$ for all $t\in [0,1]$.
As above, by \eqref{eqn:Gromov-Thurston-amalgam}, for each~$t$ we can define a representation $\rho_t^{(j)} : \pi_1(N^{k/n}) \to \SO(p,q+1)$ to be equal to $\rho_1^{(j-1)}$ on $\pi_1(N^{k/n} \smallsetminus (\overline{V_{j-1}} \cup \overline{V_j}))$, and to $g_t^{(j)}\rho_1^{(j-1)}(\cdot) (g_t^{(j)} )^{-1}$ on $\pi_1(V_{j-1} \cup V_j \cup H_j \smallsetminus S)$.
We thus obtain a continuous family $(\rho_t^{(j)})_{t\in [0,1]} \subset \Hom(\pi_1(N^{k/n}),\SO(p,q+1))$, with values in $\SO(p,j+2)$, such that $\rho_0^{(j)} = \rho_1^{(j-1)}$.
We claim that for any $t\neq 0$, the image under $\rho_t^{(j)}$ of the subgroup $\Gamma_j$ of $\pi_1(N^{k/n})$ generated by $\pi_1(V_1), \dots, \pi_1(V_j), \pi_1(V_{2k-2}), \pi_1(V_{2k-1})$ is Zariski-dense in $\SO(p,j+2)$.
Indeed, $\rho_t^{(j)}$ coincides with~$\rho_1^{(j-1)}$ on the subgroup generated by $\pi_1(V_1), \dots, \pi_1(V_{j-1}), \pi_1(V_{2k-2}), \pi_1(V_{2k-1})$, hence the Zariski closure of $\rho_t^{(j)}(\Gamma_j)$ contains $\SO(p,j+1)$ by the $(j-1)$-th step.
Moreover, $\rho_t^{(j)}(\pi_1(V_j))$ is Zariski-dense in a conjugate of $\SO(p,1)$ which preserves the copy of $\H^p$ with tangent space $T_zZ \oplus \spa(v_{j,t},v_{j+1})$ at~$z$.
Since $v_{j,t} \notin \spa(e_1,\dots,e_{j+2})$, the group $\rho_t^{(j)}(\pi_1(V_j))$ does not preserve $\hat{\H}^{p,j}$; therefore the group $\rho_t^{(j)}(\Gamma_j)$ also does not preserve $\hat{\H}^{p,j}$, and so it is Zariski-dense in $\SO(p,j+2)$ by Lemma~\ref{lem:Lie-alg}.\eqref{item:Lie-alg-1}.

In the end, by concatenating the continuous families $(\rho_t^{(j)})_{t\in [0,1]}$, for $j$ ranging from $1$ to $q-1$, we obtain a continuous deformation of~$\rho$ inside $\Hom(\pi_1(N^{k/n}),\SO(p,q+1))$ whose image is eventually Zariski-dense in $\SO(p,q+1)$.
These representations are still $\H^{p,q}$-convex cocompact by Theorem~\ref{thm:main}.
This completes the proof of Proposition~\ref{prop:Z-dense-deform-MST}.

\vspace{0.5cm}

\end{document}